\documentclass[12pt]{amsart}
\usepackage{amscd,amsmath,amssymb,amsfonts,verbatim}
\usepackage[cmtip, all]{xy}
\usepackage{graphicx}
\usepackage[active]{srcltx}


\setlength{\textwidth}{5.8in}             
\setlength{\textheight}{9in}

\setlength{\oddsidemargin}{.25in}
\setlength{\evensidemargin}{.25in}

\newtheorem{thm}{Theorem}[section]
\newtheorem{prop}[thm]{Proposition}
\newtheorem{lem}[thm]{Lemma}
\newtheorem{cor}[thm]{Corollary}

\theoremstyle{definition}

\newtheorem{defn}[thm]{Definition}
\newtheorem{remk}[thm]{Remark}
\newtheorem{remks}[thm]{Remarks}

\newtheorem{exm}[thm]{Example}
\newtheorem{exms}[thm]{Examples}
\newtheorem{notat}[thm]{Notation}

\numberwithin{equation}{section}

{\hfill$\square$\end{defn}}

{\hfill$\square$\end{remk}}

{\hfill$\square$\end{remks}}

{\hfill$\square$\end{exm}}

{\hfill$\square$\end{exms}}

{\hfill$\square$\end{notat}}

\newcommand{\sB}{{\mathcal B}}
\newcommand{\sC}{{\mathcal C}}
\newcommand{\sD}{{\mathcal D}}
\newcommand{\sE}{{\mathcal E}}
\newcommand{\sF}{{\mathcal F}}
\newcommand{\sG}{{\mathcal G}}

\newcommand{\sK}{{\mathcal K}}

\newcommand{\sM}{{\mathcal M}}

\newcommand{\sO}{{\mathcal O}}
\newcommand{\sP}{{\mathcal P}}

\newcommand{\sS}{{\mathcal S}}

\newcommand{\sU}{{\mathcal U}}
\newcommand{\sV}{{\mathcal V}}
\newcommand{\sW}{{\mathcal W}}
\newcommand{\sX}{{\mathcal X}}
\newcommand{\sY}{{\mathcal Y}}
\newcommand{\sZ}{{\mathcal Z}}
\newcommand{\A}{{\mathbb A}}

\newcommand{\C}{{\mathbb C}}

\renewcommand{\P}{{\mathbb P}}

\newcommand{\R}{{\mathbb R}}

\newcommand{\Z}{{\mathbb Z}}

\newcommand{\fm}{{\mathfrak m}}

\newcommand{\surj}{\twoheadrightarrow}
\newcommand{\inj}{\hookrightarrow}

\newcommand{\Pic}{{\rm Pic}}

\newcommand{\Hom}{{\rm Hom}}

\newcommand{\Spec}{{\rm Spec \,}}

\newcommand{\0}{\emptyset}
\newcommand{\sHom}{{\mathcal{H}{om}}}

\newcommand{\Sch}{{\operatorname{\mathbf{Sch}}}}

\newcommand{\Top}{{\mathbf{Top}}}

\newcommand{\<}{\langle}
\renewcommand{\>}{\rangle}
\newcommand{\Sets}{{\mathbf{Sets}}}

\newcommand{\Sm}{{\mathbf{Sm}}}

\newcommand{\Ho}{{\mathbf{Ho}}}

\newcommand{\ds}{{/\kern-3pt/}}

\newcommand{\sm}{{\operatorname{sm}}}

\newcommand{\colim}{\mathop{\text{colim}}}

\newcommand{\ov}{\overline}
\newcommand{\wt}{\widetilde}
\newcommand{\wh}{\widehat}

\renewcommand{\dim}{\text{\rm dim}}

\newcommand{\tuborg}{\left\{\begin{array}{ll}}
\newcommand{\sluttuborg}{\end{array}\right.}

\newcommand{\Shv}{{\mathbf{Shv}}}
\newcommand{\PShv}{{\mathbf{PSh}}}
\newcommand{\lAff}{{\operatorname{lAff}}}

\begin{document}
\title{Motivic homotopy of group scheme actions}
\author{Amalendu Krishna, Paul Arne {\O}stv{\ae}r}

\keywords{Motivic homotopy theory, smooth affine group schemes, equivariant $K$-theory, 
purity and blow-up theorems}

\subjclass[2010]{14L30, 55P91}

\begin{abstract}
To smooth schemes equipped with a smooth affine group scheme action,
we associate an equivariant motivic homotopy category. 
Underlying our construction is the choice of an `equivariant Nisnevich topology' induced by a complete, 
regular, 
and bounded $cd$-structure. 
We show equivariant $K$-theory of smooth schemes is represented in the equivariant motivic homotopy category. 
This is used to characterize equivariantly contractible smooth affine curves and equivariant vector bundles on such curves. 
Generalizations of the purity and blow-up theorems in motivic homotopy theory are shown for actions of finite cyclic groups.    
\end{abstract}

\maketitle
\tableofcontents

\section{Introduction}\label{section:Intro}
In this paper we develop motivic homotopy theory of smooth affine group scheme 
actions.  
We show the main results in the pioneering work of Morel-Voevodsky 
on motivic homotopy theory \cite{MV} generalize to the equivariant setting, 
e.g., the purity theorem for Thom spaces, the blow-up theorem and 
representability of $K$-theory.   

A major motivation for motivic homotopy theory of group actions is to 
construct a convenient setting for equivariant cohomology theories on 
the category of smooth schemes with a group action.
For the group of order two, 
the most important examples are Real algebraic $K$-theory, 
Real motivic cobordism \cite{HKO},  
and a Bredon type theory of equivariant motivic cohomology 
\cite{HVO}.
We complete these results by representing equivariant $K$-theory of 
group actions, as introduced by Thomason in the mid 1980's \cite{Thomason3}.
By considering actions by the multiplicative group scheme $\mathbb{G}_m$ and 
its subgroup schemes $\mu_n$ of sheaves of roots of unity, 
our results point towards an algebro-geometric version of  
$S^{1}$-equivariant homotopy theory \cite{Greenlees}.

As for the trivial group, every equivariant $\A^{1}$-homotopy equivalence becomes an 
isomorphism in the equivariant motivic homotopy category.
This basic observation implies the same result holds for every 
equivariant vector bundle, by patching of equivariant 
$\A^{1}$-homotopies on a Zariski open covering. 
Recall that for an algebraic group $G$ over a field $k$, 
a $G$-equivariant vector bundle $\sV$ over a $k$-scheme $X$ 
with $G$-action is trivial 
if there exists a $G$-representation $V$ such that $\sV=V\times_k X$. 
For $G$ a finite cyclic group, 
representability of equivariant $K$-theory allows us to show that every 
$G$-equivariant vector bundle on an equivariantly $\A^{1}$-contractible 
smooth affine curve is trivial. 
It is an open question (even when $G$ is trivial) whether the same result holds in 
higher dimensions, starting with surfaces.

We relate the equivariant motivic homotopy category $\Ho^G_{\A^{1}}(k)$ to 
other existing settings for homotopy theory.
For example, 
there is a naturally induced adjunction between $\Ho^G_{\A^{1}}(k)$ and the 
motivic homotopy category $\Ho_{\A^{1}}(k)$ corresponding to 
the trivial group.
When the base scheme is the complex numbers, 
taking complex points furnishes a `realization' functor to the equivariant 
homotopy category of topological spaces equipped with an action by 
the complex points of $G$.
Under base change the corresponding equivariant motivic homotopy categories 
are related by standard adjunctions.
These and many other functorial properties deserve a thorough treatment using 
cross-functors in the sense of Voevodsky, 
following Ayoub's work \cite{Ayoub}, 
which is somehow beyond the scope of this paper.

Our main results can be summarized as follows.
We leave precise statements to the main body of the paper. 

\begin{thm}\label{thm:Main-Results}
Let $G$ be a smooth affine group scheme over a field $k$.
Denote by $\Sm^G_k$ the category of separated finite type smooth $k$-schemes equipped with a $G$-action.
Then we have the following.
\begin{enumerate}
\item
The equivariant Nisnevich topology on $\Sm^G_k$ is given by a complete, 
regular and bounded $cd$-structure.
\item
The category of simplicial presheaves on $\Sm^G_k$ admits local and
motivic model structures in which the local weak equivalence is determined by 
the equivariant Nisnevich topology and the motivic weak equivalence is 
governed by equivariant vector bundle projections. 
\item
The equivariant algebraic $K$-theory of smooth $G$-schemes is represented
in the equivariant motivic homotopy category. Motivic
weak equivalences between smooth $G$-schemes induce isomorphisms on 
equivariant $K$-theory.
\item
If $k$ is infinite and $G$ is a finite cyclic group, 
all equivariant vector bundles on an equivariantly contractible smooth
affine curve are trivial.
\item
If $k$ is algebraically closed and $G$ is a finite cyclic group of prime order,
the purity and blow-up theorems for closed immersions of smooth $G$-schemes
hold in the equivariant motivic homotopy category.
\end{enumerate} 
\end{thm}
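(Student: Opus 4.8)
The proof splits into the five independent assertions, each occupying its own section. \textbf{For (1) and (2)}, the first task is to isolate the class of \emph{equivariant Nisnevich distinguished squares}: Cartesian squares in $\Sm^G_k$ formed from an equivariant open immersion $U\hookrightarrow X$ and an equivariant \'etale map $V\to X$ that is an isomorphism, compatibly with isotropy, over the reduced complement $X\setminus U$. One then checks Voevodsky's three axioms for the resulting $cd$-structure: completeness is a formal pullback-stability statement; regularity amounts to showing the diagonal of an equivariant \'etale map again fits into such a square, using stability of equivariant \'etale morphisms under the relevant operations; and boundedness follows by exhibiting a density structure, taking $X_{\ge d}$ to be the open $G$-subschemes whose complement has codimension at least $d$, the bound coming from the Krull dimension. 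Given this, Voevodsky's machinery produces the local (equivariant Nisnevich) model structure on simplicial presheaves on $\Sm^G_k$, and the motivic structure is the left Bousfield localization at the essentially small class of equivariant vector bundle projections $\sV\to X$.

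\textbf{For (3)}, I would take Thomason's equivariant $K$-theory presheaf $X\mapsto K^G(X)$ and show it is already motivically fibrant, modulo the usual group-completion and connectivity bookkeeping. Two inputs are needed: equivariant Nisnevich descent on distinguished squares, which follows from equivariant Thomason--Trobaugh localization and base change; and $\A^1$-invariance on regular $G$-schemes, which is Thomason's theorem. Since the $cd$-structure is bounded, descent on squares upgrades to full descent, so $K^G$ represents itself, i.e.\ $[X_+,K^G]_{\Ho^G_{\A^1}(k)}\cong K^G(X)$ for smooth $G$-schemes $X$; the last sentence of (3) is then immediate, since a motivic weak equivalence induces a bijection on maps into the motivically fibrant object $K^G$.

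\textbf{For (4)}, let $X$ be an equivariantly $\A^1$-contractible smooth affine curve over the infinite field $k$ and $\sV$ an equivariant vector bundle on it. By (3), $K_0^G(X)\cong K_0^G(\Spec k)=R(G)$, so the class of $\sV$ in $K_0^G(X)$ equals that of a trivial bundle $V\times_k X$ for a suitable $G$-representation $V$; in particular they share rank and $K_0^G$-class. It then remains to promote equality in $K_0^G$ to an isomorphism of bundles, a cancellation statement for equivariant bundles on a smooth affine curve. I would prove this by an equivariant Serre-splitting argument for modules over the twisted group ring $\sO(X)\ast G$ — a general-position argument producing an equivariant nowhere-vanishing section once the rank exceeds the dimension, which is precisely where $k$ infinite is used — and then induct on the rank.

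\textbf{For (5)}, with $G=\Z/p$ over an algebraically closed field $k$, I would follow Morel--Voevodsky's proofs of homotopy purity and the blow-up theorem, replacing each local input by its equivariant analogue. Purity reduces, via equivariant deformation to the normal cone, to showing that a closed immersion $Z\hookrightarrow X$ of smooth $G$-schemes is equivariant-Nisnevich-locally isomorphic to the zero section of $N_{Z/X}\to Z$, whence $X/(X\setminus Z)$ is motivically equivalent to the Thom space $\Th(N_{Z/X})$; the blow-up theorem then follows from purity together with the equivariant projective bundle formula, itself a consequence of (3), applied along the exceptional divisor. The crux — and the step I expect to be the main obstacle — is exactly this equivariant local linearization: one must produce, around each point of $Z$, a $G$-stable equivariant-\'etale neighborhood in which $X$, $Z$, and $N_{Z/X}$ are put into standard linear form, via a Luna-type slice argument. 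This works because over an algebraically closed field the representation theory of $\Z/p$ is controlled enough — a sum of characters in the tame case $p\neq\Char k$, with tight control of indecomposables in the wild case — and it is the reliance on this that forces the hypotheses in (5) to be far more restrictive than those in (1)--(4).
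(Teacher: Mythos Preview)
Your outline for parts (1)--(3) matches the paper closely. One small correction: the paper localizes at the projections $X\times\A^1\to X$ with \emph{trivial} $G$-action on $\A^1$, and then proves separately (Proposition~\ref{prop:Hom-inv}) that every equivariant vector bundle projection becomes a motivic weak equivalence; it does not localize at all such projections directly.

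For part (4) your route diverges substantially from the paper's, and your sketch has a gap. The paper does not attempt equivariant cancellation over a general curve. Instead it proves a structural classification: an equivariantly $\A^1$-contractible smooth affine curve must be isomorphic, as a $G$-scheme, to a one-dimensional linear representation of $G$. This uses $K^G_0$ to force rationality, then $K^G_1$ (together with Vistoli's surjectivity theorem for the forgetful map $K^G_*\otimes_{R(G)}\Z\to K_*$ after inverting $|G|$) to force $X\cong\A^1$, and finally a $\pi_0^{G,\A^1}$ argument to force the action to be linear. Only \emph{after} reducing to $\A^1$ with a linear action does the paper invoke the equivariant Serre problem, citing the theorems of Castella and Masuda--Moser-Jauslin--Petrie. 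Your proposed shortcut---promoting a $K_0^G$-equality to an isomorphism via equivariant Serre splitting over $\sO(X)\ast G$---is exactly the hard step these cited papers address on $\A^n$, and there is no reason to expect it to be easier over an unknown curve; the ``general-position nowhere-vanishing section'' argument does not obviously produce a $G$-equivariant section.

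For part (5) your overall strategy (deformation to the normal cone plus equivariant local linearization) agrees with the paper, but two points differ. First, the paper proves the blow-up theorem by repeating the purity argument verbatim, not by deducing it from purity plus a projective bundle formula. Second, the paper works only in the tame case $p\neq\Char(k)$; your remark about controlling indecomposables in the wild case goes beyond what the paper claims or proves.
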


Herrmann \cite[Proposition~3.5.4]{Herman} proved that equivariant $K$-theory 
cannot be represented in the equivariant motivic homotopy 
category for the intermediate Nisnevich topology 
(see \S~\ref{subsubsection:H-Nis*}), 
which follows a construction common in topology by defining weak equivalences 
via fixed point loci of all subgroup schemes.
This approach to equivariant motivic homotopy theory is intuitively very clear and works well in many aspects, 
but alas does not mesh well with cohomology theories such as equivariant $K$-theory.
Our representability theorem is in some sense made possible by the fine 
differences between equivariant Nisnevich topologies, 
cf.~\S~\ref{section:CD-structure}.

\vskip .3cm

{\sl Related works:}
The first version of this paper was written in 2011 and 
several related papers have appeared during its hiatus period.
The subject of motivic homotopy of group actions can be traced back to 
Deligne's lecture notes 
\cite{Deligne} emphasizing the role of quotients by finite group actions in 
the Rost-Voevodsky proof of the Bloch-Kato conjecture.
Hu-Kriz-Ormsby \cite{HKO} used the equivariant
Nisnevich topology to introduce Real algebraic $K$-theory and Real 
motivic cobordism for the group of order two.
Herrmann \cite{Herman} worked out an unstable and stable equivariant motivic 
homotopy theory based on fixed points using the 
intermediate Nisnevich topology, cf.~\S~\ref{subsubsection:H-Nis*}.
An alternate construction carried out by Carlsson-Joshua \cite{CJ} allowing 
for actions of discrete groups is bootstrapped for solving 
Carlsson's conjecture relating $K$-theory of fields to representation theory.
A Bredon style motivic cohomology theory related to Real algebraic $K$-theory 
and equivariant higher Chow groups was introduced by 
Heller-Voineagu-{\O}stv{\ae}r \cite{HVO}.
\vspace{0.1in}

\vskip .4cm

{\sl Brief Outline of the paper:}
We describe the equivariant
Nisnevich site on $\Sch^G_k$ in \S~\ref{section:Nis-G-site} and give a
comparison of this site with other known equivariant topologies.
A $cd$-structure on the equivariant Nisnevich site and several of its 
consequences are described in \S~\ref{section:CD-structure}.

In \S~\ref{section:Models}, \ref{section:UHC} and \ref{section:UHC-P}, 
we work out the model structures on motivic $G$-spaces based 
on the equivariant Nisnevich topology and Bousfield localization with respect 
to the affine line $\A^{1}$.
A comparison with the motivic homotopy category and certain base change 
properties are investigated in \S~\ref{section:Site-Com}.
Proofs of the equivariant purity and blow-up theorems occupy 
\S~\ref{section:ENL-G-sch} and ~\ref{section:EPT}.
Finally, we prove representability of equivariant $K$-theory, 
and derive an algebraic analogue of Segal's theorem in \S~\ref{section:Nis-Desc-rep}.

\vskip .4cm

{\sl Generalizations:}
One may observe that the equivariant Nisnevich topology can be defined over 
any noetherian base scheme $S$ using the $cd$-topology defined by the
distinguished squares ~\eqref{eqn:cd-square}.
Proposition~\ref{prop:CRB} remains valid in this set up.
The results of sections~\ref{section:Models}-\ref{section:Site-Com} and 
\ref{section:Nis-Desc-rep} also generalize mutatis mutandis.

\section{The Nisnevich site for $G$-schemes}\label{section:Nis-G-site}
Let $k$ be a field and $G$ a smooth affine group scheme over $k$.
Recall that the identity component $G^0$ of $G$ is a normal closed subgroup
of $G$ which is smooth over $k$. Moreover, the quotient $\ov{G}$ is a 
finite {\'e}tale group scheme over $k$. We shall assume throughout this text 
that $\ov{G}$ is a finite constant group scheme over $k$.

Let $\Sch_k$ denote the category of separated schemes of finite type over $k$
and let $\Sm_k$ denote its full subcategory consisting of smooth
schemes over $k$.  A {\sl scheme} in this paper will mean an object of
$\Sch_k$ and the scheme $X \times Y$ will mean the fiber product of 
schemes $X$ and $Y$ over $k$.
 
Let $\Sch^G_k$ ($\Sm^G_k$) denote the category of (smooth) 
separated schemes of finite type over $k$ which are equipped with a 
$G$-action such that maps between two $G$-schemes are $G$-equivariant and 
commute with the structure maps to $\Spec(k)$. In particular, an object of 
$\Sch^G_k$ is a pair $(X, \mu_X)$ such that $X \in \Sch_k$ and there is an 
action map $\mu_X: G \times X \to X$ which satisfies the usual axioms of 
group actions.
A morphism $f : (X, \mu_X) \to (Y, \mu_Y)$ is a morphism
$f : X \to Y$ in $\Sch_k$ such that $f \circ \mu_X = \mu_Y \circ 
({\rm id}_G \times f)$. A scheme $X$ can be viewed as a $G$-scheme
via the trivial action, in which case we write it as the pair $(X, t_X)$.
In this case, $t_X$ is nothing but the projection map $t_X : G \times X \to X$.
This yields a full and faithful embedding 
\begin{equation}\label{eqn:trivial-embedding}
\iota_k : \Sch_k \to \Sch^G_k; \ \ X \mapsto (X, t_X)
\end{equation}
of categories.

\subsection{Stabilizer subgroups for $G$-actions}
\label{subsection:Stab}
Let $\Top$ denote the category of topological spaces and for a given  
topological group $G$, let ${\Top}^G$ denote the category of 
topological spaces with continuous $G$-actions. There are forgetful functors
$|-|: \Sch_k \to \Top$ and $|-| : \Sch^G_k \to {\Top}^{|G|}$.
These functors take a scheme $X$ to the underlying Zariski topological space of $X$. 
Cartesian products in $\Top$ are given the product topology.

Let $G$ be a smooth affine group scheme over $k$ and let $X \in \Sch^G_k$.
Given a point $x \in X$, the {\sl scheme-theoretic} stabilizer of $x$ is the
$k(x)$-group scheme $G_x$ defined by the Cartesian square:
\begin{equation}\label{eqn:Stab-0}
\xymatrix@C1pc{
G_x \ar[r] \ar[d] & G \times X \ar[d]^{(\mu_X, {\rm id}_X)} \\
\Spec(k(x)) \ar[r]_>>>{\Delta_X} & X \times X.}
\end{equation}

The {\sl set-theoretic} stabilizer of $x$ is the topological group
$S_x$ defined by the Cartesian square:
\begin{equation}\label{eqn:Stab-1}
\xymatrix@C1pc{
S_x \ar[r] \ar[d] & |G| \times |X| \ar[d]^{(\mu_{|X|}, {\rm id}_{|X|})} \\
|\Spec(k(x))| \ar[r]_>>>{\Delta_{|X|}} & |X| \times |X|.}
\end{equation}

There is a commutative diagram in $\Top$:
\begin{equation}\label{eqn:Stab-2}
\xymatrix@C1pc{
|G_x| \ar[r] \ar[d] & |G \times X| \ar[d]^{|(\mu_X, {\rm id}_X)|} 
\ar[r] & |G| \times |X| \ar[d]^{(\mu_{|X|}, {\rm id}_{|X|})} \\
|\Spec(k(x))| \ar[r]_>>>{\Delta_X} & |X \times X| \ar[r] & 
|X| \times |X|.}
\end{equation}
It is well known that the left square is Cartesian and the horizontal
arrows in the right square are surjective. Furthermore, these horizontal arrows 
are isomorphisms on the sets of closed points if $k$ is algebraically closed.
We conclude:

\begin{lem}\label{lem:Stab-main}
Given a point $x \in X$, there is a natural morphism of topological
groups $|G_x| \to S_x \to |G|$ such that the second morphism is an
inclusion. It is an inclusion of a closed subgroup if $x$ is a closed point of $X$. 

If $G$ is a finite constant group scheme over $k$, we can identify $G$
with $|G|$ and there are inclusions of closed subgroups $G_x \inj S_x \inj G$.
If $k$ is algebraically closed and $x$ is a closed point of $X$, then
the map $|G_x| \to S_x$ is bijective on the sets of closed points.
\end{lem}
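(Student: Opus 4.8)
The plan is to read off everything from the commutative diagram \eqref{eqn:Stab-2} together with the three stated facts about it: that the left square is Cartesian, that the horizontal arrows in the right square are surjective, and that these horizontal arrows are bijective on closed points when $k$ is algebraically closed. First I would construct the map $|G_x| \to S_x$. Since the outer rectangle of \eqref{eqn:Stab-2} commutes and $S_x$ is, by \eqref{eqn:Stab-1}, the fiber product of $|G|\times|X| \to |X|\times|X| \leftarrow |\Spec k(x)|$ in $\Top$, the universal property of that pullback produces a unique continuous map $|G_x| \to S_x$ compatible with the projections to $|G|\times|X|$ and to $|\Spec k(x)|$. Composing with $S_x \to |G|$ (the projection, which fits \eqref{eqn:Stab-1} since $\mu_{|X|}$ restricted over the diagonal is the orbit map) gives the asserted factorization $|G_x| \to S_x \to |G|$.

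Next I would check that $S_x \to |G|$ is injective, i.e. an inclusion. Over the one-point space $|\Spec k(x)|$, the map $(\mu_{|X|},\id):|G|\times|X|\to|X|\times|X|$ pulls back to the map $|G| \to |X|\times\{x\}$, $g\mapsto (g\cdot x,\, ?)$ — more precisely the composite with the base point $x$ — and $S_x$ is the preimage of $(x,x)$; since the second coordinate is constantly $x$ and the first coordinate is the orbit map $g\mapsto g\cdot x$, we get $S_x = \{g \in |G| : g\cdot x = x\}$ as a subspace of $|G|$, which is exactly the set-theoretic stabilizer and visibly a subgroup. This makes $S_x \hookrightarrow |G|$ a subspace inclusion, hence a closed embedding precisely when $S_x$ is closed in $|G|$; when $x$ is a closed point, $\{x\}\subset X$ is closed, so its preimage under the continuous orbit map $|G|\to|X|$ is closed, giving the closed-subgroup claim. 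For the finite constant case, $G$ is a finite discrete group scheme, $G\to|G|$ is an isomorphism of (discrete) groups, every subgroup is closed, and $G_x$ — being a closed subscheme of the finite discrete scheme $G\times\{x\}$ — is again finite discrete, so $|G_x|=G_x$; thus the chain becomes $G_x \hookrightarrow S_x \hookrightarrow G$, all inclusions of (closed) subgroups.

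Finally, for the algebraically closed case I would use the third stated property of \eqref{eqn:Stab-2}: the horizontal maps $|G\times X|\to|G|\times|X|$ and $|X\times X|\to|X|\times|X|$ restrict to bijections on closed points. Because the left square is Cartesian, $|G_x|$ is the fiber of $|G\times X|\to|X\times X|$ over the closed point of $|\Spec k(x)|$, while $S_x$ is the corresponding fiber of $|G|\times|X|\to|X|\times|X|$; a bijection on closed points of the total spaces that is compatible with the maps to $|X\times X|$ and $|X|\times|X|$ (itself a bijection on closed points) restricts to a bijection on the closed points of the fibers. Hence $|G_x|\to S_x$ is bijective on closed points. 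The main obstacle here is purely bookkeeping: making sure the identification of $S_x$ with $\{g\in|G|: g\cdot x=x\}$ and of $|G_x|$ with the fiber of the left square is done with the correct projection maps, and that "bijective on closed points" of the ambient spaces transfers cleanly to fibers — this is where one must be slightly careful, since $k(x)$ need not be algebraically closed unless $x$ is closed and $k$ is, which is exactly the hypothesis invoked. Everything else is a formal diagram chase using the universal property of pullbacks in $\Top$.
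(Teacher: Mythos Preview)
Your proposal is correct and follows the same approach as the paper: the paper gives no separate proof but simply writes ``We conclude'' after stating diagram~\eqref{eqn:Stab-2} and the three facts about it (left square Cartesian, right horizontal arrows surjective, and bijective on closed points over an algebraically closed field), and your plan is precisely to unpack that conclusion via the universal property of the pullback defining $S_x$. The only place where you are slightly more explicit than necessary is the finite constant case---there the point is simply that $|G\times X|\to |G|\times|X|$ is a homeomorphism (since $G\times X=\coprod_{g\in G}X$), so the composite $|G_x|\to S_x\to|G|$ is already injective, forcing $|G_x|\hookrightarrow S_x$; your argument via ``$G_x$ finite discrete'' reaches the same conclusion.
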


If $G$ is a finite constant group scheme over $k$ one can get the
following explicit descriptions of $G_x$ and $S_x$. One can  
check that a $G$-action on $X$ is the same as a homomorphism of groups 
$\sigma: G \to {\rm Aut}_{\Sch_k}(X)$ and
one has $S_x = \{g \in G| g \cdot x : = \sigma(g)(x) = x\}$. 
If $g \in S_x$, then it acts on $\Spec(k(x))$ which is
just the restriction of $S_x$-action on the scheme $X$.
In other words, $\sigma$ restricts to a homomorphism $\sigma_x: S_x \to
{\rm Aut}_{\Sch}(\Spec(k(x)))$. Here, $\Sch$ denotes the category of all
schemes over $k$. One checks using ~\eqref{eqn:Stab-0} that
$G_x = {\rm Ker}(\sigma_x)$.

\begin{exm}\label{exm:Set-scheme-1}
Let $G$ be the cyclic group of order two acting on $X = \Spec(\C)$ in the category $\Sch_{\R}$ by complex conjugation.
One checks easily that $G_x$ is trivial while $S_x = G$, 
where $x$ is the unique point of $X$. 
\end{exm}

\begin{exm}\label{exm:Set-scheme-2}
If $X \in \Sch^G_k$ and $g \in S_x$ for some $x \in X$, 
observe that $g$ acts on the local ring $\sO_{X,x}$ by $k$-algebra automorphisms. 
We show by an example that $G_x$ may act as the identity on $k(x)$ and differently on $\sO_{X,x}$.
Let $G = \<\sigma \>$ be the cyclic group of order two acting on $\A^1_k$ by $\sigma(x)=-x$. 
It is clear that $S_0=G$ and hence $G_0=G$. 
This action is algebraically described by the $k$-algebra automorphism $\sigma: k[x] \to k[x]$ given by $\sigma(x) = -x$.
It is then clear that $G$ acts on the local ring $\sO_{\A^1_k, 0} \simeq k[x]_{(x)}$ by $\sigma(x) = -x$. 
This is the identity map if and only if ${\rm char}(k) = 2$.
\end{exm}

\vskip .3cm

\subsection{Equivariant Nisnevich topology}\label{subsection:ENM}
We now define our Nisnevich site for $G$-schemes in terms of 
equivariant Nisnevich coverings. We shall show that these coverings
yield a Grothendieck topology on $\Sch^G_k$.
The exact same definitions and results hold for $\Sm^G_k$.

\begin{defn}\label{defn:Nisne-cover}
Let $X \in \Sch^G_k$. A family of \'etale morphisms 
$\{Y_i \xrightarrow{f_i} X\}_{i \in I}$ in 
$\Sch^G_k$ is called a ($G$-equivariant) Nisnevich cover of $X$ if
for any point $x \in X$, there is an index $i = i(x) \in I$ and a point
$y \in Y_i$ such that 
\begin{enumerate}
\item
$f_i(y) = x$,
\item
the induced map of the residue fields $k_x \to k_y$ is an isomorphism,
and
\item
the induced map $S_y \to S_x$ is an isomorphism.
\end{enumerate}
\end{defn}

It is immediate from this definition that a $G$-equivariant 
Nisnevich cover is the same as a Nisnevich cover in the sense of
\cite{MV} if $G$ is trivial.

\begin{prop}\label{prop:Nisne-site}
The category $\Sch^G_k$ with the $G$-equivariant Nisnevich coverings 
constitutes a Grothendieck site.
\end{prop}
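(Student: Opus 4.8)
The plan is to verify the three Grothendieck topology axioms for the collection of $G$-equivariant Nisnevich covers: stability under isomorphism/base change, transitivity (a cover of covers is a cover), and that isomorphisms are covers. The last is immediate, since an isomorphism $Y \xrightarrow{\sim} X$ visibly satisfies conditions (1)--(3) of Definition~\ref{defn:Nisne-cover} for every point. So the real content lies in pullback-stability and transitivity, and both will be handled by analyzing how the residue-field condition and the stabilizer condition behave under the relevant scheme-theoretic operations.

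First I would treat \emph{stability under base change}. Given a $G$-equivariant Nisnevich cover $\{Y_i \xrightarrow{f_i} X\}$ and an equivariant morphism $g: X' \to X$ in $\Sch^G_k$, I form the fiber products $Y'_i = Y_i \times_X X'$, which lie in $\Sch^G_k$ since fiber products of $G$-schemes over a $G$-scheme carry a natural diagonal $G$-action, and the projections $Y'_i \to X'$ are \'etale because \'etaleness is stable under base change. To check the covering condition at a point $x' \in X'$ with image $x = g(x')$, pick $i$ and $y \in Y_i$ over $x$ with $k_x \xrightarrow{\sim} k_y$ and $S_y \xrightarrow{\sim} S_x$. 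Since $k_x \to k_y$ is an isomorphism, the fiber $Y_i \times_X \Spec k_x$ over $x$ contains $\Spec k_y \cong \Spec k_x$ as an open-and-closed point, so there is a point $y' \in Y'_i$ mapping to $x'$ with $k_{x'} \xrightarrow{\sim} k_{y'}$; concretely $y'$ corresponds to the prime of $k(x') \otimes_{k(x)} k(y)$ coming from the isomorphism. It then remains to see $S_{y'} \xrightarrow{\sim} S_{x'}$. Here I would use the explicit description of the set-theoretic stabilizer: an element of $|G|$ (or of $G$ when $G$ is finite constant) lies in $S_{x'}$ iff it fixes $x'$ and acts on the residue field compatibly, and $g$ being $G$-equivariant forces $S_{x'} \hookrightarrow S_x$, while the \'etale map $Y'_i \to X'$ forces $S_{y'} \hookrightarrow S_{x'}$; chasing the square relating the stabilizers of $y', x', y, x$ (all of which sit inside $|G|$ by Lemma~\ref{lem:Stab-main}), the isomorphism $S_y \xrightarrow{\sim} S_x$ together with $S_{y'} = S_{x'} \cap S_y$ inside $|G|$ and $S_{x'} \subseteq S_x$ gives $S_{y'} = S_{x'}$.

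Next, \emph{transitivity}: suppose $\{Y_i \xrightarrow{f_i} X\}_{i\in I}$ is a $G$-equivariant Nisnevich cover and, for each $i$, $\{Z_{ij} \xrightarrow{g_{ij}} Y_i\}_{j \in J_i}$ is one too; I must show $\{Z_{ij} \xrightarrow{f_i \circ g_{ij}} X\}$ is a cover. Each composite is \'etale (composites of \'etale maps are \'etale) and equivariant. Given $x \in X$, choose $i$ and $y \in Y_i$ over $x$ realizing the three conditions, then choose $j$ and $z \in Z_{ij}$ over $y$ realizing them. Then $f_i(g_{ij}(z)) = x$, the residue-field maps compose to an isomorphism $k_x \xrightarrow{\sim} k_y \xrightarrow{\sim} k_z$, and the stabilizer maps compose to an isomorphism $S_z \xrightarrow{\sim} S_y \xrightarrow{\sim} S_x$. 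This is essentially formal once the pieces are in place.

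The main obstacle I anticipate is entirely in the base-change step, specifically the clean identification $S_{y'} = S_{x'}$ --- i.e., showing that the isomorphism of stabilizers propagates along the pullback and is not merely an inclusion. The subtlety is that $S_{x'} \to S_x$ need only be an inclusion (it is possible for the base-change map to shrink the stabilizer, as the residue extension $k(x)/k(x')$ need not be trivial), so one cannot simply transport the isomorphism $S_y \cong S_x$ directly; one must argue with the commutative cube of stabilizers and exploit that both $S_{y'} \to S_{x'}$ and $S_{y'} \to S_y$ are inclusions of subgroups of the \emph{same} ambient group $|G|$, using that $Y_i \to X$ and $Y'_i \to X'$ are \'etale (so the residue fields at $y, x$ resp.\ $y', x'$ agree) to pin down the intersection. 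I would isolate this as a small lemma about the functoriality of $S_{(-)}$ under pullback of \'etale $G$-morphisms, perhaps phrased as: if $Y \to X$ is \'etale and $G$-equivariant and $y \mapsto x$ induces an isomorphism on residue fields, then $S_y = S_x$ as subgroups of $|G|$; and if moreover $X' \to X$ is $G$-equivariant with $x' \mapsto x$, then $S_{y'} = S_{x'}$ for the induced point $y'$ over $x'$ with isomorphic residue field. Once that bookkeeping is dispatched, the rest of the verification is routine.
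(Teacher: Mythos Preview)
Your core argument is correct and essentially the same as the paper's. The paper also reduces everything to the base-change step, constructs the point $w=(y,z)$ in the fiber product via the residue-field isomorphism, and verifies $S_w \xrightarrow{\sim} S_z$ by the same mechanism you use: if $g\in S_z$ then $g\in S_x$ by equivariance, hence $g\in S_y$ by the hypothesis $S_y\cong S_x$, hence $g\in S_{(y,z)}$ since $G$ acts diagonally. Your intersection formula $S_{y'}=S_{x'}\cap S_y$ is exactly this diagonal-action observation, and the rest of your chain $S_{x'}\subseteq S_x=S_y$ gives $S_{y'}=S_{x'}$ just as in the paper.

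One caution: the lemma you propose to extract at the end is stated too strongly. You write that if $Y\to X$ is \'etale and $G$-equivariant and $y\mapsto x$ induces an isomorphism on residue fields, then $S_y=S_x$. That is false in general---for instance, let $G$ act trivially on $X$ and take $Y=G\times X$ with $G$ acting by left translation on the first factor; the projection is \'etale, equivariant, and an isomorphism on residue fields at every point, yet $S_{(g,x)}$ is trivial while $S_x=G$. Indeed, if this lemma were true, condition~(3) in Definition~\ref{defn:Nisne-cover} would be redundant. Your actual argument does not need this lemma: you already assume $S_y=S_x$ as part of the covering hypothesis, and that is all that is required. So drop the proposed lemma and keep the direct computation; the proof then matches the paper's.
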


\begin{proof}
By the definition it is clear that $G$-equivariant isomorphisms are Nisnevich coverings, 
and any refinement of a $G$-equivariant Nisnevich cover is also of the same type. 
We need to check that coverings are preserved under base change. 
This part is not automatically true by reduction to ordinary (non-equivariant) Nisnevich covers because for some point $x$, 
a point $y$ in the cover mapping to $x$ may satisfy condition (2) of Definition~\ref{defn:Nisne-cover}, 
but not condition (3).
\vspace{0.1in}

We consider the Cartesian diagram in $\Sch^G_S$
\begin{equation}\label{eqn:Base-change}
\xymatrix@C.9pc{
W \ar[r]^{v'} \ar[d]_{u'} & Y \ar[d]^{u} \\
Z \ar[r]_{v} & X,}
\end{equation}
where $u$ is a $G$-equivariant Nisnevich cover. 
It is clear that $u'$ is {\'e}tale.
Let us now fix a point $z \in Z$ and let $v(z) = x$. Choose a point
$y \in Y$ such that $k(x) \xrightarrow{\simeq} k(y)$ and $S_y 
\xrightarrow{\simeq} S_x$.

It is easy to check (see \cite[Exercise~3.1.7]{Liu}) that there is a natural
homeomorphism of topological spaces
\[
\Spec (k(y) {\underset{k(x)}\otimes} k(z))
\xrightarrow{\simeq} \{w \in W| v'(w) = y, \ u'(w) = z\}.
\]

On the other hand, the isomorphism $k(x) \xrightarrow{\simeq} k(y)$ implies 
that the map $k(z) \to k(z) {\underset{k(x)}\otimes} \ k(y)$ is an
isomorphism. In particular, $k(z) {\underset{k(x)}\otimes} \ k(y)$ is a
field and defines a unique point $w = (y,z) \in W$ such that
$v'(w) = y$ and $u'(w) = z$. Moreover, the map
$k(z) \to k(w)$ is an isomorphism.
We are only left with showing that $S_w \rightarrow S_z$ is an isomoprhism.
This map is clearly injective. To prove its surjectivity,
notice that $g \in S_z$ implies $g \in S_x$ as $v(z) = x$ and $v$ is
$G$-equivariant. But then, our assumption implies
$g \in S_y$. Since $G$ acts diagonally on $W$ we conclude that
$g \in S_w = S_{(y, z)}$. 
\end{proof}

\vskip .3cm

{\bf Notations:} For the rest of this text, we shall abbreviate the term
`equivariant Nisnevich topology' by simply calling it the
{\sl $eN$-topology}. An equivariant Nisnevich cover of a $G$-scheme will
be called an $eN$-cover.
We shall denote the ($G$-equivariant) Nisnevich Grothendieck site 
on the category of $G$-schemes over $k$ by $\Sch^G_{k/{\rm Nis}}$,
and the corresponding site of smooth $G$-schemes by
$\Sm^G_{k/{\rm Nis}}$. We refer to these sites as {\sl $eN$-sites}. 
Throughout the text the following notations will be used.
\begin{enumerate}
\item
$\PShv^G_{\Sch_k} :=$ the category of presheaves of sets on $\Sch^G_k$.
\item
$\Shv^G_{\Sch_k} :=$ the category of sheaves of sets on $\Sch^G_{k/{\rm Nis}}$.
\item
$\PShv^G_{\Sm_k} :=$ the category of presheaves of sets on $\Sm^G_k$.
\item
$\Shv^G_{\Sm_k} :=$ the category of sheaves of sets on $\Sm^G_{k/{\rm Nis}}$.
\end{enumerate}

Suppose $\sC$ and $\sD$ are Grothendieck sites.
A functor $f^{-1} : \sC \to \sD$ is a {\sl continuous map of sites} if for every sheaf $F$ on $\sD$,
the presheaf $f_*(F) = F \circ f^{-1}$ is a sheaf on $\sC$.
Such a map of sites is written $f: \sD \to \sC$.
A continuous map of sites $f$ is called a {\sl morphism of sites} if the left adjoint $f^*$ of $f_*$ commutes with finite limits.
Since we shall discuss functors between Grothendieck sites, 
the following criterion for these notions will be used repeatedly in order to decide about the nature of these functors.

\begin{prop}$($\cite[Remarks~1.1.44, 1.1.45]{MV}$)$\label{prop:site-N}
Suppose the functor $f^{-1}: \sC \to \sD$ commutes with fiber products.
\begin{enumerate}
\item
Then the map of sites $f: \sD \to \sC$ is continuous if and only if $f^{-1}$ preserves coverings. 
\item
Suppose furthermore the topology on $\sD$ is sub-canonical. 
Then $f$ is a morphism of sites if and only if it is continuous.
\end{enumerate}
\end{prop}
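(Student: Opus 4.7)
The plan is to treat the two parts in sequence, each time unpacking what the sheaf axiom says on a covering of $\sC$ versus on the image family in $\sD$, and finally translating left exactness of $f^*$ into a statement about its values on representables.

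For the sufficiency direction of (1), I would start with an arbitrary covering $\{U_i \to U\}$ in $\sC$ and pull it back to the family $\{f^{-1}(U_i) \to f^{-1}(U)\}$ in $\sD$, which is a covering by hypothesis. For a sheaf $F$ on $\sD$, the sheaf axiom on this pulled-back covering is the equalizer
\[
F(f^{-1}(U)) \to \prod_i F(f^{-1}(U_i)) \rightrightarrows \prod_{i,j} F(f^{-1}(U_i) \times_{f^{-1}(U)} f^{-1}(U_j)).
\]
Since $f^{-1}$ commutes with fiber products, the right-most term rewrites as $\prod_{i,j} F(f^{-1}(U_i \times_U U_j))$, and the whole diagram becomes the sheaf-axiom equalizer for $f_*F = F \circ f^{-1}$ on $\{U_i \to U\}$. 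Hence $f_*F$ is a sheaf and $f$ is continuous.

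For the necessity direction of (1), I would argue by contrapositive: if $\{f^{-1}(U_i) \to f^{-1}(U)\}$ fails to be a covering in $\sD$ for some covering $\{U_i \to U\}$ in $\sC$, then I would construct a sheaf $F$ on $\sD$ for which $f_*F$ violates the sheaf axiom on $\{U_i \to U\}$. The natural candidate is the sheafification of a presheaf carrying a matching family on $\{f^{-1}(U_i) \to f^{-1}(U)\}$ that does not extend to a section on $f^{-1}(U)$; such data exist precisely because this family is not a covering. I expect this to be the technical heart of the argument, because one must produce enough sheaves on $\sD$ to detect the failure of $f^{-1}$ to preserve this specific covering; in practice one leans on representable-type sheaves and their sheafifications.

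For (2), one implication is tautological: a morphism of sites is by definition continuous. For the converse, under the sub-canonical hypothesis I would first establish that $f^*(h_X) \cong h_{f^{-1}(X)}$ for every $X \in \sC$, using Yoneda together with the adjunction: for any sheaf $G$ on $\sD$,
\[
\Hom(f^*h_X, G) \cong \Hom(h_X, f_*G) \cong (f_*G)(X) \cong G(f^{-1}X) \cong \Hom(h_{f^{-1}X}, G).
\]
Since $f^{-1}$ commutes with fiber products, $f^*$ therefore preserves them on representables. To extend left exactness to arbitrary sheaves, I would invoke the standard topos-theoretic bootstrap: every sheaf on $\sC$ is a colimit of representables via its category of elements, $f^*$ commutes with colimits as a left adjoint, and the left Kan extension of a finite-limit-preserving functor along the Yoneda embedding into a Grothendieck topos is itself left exact (flatness). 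Combining these facts gives that $f^*$ preserves finite limits of all sheaves, so $f$ is a morphism of sites.
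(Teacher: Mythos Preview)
The paper does not supply its own proof of this proposition; it is quoted as a known fact from \cite[Remarks~1.1.44, 1.1.45]{MV}, so there is nothing to compare your argument against. Your outline is essentially the standard one and is correct in structure.

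Two remarks. First, in the necessity direction of (1) your sketch is vague at the point where you need to produce a sheaf $F$ on $\sD$ witnessing that $\{f^{-1}(U_i)\to f^{-1}(U)\}$ is not a covering. A clean way to do this is to use the sieve $R\hookrightarrow h_{f^{-1}(U)}$ generated by the family: this family is a covering if and only if the sheafification $R^{\mathrm{sh}}\to h_{f^{-1}(U)}^{\mathrm{sh}}$ is an isomorphism, and when it is not, either $R^{\mathrm{sh}}$ or $h_{f^{-1}(U)}^{\mathrm{sh}}$ furnishes the required $F$. Second, in (2) you should be a bit careful with representables versus sheafified representables: the Yoneda computation you wrote actually identifies $f^*(h_X^{\mathrm{sh}})$ with $h_{f^{-1}X}$ (the latter already a sheaf since $\sD$ is sub-canonical). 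With that adjustment, the flatness/left-Kan-extension step you invoke is exactly the content of SGA4, Exp.~III, and your argument goes through.
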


It follows that there is a continuous map of sites $\tau_G: \Sch^G_{S/{\rm Nis}} \to \Sm^G_{S/{\rm Nis}}$. 
One knows, 
however, 
that $\tau_G$ is not a morphism of sites (see \cite[Example~I.1.46]{MV}).

\subsection{Comparison with other topologies on $G$-schemes}
\label{subsection:H-Nis}

\subsubsection{The intermediate Nisnevich topology}
\label{subsubsection:H-Nis*}
Suppose $G$ is a finite constant group scheme over $k$. 
Replacing set-theoretic stabilizers by scheme-theoretic stabilizers in 
condition (3) of Definition~\ref{defn:Nisne-cover} yields
a Grothendieck topology on $\Sm^G_k$.
This topology is called the {\sl $H$-Nisnevich topology} on $\Sm^G_k$ by 
Herrmann \cite{Herman} and the intermediate 
Nisnevich topology by Williams \cite{Williams}. 
We let $\Sch^{G-H}_{k/{\rm Nis}}$ denote the corresponding site, 
and note below that the intermediate Nisnevich topology on $\Sch^G_k$ is finer 
than the $eN$-topology.

\begin{lem}$($\cite[Lemma~2.1.14]{Herman}$)$\label{lem:H-Nis-E-Nis}
Let $f : Y \to X$ be a $G$-equivariant morphism of schemes.
Let $x \in X$ and suppose that there is a point $y \in Y$ such that
$f(y) =x$, $k(x) \xrightarrow{\simeq} k(y)$ and $S_y \xrightarrow{\simeq} S_x$.
Then there is a naturally induced isomorphism $G_y \xrightarrow{\simeq} G_x$.
\end{lem}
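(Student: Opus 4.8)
The plan is to exploit the characterization, given just after Example~\ref{exm:Set-scheme-2}, of $G_x$ as the kernel of the restricted action homomorphism $\sigma_x\colon S_x\to {\rm Aut}_{\Sch}(\Spec(k(x)))$, combined with the functoriality of this construction along the $G$-equivariant map $f\colon Y\to X$. Concretely, for $g\in S_y$ the automorphism $g$ of $\sO_{Y,y}$ restricts to an automorphism of $k(y)$, and the ring map $\sO_{X,x}\to\sO_{Y,y}$ induced by $f$ is $g$-equivariant; passing to residue fields, the isomorphism $k(x)\xrightarrow{\simeq}k(y)$ intertwines $\sigma_x(g)$ with $\sigma_y(g)$ under the identification $S_y\xrightarrow{\simeq}S_x$. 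Hence the two homomorphisms $\sigma_x$ and $\sigma_y$ are identified via the given isomorphisms, and therefore so are their kernels. Since $G_x={\rm Ker}(\sigma_x)$ and $G_y={\rm Ker}(\sigma_y)$ by the discussion following Example~\ref{exm:Set-scheme-2} (valid here because $G$ is a finite constant group scheme, which is the standing hypothesis of \S~\ref{subsubsection:H-Nis*}), this gives the desired isomorphism $G_y\xrightarrow{\simeq}G_x$.

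To organize this, I would first record that under the hypotheses the square~\eqref{eqn:Stab-0} defining $G_y$ and the one defining $G_x$ are compatible: there is a morphism of the Cartesian squares induced by $f$ (using that $f$ is $G$-equivariant, so it commutes with $\mu_Y,\mu_X$ and the diagonals), giving a canonical map $G_y\to G_x$ lying over $\Spec(k(y))\to\Spec(k(x))$. Then, since $k(x)\xrightarrow{\simeq}k(y)$, base-changing square~\eqref{eqn:Stab-0} for $X$ along $\Spec(k(y))\to\Spec(k(x))$ leaves it unchanged, so $G_x$ can be computed as the fiber over $k(y)$ of $(\mu_X,{\rm id}_X)$; one checks the resulting $k(y)$-group scheme receives $G_y$ via a map that, after the identification $S_y\simeq S_x$, is a monomorphism of group schemes. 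Finiteness of $G$ makes everything a closed subgroup of the finite constant group scheme $G\times\Spec(k(y))$, so a monomorphism is automatically a closed immersion; it remains to see it is also surjective, i.e.\ an isomorphism on points.

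For surjectivity, the clean route is the kernel description: an element $g\in S_x$ lies in $G_x$ exactly when $g$ acts trivially on $k(x)$. Transporting along the isomorphisms, $g$ corresponds to an element of $S_y$, and because the residue-field extension $k(x)\to k(y)$ is an isomorphism compatible with the $g$-actions, $g$ acts trivially on $k(x)$ if and only if it acts trivially on $k(y)$, i.e.\ iff the corresponding element lies in $G_y$. So $S_y\simeq S_x$ restricts to a bijection $G_y\simeq G_x$, and combined with the previous paragraph this bijection is an isomorphism of $k(y)$-group schemes.

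The main obstacle I anticipate is bookkeeping rather than a genuine mathematical difficulty: one must be careful that the ``restricted action'' $\sigma_x$ is genuinely functorial in the $G$-equivariant map $f$ — that is, that the action of $g$ on $\Spec(k(y))$ really is carried to the action of $g$ on $\Spec(k(x))$ by $f$ — which requires unwinding that $f$ is equivariant not just for the $G$-action but compatibly with the stabilizer identification $S_y\simeq S_x$ and with the residue-field isomorphism. Once the diagram $\Spec(k(y))\to\Spec(k(x))$ is seen to be $S_x$-equivariant (via $S_y\simeq S_x$), the equality of kernels is formal. A secondary point worth a sentence is justifying that it suffices to compare set-theoretically/pointwise because we are dealing with finite constant — hence finite étale, in fact reduced — group schemes over a field, so a monomorphism that is bijective on points is an isomorphism.
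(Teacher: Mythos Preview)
Your proposal is correct and follows essentially the same idea as the paper's proof: both use the description $G_x = \ker\big(\sigma_x\colon S_x \to {\rm Aut}_{\Sch}(\Spec(k(x)))\big)$ from the discussion after Lemma~\ref{lem:Stab-main} (not after Example~\ref{exm:Set-scheme-2} --- a minor misreference), together with the $g$-equivariance of $k(x)\to k(y)$ coming from $f$, to conclude that $g$ acts trivially on $k(x)$ iff it acts trivially on $k(y)$. The paper's proof is much terser: it notes that the inclusion $G_y \hookrightarrow G_x$ is automatic, and then for $g \in G_x \subseteq S_x = S_y$ writes the single commutative square expressing $g$-equivariance of the isomorphism $k(x)\xrightarrow{\simeq}k(y)$ to read off $g_* = {\rm id}$ on $k(y)$; your extra scheme-theoretic bookkeeping (Cartesian squares, monomorphisms of finite constant group schemes) is correct but unnecessary here, since both $G_x$ and $G_y$ are literally subgroups of the abstract finite group $G$ and the argument is purely set-theoretic.
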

\begin{proof}
Let $g \in G_x$ and consider the commutative diagram
\begin{equation}\label{eqn:HE-1}
\xymatrix@C.8pc{
k(x) \ar[r]^{id} \ar[d]_{\simeq} & k(x) \ar[d]^{\simeq} \\
k(y) \ar[r]_{g_*} & k(y).}
\end{equation}
Since the vertical arrows are induced by $f$, it follows easily from this 
diagram that $g_*$ is the identity. That is, $g \in G_y$. 
\end{proof}

\subsubsection{The Isovariant Nisnevich topology}\label{subsubsection:IsoV}
Recall that for $X \in \Sch^G_k$, the isotropy group scheme is a group scheme
$G_X$ over $X$ defined by the Cartesian square 
\begin{equation}\label{eqn:Isotropy}
\xymatrix@C.9pc{
G_X \ar[r] \ar[d]_{i_X} & G \times X \ar[d]^{(\mu_X, {\rm id}_X)} \\
X \ar[r]_<<<{\Delta_X} & X \times X.}
\end{equation}

A $G$-equivariant {\'e}tale cover $\{X_i \to X\}_{i \in I}$ is called 
isovariant if the induced map of isotropy group schemes is an isomorphism
for each $i \in I$. An isovariant {\'e}tale cover which is also Nisnevich,
is called an isovariant Nisnevich cover. The isovariant {\'e}tale site
on smooth schemes was introduced by Thomason \cite{Thomason1} in order
to prove the {\'e}tale descent for Bott-inverted equivariant $K$-theory
with finite coefficients. Its Nisnevich analogue was introduced by Serpe
\cite{Serpe} in an attempt to prove descent theorems for equivariant
algebraic $K$-theory with integral coefficients. (However, most of the results 
claimed in \cite{Serpe} are either false or need amendments.)
The intermediate Nisnevich topology is clearly finer than the isovariant
Nisnevich topology. Let $\Sch^{G-Iso}_{k/{\rm Nis}}$ denote the isovariant
Nisnevich site on $\Sch^G_k$.

It is known (see \cite[Lemma~3.2.8]{Herman}) that the intermediate Nisnevich 
topology on $\Sch^G_k$ (for $G$ finite) is sub-canonical. 
It follows that the isovariant 
Nisnevich topology (being coarser than the intermediate Nisnevich topology) 
is also sub-canonical (see Corollary~\ref{cor:sub-can} for a more general 
result). 
We conclude from Proposition~\ref{prop:site-N} and 
Lemma~\ref{lem:H-Nis-E-Nis} that for $G$ finite, the identity functor on
$\Sch^G_k$ induces morphisms of Grothendieck sites
\begin{equation}\label{eqn:H-E-0}
\xymatrix@C.8pc{
\Sch^{G-H}_{k/{\rm Nis}} \ar[rr]^{\epsilon^G} \ar[dr]_{\nu^G} & & 
\Sch^G_{k/{\rm Nis}} \\
& \Sch^{G-Iso}_{k/{\rm Nis}}. &}
\end{equation}

The following examples show that the equivariant 
Nisnevich topology is distinct from the intermediate Nisnevich 
topology.  Moreover, the equivariant Nisnevich and the isovariant Nisnevich 
topologies are in general not comparable.

\begin{exm}\label{exm:Ison-H-1}
We view the complex numbers $\C$ as an $\R$-algebra and consider the map of $\R$-algebras
\[
f : \C \to \C \times \C ; \ \ a \mapsto \left(a, \bar{a}\right).
\]
Let $G=\<\sigma\>$ be the cyclic group of order two acting by complex conjugation on $\C$ and 
by switching the coordinates on $\C\times\C$. 
Note that $f$ is a $G$-equivariant $\R$-algebra map and an isovariant Nisnevich covering. 
Let $Y = \Spec(\C \times \C)$ and $X = \Spec(\C)$.
For the unique point $\eta \in X$ we have $S_{\eta} = G$. 
On the other hand, 
the set-theoretic stabilizer of any point in $f^{-1}_*(\eta)$ is trivial. 
Hence $f_*: Y \to X$ is not an $eN$-cover. 
\end{exm}

\begin{exm}\label{exm:Nis-No-Isov}
For the inclusion of $\R$-algebras $\R \to \C$ we let $G = \<\sigma\>$ (as above) act trivially on $\R$ and by 
complex conjugation on $\C$.
The inclusion is $G$-equivariant \'etale, 
but it is neither isovariant nor Nisnevich. 
However, 
the map $\R \to \R \times \C$ is a $G$-equivariant Nisnevich (hence an intermediate Nisnevich) cover of $\Spec(\R)$, 
which is not isovariant since the first map is not isovariant.
\end{exm}

The intermediate Nisnevich topology resemblances closely the situation in 
topology in the sense that 
$Y \to X$ is an intermediate Nisnevich cover if and only if the induced maps 
of fixed point loci $Y^H \to X^H$ 
are ordinary Nisnevich covers for all subgroups $H \subseteq G$. 
On the other hand, 
it is also known (see \cite[Remark~3.5.5]{Herman}) that descent and 
representability of equivariant $K$-theory 
fail in the intermediate and isovariant Nisnevich topologies.
This makes the $eN$-topology more suitable for studying cohomology theories 
for schemes with group actions.
We have also observed that coverings in the intermediate and isovariant 
Nisnevich topologies do not necessarily split.
It is unlikely that these topologies arise from $cd$-structures.

\section{A $cd$-structure on the $eN$-topology\/}
\label{section:CD-structure}
Our goal in this section is to show that the $eN$-topology
can be described in terms of a $cd$-structure in the sense of
Voevodsky \cite{Voev1}. We shall further show that this $cd$-structure is
in fact regular, complete, and bounded. Applications of this will appear
later in the paper.

\subsection{$eN$-neighborhoods}\label{subsection:Nis-N}
Let $X \in \Sch^G_k$ and let $i: Z \inj X$ be a $G$-invariant locally closed
subset with the reduced subscheme structure.  
Let us denote this datum by $(X, Z)$. 
An $eN$-neighborhood of $(X, Z)$ is a commutative
square
\begin{equation}\label{eqn:Nis-N-0}
\xymatrix@C1pc{
Z' \ar[r]^{i'} \ar[d]_{\simeq} & U \ar[d]^{f} \\
Z \ar[r]_{i} & X}
\end{equation}
in $\Sch^G_k$ such that $f$ is {\'e}tale. We shall denote such a 
neighborhood by $(U, Z)$. If the square ~\eqref{eqn:Nis-N-0} is Cartesian,
we shall say that $(U, Z)$ is a {\sl distinguished} $eN$-neighborhood of
$(X, Z)$. Notice that in this case, $Z'$ is automatically reduced.
If $U$ has no $G$-action or $f$ is not
necessarily $G$-equivariant, then we shall say that $(U, Z)$ is a Nisnevich
neighborhood of $(X, Z)$.

Given an $eN$-neighborhood $f: (U, Z) \to (X, Z)$ and a $G$-invariant
locally closed subset $Y \subseteq X$, we shall write the $G$-scheme
$Y {\underset{X}\times} \ U$ in short as $Y \cap U$ or $Y_U$.

\subsubsection{$eN$-neighborhood refinement}\label{subsubsection:ENR}
Assume that $G = \{e = g_0, \cdots , g_n\}$ is a finite constant group
scheme over $k$. 
Given a Nisnevich neighborhood $(U, Z)$ (not necessarily $G$-invariant)
and $g \in G$, the {\sl translate} of $U$ by $g$ is the scheme $g(U)$ defined
by the Cartesian square
\begin{equation}\label{eqn:Nis-N-1}
\xymatrix@C1pc{
g(U) \ar[r] \ar[d]_{g(f)} & U \ar[d]^{f} \\
X \ar[r]_{\tau_{g^{-1}}} & X,}
\end{equation}
where $\tau_{g^{-1}}: X \to X$ is the automorphism of $X$ defined by $g^{-1}$ via
the $G$-action on $X$.  
We can iteratively form the fiber product 
$$
U_G:= U {\underset{X}\times} \ g_1(U)
{\underset{X}\times} \cdots {\underset{X}\times} \ g_n(U),
$$
using the maps $g_i(f):g_i(U) \rightarrow X$. 
Since $Z \inj X$ is $G$-invariant,
it is easy to check that
$(U_G, Z)$ is in fact an $eN$-neighborhood of $(X, Z)$
and there is a factorization
$(U_Z, Z) \to (U, Z) \to (X, Z)$. We conclude that every Nisnevich
neighborhood of $(X, Z)$ contains, i.e., is dominated by, an 
$eN$-neighborhood.

\vskip .3cm

\subsection{$cd$-structure on $\Sch^G_k$}
\label{section:cd-str}
The notion of $cd$-structure on Grothendieck sites was introduced by
Voevodsky in \cite{Voev1} in order to streamline the study of homotopy 
theory of schemes with respect to various topologies. 
We refer to \cite{Voev1} for the definition of $cd$-structure on a
category and its various properties.

\begin{defn}\label{defn:dist-square}
A {\sl distinguished $eN$-square} is a commutative diagram in $\Sch^G_k$
\begin{equation}\label{eqn:cd-square}
\xymatrix@C.9pc{
B \ar[r] \ar[d] & Y \ar[d]^{p} \\
A \ar[r]_{j} & X,} 
\end{equation}
with $j$ an open immersion and $(Y, (X \setminus A)_{\rm red})$ a distinguished 
$eN$-neighborhood of $(X, (X \setminus A)_{\rm red})$.

The equivariant Nisnevich $cd$-structure on $\Sch^G_k$ is the collection of 
distinguished $eN$-squares (\ref{eqn:cd-square})
\end{defn}

It is straightforward to check that we obtain a $cd$-structure on $\Sch^G_k$ 
in the sense of \cite{Voev1}, i.e.,  
a commutative diagram isomorphic to a distinguished $eN$-square is again a 
distinguished $eN$-square.
The equivariant Nisnevich $cd$-structure on $\Sm^G_k$ is defined in the same 
way using distinguished $eN$-squares in the smooth category.
Our next result is an equivariant analogue of Voevodsky's 
\cite[Theorem~2.2]{Voev2}.
The proof is obtained by following the steps in the non-equivariant case with 
suitable modifications at various stages. 
We refer to \cite[\S~2]{Voev1} for the definition of a complete, regular, and 
bounded $cd$-structure.

\begin{prop}\label{prop:CRB}
The equivariant Nisnevich $cd$-structures on $\Sch^G_k$ and $\Sm^G_k$ 
are complete, regular, and bounded.
\end{prop}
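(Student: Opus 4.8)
The plan is to verify the three conditions — completeness, regularity, and boundedness — by following Voevodsky's proof in \cite{Voev2} for the ordinary Nisnevich $cd$-structure, checking at each step that the equivariance causes no obstruction. I will treat $\Sch^G_k$ and $\Sm^G_k$ uniformly, since all the constructions below (fiber products along étale maps, translates by $g\in G$, open immersions) stay inside either category.

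First, for \emph{completeness}: I need to show that every $eN$-covering is refined by one coming from a sequence of distinguished $eN$-squares, or equivalently (by Voevodsky's criterion) that the $cd$-structure is complete in the sense that any covering sieve contains one generated by distinguished squares. The key input here is the refinement construction of \S\ref{subsubsection:ENR}: given an arbitrary $eN$-cover $\{Y_i \to X\}$, Noetherian induction on closed subsets of $X$ lets me peel off, at each stage, the largest locally closed $G$-invariant $Z \inj X$ over which some $Y_i$ restricts to a distinguished $eN$-neighborhood; that $Y_i$ together with the open complement of the closure of $Z$ forms a distinguished $eN$-square, and one iterates. The only equivariant subtlety is that the loci produced must be $G$-invariant — this holds because the conditions (2),(3) defining an $eN$-cover are $G$-stable (if $y \mapsto x$ witnesses the cover at $x$, then $g(y) \mapsto g(x)$ witnesses it at $g(x)$, using that $S_{g(y)}$ is conjugate to $S_y$), so the "good locus" is automatically $G$-invariant. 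Thus completeness reduces, essentially formally, to the non-equivariant case applied on underlying schemes plus this invariance observation.

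Second, \emph{regularity}: for a distinguished $eN$-square \eqref{eqn:cd-square} I must check that the map $B \to A \times_X Y$ is an isomorphism (automatic, since the square is Cartesian by definition of distinguished neighborhood), that $\Delta \colon Y \to Y \times_X Y$ is a (split) monomorphism complemented by a distinguished square, and that the resulting square built from the diagonal is again distinguished. The point is that for an étale separated map $p\colon Y \to X$, the diagonal $Y \to Y\times_X Y$ is an open immersion; its complement, with reduced structure, is $G$-invariant because $p$ is $G$-equivariant, so $Y\times_X Y = \Delta(Y) \sqcup (\text{complement})$ fits into a distinguished $eN$-square whose pullback neighborhood is again étale and $G$-equivariant. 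This is verbatim Voevodsky's argument with "$G$-equivariant étale" in place of "étale", and regularity follows.

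Third, \emph{boundedness}: I need a density structure on $\Sch^G_k$ (resp. $\Sm^G_k$) — the natural choice being $D_i(X) = \{$ $G$-invariant open $U \subseteq X$ such that $\dim(X\setminus U) \le \dim X - i$ $\}$, or rather the version indexed so that dimension of the closed complement drops — and to check that distinguished $eN$-squares are "reducing" with respect to it. Here the estimate is dimension-theoretic: in a distinguished square, étale maps preserve dimension and $B \to A$, $B \to Y$ are open immersions whose complements have codimension bounded in terms of the complement of $A$ in $X$, so the same codimension bookkeeping as in the non-equivariant case goes through, the only change being that all the opens in the density structure are taken $G$-invariant (which is harmless, as $G$ is finite on the relevant component and invariant opens are cofinal). \textbf{The main obstacle} I anticipate is precisely checking that this $G$-invariant density structure is still a density structure in Voevodsky's technical sense (locality, the chain condition) and that the boundedness inequality is not degraded by passing to invariant opens — one must confirm that for any open $U$ there is a $G$-invariant open $U' \subseteq U$ with controlled complement, which uses finiteness of $\ov G$ and the standing assumption that $\ov G$ is constant, together with the fact that the intersection $\bigcap_{g} g(U)$ is $G$-invariant with complement of the same dimension. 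Once the density structure is in place, boundedness is a routine verification, and the three properties together give the Proposition.
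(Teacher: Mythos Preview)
Your regularity argument is fine and matches the paper's. The issues are with completeness and, to a lesser extent, boundedness.

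For \emph{completeness} you are proving the wrong statement. Completeness of a $cd$-structure is an intrinsic property of the class of distinguished squares and the topology $t_P$ they generate: every $t_P$-covering sieve must contain a sieve generated by a \emph{simple} covering. Voevodsky's Lemma~2.4 in \cite{Voev1} reduces this to the one-line check that distinguished $eN$-squares are stable under pullback (étale maps, open immersions, and the isomorphism over the closed complement all base-change), which is exactly what the paper does. Your Noetherian-induction sketch is instead an outline of Proposition~\ref{prop:Comparison**} --- that the $eN$-topology coincides with the $cd$-topology --- which is a separate, harder result requiring the equivariant splitting argument of Proposition~\ref{prop:Nisne-split}. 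That argument genuinely uses condition~(3) on set-theoretic stabilizers (cf.\ Remark~\ref{remk:Set-scheme}) and is not something you can wave through; but more to the point, it is not needed for completeness at all.

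For \emph{boundedness} your instinct is right but the details diverge from the paper in two ways. First, the density structure the paper uses is not the codimension one you wrote down but the specialization-chain structure pulled back from $\Sch_k$ (a $G$-invariant open $U \subseteq X$ lies in $D_i(X)$ iff every point of $X\setminus U$ admits a length-$i$ chain of strict specializations in $X$); this is what makes the reducing argument of \cite[Proposition~2.10]{Voev2} go through verbatim. Second, the step where $G$-invariance must be enforced is Lemma~\ref{lem:Density}: given a non-invariant $W' \in D_i(Y)$ with $f^{-1}(W') \subseteq V$, the paper replaces it by the \emph{orbit} $W = GW'$ (an open set since the action map is smooth), not the intersection $\bigcap_g gW'$. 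The orbit works because $V$ is already $G$-invariant, so $f^{-1}(GW') = G f^{-1}(W') \subseteq V$, and $W' \subseteq GW'$ ensures $GW' \in D_i(Y)$. Your intersection would give a $G$-invariant open \emph{inside} $W'$, which goes the wrong way for the containment $f^{-1}(W) \subseteq V$ you already have but could fail to stay in $D_i(Y)$ without further argument.
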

\begin{proof}
We write a proof for the category $\Sch^G_k$ as the smooth case is no different.
The completeness is a direct consequence of \cite[Lemma~2.4]{Voev1} since the
distinguished $eN$-squares of the form
~\eqref{eqn:cd-square} are closed under pullbacks.
 
To prove regularity, we observe that 
given a distinguished $eN$-square ~\eqref{eqn:cd-square} in $\Sch^G_k$, the 
derived square
\begin{equation}\label{eqn:CB*1}
\xymatrix@C.8pc{
B \ar[r]^{e'} \ar[d]_{\Delta_B} & Y \ar[d]^{\Delta_Y} \\
B \times_A B \ar[r] & Y \times_X Y}
\end{equation}
is a distinguished square in $\Sch_k$ by \cite[Theorem~2.2]{Voev2} and hence, a 
distinguished square in $\Sch^G_k$ since all the underlying maps
in ~\eqref{eqn:CB*1} are $G$-equivariant. The regularity condition now follows
from \cite[Lemma~2.11]{Voev1}.

The boundedness condition is not straightforward from the non-equivariant case.
First we need to define a density structure on $\Sch^G_S$. 
For $X \in \Sch^G_S$ and $i \ge 0$, 
let $D_i(X)$ denote the class of open embeddings $U \to X$ in $\Sch^G_S$ that define an element of the density structure on $\Sch_S$ 
\cite[Proposition~2.10]{Voev1} under the forgetful functor $\Sch^G_S \to \Sch_S$.
That is, 
for every $z\in X\setminus U$ there exists a sequence of points $z=x_0,x_1,\dots,x_i$ in $X$ such that for $0\leq j<i$, 
$x_j\neq x_{j+1}$ and $x_j\in\overline{\{x_{j+1}\}}$. 
One verifies easily that this defines a density structure on $\Sch^G_S$, 
and it is locally of finite dimension. 

To prove boundedness, 
it is enough to show that every distinguished $eN$-square is reducing with respect to the above density structure.
Consider a distinguished $eN$-square of the form~\eqref{eqn:cd-square} and suppose $B_0 \in D_{i-1}(B), A_0 \in D_i(A)$ and $Y_0 \in D_i(Y)$.
Applying Lemma~\ref{lem:Density} below to the morphism $j \coprod p$ we can find $X_0 \in D_i(X)$ such that $j(A_0) \cap p(Y_0) \subseteq X_0$.
Replacing $Y$ by $Y_0$, $A$ by $A_0$, $B$ by $B'= A_0 {\underset{X}\times} Y_0$, $X$ by $X_0$, and applying \cite[Lemma~2.5]{Voev2} we are 
reduced to consider the distinguished $eN$-square    
\begin{equation}\label{eqn:cd-square-1}
\xymatrix@C.9pc{
B' \ar[r] \ar[d] & Y_0 \ar[d]^{p} \\
A_0 \ar[r]_{j} & X_0.} 
\end{equation}

We now set 
\begin{equation}\label{eqn:cd-square-2}
B'_0 = B' \cap B_0, \ Z = B' \setminus B'_0,  \ Y' = Y_0 \setminus 
cl_{Y_0}(Z),  \ A' = A_0 \ {\rm and} \ X' = j(A_0) \cup p(Y').
\end{equation}
In \cite[Proposition~2.10]{Voev2} it is noted that 
\begin{equation}\label{eqn:cd-square-3}
\xymatrix@C.9pc{
B'_0 \ar[r] \ar[d] & Y' \ar[d]^{p} \\
A_0 \ar[r]_{j} & X'} 
\end{equation}   
is a distinguished Nisnevich square which satisfies the required properties.
To complete the proof we observe that the inclusions in ~\eqref{eqn:cd-square-2} are $G$-invariant.
\end{proof}

\begin{lem}\label{lem:Density}
Let $f: X \to Y$ be a morphism in $\Sch^G_k$ and assume that there exists
a $G$-invariant dense open subset $U$ in $Y$ such that $f^{-1}(U)$ is dense
and $f^{-1}(U) \to U$ has fibers of dimension zero. Then for any
$i \ge 0$ and $V \in D_i(X)$, there exists $W \in D_i(Y)$ such that
$f^{-1}(W) \subseteq V$.
\end{lem}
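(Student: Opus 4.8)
\textbf{Proof plan for Lemma~\ref{lem:Density}.}
The plan is to deduce the equivariant statement from its non-equivariant counterpart, which is exactly \cite[Lemma~2.8]{Voev2} (the lemma about density structures used in the proof of the non-equivariant reducing property). First I would recall that $D_i(X)$ was \emph{defined} in the proof of Proposition~\ref{prop:CRB} by pulling back the standard density structure on $\Sch_k$ along the forgetful functor $\Sch^G_k \to \Sch_k$; so an element $V \in D_i(X)$ is just a $G$-invariant open subset of $X$ which, as a bare open subset, lies in the density structure on the underlying scheme. Thus the content of the lemma is: given such a $V$, one can find an open $W$ in the ordinary density structure on $Y$ with $f^{-1}(W) \subseteq V$, and the extra task is only to arrange that $W$ can be taken $G$-invariant.

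The key steps, in order, are as follows. (1) Apply the non-equivariant result directly: forgetting the $G$-structure, $f : X \to Y$ is a morphism of schemes, $U \subseteq Y$ a dense open with $f^{-1}(U)$ dense and $f^{-1}(U) \to U$ quasi-finite, and $V \in D_i(X)$; hence there is an open $W_0 \subseteq Y$ in the density structure on $\Sch_k$ with $f^{-1}(W_0) \subseteq V$. (2) Symmetrize: set $W = \bigcap_{g \in G} \tau_g(W_0)$, a $G$-invariant open subset of $Y$ (a finite intersection since $\ov G$, and hence $G$ acting through its image, is finite constant — here one uses the standing assumption on $G$; if $G$ is not finite one instead uses that the orbit of the open $W_0$ under $G$ is finite because automorphisms of a noetherian scheme move a given open to only finitely many opens, or one restricts as in \S~\ref{subsubsection:ENR} to the finite constant quotient — I would phrase it for the finite case as that is where boundedness is invoked). (3) Check $W$ is still in the density structure on $Y$: a finite intersection of opens each of which is ``$i$-dense'' is again $i$-dense, because the defining condition on chains of specializations (for every $z \in Y \setminus W$ there is a length-$i$ chain of distinct specializations starting at $z$ and staying in... ) — more precisely, $\tau_g$ being an automorphism preserves the density structure, so each $\tau_g(W_0) \in D_i(Y)$, and density structures are closed under finite intersection by \cite[Proposition~2.10]{Voev1} (or its proof). (4) Finally, $f$ is $G$-equivariant, so $f^{-1}(\tau_g(W_0)) = \tau_g(f^{-1}(W_0)) \subseteq \tau_g(V) = V$ for each $g$ (using that $V$ is $G$-invariant), whence $f^{-1}(W) = \bigcap_g f^{-1}(\tau_g(W_0)) \subseteq V$, as desired.

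The main obstacle is step (3): verifying that the pulled-back density structure $D_i$ on $\Sch^G_k$ is genuinely closed under the finite intersections produced by symmetrization, i.e. that intersecting several $G$-translates of an $i$-dense open keeps it $i$-dense. This is essentially the statement that the standard density structure on schemes is closed under finite intersections, which is part of Voevodsky's verification that it \emph{is} a density structure \cite[Proposition~2.10]{Voev1}; but one should be slightly careful because the relevant closure property for density structures is phrased for intersecting an element of $D_i$ with an element of $D_0$ (or with an arbitrary dense open), and one wants it for two elements of $D_i$. Since the standard density structure consists of all opens whose complement has the chain-of-specializations property at every point, and this property is manifestly preserved under intersection of two such opens (the chain for $z$ in the smaller complement works), this causes no real trouble — I would simply note that the condition defining $D_i$ on $\Sch_k$ is intersection-stable and transport this through the forgetful functor. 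Everything else is formal manipulation with $G$-invariance of $V$ and $G$-equivariance of $f$.
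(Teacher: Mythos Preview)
Your proposal is essentially correct for finite $G$ and shares the paper's opening move (apply the non-equivariant lemma, then symmetrize), but the paper's symmetrization is different and cleaner: it takes the \emph{orbit} $W = GW'$ rather than the intersection $\bigcap_g \tau_g(W_0)$. This buys two things. First, your ``main obstacle'' (step (3)) evaporates: since $W' \subseteq GW'$, membership $GW' \in D_i(Y)$ is immediate from the monotonicity built into the density-structure axioms, with no need to check intersection-stability. Second, and more importantly, the union works for an arbitrary smooth affine $G$: one only needs that the action map $\mu_Y : G \times Y \to Y$ is open (being smooth), so $GW'$ is open and $G$-invariant, and the inclusion $f^{-1}(GW') = G f^{-1}(W') \subseteq GV = V$ follows from $G$-equivariance of $f$ and $G$-invariance of $V$ --- exactly the computation you already did in step (4).

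By contrast, your intersection needs finitely many translates. Your parenthetical fix for non-finite $G$ (``automorphisms of a noetherian scheme move a given open to only finitely many opens'') is false: e.g.\ $\G_m$ acting on $\A^1$ by scaling moves the complement of a nonzero point to infinitely many distinct opens. Since Proposition~\ref{prop:CRB} is stated for general smooth affine $G$, the lemma must cover that case, and your argument as written does not. Switching from intersection to union repairs this with no extra work.
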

\begin{proof}
By \cite[Lemma~2.9]{Voev2}, there exists $W' \in D_i(Y)$ such that
$f^{-1}(W') \subseteq V$. But $W'$ may not be $G$-invariant.
However, since $V \subseteq X$ is $G$-invariant (by
definition of our density structure), 
it follows that $f^{-1}(GW') = G(f^{-1}(W')) \subseteq V$.
Since the map $\mu_Y: G \times Y \to Y$ is smooth and in particular open, 
we see that $GW' \subseteq Y$ is a $G$-invariant open subset.

Setting $W = GW'$,
it is clear that $W \subseteq Y$ is a $G$-invariant open subset
such that $f^{-1}(W) \subseteq V$. Furthermore, as $W' \subseteq W$ and
$W' \in D_i(Y)$, we see that $W \in D_i(Y)$. This proves the lemma.
\end{proof}

\subsection{$cd$-property of the $eN$-topology}\label{subsection:CD-eN}
In order to show that the $eN$-topology on $\Sch^G_k$ (and $\Sm^G_k$)
is induced by the above $cd$-structure, we need to produce a splitting of 
$eN$-covers. We do this in the next result. 
Recall that $G$ is a smooth affine group scheme over $k$ such that 
$\ov{G} = G/{G^0}$ is a constant group scheme over $k$.
We write $G = \stackrel{r}{\underset{i =0}\coprod} g_iG^0$,
where $\{e = g_0, g_1, \cdots , g_r\}$ are points in $G(k)$ which
represent the left cosets of $G^0$.

\begin{defn}\label{defn:splitting}
A family of morphisms $\{Y_i\xrightarrow{f_i} X\}_{i \in I}$ in $\Sch^G_S$ \sl{splits} 
if there is a filtration of $X$ by $G$-invariant closed subschemes
\begin{equation}\label{eqn:split*-0}
\emptyset = X_{n+1} \subsetneq X_{n} \subsetneq \cdots \subsetneq X_0 = X, 
\end{equation}
and for each $0 \le j \le n$ there is an $i= i(j) \in I$ such that the map
\[
\left(X_j \setminus X_{j+1}\right) \times_X Y_i \to X_j \setminus X_{j+1}
\]
has a $G$-equivariant section. 
If each $f_i$ is also \'etale, 
the family of morphisms is called a \sl{split \'etale cover} of $X$.
\end{defn}

\begin{prop}\label{prop:Nisne-split}
A family of morphisms $\{Y_i \xrightarrow{f_i} X\}_{i \in I}$ in $\Sch^G_k$ is 
an $eN$-cover if and only if it is a split \'etale cover.
\end{prop}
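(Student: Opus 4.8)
The plan is to prove the two implications separately, with the nontrivial direction being that an $eN$-cover splits.

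\textbf{Split {\'e}tale cover $\Rightarrow$ $eN$-cover.} Suppose $\{Y_i \xrightarrow{f_i} X\}$ admits a filtration \eqref{eqn:split*-0} with $G$-equivariant sections $s_j$ of $(X_j \setminus X_{j+1}) \times_X Y_{i(j)} \to X_j \setminus X_{j+1}$. Given $x \in X$, it lies in a unique locally closed stratum $X_j \setminus X_{j+1}$. First I would take $y = s_j(x) \in Y_{i(j)}$ (viewed inside $Y_{i(j)}$ via the projection); since $s_j$ is a section, $f_{i(j)}(y) = x$, so condition (1) of Definition~\ref{defn:Nisne-cover} holds, and because a section of an {\'e}tale map induces an isomorphism on residue fields, condition (2) holds. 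For condition (3), the $G$-equivariance of $s_j$ forces $S_y = S_x$: any $g \in S_x$ preserves the stratum and the section commutes with the $G$-action, so $g \cdot y = g\cdot s_j(x) = s_j(g \cdot x) = s_j(x) = y$, giving $S_x \subseteq S_y$, while the reverse inclusion is automatic from equivariance of $f_{i(j)}$. Hence the family is an $eN$-cover.

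\textbf{$eN$-cover $\Rightarrow$ split {\'e}tale cover.} This is the main content. The strategy is Noetherian induction on closed $G$-invariant subschemes of $X$, exactly mimicking the non-equivariant splitting of Nisnevich covers but tracking $G$-invariance at each step. Let $\{Y_i \xrightarrow{f_i} X\}$ be an $eN$-cover. It suffices to produce a nonempty $G$-invariant open $U \subseteq X$ and an index $i$ with a $G$-equivariant section of $U \times_X Y_i \to U$; then $X \setminus U$ is a proper closed $G$-invariant subscheme, and the induction on it (with the pulled-back cover) furnishes the rest of the filtration. To build $U$: pick a generic point $\eta$ of $X$. By the $eN$-cover property there is an index $i$ and $y \in Y_i$ over $\eta$ with $k(\eta) \xrightarrow{\simeq} k(y)$ and $S_y \xrightarrow{\simeq} S_\eta$. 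The residue field isomorphism, by standard spreading out, gives a (not necessarily $G$-invariant) open $V \ni \eta$ in $X$ over which $f_i$ admits a section $s: V \to Y_i$. Now I would $G$-translate and intersect: replace $V$ by $V' = \bigcap_{g \in \ov{G}} g(V)$ — wait, more carefully, since $G$ need not be finite, I would use that $X$ is covered by the finitely many translates indexed by the coset representatives $g_0, \dots, g_r$ of $G^0$, and that the connected group $G^0$ preserves each generic point; concretely, set $U = \bigcup$ of the $G$-orbit of the largest open on which the section and the stabilizer condition persist, using that $\mu_X$ is open (as in Lemma~\ref{lem:Density}) so that $G$-saturations of opens are open. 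The key point making the section globalize equivariantly is the \S\ref{subsubsection:ENR} refinement idea: after passing to the translated open, the stabilizer condition $S_y \simeq S_\eta$ together with $G$-equivariance of $f_i$ forces the locus where the section is defined to be $G$-stable, and the section to be $G$-equivariant there, because the fiber $f_i^{-1}(\eta)$ contains a unique point with the full stabilizer $S_\eta$ and equivariance pins down the section on it, then spreads out.

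\textbf{Main obstacle.} The hard part will be handling the disconnected/non-finite structure of $G$ cleanly — ensuring that the generically-defined section can be enlarged to a $G$-invariant open and made genuinely $G$-equivariant rather than merely equivariant for the finite quotient $\ov{G}$, and that the density/dimension bookkeeping of the filtration survives under $G$-saturation. The technical engine for this is the openness of $\mu_X \colon G \times X \to X$ (used already in Lemma~\ref{lem:Density}) and the observation that a section of an {\'e}tale map over a dense open is unique once it is fixed at a point, so the two translated sections $s$ and $g \cdot s \cdot g^{-1}$ must agree on the overlap since they agree at the generic point $\eta$ (whose preimage has a distinguished point by the $S_y \simeq S_\eta$ condition and Lemma~\ref{lem:H-Nis-E-Nis}). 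I expect the remaining steps — verifying the strata are locally closed, the sections glue, and the induction terminates by Noetherianity — to be routine.
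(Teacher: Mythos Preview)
Your overall strategy --- Noetherian induction, produce a $G$-equivariant section over a nonempty $G$-invariant open --- matches the paper's, and your easy direction is fine. But the mechanism you propose for the hard direction has a genuine gap.

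The claim that ``the fiber $f_i^{-1}(\eta)$ contains a \emph{unique} point with the full stabilizer $S_\eta$'' is simply false: take $G$ acting trivially on both $X$ and a disconnected \'etale cover $Y_i$; every fiber point has full stabilizer. So uniqueness cannot be what pins down the section. What is true, and what you implicitly need, is only that for $g\in S_\eta$ one has $g\cdot y=y$ because $S_y=S_\eta$; this makes $s$ and $g\cdot s\cdot g^{-1}$ agree at $\eta$, hence on an open neighborhood. That handles $g\in S_\eta$ after shrinking. The real difficulty you do not address is the overlap of $gV$ and $g'V$ for $g,g'$ in \emph{distinct} cosets of $S_\eta$: these translates contain different generic points $g\eta\neq g'\eta$, and if the corresponding irreducible components of $X$ meet, your translated sections are defined on overlapping opens with no common generic point to compare them at. Your gluing argument gives no reason they should agree there.

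The paper sidesteps this by a preliminary reduction you omit: it first removes the closed $G$-invariant locus $Z$ of pairwise intersections of irreducible components, so that on the complement $W$ the components are disjoint. Then, rather than gluing translated sections, it takes the $G$-orbit $GU_y\subseteq Y_i$ of the closure of $y$ (an open, since components are disjoint and the action map is open) and shows directly that $f_i$ restricts to an isomorphism $GU_y\xrightarrow{\ \sim\ }GW_x$. The only thing to check is that no two components $U_y$ and $g_{j'}U_y$ of $GU_y$ map to the same component of $GW_x$; if they did, one would get $g_{j'}\in S_x\setminus S_y$, contradicting $S_y\simeq S_x$. This is exactly where the stabilizer condition is used, in one clean stroke, and it replaces your uniqueness and gluing arguments entirely.
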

\begin{proof}
It is clear that a split \'etale $G$-equivariant family of morphisms is
an $eN$-cover. The core of the proof is to show the converse.

Suppose $\{Y_i \xrightarrow{f_i} X\}_{i \in I}$ is a $G$-equivariant Nisnevich 
cover of $X$. 
Let $Z \subset X$ be the closed subscheme
(with reduced structure) which is the union of all possible nonempty 
intersections (if there are any) of the irreducible components of $X$. 
It is easy to check that
$Z$ is $G$-invariant. This follows from the fact that every 
left coset $g_iG$ takes any given irreducible component $X_j$ of $X$ onto some
(same or different) irreducible component of $X$ and
$g_iGX_j = g_iGX_{j'}$ if and only if $X_j = X_{j'}$.
Let $W$ be the 
$G$-invariant open subscheme of $X$ given by the complement of $Z$ and set
$U_i = Y_i \times_X W$. Then $\{U_i \xrightarrow{f_{U_i}} W\}$ is 
an $eN$-cover of $W$. 
Notice that $W$ is a disjoint union of its irreducible components and each
$f_{U_i}$ being \'etale, it follows that each $U_i$ is also a disjoint union of
its irreducible components.

Let $x \in W$ be a generic point of $W$. Then the closure $W_x = 
\ov{\{x\}}$ in $W$ is an irreducible component of $W$. By our assumption,
there is a point $y$ lying in some $U_i$ such that 
\begin{equation}\label{eqn:split-0}
f_{i}(y) = x, \ k_x \xrightarrow{\simeq} k_y,  \
{\rm and} \ S_y \xrightarrow{\simeq} S_x.
\end{equation}
Then the closure $U_y = \ov{\{y\}}$ in $U_i$ is an irreducible component of 
$U_i$. Since $U_y \to W_x$ is \'etale and generically an isomorphism, it
must be an open immersion. Thus $f_i$ maps $U_y$ isomorphically onto an
open subset of $W_x$. We replace $W_x$ by this open subset $f_i(U_y)$
and call it our new $W_x$. 

Let $GU_y$ be the image of the action morphism $\mu: G \times U_y \to U_i$.
Notice that $\mu$ is a smooth map and hence open.
This in particular implies that $GU_y$ is a $G$-invariant open subscheme
of $U_i$ as $U_y$ is one of the disjoint irreducible components of $U_i$
and hence open. By the same reason, $GW_x$ is a $G$-invariant open
subscheme of $W$.  

Since the identity component $G^0$ is connected, it keeps $U_y$ invariant. In other words,
the point $y \in U_i$ is fixed by $G^0$ and hence $G$ acts on this point
via its quotient $\ov{G} = G/{G^0}$.
Recall that $\ov{G}$ is a finite constant group scheme over $k$.

Since each $g_jG$ takes $U_y$ onto an irreducible component of $U_i$
and since $U_i$ has only finitely many irreducible components which are all
disjoint, we see that $GU_y = U_{i_0} \coprod U_{i_1} \coprod \cdots 
\coprod U_{i_n}$ is a disjoint union of some irreducible components of
$U_i$ with $U_{i_0} = U_y$. In particular, for each $U_{i_j}$, we have
$U_{i_j} = g_jGU_y = g_jU_y$ for some $g_jG$. 

Since $f_i$ maps $U_y$ isomorphically onto $W_x$, we conclude from the above 
that $f_i$ maps each $U_{i_j}$ isomorphically onto one and only one
$W_{j}$ such that $GW_x = f_i\left(GU_y\right) =
W_{0} \coprod W_{1} \coprod \cdots \coprod W_{m}$ (with $m \le n$) is a 
disjoint union of open subsets of some irreducible components of $W$ with 
$W_{0} = W_x$. 
The morphism $f_i$ will map the open subscheme $GU_y$ isomorphically onto the 
open subscheme $GW_x$ if and only if no two components of $GU_y$ are mapped
onto one component of $GW_x$. This is ensured by using the
third condition of the definition of the $eN$-covering.

If two distinct components of $GU_y$ are mapped onto one component of $GW_x$,
we can (using the equivariance of $f_i$) apply automorphisms by $g_j$'s 
and assume that one of these
components is $U_y$. In particular, we find that there are some
$j, j' \ge 1$ such that
\begin{equation}\label{eqn:split-1}
W_x = f_i\left(U_y\right) = f_i\left(U_{i_j}\right)
= f_i\left(g_{j'} U_{y}\right) = g_{j'} f_i\left(U_{y}\right) = g_{j'}W_x.
\end{equation}

But this implies that $g_{j'} \in S_x$ and $g_{j'} \notin S_y$. 
This violates the condition in ~\eqref{eqn:split-0} that the set-theoretic
stabilizers $S_y$ and $S_x$ are isomorphic.     
We have thus shown that the morphism $f_i$ has a $G$-equivariant splitting 
over a nonempty $G$-invariant open subset $GW_x$.
Letting $X_1$ be the complement of this open subset in $X$, we see that
$X_1$ is a proper $G$-invariant closed subscheme of $X$ and by
restricting our $eN$-cover to $X_1$, we get such a cover for
$X_1$. The proof of the proposition is now completed by the Noetherian
induction.   
\end{proof}

\begin{remk}\label{remk:Set-scheme}
One cannot conclude from ~\eqref{eqn:split-1} that
$g_j$ lies in the scheme-theoretic stabilizer of $x$. 
We thank Ben Williams for pointing this out soon after the first version of this paper 
was shared with him in 2011.
\end{remk}

\begin{remk}\label{remk:No-split}
One can easily check that Example~\ref{exm:Ison-H-1} is also  an
example of an isovariant Nisnevich cover (hence and intermediate Nisnevich 
cover) which can not admit an equivariant splitting. 
This provides a counterexample to \cite[Lemma~2.12]{Serpe}. 
\end{remk}

\begin{remk}\label{remk:Nisne-split1}
It is straightforward to see that a split \'etale cover
has the base change property. Using Proposition~\ref{prop:Nisne-split}, this 
gives  another proof of Proposition~\ref{prop:Nisne-site}.
\end{remk}

\vskip .3cm

\begin{prop}\label{prop:Comparison**}
The $eN$-topology on $\Sch^G_k$ and $\Sm^G_k$ coincides with the
topology induced by the equivariant Nisnevich $cd$-structure. 
\end{prop}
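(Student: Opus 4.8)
The plan is to verify the standard equivalence, due to Voevodsky, between a topology defined by a complete $cd$-structure and the topology whose covering sieves are generated by the distinguished squares. Concretely, I would invoke \cite[Lemma~2.5]{Voev1} (or the analogous statement in \cite{Voev1}): if a $cd$-structure is complete, then a presheaf is a sheaf for the associated topology if and only if it sends the distinguished squares to pullback squares (and the empty square to the terminal object); equivalently, the $cd$-topology is the coarsest topology for which every distinguished square is a covering square, and its covers can be refined by iterated ``elementary'' covers built from distinguished squares. Since Proposition~\ref{prop:CRB} already gives completeness of the equivariant Nisnevich $cd$-structure on $\Sch^G_k$ (and $\Sm^G_k$), the abstract machinery applies; what remains is to match the two classes of covers concretely.

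First I would show every distinguished $eN$-square \eqref{eqn:cd-square} gives an $eN$-cover: the pair $\{A \hookrightarrow X,\ Y \xrightarrow{p} X\}$ is an $eN$-cover of $X$, since for a point $x \in X$ it lies either in the open set $A$ (where the open immersion is an isomorphism near $x$, so conditions (1)--(3) of Definition~\ref{defn:Nisne-cover} are trivially met) or in $(X\setminus A)_{\red}$, where the definition of a distinguished $eN$-neighborhood supplies a point of $Y$ with isomorphic residue field and isomorphic set-theoretic stabilizer. Hence the $cd$-topology is coarser than (contained in) the $eN$-topology. For the reverse inclusion I would use Proposition~\ref{prop:Nisne-split}: an arbitrary $eN$-cover $\{Y_i \to X\}$ is a split \'etale cover, so there is a filtration $\emptyset = X_{n+1} \subsetneq \cdots \subsetneq X_0 = X$ by $G$-invariant closed subschemes with a $G$-equivariant section of $Y_{i(j)}$ over each stratum $X_j \setminus X_{j+1}$. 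I would then induct down the filtration: the open immersion $X\setminus X_n \hookrightarrow X$ together with (a $G$-invariant \'etale neighborhood of $X_n$ extracted from) $Y_{i(n)}$ assembles into a distinguished $eN$-square over $X$, realizing the first stratum; restricting to $X_n$ and repeating refines $\{Y_i \to X\}$ by a composite of covers each generated by distinguished $eN$-squares. This exhibits every $eN$-cover as a refinement of an iterated $cd$-cover, giving the opposite containment.

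The main obstacle I anticipate is the bookkeeping in that induction: from the existence of a $G$-equivariant section over the locally closed stratum $X_0 \setminus X_1$ one must produce an honest distinguished $eN$-square over $X$, i.e.\ a $G$-invariant \'etale map $Y' \to X$ that is an isomorphism over $(X\setminus (X\setminus X_1)_{\red})_{\red} = X_1$-reduced-complement and through which the section factors. This is exactly the passage used (non-equivariantly) in Voevodsky's proof that the Nisnevich topology is the $cd$-topology, and the equivariant enhancement is available because all the schemes, maps, and open/closed subsets in sight are $G$-invariant — the translate and fiber-product construction of \S\ref{subsubsection:ENR} lets one upgrade a Nisnevich neighborhood to an $eN$-neighborhood, and smoothness (hence openness) of $\mu_X$ keeps the relevant opens $G$-invariant, just as in Lemma~\ref{lem:Density} and Proposition~\ref{prop:Nisne-split}. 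So the proof is: cite completeness from Proposition~\ref{prop:CRB}, cite the $cd$-machinery of \cite{Voev1}, check distinguished $eN$-squares are $eN$-covers, and use the splitting of Proposition~\ref{prop:Nisne-split} to refine an arbitrary $eN$-cover by distinguished $eN$-squares; the smooth case \(\Sm^G_k\) is identical since all constructions preserve smoothness.
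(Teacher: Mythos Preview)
Your approach is essentially the same as the paper's: show that distinguished $eN$-squares yield $eN$-covers, then use the splitting from Proposition~\ref{prop:Nisne-split} and induct on the length of the splitting sequence to refine an arbitrary $eN$-cover by iterated $cd$-covers. The paper carries out exactly this induction (without invoking the abstract completeness machinery of \cite{Voev1}, which is not needed here).

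A few slips to clean up. First, your induction is indexed inconsistently: in the main paragraph you correctly take $A = X \setminus X_n$ and build the \'etale neighborhood of the \emph{smallest} closed stratum $X_n$ from $Y_{i(n)}$, but in the ``obstacle'' paragraph you speak of the section over $X_0 \setminus X_1$, the largest open stratum. The paper (and the correct argument) works at the $X_n$ end: the section over $X_n$ has open image $X'_n$ in $X_n \times_X Y_{i(n)}$, one sets $W_n$ to be its closed complement and $Y = Y_{i(n)} \setminus W_n$, and then $\{A \hookrightarrow X,\ Y \to X\}$ is the desired distinguished square. Second, ``restricting to $X_n$ and repeating'' should read ``restricting to $A$ and repeating'': the pullback of the cover to $Y$ already has a $G$-equivariant section, while the pullback to $A$ has a splitting sequence of strictly smaller length, which is what drives the induction. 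Third, the appeal to \S\ref{subsubsection:ENR} is unnecessary here, since the $Y_i$ are $G$-equivariant by hypothesis and all the opens and closeds you form are automatically $G$-invariant; no non-equivariant Nisnevich neighborhood needs to be upgraded.
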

\begin{proof}
It is easy to see from the definitions that for a distinguished
square ~\eqref{defn:dist-square}, the family $\{Y \xrightarrow{p} X,
A \xrightarrow{j} X\}$ is an $eN$-cover of $X$. So we only need to
prove that any $eN$-cover has a refinement which is
an equivariant Nisnevich $cd$-cover. 
Let $\{Y_i \xrightarrow{f_i} X\}_{i \in I}$ be an $eN$-cover of $X$. 
By Proposition~\ref{prop:Nisne-split}, we can
assume that this is a split \'etale cover. In particular,
there is a finite filtration of $X$ by the $G$-invariant closed subschemes such
that the covering map is split in the complementary open subsets.
We prove our assertion by induction on the minimal length of this splitting.

If the length of the splitting is zero, then the cover has an equivariant
section $s : X \to Y_i$ for some $i \in I$. 
Since each $f_i$ is \'etale, $s$ must be {\'e}tale too. 
In particular, this section maps $X$ isomorphically onto a 
$G$-invariant open subscheme $X'$ of $Y_i$. In this case, the square
\[
\xymatrix@C.9pc{
X' \ar@{=}[r] \ar[d] & X' \ar[d] \\
X \ar@{=}[r] & X} 
\]
is a distinguished $eN$-square which refines our cover.
To conclude, it suffices now to construct a 
distinguished $eN$-square of the form ~\eqref{eqn:cd-square} 
such that the pullback of the covering map $\{Y_i \xrightarrow{f_i} X\}$ to 
$Y$ has a $G$-equivariant section and the pullback to $A$ has an equivariant 
splitting sequence of length strictly less than $n$.

Given the splitting sequence of ~\eqref{eqn:split*-0}, we see that  
$\{X_{n} \times_X Y_i \to X_n\}$ is an $eN$-cover with a $G$-equivariant section 
$s: X_n \to X_{n} \times_X Y_i$ for some $i$.
Let $X'_n$ be the image of this section. We have seen above that $X'_n$ is a
$G$-invariant open subscheme of $X_{n} \times_X Y_i$.
In particular, its complement $W_n$ is a $G$-invariant
closed subscheme. By setting $A = X \setminus X_n$ and 
$Y =  (X_{n} \times_X Y_i) \setminus W_n$, 
we see that the square defined by 
$\{A \xrightarrow{j} X, Y \xrightarrow{p} X\}$ is a distinguished $eN$-square. 
Furthermore, the pullback of this square to
$Y$ has a $G$-equivariant section and its pullback to $A$ is an $eN$-cover 
which has a splitting sequence of length less than $n$. This completes the 
proof of the proposition.
\end{proof}    

Combining Propositions~\ref{prop:CRB} and ~\ref{prop:Comparison**},
we get the following results.

\begin{thm}\label{thm:Comparison}
The $eN$-topology on $\Sch^G_k$ and $\Sm^G_k$ is induced by a 
$cd$-structure, which is complete, regular, and bounded.
\end{thm}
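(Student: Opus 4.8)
The plan is to simply combine the two preceding results, but with attention to which categories and which topology data are being identified. First I would invoke Proposition~\ref{prop:Comparison**}, which says that the $eN$-topology on $\Sch^G_k$ (and on $\Sm^G_k$) coincides with the topology generated by the equivariant Nisnevich $cd$-structure of Definition~\ref{defn:dist-square}. This reduces the statement to checking that the $cd$-structure in question has the three desired properties. Then I would appeal to Proposition~\ref{prop:CRB}, which asserts precisely that the equivariant Nisnevich $cd$-structures on $\Sch^G_k$ and $\Sm^G_k$ are complete, regular, and bounded. Putting these two facts together yields the theorem, since ``the $eN$-topology is induced by a $cd$-structure'' is the content of Proposition~\ref{prop:Comparison**} and the adjectives ``complete, regular, and bounded'' are the content of Proposition~\ref{prop:CRB}.

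The only thing to be careful about is that these are genuinely the \emph{same} $cd$-structure in both propositions --- namely the collection of distinguished $eN$-squares \eqref{eqn:cd-square} --- and that Proposition~\ref{prop:Comparison**} is stated for covers obtained via Proposition~\ref{prop:Nisne-split} (split \'etale covers), so one should note that the $cd$-topology associated to a $cd$-structure is by definition the one generated by the families $\{Y \xrightarrow{p} X,\ A \xrightarrow{j} X\}$ arising from distinguished squares, and that Proposition~\ref{prop:Comparison**} already verifies this matches the $eN$-topology. No new argument is needed; the work has all been done in the two cited propositions. I do not expect any real obstacle here --- the statement is a packaging of Propositions~\ref{prop:CRB} and~\ref{prop:Comparison**}. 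If anything, the only subtlety worth a sentence is that boundedness in Proposition~\ref{prop:CRB} was established relative to a specific density structure on $\Sch^G_k$ (the one pulled back from $\Sch_k$ along the forgetful functor), and one should recall that ``bounded $cd$-structure'' is a property that only requires the \emph{existence} of some locally finite-dimensional density structure making every distinguished square reducing, so the choice made in Proposition~\ref{prop:CRB} suffices.

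\begin{proof}
This is an immediate consequence of Propositions~\ref{prop:CRB} and~\ref{prop:Comparison**}. By Proposition~\ref{prop:Comparison**}, the $eN$-topology on $\Sch^G_k$ (respectively on $\Sm^G_k$) agrees with the topology induced by the equivariant Nisnevich $cd$-structure of Definition~\ref{defn:dist-square}; thus the $eN$-topology is induced by a $cd$-structure. By Proposition~\ref{prop:CRB}, this $cd$-structure is complete, regular, and bounded. Combining the two statements proves the theorem.
\end{proof}
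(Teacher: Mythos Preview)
Your proposal is correct and matches the paper's own treatment: the theorem is stated as an immediate consequence of Propositions~\ref{prop:CRB} and~\ref{prop:Comparison**}, which is exactly what you do. The extra care you take in noting that the same $cd$-structure is used in both propositions and that boundedness only requires the existence of some suitable density structure is accurate and harmless, though the paper does not spell this out.
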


\begin{cor}\label{cor:Sheaf-Nis}
A presheaf $\sF$ of sets on the site $\Sch^G_{k/{\rm Nis}}$ 
(or $\Sm^G_{k/{\rm Nis}}$) is a sheaf if and only if $\sF(\0) = \star$ and it 
takes a square of the form ~\eqref{eqn:cd-square} to a Cartesian square.
\end{cor}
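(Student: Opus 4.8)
The plan is to deduce this directly from the general theory of $cd$-structures developed by Voevodsky, now that we know by Theorem~\ref{thm:Comparison} that the $eN$-topology is induced by a complete, regular, and bounded $cd$-structure. The key input is Voevodsky's characterization \cite[Lemma~2.9]{Voev1} (or \cite[Corollary~2.17]{Voev1}): for a \emph{complete} $cd$-structure, a presheaf of sets $\sF$ is a sheaf for the associated topology if and only if $\sF(\0) = \star$ and $\sF$ carries every distinguished square of the $cd$-structure to a Cartesian square of sets. So the first step is simply to invoke Proposition~\ref{prop:CRB}, which guarantees completeness (completeness alone suffices for the sheaf criterion; regularity and boundedness are only needed for the descent/cohomological consequences), and then apply this lemma to the equivariant Nisnevich $cd$-structure on $\Sch^G_k$ and $\Sm^G_k$.

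To make the argument self-contained rather than a bare citation, I would spell out both directions. For the ``only if'' direction: the empty scheme $\0$ is covered by the empty family, so any sheaf satisfies $\sF(\0) = \star$; and for a distinguished $eN$-square \eqref{eqn:cd-square}, the family $\{Y \xrightarrow{p} X,\ A \xrightarrow{j} X\}$ is an $eN$-cover of $X$ (as noted in the proof of Proposition~\ref{prop:Comparison**}), while the fiber products of the cover with itself are computed by the square itself — using that $B = A \times_X Y$ and that $j$ is an open immersion so $A \times_X A = A$ and $Y \times_X Y$ is handled by the derived square \eqref{eqn:CB*1} — whence the sheaf condition for this cover is exactly the assertion that $\sF$ turns \eqref{eqn:cd-square} into a Cartesian square. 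For the ``if'' direction, one uses Proposition~\ref{prop:Comparison**}: every $eN$-cover admits a refinement built out of distinguished $eN$-squares, so by the standard fact that being a sheaf can be checked on a cofinal system of covers, it suffices to verify the sheaf axiom for the covers $\{Y \to X,\ A \to X\}$ coming from distinguished squares — and this is precisely the hypothesis. The induction on the splitting length already carried out in Proposition~\ref{prop:Comparison**} furnishes the needed refinements.

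The only genuine subtlety — and the step I expect to require the most care — is the bookkeeping of fiber products in the ``only if'' direction: one must check that the cover $\{Y, A\} \to X$ has its iterated self-intersections correctly described so that the equalizer diagram defining the sheaf condition reduces to the single square \eqref{eqn:cd-square}. This is where $j$ being an open immersion is essential (it forces $A \times_X A \xrightarrow{\simeq} A$ and $A \times_X Y \xrightarrow{\simeq} B$), and where regularity of the $cd$-structure — giving that \eqref{eqn:CB*1} is again distinguished — lets one handle $Y \times_X Y$. Everything else is formal once Theorem~\ref{thm:Comparison} is in hand, so the corollary follows essentially immediately from the general $cd$-structure machinery; I would phrase the final proof as a short appeal to \cite[Lemma~2.9]{Voev1} combined with Theorem~\ref{thm:Comparison}, with the fiber-product verification relegated to a sentence.
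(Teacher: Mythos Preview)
Your proposal is correct and takes essentially the same approach as the paper: the paper's proof is a one-line citation to Theorem~\ref{thm:Comparison} together with \cite[Lemma~2.9, Proposition~2.15]{Voev1}, which is precisely the Voevodsky $cd$-structure machinery you invoke. Your additional unpacking of the two directions and the fiber-product bookkeeping is accurate (note that Proposition~2.15 in \cite{Voev1} is what handles the $Y \times_X Y$ issue you flag, via regularity), but as you yourself conclude, the final proof collapses to the same short citation.
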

\begin{proof}
This is an immediate consequence of Theorem~\ref{thm:Comparison} and
\cite[Lemma~2.9, Proposition~2.15]{Voev1}.
\end{proof}

\begin{cor}\label{cor:Nis-Dim}
For any sheaf $\sF$ of abelian groups on the site $\Sch^G_{k/{\rm Nis}}$, one has
$H^i_{eN}(X, \sF) = 0$ for $i > \dim(X)$.
\end{cor}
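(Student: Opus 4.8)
The plan is to deduce this cohomological vanishing from the boundedness of the $eN$-$cd$-structure established in Proposition~\ref{prop:CRB} (equivalently Theorem~\ref{thm:Comparison}), via Voevodsky's general machinery relating bounded $cd$-structures to cohomological dimension. Recall from the proof of Proposition~\ref{prop:CRB} that the relevant density structure on $\Sch^G_k$ is the one pulled back from the standard density structure on $\Sch_k$ through the forgetful functor; in particular, for $X \in \Sch^G_k$ the largest $i$ for which $D_i(X)$ contains a nonempty open is exactly $\dim(X)$, since the defining chains of points in the density structure are taken in the underlying scheme. So the dimension function associated to this density structure agrees with the Krull dimension.

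The key step is then to invoke \cite[Theorem~2.27]{Voev1} (or the corresponding statement in \cite{Voev2}): if the $cd$-structure is bounded by a density structure which is locally of finite dimension, then for any sheaf $\sF$ of abelian groups on the associated site one has $H^i(X,\sF) = 0$ whenever $i$ exceeds the $\sF$-dimension of $X$, and a fortiori whenever $i > \dim_{D}(X)$, the dimension of $X$ with respect to the density structure $D$. Since we have just identified $\dim_D(X)$ with $\dim(X)$, this yields $H^i_{eN}(X,\sF) = 0$ for $i > \dim(X)$. To apply this we need three inputs, all already in hand: the $eN$-topology is the topology associated to the $cd$-structure (Proposition~\ref{prop:Comparison**}), the $cd$-structure is bounded (Proposition~\ref{prop:CRB}), and the density structure used to witness boundedness is locally of finite dimension (noted explicitly in the proof of Proposition~\ref{prop:CRB}).

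Concretely I would first recall the density structure $D_\bullet$ on $\Sch^G_k$ from the proof of Proposition~\ref{prop:CRB} and observe that its dimension function coincides with Krull dimension, because an open $U \to X$ lies in $D_i(X)$ iff every point of $X \setminus U$ admits a specialization chain of length $i$ in $X$, and the maximal such $i$ over all proper opens is $\dim(X)$. Then I would cite \cite[Corollary~2.25, Theorem~2.27]{Voev1}, which gives that a bounded $cd$-structure has cohomological dimension at each $X$ bounded by $\dim_{D_\bullet}(X)$; combined with Theorem~\ref{thm:Comparison} and the identification of dimensions this finishes the proof. The only mild subtlety to check is that the forgetful functor does not collapse the dimension — i.e.\ that $G$-invariant opens are cofinal enough to realize the full Krull dimension — but this is immediate since for any point $z \in X$ the complement of the $G$-orbit closure (or indeed $X$ minus any closed $G$-invariant subset through $z$) is a $G$-invariant open, and one can build maximal specialization chains inside the generic locus. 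I do not expect a genuine obstacle here; the whole content has been front-loaded into the boundedness statement of Proposition~\ref{prop:CRB}, and this corollary is a formal consequence of Voevodsky's theory exactly as in the non-equivariant case.
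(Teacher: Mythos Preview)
Your proposal is correct and follows essentially the same route as the paper: combine the fact that the $eN$-topology arises from a complete, regular, and bounded $cd$-structure (Proposition~\ref{prop:CRB} and Theorem~\ref{thm:Comparison}) with Voevodsky's general cohomological dimension bound for such topologies. The paper's proof is a one-line citation of Proposition~\ref{prop:CRB}, Theorem~\ref{thm:Comparison}, and \cite[Theorem~2.7]{Voev1}; your version is simply a more expanded account of the same deduction, including the (correct) identification of the density-structure dimension with Krull dimension that the paper leaves implicit.
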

\begin{proof}
This follows immediately from Proposition~\ref{prop:CRB},  
Theorem~\ref{thm:Comparison} and \cite[Theorem~2.7]{Voev1}.
\end{proof}

\begin{cor}\label{cor:sub-can}
The $eN$-topology on $\Sch^G_k$ is sub-canonical.
\end{cor}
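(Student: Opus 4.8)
\emph{Proof sketch.} The plan is to invoke Corollary~\ref{cor:Sheaf-Nis}: it suffices to check that for every $T \in \Sch^G_k$ the representable presheaf $\Hom_{\Sch^G_k}(-,T)$ sends $\0$ to $\star$ and carries each distinguished $eN$-square~\eqref{eqn:cd-square} to a Cartesian square. The first condition is clear, since $\0$ is the initial object of $\Sch^G_k$. For the second, fix a distinguished $eN$-square with corners $B,A,Y,X$ as in~\eqref{eqn:cd-square} together with $G$-equivariant morphisms $\phi\colon A\to T$ and $\psi\colon Y\to T$ whose restrictions to $B$ agree; we must produce a unique $G$-equivariant $x\colon X\to T$ with $x\circ j=\phi$ and $x\circ p=\psi$.

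First I would forget the $G$-actions. The underlying square of schemes is a distinguished Nisnevich square in the sense of \cite{MV}: $j$ is an open immersion, $p$ is \'etale, the square is Cartesian, and $p$ is an isomorphism over $X\setminus A$ --- the last point because $p$ restricts to an isomorphism $Y\times_X(X\setminus A)_{\red}\xrightarrow{\ \simeq\ }(X\setminus A)_{\red}$ by definition of a distinguished $eN$-neighborhood, and an \'etale morphism inducing an isomorphism on the associated reduced subschemes is itself an isomorphism. Since the ordinary Nisnevich topology on $\Sch_k$ is sub-canonical --- equivalently, $\Hom_{\Sch_k}(-,T)$ carries distinguished Nisnevich squares to Cartesian squares --- there is a unique morphism of schemes $x\colon X\to T$ with $x\circ j=\phi$ and $x\circ p=\psi$.

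It remains to upgrade $x$ to an equivariant map, i.e.\ to show $x\circ\mu_X=\mu_T\circ(\id_G\times x)$ as maps $G\times X\to T$. Pulling back~\eqref{eqn:cd-square} along the projection $G\times X\to X$ yields the square with corners $G\times B,\ G\times A,\ G\times Y,\ G\times X$, which is again a distinguished square because the equivariant Nisnevich $cd$-structure is complete (closed under pullbacks), cf.\ Proposition~\ref{prop:CRB}; in particular $\{G\times A\to G\times X,\ G\times Y\to G\times X\}$ is a Nisnevich cover. Restricting the two maps $G\times X\rightrightarrows T$ to $G\times A$ both give $\phi\circ\mu_A$ (using that $A$ is $G$-invariant, so $\mu_X$ restricts to the action $\mu_A$ on $A$, and that $\phi$ is equivariant), and restricting them to $G\times Y$ both give $\psi\circ\mu_Y$ (using that $p$ and $\psi$ are equivariant). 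As $\Hom_{\Sch_k}(-,T)$ is separated for the Nisnevich topology, the two maps coincide, so $x\in\Hom_{\Sch^G_k}(X,T)$; uniqueness in $\Sch^G_k$ is inherited from uniqueness in $\Sch_k$. Equivalently, one may package the whole argument as the observation that $\Hom_{\Sch^G_k}(-,T)$ is the equalizer of the two $eN$-sheaves $\Hom_{\Sch_k}(-,T)$ and $\Hom_{\Sch_k}(G\times(-),T)$ on $\Sch^G_k$, and a finite limit of sheaves is a sheaf. The only non-formal ingredient is the classical sub-canonicity of the ordinary Nisnevich topology; the single point that goes beyond the non-equivariant case is promoting the descended morphism to an equivariant one, which is precisely what the $G\times(-)$ base-change above accomplishes, so I do not expect a serious obstacle.
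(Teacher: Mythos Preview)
Your proof is correct and follows the same overall route as the paper: both invoke Corollary~\ref{cor:Sheaf-Nis}, reduce to the non-equivariant sub-canonicity of the Nisnevich topology to obtain a unique $x\colon X\to T$ in $\Sch_k$, and then argue separately that $x$ is $G$-equivariant.

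The only genuine difference is in that last step. The paper checks equivariance by hand on the $G$-invariant stratification $X=A\cup(X\setminus A)$: on $A$ the map $x$ agrees with the equivariant $\phi$, and on $X\setminus A$ it agrees (via the equivariant isomorphism $p^{-1}(X\setminus A)\xrightarrow{\simeq}X\setminus A$) with the equivariant $\psi$, so $x$ is equivariant on each piece and hence on $X$. Your argument instead pulls back to $G\times X$ and uses separatedness of $\Hom_{\Sch_k}(-,T)$ on the Nisnevich cover $\{G\times A,\,G\times Y\}$ to identify $x\circ\mu_X$ with $\mu_T\circ(\id_G\times x)$; the equalizer-of-sheaves packaging is a clean way to say the same thing. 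Your version is a bit more categorical and avoids the (mildly delicate) point of gluing a morphism from its restrictions to an open and its closed complement; the paper's version is more elementary but relies on that gluing. Either way the content is the same, and no idea is missing.
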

\begin{proof}
Let $U \in \Sch^G_k$ and let us consider a square of the form 
~\eqref{eqn:cd-square}. By corollary~\ref{cor:Sheaf-Nis}, it suffices to show
that this square is Cartesian after applying the functor 
$\Hom_{\Sch^G_k}(-, U)$. So let $f_1 \in \Hom_{\Sch^G_{k}}(Y, U)$
and $f_2 \in \Hom_{\Sch^G_{k}}(A, U)$ be such that their restriction to
$B$ coincide. 

Since the $eN$-topology on $\Sch^G_k$ is known to be sub-canonical
for $G$ trivial, we find a unique $f \in \Hom_{\Sch_{k}}(X, U)$
such that $f \circ p = f_1$ and $f \circ j = f_2$. 
It remains to show that $f$ is $G$-equivariant.
Since the map
$p^{-1}(X \setminus A) \to X \setminus A$ is a $G$-equivariant isomorphism,
we see that the restrictions of $f$ to the $G$-invariant subsets 
$A$ and $X \setminus A$ are 
$G$-equivariant. It follows that $f$ is $G$-equivariant.
\end{proof}

More applications of Theorem~\ref{thm:Comparison} will appear
\S~\ref{subsection:LPMS}.

\vskip .3cm

\subsection{Points in the $eN$-topology}
\label{subsection:Points}
Recall that a {\sl point} $x$ on a Grothendieck site $\sC$ is a functor $x^*: \Shv(\sC)\to\Sets$ 
which commutes with all small colimits and finite limits. 
Such a functor acquires a right adjoint $x_*:\Sets\to\Shv(\sC)$ by Freyd's adjoint functor theorem.
Having enough points is convenient for expressing weak equivalences in the homotopy theory of 
simplicial presheaves on a site. 
Below we describe a set of points on the $eN$-site of $G$-schemes for $G$  a finite constant group scheme.

Given $X \in \Sch^G_k$ and $x \in X$, let $Gx$ denote the set-theoretic
$G$-orbit of $x$. Let $\sO^h_{X, Gx}$ denote the henselization
of the semi-local ring $\sO_{X, Gx}$ along the ideal defining the scheme $Gx$.
Set $X^h_{Gx} = \Spec(\sO^h_{X, Gx})$. One observes that the pair
$(X^h_{Gx}, Gx)$ is nothing but the filtering limit of all Nisnevich 
neighborhoods $(U, Gx)$ of $(X, Gx)$.
Since every Nisnevich neighborhood of $(X, Gx)$ contains an
$eN$-neighborhood (see \S~\ref{subsection:Nis-N}), we see that
$X^h_{Gx}$ is the filtered limit of all $eN$-neighborhoods of $(X, Gx)$. 
In particular, it acquires a canonical $Gx$-preserving compatible $G$-action. 

Given a pair $\underline{x} = (X, Gx)$, one gets a functor
$\underline{x}: \Sch^G_k \to \Sets$ by setting
$\underline{x}(U) = \Hom_{{\underset{\longleftarrow}{\Sch}}^G_k}(X^h_{Gx}, U)$.
Here, ${\underset{\longleftarrow}{\Sch}}^G_k$ denotes the category 
of all $k$-schemes with $G$-action (not necessarily of finite type). 
The left Kan extension of
this gives a functor $\underline{x}: \PShv^G_{\Sch_k} \to\Sets$
and one checks at once that its restriction to the subcategory $\Shv^G_{\Sch_k}$
indeed gives a point on $\Sch^G_{k/{\rm Nis}}$. 
We shall write this functor on presheaves as $F \mapsto F(X^h_{Gx})$.

\begin{prop}\label{prop:Point-Cons}
The collection $\{\underline{x}| X \in \Sch^G_k, x \in X\}$
is a conservative family of points on the site $\Sch^G_{k/{\rm Nis}}$.
\end{prop}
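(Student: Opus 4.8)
The plan is to verify that the family $\{\underline{x}\}$ detects isomorphisms of sheaves, which by the standard criterion reduces to showing two things: first, that each $\underline{x}$ is genuinely a point (already asserted in the construction above via the left Kan extension and the identification of $X^h_{Gx}$ as the cofiltered limit of $eN$-neighborhoods), and second, that a morphism $\varphi\colon\sF\to\sG$ of sheaves on $\Sch^G_{k/{\rm Nis}}$ which induces an isomorphism $\sF(X^h_{Gx})\xrightarrow{\simeq}\sG(X^h_{Gx})$ for every pair $(X,Gx)$ is itself an isomorphism. So after recalling the definition of a conservative family of points, I would first note that stalk functors commute with finite limits and all colimits, and that for a sheaf $\sF$ one has $\sF(X^h_{Gx})=\colim_{(U,Gx)}\sF(U)$, the colimit running over $eN$-neighborhoods of $(X,Gx)$, by Corollary~\ref{cor:Sheaf-Nis} together with the fact that filtered colimits of sheaves are computed sectionwise on a site with a bounded $cd$-structure (or more elementarily, that the henselization is the limit of $eN$-neighborhoods and $\sF$ carries distinguished squares to Cartesian squares).

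The core step is injectivity and surjectivity of $\varphi$ on sections over an arbitrary $X\in\Sch^G_k$. For injectivity: if $a,b\in\sF(X)$ have $\varphi(a)=\varphi(b)$, then for each $x\in X$ their images in $\sF(X^h_{Gx})$ agree, hence agree already in some $eN$-neighborhood $\sF(U_x)$ of $(X,Gx)$; the family $\{U_x\to X\}_{x\in X}$ is then an $eN$-cover of $X$ (every point of $X$ lies in some $Gx$ and the $eN$-neighborhood condition supplies the lift with isomorphic residue field and set-theoretic stabilizer), so the sheaf axiom forces $a=b$. For surjectivity: given $c\in\sG(X)$, for each $x$ its stalk lifts to $\sF(X^h_{Gx})$, hence to some $\sF(U_x)$ for an $eN$-neighborhood $U_x\to X$, compatibly with $c|_{U_x}$; shrinking the $U_x$ if necessary so that the lifts agree on the overlaps $U_x\times_X U_{x'}$ (using injectivity already proven, applied stalkwise on the overlap), the sheaf axiom glues these to the desired preimage of $c$ in $\sF(X)$. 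The key input in both halves is that the distinguished sets of $eN$-neighborhoods $\{U_x\to X\}$ assembled from points actually form $eN$-covers, which is exactly the content of condition (3) in Definition~\ref{defn:Nisne-cover} together with the refinement remark of \S~\ref{subsubsection:ENR} that every Nisnevich neighborhood is dominated by an $eN$-neighborhood.

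The main obstacle I anticipate is the bookkeeping with $G$-orbits and set-theoretic stabilizers rather than the homotopy-theoretic formalism, which is entirely standard once one has enough points. Specifically, one must be careful that $X^h_{Gx}$ is the limit of $eN$-neighborhoods of the \emph{orbit} $Gx$ (not of the single point $x$), so that the induced $G$-action is the relevant structure, and that the resulting family $\{X^h_{Gx}\to X\}$—ranging over orbit representatives—is a conservative family precisely because every $x\in X$ is covered by an $eN$-neighborhood of its own orbit with isomorphic residue field and stabilizer. A secondary technical point is justifying that a section over $X^h_{Gx}$ that lies in the image of $\varphi$ descends to some honest $eN$-neighborhood $U$ of finite type; this follows from $X^h_{Gx}=\lim_U U$ over the cofiltered system of $eN$-neighborhoods and the compatibility of $\sF,\sG$ with cofiltered limits of affine schemes along affine transition maps, which is a formal consequence of $\sF,\sG$ being sheaves of sets (hence finitely presented as functors on the relevant pro-category). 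Once these identifications are in place, the verification that $\varphi$ is an isomorphism on all sections is the routine two-step gluing argument sketched above.
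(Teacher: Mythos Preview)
Your argument is correct, but it takes a different route from the paper. The paper invokes the SGA4 criterion \cite[Proposition~6.5.a]{GV}: a family of points is conservative provided every family of maps $\{U_i\to U\}$ in the site which is jointly surjective on all stalks is dominated by a cover. The paper then checks this by fixing $u\in U$, using stalk-surjectivity to factor the canonical map $U^h_{Gu}\to U$ through some $U_i$, and descending this factorization to an $eN$-neighborhood of $(U,Gu)$ via the identification of $U^h_{Gu}$ as the filtered limit of such neighborhoods. By contrast, you bypass the SGA4 criterion entirely and verify directly that a sheaf map $\varphi$ inducing isomorphisms on all stalks is an isomorphism on sections, via the standard two-step injectivity/surjectivity gluing. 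Both arguments hinge on the same geometric fact---that $eN$-neighborhoods of the orbits $\{Gx\}_{x\in X}$ assemble into an $eN$-cover of $X$---so the core content is identical; your approach is more elementary and self-contained, while the paper's is shorter once the topos-theoretic criterion is granted.

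Two minor comments on your write-up. First, in the surjectivity step you do not actually need to shrink the $U_x$: once injectivity of $\varphi$ on sections over \emph{all} objects (in particular over $U_x\times_X U_{x'}$) has been established, the local lifts $a_x$ are forced to agree on overlaps, since $\varphi(a_x|_{U_x\times_X U_{x'}})=\varphi(a_{x'}|_{U_x\times_X U_{x'}})$ and $\varphi$ is injective there. Second, your remark about ``compatibility of $\sF,\sG$ with cofiltered limits of affine schemes'' is not needed: $\sF(X^h_{Gx})$ is by definition the filtered colimit $\colim_{(U,Gx)}\sF(U)$ over $eN$-neighborhoods (this is how the stalk functor is constructed via left Kan extension in the paragraph preceding the proposition), so the descent of a section to some finite-type $U$ is immediate from the colimit description and requires no further justification.
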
 
\begin{proof}
By \cite[Proposition~6.5.a]{GV}, it is enough to show that if $U \in \Sch^G_k$
and if $\{f_i: U_i \to U\}_{i \in I}$ is a family of $G$-equivariant 
maps such that $\{\underline{x}(U_i) \to \underline{x}(U)\}_{i \in I}$ is
surjective for all $X \in \Sch^G_k$ and all $x \in X$, then 
$\{f_i\}$ is dominated by an $eN$-cover of $U$.

So suppose that $\{\underline{x}(U_i) \to \underline{x}(U)\}_{i \in I}$ 
is a surjective family for all pairs $(X, Gx)$. Let $u \in U$ and let
$v: (U^h_{Gu}, Gu) \to (U, Gu)$ be the resulting $G$-equivariant map.
By our assumption, we get an $i \in I$ and a $G$-equivariant factorization
\begin{equation}\label{eqn:Point*-1}
\xymatrix@C1.2pc{
& U_i \ar[d]^{f_i} \\
U^h_{Gu} \ar[ur]^{w}\ar[r]_{v} & U.}
\end{equation}
 
Notice that $w$ has to be an isomorphism on $Gu$ and hence gives a section
of $f_i$ along $Gu$. Since $(U^h_{Gu}, Gu)$ is the filtered limit of
$eN$-neighborhoods of $Gu$ and
since $f_i$ is a $G$-equivariant finite type morphism, we conclude that
there is an $eN$-neighborhood $(U'_i, Gu)$ and a 
$G$-equivariant factorization $(U'_i, Gu) \xrightarrow{w} (U_i, Gu)
\xrightarrow{f_i} (U, Gu)$. Since $u \in U$ was chosen as an arbitrary point,   
we get the desired domination of $\{f_i\}$.
\end{proof}

\vskip .3cm

\section{Model structures on simplicial presheaves on 
$\Sm^G_{k/{\rm Nis}}$}\label{section:Models}
Let $\sS$ denote the category of simplicial sets with internal hom objects
$\sS(-,-)$ defined, for example, in \cite[I.5]{GoerssJardine}. The category
of pointed simplicial sets will be denoted by $\sS_{\bullet}$. We have the 
pointed version of internal hom as well. A motivic $G$-space is a
contravariant functor $\Sm^G_k \to \sS$ and a pointed motivic $G$-space is a
contravariant functor $\Sm^G_k \to \sS_{\bullet}$. Due to the finite type
condition on $G$-schemes, the category $\Sm^G_k$ is essentially small, i.e.,
it is locally small with a small set of isomorphism classes of objects.
Let $\sM^G_k$ (resp. $\sM^G_{k, \bullet}$) denote the category of 
motivic (resp. pointed motivic) $G$-spaces. 
We may identify $\sS$ with the full subcategory of $\sM^G_k$ comprised of
constant motivic $G$-spaces. The Yoneda lemma yields a fully faithful
embedding of $\Sm^G_k$ into $\sM^G_k$ by sending $X \in \Sm^G_k$ to the
representable motivic $G$-space $h^G_X = \Hom_{\Sm^G_k}(-, X)$ taking values in
simplicial sets of dimension zero. Recall from Corollary~\ref{cor:sub-can}
that $h^G_X$ is a sheaf in the $eN$-topology. 
A pointed motivic $G$-space is just a motivic $G$-space $A$ with
a map $pt = h^G_k \to A$. In particular, a pointed $G$-scheme $(X,x)$
amounts to a $G$-scheme $X$ together with a $k$-rational $G$-fixed point
$x \in X$. In the following, we make no notational distinction between  
$X$ and $h^G_X$. For $X \in \Sm^G_k$, the symbol $X_{+}$ will denote the
pointed motivic $G$-scheme $(X \coprod pt, pt)$. 
We note the following useful fact about $\sM^G_{k,\bullet}$.

\begin{lem}\label{lem:elementary}
The category $\sM^G_{k, \bullet}$ is both a closed symmetric monoidal category
and a locally finitely presented bicomplete $\sS_{\bullet}$-category. 
In particular, filtered colimits commute with finite limits.
\end{lem}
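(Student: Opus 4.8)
The plan is to verify each clause of the statement by reducing to standard facts about functor categories into well-behaved target categories. First I would recall that $\sS_\bullet$ is itself a closed symmetric monoidal category under the smash product, with internal hom the pointed mapping space, and that it is locally finitely presented and bicomplete. Since $\sM^G_{k,\bullet}$ is the category of contravariant functors from the essentially small category $\Sm^G_k$ to $\sS_\bullet$, that is, a diagram category $\mathrm{Fun}((\Sm^G_k)^{\mathrm{op}}, \sS_\bullet)$, the bicompleteness is immediate: limits and colimits are computed sectionwise, using the corresponding (co)limits in $\sS_\bullet$. Likewise the $\sS_\bullet$-enrichment is the usual pointwise one, with $\underline{\Hom}(A,B)$ the end $\int_{X} \sS_\bullet(A(X), B(X))$, giving tensor $K \wedge A$ and cotensor $A^K$ defined sectionwise; all the enriched (co)tensor adjunctions follow from those in $\sS_\bullet$.

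Next I would address the closed symmetric monoidal structure. The point is that $\sM^G_{k,\bullet}$ is a presheaf category on the monoidal category $\Sm^G_k$ (monoidal via the product $X\times Y$ over $k$, which exists in $\Sm^G_k$ with the diagonal $G$-action), so it inherits the Day convolution monoidal structure: for representables one sets $h^G_X \wedge h^G_Y = h^G_{X\times Y}$ pointed suitably, and in general $A \wedge B$ is the left Kan extension computed by the coend $\int^{(X,Y)} A(X)\wedge B(Y) \wedge h^G_{X\times Y}$. Symmetry and associativity come from those of $\times$ on $\Sm^G_k$ and of $\wedge$ on $\sS_\bullet$. Closedness is formal for Day convolution: the internal hom is $\underline{\Hom}(B,C)(X) = \Hom_{\sM^G_{k,\bullet}}(B \wedge h^G_X, C)$, and the adjunction $\Hom(A\wedge B, C) \cong \Hom(A, \underline{\Hom}(B,C))$ is a standard coend/Yoneda manipulation. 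I would cite \cite{GoerssJardine} or a standard reference for Day convolution rather than reprove this.

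For local finite presentability, I would use that $\sS_\bullet$ is locally finitely presented and that $\Sm^G_k$ is small; functor categories from a small category into a locally finitely presented category are again locally finitely presented, with a generating set of finitely presentable objects given by $h^G_X \wedge (\Delta^n/\partial\Delta^n)_+$-type objects, i.e.\ representables tensored with finitely presentable pointed simplicial sets. The final clause, that filtered colimits commute with finite limits, is then a general consequence of local finite presentability (this is essentially the defining property of locally finitely presentable categories), and can also be seen directly since both filtered colimits and finite limits in $\sM^G_{k,\bullet}$ are computed sectionwise and the statement holds in $\sS_\bullet$.

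The main obstacle, such as it is, is bookkeeping rather than mathematics: one must check that the monoidal product interacts correctly with the basepoints (the unit is $S^0 \wedge pt_+$-style, i.e.\ $h^G_k$ pointed appropriately, or rather $(\Spec k)_+$), and that the coend defining $\wedge$ genuinely lands in pointed spaces and is associative up to coherent isomorphism — but these are the same verifications as in the non-equivariant motivic setting of \cite{MV}, carried out verbatim with $\Sm_k$ replaced by $\Sm^G_k$, since the only structure used is that $\Sm^G_k$ is essentially small and symmetric monoidal under $\times_k$. I expect no genuine difficulty and would present the proof as a reduction to \cite{MV} and to the standard theory of Day convolution and enriched presheaf categories.
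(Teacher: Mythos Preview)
Your proposal is correct, but packaged differently from the paper. The paper does not give a formal proof of this lemma; instead it treats the result as standard and simply \emph{describes} the structures in the paragraphs following the statement: the monoidal product is defined as the \emph{pointwise} (schemewise) smash $(\sX \wedge \sY)(U) = \sX(U) \wedge \sY(U)$, the internal hom as $\sHom(\sX,\sY)(U) = \sS(\sX \wedge U_+, \sY)$, and the finite-presentability claims are deferred to Borceux's handbook \cite{handbook2}. You instead invoke Day convolution on $(\Sm^G_k, \times_k)$. Since $\Sm^G_k$ is cartesian with terminal object $\Spec k$, co-Yoneda collapses your coend $\int^{(X,Y)} A(X)\wedge B(Y)\wedge (h^G_{X\times Y})_+$ to $A(U)\wedge B(U)$, so the two monoidal structures coincide and your internal-hom formula specializes to the paper's. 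What your route buys is a uniform categorical justification (closedness of Day convolution is automatic); what the paper's route buys is concreteness and no need to verify that Day convolution agrees with the pointwise structure. Either way the content is the same, and your reduction of local finite presentability to the corresponding fact for $\sS_\bullet$ via the smallness of $\Sm^G_k$ matches the paper's implicit reasoning and its citation of \cite{handbook2}.
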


The tensor product in $\sM^G_{k, \bullet}$ is defined by taking pointwise 
(schemewise) smash product
$(\sX \wedge \sY)(U) = \sX(U) \wedge \sY(U)$. With this definition,
$S^0 = pt \coprod pt = \Spec(k) \coprod \Spec(k)$ 
is the unit of the product and the limits, colimits are
defined pointwise. The functor ${\rm Ev}_U$ evaluating motivic $G$-spaces
at a fixed $G$-scheme $U$ is strict symmetric monoidal, preserves limits and
colimits, and there is an adjunction:
\begin{equation}\label{eqn:adjunc}
\xymatrix{
{\rm Fr}_U : 
\sS_{\bullet} 
\ar@<+.7ex>[r] &
\ar@<+.7ex>[l] \sM^G_{k, \bullet}: 
{\rm Ev}_U.
}
\end{equation}
The left adjoint ${\rm Fr}_U$, defined by ${\rm Fr}_U(K) = U_{+} \wedge K$, is
lax symmetric monoidal for any $G$-scheme and strict symmetric monoidal
when $U = pt$. For any $\sX \in \sM^G_{k, \bullet}$ and $K \in \sS_{\bullet}$,
we define $\sX \wedge K$ and $\sX^K$ by sending $U$ to 
$\sX(U) \wedge K$ and $\sS_{\bullet}(K, \sX(U))$, respectively.  

The $\sS_{\bullet}$-enrichment of motivic $G$-spaces is given degreewise by the
pointed simplicial set
\begin{equation}\label{eqn:simp-str}
{\sS(\sX, \sY)}_{n} = \Hom_{\sM^G_{k, \bullet}}(\sX \wedge \Delta[n]_{+}, \sY).
\end{equation}
The internal hom in $\sM^G_{k, \bullet}$ is defined pointwise as
$\sHom(\sX, \sY)(U) = \sS(\sX \wedge U_{+}, \sY)$. 

\vskip .3cm

A motivic $G$-space $\sX$ is finitely presentable if 
$\Hom_{\sM^G_{S,\bullet}}(\sX, -)$ commutes with filtered colimits. 
Using the natural isomorphism 
$\sHom(U_{+} \wedge K, \sX) \simeq \sX(U \times -)^K$,
one deduces that $\sX$ is finitely presentable if and only if $\sS(\sX, -)$ 
commutes with filtered colimits. 
The pointed finite simplicial sets and the $G$-schemes form the building 
blocks for $\sM^G_{S,\bullet}$ in the following sense
(see \cite[5.2.2b, 5.2.5]{handbook2}):

\begin{lem}\label{lem:finitecolimit}
Every pointed motivic $G$-space is a filtered colimit of finite colimits
of pointed motivic $G$-spaces of the form $(U \times \Delta[n])_{+}$,
where $U \in \Sm^G_k$ and $\Delta[n]$ is the standard $n$-simplex for $n \ge 0$.
The motivic $G$-spaces $(U \times \Delta[n])_{+}$ are finitely presented.
The finitely presented motivic $G$-spaces are closed under retracts,
finite colimits and tensor product.
\end{lem}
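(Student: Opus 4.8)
\textbf{Proof proposal for Lemma~\ref{lem:finitecolimit}.}

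The plan is to follow the standard pattern from \cite[5.2.2b, 5.2.5]{handbook2} for presheaf categories, adapted to the category $\sM^G_{k,\bullet}$ of pointed simplicial presheaves on $\Sm^G_k$. First I would recall that $\sM^G_k$ is the functor category from $(\Sm^G_k)^{\op}$ to $\sS$; by Lemma~\ref{lem:elementary} it is locally finitely presentable, and the essential smallness of $\Sm^G_k$ (noted in \S~\ref{section:Models}) guarantees that the objects $(U\times\Delta[n])_{+}$ form a \emph{set} of generators. The first claim is the statement that every object is canonically a filtered colimit of finite colimits of these generators: in a locally finitely presentable category, every object is the filtered colimit of its canonical diagram of finitely presentable subobjects, and one checks that each finitely presentable pointed motivic $G$-space is itself a finite colimit of the $(U\times\Delta[n])_{+}$'s. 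Concretely, an arbitrary pointed motivic $G$-space $\sX$ can be built by a double colimit: schemewise, $\sX(U)$ is the colimit over its simplices of representable simplicial sets $\Delta[n]$, and these simplices are indexed by maps $(U\times\Delta[n])_{+}\to\sX$ via \eqref{eqn:adjunc} (using $\Fr_U$ and ${\rm Ev}_U$), so $\sX=\colim (U\times\Delta[n])_{+}$ over the (not necessarily filtered) category of elements; one then reorganizes this as a filtered colimit of its finite subdiagrams.

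For the second claim, that each $(U\times\Delta[n])_{+}$ is finitely presentable, I would use the criterion recorded just before the lemma: $\sX$ is finitely presentable if and only if $\sS(\sX,-)$ commutes with filtered colimits. Since colimits in $\sM^G_{k,\bullet}$ are computed schemewise and $\sS(\sX,-)_m=\Hom_{\sM^G_{k,\bullet}}(\sX\wedge\Delta[m]_{+},-)$, it suffices to observe that $(U\times\Delta[n])_{+}\wedge\Delta[m]_{+}\simeq (U\times\Delta[n]\times\Delta[m])_{+}$, and that by the adjunction \eqref{eqn:adjunc} one has $\Hom_{\sM^G_{k,\bullet}}((U\times K)_{+},\sY)\simeq \Hom_{\sS_{\bullet}}(K_{+},\sY(U))$ for a finite simplicial set $K$. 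A finite simplicial set is a finitely presentable object of $\sS_{\bullet}$, and evaluation ${\rm Ev}_{U}$ preserves filtered colimits (they are schemewise), so the composite functor $\Hom_{\sM^G_{k,\bullet}}((U\times\Delta[n]\times\Delta[m])_{+},-)$ preserves filtered colimits. This gives finite presentability of the building blocks.

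The third claim, closure of finitely presentable objects under retracts, finite colimits, and the smash product, is the routine part. Closure under retracts and finite colimits is formal: in any category, finitely presentable objects are closed under retracts, and under \emph{finite} colimits because $\Hom$ out of a finite colimit is a finite limit of the $\Hom$'s, and filtered colimits commute with finite limits in $\sS_{\bullet}$ (again by Lemma~\ref{lem:elementary}, the relevant commutation holding in $\sM^G_{k,\bullet}$ and hence, after evaluation, in simplicial sets). For the smash product I would argue that $\sHom(\sX,-)$ for $\sX$ finitely presentable preserves filtered colimits --- using the pointwise formula $\sHom(\sX,\sY)(U)=\sS(\sX\wedge U_{+},\sY)$ together with the fact that $\sX\wedge U_{+}$ is again finitely presentable --- and then $\Hom_{\sM^G_{k,\bullet}}(\sX\wedge\sX',\sY)\simeq\Hom_{\sM^G_{k,\bullet}}(\sX,\sHom(\sX',\sY))$ shows that $\sX\wedge\sX'$ is finitely presentable whenever both factors are.

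The main obstacle --- really the only point needing care rather than bookkeeping --- is the first claim: making precise the reorganization of the (large, non-filtered) canonical diagram of a pointed motivic $G$-space into a \emph{filtered} colimit of \emph{finite} colimits of the generators $(U\times\Delta[n])_{+}$. The clean way is to cite the general fact that in a locally finitely presentable category with a set of finitely presentable generators, every object is a filtered colimit of finitely presentable objects, each of which is a finite colimit of generators; this is exactly the content of \cite[5.2.2b, 5.2.5]{handbook2}, so the argument reduces to verifying the hypotheses (essential smallness of $\Sm^G_k$, finite presentability of the $(U\times\Delta[n])_{+}$), which is done in the preceding two paragraphs. Hence the lemma.
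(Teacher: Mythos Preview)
Your proposal is correct and follows essentially the same approach as the paper: the paper gives no proof at all for this lemma, merely citing \cite[5.2.2b, 5.2.5]{handbook2} in the sentence introducing it, and your argument is precisely an unpacking of that citation together with the verification that the hypotheses (essential smallness of $\Sm^G_k$, finite presentability of the generators via the adjunction~\eqref{eqn:adjunc}) are met.
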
  

In the above we described the monoidal structure on pointed motivic $G$-spaces. 
This story works verbatim for motivic $G$-spaces $\sM^G_S$ by replacing the smash 
product with the schemewise defined product $\sX \times \sY$.

\vskip .3cm

\subsection{Schemewise model structures}\label{subsection:SMS}
The goal of this section is to construct model structures on
motivic $G$-spaces. We first describe these model structures for the
unpointed motivic $G$-spaces and show in the end how these model structures
induce such structures on the pointed motivic $G$-spaces.
We refer the reader to \cite{Hirsc} for standard notions related to
model structures. We only recall here that a model structure on $\sM^G_k$
is a simplicial model structure if the simplicial structure interacts with
cofibrations, fibrations and weak equivalences: 
If $i :\sX \to \sY$ is a cofibration and $p : \sZ \to \sW$ is a fibration in 
$\sM^G_k$, then the map of simplicial sets
\[
\sS(\sY, \sZ) \xrightarrow{(i^*, p_*)}
\sS(\sX,\sZ) {\underset{\sS(\sX,\sW)}\times} \
\sS(\sY, \sW)
\]
is a Kan fibration, which is a weak equivalence if either $i$ or $p$ is a weak
equivalence.

\vskip .2cm

We shall say that a map $f : \sX \to \sY$ of motivic $G$-spaces is a 
{\sl schemewise weak equivalence} (resp.~{\sl schemewise fibration}) if the map of simplicial sets
$\sX(X) \to \sY(X)$ is a weak equivalence (resp. Kan fibration) of simplicial 
sets for every $X \in \Sm^G_k$. 
Moreover, $f$ is called a projective cofibration if it has the left 
lifting property with respect to all maps which are schemewise fibrations and
weak equivalences. It follows from
\cite[Theorems~11.6.1, 11.7.3, 13.1.14, Proposition~12.1.5 ]{Hirsc}
that $\sM^G_k$ acquires the so-called {\sl projective model structure}:

\begin{thm}$($Projective model structure$)$\label{thm:PMS-Psh} 
The schemewise fibrations and weak equivalence, and projective cofibrations 
form a combinatorial and simplicial model structure on $\sM^G_k$ with 
respect to the $\sS$-enrichment in ~\eqref{eqn:simp-str}.

The set of generating cofibrations
\[
I^{\rm sch}_{\rm proj}(\sm^G_k) = \{U_{+} \wedge (\partial \Delta^n \subsetneq
\Delta^n)_{+}\}_{n \ge 0, U \in \Sm^G_k}
\]
and trivial cofibrations
\[
J^{\rm sch}_{\rm proj}(\sm^G_k) = \{U_{+} \wedge (\Lambda^n_i \subsetneq
\Delta^n)_{+}\}_{n \ge 0, 0 \le i \le n, U \in \Sm^G_k}
\]
are induced from the corresponding maps in $\sS$. The domains and
codomains of the maps in these generating sets are finitely presented.
The projective model structure is proper. For every $U \in \Sm^G_k$,
the pair $({\rm Fr}_U, {\rm Ev}_U)$ forms a Quillen pair.
\end{thm}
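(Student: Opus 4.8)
The plan is to construct the projective model structure on $\sM^G_k$ by transfer from the model category $\sS$ of simplicial sets, exactly along the lines of the standard treatment of diagram categories in \cite[Chapters~11--13]{Hirsc}. Since $\Sm^G_k$ is essentially small (as noted, it is locally small with a small skeleton), the category $\sM^G_k$ of presheaves of simplicial sets on it is a category of diagrams $(\Sm^G_k)^{\op} \to \sS$. The first step is to invoke \cite[Theorem~11.6.1]{Hirsc} (projective model structure on a diagram category valued in a cofibrantly generated model category): the schemewise weak equivalences and schemewise fibrations, together with the class of maps having the left lifting property against schemewise trivial fibrations, form a cofibrantly generated model structure, with generating (trivial) cofibrations obtained by applying the left adjoints $\mathrm{Fr}_U$ to the generating (trivial) cofibrations of $\sS$. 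This immediately yields the explicit description of $I^{\rm sch}_{\rm proj}$ and $J^{\rm sch}_{\rm proj}$ as the sets $\{U_+ \wedge (\partial\Delta^n \hookrightarrow \Delta^n)_+\}$ and $\{U_+ \wedge (\Lambda^n_i \hookrightarrow \Delta^n)_+\}$, since $\mathrm{Fr}_U(K) = U_+ \wedge K$ by \eqref{eqn:adjunc}.

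Next I would verify the remaining structural claims one at a time. Combinatoriality follows since $\sS$ is combinatorial and the domains and codomains of the generators are the representables $(U \times \Delta[n])_+$, which are finitely presented by Lemma~\ref{lem:finitecolimit}; hence the domains and codomains of maps in $I^{\rm sch}_{\rm proj}$ and $J^{\rm sch}_{\rm proj}$ are finitely presented, as asserted. For the simplicial enrichment, I would apply \cite[Theorem~11.7.3]{Hirsc}: with the $\sS$-structure of \eqref{eqn:simp-str} and the pushout-product/pullback-hom machinery, the pushout-product axiom (SM7) for the projective structure reduces to SM7 in $\sS$ together with the fact that $\mathrm{Fr}_U$ is a left $\sS$-functor; this gives the pushout-product axiom in the form stated, i.e.\ the map $\sS(\sY,\sZ) \to \sS(\sX,\sZ) \times_{\sS(\sX,\sW)} \sS(\sY,\sW)$ is a Kan fibration, trivial if $i$ or $p$ is. Properness comes from \cite[Proposition~13.1.14]{Hirsc}: $\sS$ is proper, weak equivalences and (co)fibrations in the projective structure are detected schemewise, pushouts and pullbacks in $\sM^G_k$ are computed schemewise, so left and right properness are inherited objectwise. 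Finally, that $(\mathrm{Fr}_U, \mathrm{Ev}_U)$ is a Quillen pair is essentially by construction: $\mathrm{Ev}_U$ preserves schemewise fibrations and schemewise weak equivalences (both being defined objectwise), so its left adjoint $\mathrm{Fr}_U$ is left Quillen; alternatively cite \cite[Proposition~12.1.5]{Hirsc}.

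I do not anticipate a serious obstacle here: the theorem is the standard projective model structure on a presheaf category, and every ingredient is available from the cited parts of \cite{Hirsc} once one observes that $\Sm^G_k$ is essentially small. The only points requiring a modicum of care are bookkeeping ones: (i) confirming that the left adjoints in \eqref{eqn:adjunc} do send the standard generators of $\sS$ to exactly the displayed sets, which is immediate from $\mathrm{Fr}_U(K) = U_+ \wedge K$; (ii) checking that the finite-presentability of $(U\times\Delta[n])_+$ from Lemma~\ref{lem:finitecolimit} transports to finite presentability of the generating maps' domains and codomains (a retract/finite-colimit argument, again covered by Lemma~\ref{lem:finitecolimit}); and (iii) making sure the $\sS$-enrichment used is precisely \eqref{eqn:simp-str}, so that the enriched lifting axiom matches the statement. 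None of these is a genuine difficulty; they are the routine verifications alluded to when one says the result "follows from" the cited theorems of \cite{Hirsc}.
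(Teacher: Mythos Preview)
Your proposal is correct and matches the paper's approach exactly: the paper does not give a proof but simply cites \cite[Theorems~11.6.1, 11.7.3, 13.1.14, Proposition~12.1.5]{Hirsc}, which are precisely the references you invoke for existence, simplicial enrichment, properness, and the Quillen pair, respectively. Your expansion of what each citation contributes (together with Lemma~\ref{lem:finitecolimit} for finite presentability of the generators) is just a fleshed-out version of the paper's one-line justification.
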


Let $\kappa$ be the first cardinal number greater than the cardinality of the
set of maps in $\PShv^G_{\Sm_k}$. 
If $\omega$ denotes, as usual, the cardinal of
continuum, we define $\gamma$ as ${\kappa \omega}^{\kappa \omega}$. Now let
$I^{\rm sch, \kappa}_{\rm inj}(\sm^G_k)$ be the set of maps $\sX \to \sY$ such that
$\sX(U) \to \sY(U)$ is a cofibration of simplicial sets of cardinality 
less than $\kappa$ for every $U \in \Sm^G_k$. Likewise, we define
$J^{\rm sch, \gamma}_{\rm inj}(\sm^G_k)$ for schemewise trivial cofibrations 
of simplicial sets bounded by $\gamma$. With these definitions, the 
following holds for the so-called {\sl injective model structure} on
$\sM^G_k$, see \cite{Heller}, \cite{Joyal}.

\begin{thm}$($Injective model structure$)$\label{thm:LMS-Psh} 
There is a cofibrantly generated model structure on $\sM^G_k$ with
schemewise cofibrations and weak equivalences, and injective fibrations.
The cofibrations and trivial cofibrations are generated by
$I^{\rm sch, \kappa}_{\rm inj}(\sm^G_k)$ and $J^{\rm sch, \gamma}_{\rm inj}(\sm^G_k)$,
respectively. The injective model structure is combinatorial, proper and 
simplicial with the $\sS$-enrichment in ~\eqref{eqn:simp-str}. 
\end{thm}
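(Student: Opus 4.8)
The plan is to construct the injective model structure on $\sM^G_k$ by invoking the general existence theorem for cofibrantly generated model structures on categories of simplicial presheaves on an essentially small category, which is precisely the content of the references \cite{Heller} and \cite{Joyal} cited in the statement. First I would recall that $\Sm^G_k$ is essentially small (noted already in the text preceding Theorem~\ref{thm:PMS-Psh}), so that $\sM^G_k = \Fun((\Sm^G_k)^{\op}, \sS)$ is a presheaf category on a small category; the injective (= schemewise, or ``pointwise'') model structure on such a diagram category exists by the theorems of Heller and Joyal, with weak equivalences and cofibrations detected objectwise and injective fibrations defined by the right lifting property. The key point that makes this work is the set-theoretic bound: one shows that a map is a trivial cofibration if and only if it has the left lifting property against all injective fibrations, and that this class is generated (as a saturated class) by the set $J^{\mathrm{sch},\gamma}_{\mathrm{inj}}(\sm^G_k)$ of schemewise trivial cofibrations of bounded cardinality $\gamma$; symmetrically for cofibrations and $I^{\mathrm{sch},\kappa}_{\mathrm{inj}}(\sm^G_k)$. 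This cardinality argument — that every trivial cofibration is a filtered colimit of pushouts of maps in the bounded generating set — is the one genuine piece of work, and it is exactly why $\kappa$ and $\gamma$ were defined the way they were just before the statement; I would cite the relevant lemma from \cite{Heller} or \cite{Joyal} (or Lurie's account) rather than reproduce the transfinite small-object argument.

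Next I would verify the three additional adjectives claimed: combinatorial, proper, and simplicial. \emph{Combinatorial} is immediate once the model structure is established to be cofibrantly generated (by the small sets above) and the underlying category is locally presentable — $\sM^G_k$ is locally finitely presentable by Lemma~\ref{lem:elementary} (or Lemma~\ref{lem:finitecolimit}). \emph{Properness}: left properness follows because every object is cofibrant in the injective structure (all objects have the left lifting property against trivial fibrations since cofibrations are just schemewise cofibrations of simplicial sets and $\emptyset \to \sX$ is always such), and the pushout of a schemewise weak equivalence along a schemewise cofibration is a schemewise weak equivalence because this already holds in $\sS$ with its (left and right proper) Kan–Quillen structure, applied objectwise; right properness likewise reduces objectwise to right properness of $\sS$. \emph{Simpliciality} with respect to the enrichment in \eqref{eqn:simp-str}: one checks the pushout-product (SM7) axiom, namely that for a cofibration $i:\sX\to\sY$ and fibration $p:\sZ\to\sW$ the induced map $\sS(\sY,\sZ)\to\sS(\sX,\sZ)\times_{\sS(\sX,\sW)}\sS(\sY,\sW)$ is a Kan fibration, trivial if $i$ or $p$ is. Since cofibrations and weak equivalences are schemewise and the enrichment $\sS(\sX,\sY)_n = \Hom_{\sM^G_{k,\bullet}}(\sX\wedge\Delta[n]_+,\sY)$ is built from the simplicial structure pointwise, this again reduces to the corresponding statement in $\sS$; alternatively one verifies the equivalent tensor/cotensor form of the axiom using that $-\otimes K$ preserves schemewise (trivial) cofibrations because $\Delta[n]_+ \wedge -$ does so in $\sS$.

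I expect the main obstacle to be the cardinality/set-theoretic step establishing that the bounded sets $I^{\mathrm{sch},\kappa}_{\mathrm{inj}}$ and $J^{\mathrm{sch},\gamma}_{\mathrm{inj}}$ actually generate the cofibrations and trivial cofibrations — i.e., the ``solution set'' or accessibility argument needed to run the small object argument and to identify the two notions of trivial cofibration. In the interest of keeping the exposition self-contained without reproving standard homotopical algebra, I would handle this by citing \cite[Heller]{Heller} and \cite[Joyal]{Joyal} for the existence of the injective structure with these explicit generating sets on any small-diagram category of simplicial sets, and then only spell out the verification of properness and the SM7 axiom, both of which are short reductions to the well-known properties of the Kan–Quillen model structure on $\sS$ performed schemewise. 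The remaining assertions — that the model structure is combinatorial — are then formal consequences of local presentability of $\sM^G_k$ together with cofibrant generation.
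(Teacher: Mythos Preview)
Your proposal is correct and follows the same approach as the paper, which in fact gives no proof at all: the paper simply states the theorem with the references \cite{Heller} and \cite{Joyal} preceding it, leaving the verification of the additional adjectives (combinatorial, proper, simplicial) implicit. Your write-up is therefore strictly more detailed than the paper's treatment. One small point worth tightening: in your argument for right properness and for the direct form of SM7 you implicitly use that injective fibrations are in particular schemewise fibrations (since they have the right lifting property against the projective generating trivial cofibrations $J^{\rm sch}_{\rm proj}$); once this is noted, the reduction to the Kan--Quillen structure on $\sS$ goes through as you describe, and your alternative pushout-product verification of SM7 is the cleaner route anyway.
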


The third model structure one can consider is an example of a so-called
flasque model structure \cite{Isak}. It is obtained by considering 
equivariant embeddings of smooth $G$-subschemes, generalizing the
cognate schemewise model structure  in \cite[Theorem~A.9]{PPR} for the trivial
group. For $U \in \Sm^G_k$, we consider a finite set of $G$-equivariant  
monomorphisms $V_I = \{V_i \to U\}_{i \in I}$. The categorical union 
${\underset{i \in I}\cup} V_i$ is the coequalizer of the diagram
\[
\xymatrix{
{\underset{i,j \in I}\coprod} \ V_i {\underset{U}\times} \ V_j
\ar@<+.7ex>[r] 
\ar@<-.7ex>[r] &
{\underset{i \in I}\coprod} \ V_i
}
\]
formed in $\sM^G_k$. We denote by $i_I$ the induced monomorphism
${\underset{i \in I}\cup} V_i \to U$. Note that $\0 \to U$ arises in this
way. The push-out product of maps of $i_I$ and a map between simplicial sets
exists in $\sM^G_k$. In particular, we are entitled to form the sets
\[
I^{\rm sch}_{\rm clo}(\sm^G_k) = 
\{i_I  \ \square \ {(\partial \Delta^n \subset \Delta^n)}_{+}\}_{I, n \ge 0}
\]
and
\[
J^{\rm sch}_{\rm clo}(\sm^G_k) = 
\{i_I \ \square \ (\Lambda^n_i \subset \Delta^n)_{+}\}_{I, n \ge 0, 0 \le i \le n}.
\]
A map between motivic $G$-spaces is called a closed schemewise fibration if
it has the right lifting property with respect to $J^{\rm sch}_{\rm clo}(\sm^G_k)$.
A closed schemewise cofibration is a map having the left lifting property
with respect to every trivial closed schemewise fibration.

\begin{thm}$($Flasque model structure$)$\label{thm:PMS-Flasque}
The schemewise weak equivalences, closed schemewise cofibrations and
fibrations form a combinatorial and simplicial model structure on $\sM^G_k$
with respect to the $\sS$-enrichment in ~\eqref{eqn:simp-str}.
The closed schemewise cofibrations and fibrations are generated by
$I^{\rm sch}_{\rm clo}(\sm^G_k)$ and $J^{\rm sch}_{\rm clo}(\sm^G_k)$, respectively.
Moreover, the flasque model structure is cellular and proper.
\end{thm}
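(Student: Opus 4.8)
The plan is to adapt the standard proof of the flasque model structure from \cite{Isak}, or equivalently the argument for \cite[Theorem~A.9]{PPR} in the non-equivariant setting, to the present equivariant situation. Concretely, I would recognize that the schemewise weak equivalences together with the closed schemewise cofibrations and fibrations are obtained by a left Bousfield–type localization that does not change the weak equivalences: the weak equivalences are the same as for the projective (and injective) model structures, so the only task is to check the lifting and factorization axioms for the new classes. Since $\sM^G_k$ is cofibrantly generated by Theorem~\ref{thm:PMS-Psh}, I would verify the hypotheses of the small object argument for the sets $I^{\rm sch}_{\rm clo}(\sm^G_k)$ and $J^{\rm sch}_{\rm clo}(\sm^G_k)$. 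The key inputs are: (i) the domains and codomains of the maps $i_I$ are finitely presented motivic $G$-spaces (this follows from Lemma~\ref{lem:finitecolimit}, since each $V_i\to U$ is a monomorphism of $G$-schemes and finite categorical unions of finitely presented objects are finitely presented), and hence so are the domains and codomains of the pushout-products with $(\del\Delta^n\subset\Delta^n)_+$ and $(\Lambda^n_i\subset\Delta^n)_+$; and (ii) $J^{\rm sch}_{\rm clo}(\sm^G_k)$-cell complexes are schemewise trivial cofibrations, which one checks by evaluating at each $U\in\Sm^G_k$ and using that ${\rm Ev}_U$ preserves colimits and sends each $i_I$ to a monomorphism of simplicial sets.

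First I would set up the pushout-product notation carefully, recording that for a map $i$ of motivic $G$-spaces and a map $j$ of simplicial sets the pushout-product $i\,\square\, j$ exists because $\sM^G_k$ is bicomplete and tensored over $\sS$ (Lemma~\ref{lem:elementary}). Then I would show that a map has the right lifting property against $J^{\rm sch}_{\rm clo}(\sm^G_k)$ if and only if for every $U$ and every finite family $V_I$ of $G$-equivariant monomorphisms into $U$, the induced map $\sS(U,\sZ)\to\sS(\bigcup_{i}V_i,\sZ)\times_{\sS(\bigcup_i V_i,\sW)}\sS(U,\sW)$ is a trivial Kan fibration — this is the enriched adjunction translation. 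Next I would establish the two basic compatibility lemmas: a closed schemewise trivial fibration is in particular a schemewise trivial fibration (take $I=\0$, so $i_\0$ is $\0\to U$, recovering the projective generating trivial cofibrations up to the $U_+\wedge(-)$ description), and dually every projective cofibration is a closed schemewise cofibration; combined with the identification of weak equivalences this gives one half of each factorization axiom essentially for free. The remaining half — factoring an arbitrary map as a closed schemewise cofibration followed by a schemewise trivial fibration — comes from the small object argument applied to $I^{\rm sch}_{\rm clo}(\sm^G_k)$ once we know its cell complexes are closed schemewise cofibrations and its relative cell complexes that are also weak equivalences have the right lifting property against closed schemewise fibrations. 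Properness would follow from the projective case since the weak equivalences and (acyclic) fibrations/cofibrations are comparable classwise, and cellularity from the finite presentability of the generating domains and codomains together with the effective monomorphism condition, which holds because monomorphisms of $G$-schemes and of simplicial sets are effective.

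The main obstacle I anticipate is verifying that the class of \emph{closed schemewise fibrations} — maps with the right lifting property against $J^{\rm sch}_{\rm clo}(\sm^G_k)$ — is exactly the class detected by the $G$-equivariant covering-sieve condition one wants, and more precisely showing that a $J^{\rm sch}_{\rm clo}$-injective \emph{weak equivalence} is actually $I^{\rm sch}_{\rm clo}$-injective. In the non-equivariant flasque setting this is handled in \cite{Isak} by a careful combinatorial argument with the categorical unions; here one must check that all the relevant colimits (categorical unions, coequalizers) are formed $G$-equivariantly and that forming $GV_i$-translates or intersecting with $G$-invariant opens does not break the bookkeeping — but since $\sM^G_k$ is just presheaves on the small category $\Sm^G_k$ with values in $\sS$, these colimits are computed schemewise, so the equivariant structure is carried along automatically and the argument reduces to the non-equivariant one evaluated at each object $U\in\Sm^G_k$. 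A secondary point requiring care is that each $i_I\colon \bigcup_{i\in I}V_i\to U$ is genuinely a monomorphism in $\sM^G_k$ (so that its pushout-products behave), which follows because it is a monomorphism of representable presheaves — a subobject of $h^G_U$ — and these are stable under the finite categorical unions in question. Once these checks are in place, invoking \cite[Theorems~11.3.1, 11.3.2, 12.1.5]{Hirsc} (or the analogous statements in \cite{Hirsc} used for Theorem~\ref{thm:PMS-Psh}) packages everything into the claimed combinatorial, simplicial, cellular, and proper model structure.
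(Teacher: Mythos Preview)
Your proposal is correct and aligns with the paper's approach: the paper does not give an explicit proof of this theorem but simply introduces it as an instance of Isaksen's flasque model structure \cite{Isak}, generalizing \cite[Theorem~A.9]{PPR} from the trivial group to the equivariant setting. Your outline is precisely the adaptation of the Isaksen argument that the paper leaves implicit, and your identification of the key technical points---finite presentability of the generating (co)domains via Lemma~\ref{lem:finitecolimit}, the reduction to schemewise checks since colimits in $\sM^G_k$ are computed objectwise, and the delicate verification that $J^{\rm sch}_{\rm clo}$-injective weak equivalences are $I^{\rm sch}_{\rm clo}$-injective---is accurate.
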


\vskip .3cm

\subsection{Local model structures}\label{subsection:LPMS}
Recall from \cite[Chapter~3]{Hirsc} that if $\Sigma$ is a class of
morphisms in a simplicial model structure on $\sM^G_k$, then an object $\sZ$ 
of $\sM^G_k$ is called $\Sigma$-local, if it is fibrant and for every element 
$f : \sX \to \sY$ in $\Sigma$, the induced map of homotopy function complexes 
$\sS(\sY, \sZ) \to \sS(\sX, \sZ)$
is a weak equivalence (see \cite[Definitions~3.1.4, 17.1.1]{Hirsc}).
Moreover, a map $f : \sX \to \sY$ in $\sM^G_k$ is a $\Sigma$-local 
equivalence if for every $\Sigma$-local object $\sZ$, the induced map
of homotopy function complexes $\sS(\sY, \sZ) \to \sS(\sX, \sZ)$ 
is a weak equivalence. Clearly every element of $\Sigma$ is a $\Sigma$-local
equivalence.

The {\sl left Bousfield localization} of $\sM^G_k$ with respect
to $\Sigma$ is a model category structure $L_{\Sigma}\sM^G_k$ on the
underlying category $\sM^G_k$ such that 
\begin{enumerate}
\item
weak equivalences coincide with the $\Sigma$-local equivalences of $\sM^G_k$, 
\item
cofibrations coincide with the cofibrations of $\sM^G_k$, and
\item
fibrations coincide with the maps having the
right lifting property with respect to cofibrations that are simultaneously
$\Sigma$-local equivalences.
\end{enumerate}

We shall employ the technique of Bousfield localization to define local
model structures on $\sM^G_k$. One basic idea underlying the local model
structures is that the distinguished $eN$-squares inform our definition
of locally fibrant motivic $G$-spaces, and hence the accompanying 
(co)homology theories on $\Sm^G_k$.

\begin{defn}\label{defn:locallyfibrant}
A motivic $G$-space $\sX$ is called locally projective fibrant if it is
schemewise fibrant and flasque; i.e., $\sX(\0)$ is contractible and
the square
\begin{equation}\label{eqn:flasqueness}
\xymatrix@C1.3pc{
\sX(X)  \ar[r]^{\sX(j)} \ar[d]_{\sX(p)} & \sX(A) \ar[d] \\
\sX(Y) \ar[r] & X(B)} 
\end{equation}
is homotopy Cartesian for every distinguished $eN$-square of the 
form ~\eqref{eqn:cd-square}.

The locally injective fibrant and locally flasque fibrant motivic $G$-spaces
are defined analogously by means of schemewise injective and flasque
model structures, respectively.
\end{defn}

Let $(-)^{\rm cof}: \sM^G_k \to \sM^G_k$ be a cofibrant replacement functor
in the schemewise projective model structure.

\begin{defn}\label{defn:local-fibrant-1}
A map $\sX \to \sY$ of motivic $G$-spaces is called a local projective
weak equivalence if the induced map
\[
\sS(\sY^{\rm cof}, \sZ) \to \sS(\sX^{\rm cof}, \sZ)
\]
is a weak equivalence for every locally projective fibrant motivic $G$-space
$\sZ$. A map is a local projective fibration if it has the right lifting
property with respect to projective cofibrations which are simultaneously
local projective weak equivalence. The local injective and local flasque
weak equivalences and fibrations are defined analogously.
\end{defn}

\begin{thm}\label{thm:LFMS-Psh}
The category $\sM^G_k$ acquires local projective, injective and flasque
model structures. All of these model structures are combinatorial, proper
and simplicial. The identity functors from the local projective model 
structure to the local flasque and local injective model structures are
left Quillen equivalences. 
\end{thm}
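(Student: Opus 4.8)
The plan is to obtain each of the three local model structures as a left Bousfield localization of the corresponding schemewise model structure (projective, injective, or flasque) with respect to a suitable set of morphisms $\Sigma$, and then to read off the stated properties from general localization theory together with the $cd$-structure results already established. Concretely, for the projective case I would take $\Sigma$ to be the set of morphisms $\{\,Q(X) \to \ast\,\}$ forcing the square \eqref{eqn:flasqueness} to become homotopy Cartesian — equivalently, the pushout-product type maps associated to the distinguished $eN$-squares \eqref{eqn:cd-square}, together with the map $h^G_\emptyset \to \ast$ from the representable on the empty scheme. Since $\Sm^G_k$ is essentially small, there are only a set of isomorphism classes of distinguished $eN$-squares, so $\Sigma$ is genuinely a set. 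The schemewise projective model structure is combinatorial (Theorem~\ref{thm:PMS-Psh}), hence left proper and cellular enough that Hirschhorn's existence theorem \cite[Theorem~4.1.1]{Hirsc} applies and produces $L_\Sigma \sM^G_k$; this is Definition~\ref{defn:local-fibrant-1} unwound, once one checks that $\Sigma$-local objects are exactly the locally projective fibrant motivic $G$-spaces of Definition~\ref{defn:locallyfibrant}. That last identification is the standard argument: a schemewise fibrant $\sX$ is $\Sigma$-local iff $\sS(-,\sX)$ sends each generating map in $\Sigma$ to a weak equivalence, which by adjunction and the Yoneda lemma translates into the square \eqref{eqn:flasqueness} being homotopy Cartesian and $\sX(\emptyset)$ being contractible. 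The injective and flasque cases are identical, using Theorems~\ref{thm:LMS-Psh} and~\ref{thm:PMS-Flasque} as the respective starting points; the flasque case also uses that it is cellular, which Theorem~\ref{thm:PMS-Flasque} records.

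Next I would verify the four adjectives. Combinatoriality: a left Bousfield localization of a combinatorial simplicial model category is again combinatorial \cite[Theorem~4.1.1]{Hirsc}, and for the flasque (cellular but a priori not combinatorial) case I would instead invoke that the flasque structure is in fact combinatorial — or, failing a clean citation, note that it is cellular and left proper so that the localization still exists and the remaining properties go through. Simpliciality: Bousfield localization of a simplicial model category with respect to a set of morphisms is simplicial \cite[Theorem~4.1.1]{Hirsc}. Left properness is preserved under left Bousfield localization \cite[Proposition~3.4.4]{Hirsc}. Right properness is the one genuinely nontrivial point and is the place where I expect to lean hardest on Section~\ref{section:CD-structure}: for motivic spaces with respect to a topology coming from a \emph{complete, regular, and bounded} $cd$-structure, right properness of the local model structure is exactly the content of Voevodsky's machinery (the Brown–Gersten property), cf.~\cite[Lemma~2.9, Proposition~2.15, Theorem~2.7]{Voev1}, applied here via Theorem~\ref{thm:Comparison} and Corollary~\ref{cor:Sheaf-Nis}. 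So right properness reduces, via Theorem~\ref{thm:Comparison}, to the classical fact that the local model structure for a bounded complete regular $cd$-topology is right proper.

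Finally, the Quillen equivalences. The identity functor $\sM^G_k \to \sM^G_k$ from the schemewise projective to the schemewise flasque (resp.~injective) structure is a left Quillen functor, since projective cofibrations are closed schemewise cofibrations are schemewise (injective) cofibrations, and all three structures share the same schemewise weak equivalences; in fact these are Quillen equivalences already before localizing. Left Bousfield localization at the \emph{same} set $\Sigma$ is functorial in the sense that a Quillen equivalence $\sM \rightleftarrows \sN$ descends to a Quillen equivalence $L_\Sigma \sM \rightleftarrows L_\Sigma \sN$ \cite[Theorem~3.3.20]{Hirsc}, provided the left adjoint carries the localizing set to the localizing set — which here is immediate because the identity does. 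Hence the induced identity functors from $L_\Sigma^{\mathrm{proj}}\sM^G_k$ to $L_\Sigma^{\mathrm{flasque}}\sM^G_k$ and $L_\Sigma^{\mathrm{inj}}\sM^G_k$ are left Quillen equivalences, as claimed.

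The main obstacle I anticipate is right properness: unlike the other three properties, it is not formal from Bousfield localization and must be extracted from the structure of the $eN$-topology. The clean route is to package it through the equivalence of the $eN$-topology with a complete, regular, bounded $cd$-structure (Theorem~\ref{thm:Comparison}) and then quote Voevodsky \cite[\S2]{Voev1}: in that setting locally fibrant replacement can be computed by a finite (bounded) tower of homotopy-Cartesian patching steps, which is precisely what is needed to show that pullbacks of local weak equivalences along fibrations remain local weak equivalences. A secondary, smaller obstacle is confirming the flasque schemewise structure has the technical hypotheses (cellular and left proper, or combinatorial) needed for the localization existence theorem; this is where \cite{Isak} and Theorem~\ref{thm:PMS-Flasque} are used.
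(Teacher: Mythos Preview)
Your approach is essentially the same as the paper's: construct each local model structure as a left Bousfield localization of the corresponding schemewise model structure at maps built from distinguished $eN$-squares, identify the $\Sigma$-local objects with the flasque objects of Definition~\ref{defn:locallyfibrant}, read off combinatorial/simplicial/left proper from \cite[Theorem~4.1.1]{Hirsc}, and deduce the Quillen equivalences from \cite[Theorem~3.3.20]{Hirsc}.

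The one point where your argument is underspecified is right properness. You treat it as a single claim and appeal to Voevodsky's $cd$-machinery, but the references you cite from \cite{Voev1} (Lemma~2.9, Proposition~2.15, Theorem~2.7) concern the sheaf condition and cohomological dimension, not right properness per se. The paper instead invokes \cite[Theorem~1.5]{Blander} directly for the local \emph{projective} model structure; Blander's argument does use the Brown--Gersten property, so your instinct is correct, but the citation should go there. More substantively, you do not explain how right properness passes to the local injective and flasque structures. The paper handles this via the auxiliary Lemma~\ref{lem:PF-fib}: since injective and flasque cofibrations contain the projective ones, any local injective or flasque fibration is automatically a local projective fibration; combined with the observation that the three local model structures share the same weak equivalences, right properness of the projective structure immediately yields right properness of the other two. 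This transfer step is short but is not automatic from Bousfield localization, and your proposal would benefit from making it explicit.

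A minor technical point: the paper uses slightly different localizing sets for the projective case ($\Sigma^{hp}_{\rm Nis}$, built from homotopy pushouts) versus the injective and flasque cases ($\Sigma^{p}_{\rm Nis}$, built from categorical pushouts), because $B\to Y$ need not be a projective cofibration. This does not affect the Quillen-equivalence argument, since the resulting classes of local weak equivalences coincide, but it is worth being precise about when you say you are localizing ``at the same set.''
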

\begin{proof}
The schemewise model structures are combinatorial and left proper ones,
and hence suitable fodder for Bousfield localizations $L_{\Sigma}M^G_k$,
where we define $\Sigma$ by means of distinguished $eN$-squares. In order
to identify these Bousfield localizations with the definitions above, we
shall make repeated use of the fact that the cofibrations and the fibrant
objects determine the weak equivalences in any model structure.
The existence of the model structures follows by reconciling the locally
fibrant motivic $G$-spaces in the sense of Definition~\ref{defn:locallyfibrant}
with the fibrant objects in the Bousfield localizations determined by $\Sigma$.
Once we do these identifications, the claim about the simplicial and the
left properness property follows because these properties are preserved 
under Bousfield localization (see \cite[Theorem~4.1.1]{Hirsc}).

We start by defining $\Sigma$ in the case of the local projective model
structure. For a distinguished $eN$-square $Q$ as in ~\eqref{eqn:cd-square}, 
let $Q^{hp}$ be the homotopy push-out of $A \leftarrow B \rightarrow Y$ in the
schemewise projective model structure. There is a canonical map
$Q^{hp} \to X$ and we set
\begin{equation}\label{eqn:Ghp-local}
\Sigma^{hp}_{\rm Nis} = \{Q^{hp} \to X\}_Q \cup \{* \to \0_{+}\}.
\end{equation}
In the case of the local injective and flasque model structures, we consider
the categorical push-out $Q^p$ of $A \leftarrow B \rightarrow Y$ in
$\sM^G_k$. There is a canonical map $Q^p \to X$ and we set
\begin{equation}\label{eqn:G-local}
\Sigma^{p}_{\rm Nis} = \{Q^{p} \to X\}_Q \cup \{* \to \0_{+}\}.
\end{equation}

We claim that the fibrant objects in $L_{\Sigma^{hp}_{\rm Nis}}$ coincide with the
local projective fibrant objects introduced in 
Definition~\ref{defn:locallyfibrant}. In effect, an object in the 
localization $L_{\Sigma^{hp}_{\rm Nis}}$ is fibrant if and only if it is 
schemewise fibrant and $\Sigma^{hp}_{\rm Nis}$-local. 
But by adjunction, this is same
as saying that it takes a distinguished $eN$-square to a homotopy
Cartesian square and this in turn is same as saying that it is locally 
projective fibrant. The right properness of the local projective model
structure $L_{\Sigma^{hp}_{\rm Nis}}$ follows from \cite[Theorem~1.5]{Blander}.
There is a parallel story for the injective and flasque model structures.
In these cases, $B \to Y$ is a cofibration so that the categorical 
push-out $Q^p$
is a model for the homotopy push-out. For this reason, it suffices to consider
the set $\Sigma^p_{\rm Nis}$ when constructing the local injective and flasque
model structures.

We also observe that the weak equivalences in the local projective,
injective and flasque model structures are same. 
It follows from Lemma~\ref{lem:PF-fib} that a map which is either a 
local injective fibration or a local flasque fibration, is also a 
local projective fibration. We conclude from this that
that the local injective and the local flasque model 
structures are also right proper.

By \cite[Theorems~2.2, 3.7]{Isak}, it follows that the identity functors from 
the schemewise projective model structure to schemewise flasque and
the schemewise injective model structures on $\sM^G_k$ are left Quillen 
equivalences. We have just shown that the local 
projective, local injective and local flasque model structures 
are obtained by the Bousfield localizations of the corresponding schemewise
model structures with respect to the same set.
The second part of the theorem now follows from \cite[Theorem~3.3.20]{Hirsc}.
This completes the proof.
\end{proof}

\begin{lem}\label{lem:PF-fib}
Let $\Sigma$ be a set of maps in $\sM^G_k$. 
Suppose $f :\sX \to \sY$ is a fibration in the Bousfield localization  
$L_{\Sigma}\sM^G_k$ with respect to the schemewise injective model 
structure or the schemewise flasque model structure.  
Then $f$ is a fibration in the Bousfield localization with respect 
to the schemewise projective model structure.
\end{lem}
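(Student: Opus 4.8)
The plan is to exploit the fact that all three schemewise model structures (projective, injective, flasque) on $\sM^G_k$ have the same weak equivalences, so their Bousfield localizations $L_\Sigma$ with respect to any fixed $\Sigma$ also share the same weak equivalences (the $\Sigma$-local equivalences). Thus a fibration $f : \sX \to \sY$ in $L_\Sigma\sM^G_k$ for the injective or flasque structure is, by definition, a map with the right lifting property against all maps that are both cofibrations (in that structure) and $\Sigma$-local equivalences. To show $f$ is a fibration in $L_\Sigma\sM^G_k$ for the projective structure, I must verify it has the right lifting property against all maps that are both \emph{projective} cofibrations and $\Sigma$-local equivalences.

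The key observation is the inclusion of cofibrations: every projective cofibration is a closed schemewise (flasque) cofibration and also a schemewise injective cofibration, because the projective model structure has the fewest cofibrations among the three (this is exactly the content of the left Quillen equivalences from the schemewise projective structure to the flasque and injective ones, cf.\ Theorem~\ref{thm:PMS-Psh}, Theorem~\ref{thm:LMS-Psh}, Theorem~\ref{thm:PMS-Flasque} and \cite[Theorems~2.2, 3.7]{Isak}; concretely the generating projective cofibrations $U_+ \wedge (\partial\Delta^n \subset \Delta^n)_+$ are closed schemewise cofibrations and injective cofibrations). First I would record this inclusion of classes of cofibrations, noting it is unchanged by passing to the Bousfield localizations since localization does not alter the cofibrations (property (2) in the definition of $L_\Sigma$). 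So: let $g : C \to D$ be a projective cofibration which is a $\Sigma$-local equivalence. Then $g$ is simultaneously an injective (resp.\ flasque) cofibration and a $\Sigma$-local equivalence, hence a trivial cofibration in $L_\Sigma\sM^G_k$ for the injective (resp.\ flasque) structure. Since $f$ is a fibration in that localized structure, $f$ has the right lifting property against $g$. As $g$ was an arbitrary trivial projective cofibration in the localized sense, $f$ is a fibration in $L_\Sigma\sM^G_k$ for the projective structure.

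I do not anticipate a serious obstacle here; the only point requiring care is making sure the weak equivalences genuinely agree across the three localizations. This follows because the $\Sigma$-local equivalences depend only on $\Sigma$ and on the class of $\Sigma$-local objects, and the latter is characterized purely in terms of homotopy function complexes $\sS(\sY,\sZ) \to \sS(\sX,\sZ)$ being weak equivalences of simplicial sets together with fibrancy in the respective structure; since the three schemewise structures have identical weak equivalences and a left Quillen equivalence identifies the homotopy categories, one checks that $\sZ$ is $\Sigma$-local for one structure iff (a fibrant replacement of) it is for the others, and hence the $\Sigma$-local equivalences coincide. With that in hand the lifting argument above is formal. I would write the proof in roughly four sentences: state the inclusion of cofibration classes, note localization preserves cofibrations, note the weak equivalences agree, then conclude by the right-lifting-property characterization of fibrations in $L_\Sigma$.
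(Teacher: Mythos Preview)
Your proposal is correct and follows essentially the same approach as the paper's proof: the paper notes that every schemewise projective cofibration is also a schemewise injective and flasque cofibration (citing \cite[Proposition~11.6.3]{Hirsc} and \cite[Theorem~3.7]{Isak}), and that the weak equivalences in the three Bousfield localized model structures coincide (referring back to the proof of Theorem~\ref{thm:LFMS-Psh}), from which the lifting argument is immediate. Your write-up is more explicit about the RLP argument and the justification for why the $\Sigma$-local equivalences agree, but the structure is identical.
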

\begin{proof}
Every schemewise projective cofibration is also a schemewise injective and 
flasque cofibration, cf.~ \cite[Proposition~11.6.3.]{Hirsc}, 
\cite[Theorem~3.7]{Isak}. 
The result follows now since the weak equivalences in the Bousfield localized 
model structures on $L_{\Sigma}\sM^G_S$ coincide,
cf.~the proof of Theorem \ref{thm:LFMS-Psh}.
\end{proof}

Combining Theorems~\ref{thm:Comparison} and \ref{thm:LFMS-Psh}, we get
the following explicit description of the weak equivalences in the 
local projective, injective and flasque model structures on $\sM^G_k$
when $G$ is a finite constant group scheme over $k$.
This description
is closest to the description of local weak equivalence
of simplicial presheaves in the non-equivariant Nisnevich topology and 
reflects our usage of the $eN$-topology.

\begin{thm}\label{thm:Local-we}
Assume that $G$ is a finite constant group scheme over $k$.
A map $f: \sX \to \sY$ in $\sM^G_k$ is a weak equivalence in the local
projective, injective, and flasque model structures if and only if
for all $X \in \Sm^G_k$ and all $x \in X$, the map of simplicial sets
$\sX(X^h_{Gx}) \to \sY(X^h_{Gx})$ (see \S~\ref{subsection:Points}) is a weak 
equivalence.
\end{thm}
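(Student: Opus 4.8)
The plan is to reduce the statement to the general theory of $cd$-structures and points. By Theorem~\ref{thm:LFMS-Psh} the three local model structures share the same weak equivalences, so it suffices to treat, say, the local projective one. The key input is that the $eN$-topology is induced by a complete, regular, bounded $cd$-structure (Theorem~\ref{thm:Comparison}), together with Proposition~\ref{prop:Point-Cons}, which says the functors $\underline{x}$ with $X\in\Sm^G_k$, $x\in X$ form a conservative family of points. First I would recall that for a site with enough points, a map of simplicial presheaves is a local weak equivalence (in the sense of Jardine/Blander, i.e.\ an $\Sigma^{hp}_{\rm Nis}$-local equivalence) if and only if it induces a weak equivalence on all stalks. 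The stalk of a presheaf $\sF$ at the point $\underline{x}$ was identified in \S~\ref{subsection:Points} with $\sF(X^h_{Gx})$, where $X^h_{Gx}=\Spec(\sO^h_{X,Gx})$ is the filtered limit of $eN$-neighborhoods of $(X,Gx)$; since $\sF$ commutes with such filtered limits when $\sF$ is built from representables (and in general stalks are computed as this filtered colimit), the stalk of $\sX$ at $\underline{x}$ is $\sX(X^h_{Gx})$ up to the schemewise weak equivalence given by cofibrant replacement.

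The main steps, in order, are as follows. (1) Observe that since the $cd$-structure is complete, regular and bounded, the localization $L_{\Sigma^{hp}_{\rm Nis}}\sM^G_k$ coincides with the local injective (Jardine) model structure with respect to the $eN$-topology; this is exactly the content of Voevodsky's \cite[Lemma~2.9, Proposition~2.15]{Voev1} and Blander's comparison \cite{Blander}, combined with the identification carried out in the proof of Theorem~\ref{thm:LFMS-Psh}. Concretely, a schemewise-fibrant $\sX$ is $\Sigma^{hp}_{\rm Nis}$-local iff it sends distinguished $eN$-squares to homotopy Cartesian squares iff it is a local fibrant object, i.e.\ the Nisnevich descent condition in Corollary~\ref{cor:Sheaf-Nis} holds up to homotopy. (2) For Jardine-type local model structures on a site with enough points, the weak equivalences are precisely the stalkwise weak equivalences; this is standard (Jardine, \emph{Simplicial presheaves}), and applies here by Proposition~\ref{prop:Point-Cons}. (3) Identify the stalk functor: the stalk of $\sX$ at $\underline{x}$ is $\hocolim$ over $eN$-neighborhoods of $(X,Gx)$, and because $\sM^G_k$ is locally finitely presented (Lemma~\ref{lem:elementary}) and $X^h_{Gx}$ is a \emph{cofiltered} limit of finite-type $G$-schemes, evaluation at $X^h_{Gx}$ agrees with this filtered colimit for finitely presented $\sX$; one then bootstraps to all $\sX$ via cofibrant replacement and the fact that cofibrant objects are built cellularly from representables. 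Hence $f$ is a local weak equivalence iff $\sX(X^h_{Gx})^{\rm cof}\to\sY(X^h_{Gx})^{\rm cof}$ is a weak equivalence for all $(X,x)$, which is the asserted condition.

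I expect the only genuine subtlety to be step (3): one must be careful that ``evaluation at the pro-scheme $X^h_{Gx}$'' really does compute the stalk. The point is that $X^h_{Gx}$ is not an object of $\Sm^G_k$, but only a pro-object; the functor $\underline{x}$ on presheaves is defined by left Kan extension (as in \S~\ref{subsection:Points}), and for a general simplicial presheaf $\sX$ the stalk is $\colim_{(U,Gx)}\sX(U)$ over the cofiltered system of $eN$-neighborhoods. That this colimit is computed correctly by "$\sX(X^h_{Gx})$" requires that $\sX$ preserve the relevant cofiltered limits of schemes, which holds for representables by definition of $\Hom$ out of a limit, and then passes to all $\sX$ since every motivic $G$-space is a filtered colimit of finite colimits of representables (Lemma~\ref{lem:finitecolimit}) and both $\colim_{(U,Gx)}(-)$ and the stalk functor commute with these colimits; the only place homotopy-theoretic care is needed is that filtered colimits of simplicial sets are homotopy colimits, which is automatic. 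Everything else is an invocation of the already-established structural results.

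\medskip

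\begin{proof}[Proof sketch]
By Theorem~\ref{thm:LFMS-Psh} the local projective, injective and flasque model structures on $\sM^G_k$ have the same weak equivalences, so we may work with the local injective structure, which by Theorem~\ref{thm:Comparison} and the proof of Theorem~\ref{thm:LFMS-Psh} is the Jardine model structure on simplicial presheaves for the $eN$-topology.

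Since the $eN$-topology is generated by a complete, regular and bounded $cd$-structure (Theorem~\ref{thm:Comparison}), the $eN$-site has enough points by Proposition~\ref{prop:Point-Cons}: the family $\{\underline{x}\mid X\in\Sm^G_k,\ x\in X\}$ is conservative. For the Jardine model structure on a site with enough points, a morphism is a local weak equivalence precisely when it induces a weak equivalence on all stalks.

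It remains to identify the stalk at $\underline{x}$. By construction (\S~\ref{subsection:Points}), $X^h_{Gx}=\Spec(\sO^h_{X,Gx})$ is the cofiltered limit of all $eN$-neighborhoods $(U,Gx)$ of $(X,Gx)$, and the stalk of a simplicial presheaf $\sX$ at $\underline{x}$ is
\[
\sX_{\underline{x}} \;=\; \colim_{(U,Gx)} \sX(U).
\]
For $\sX$ representable this colimit is $\sX(X^h_{Gx})$ since $\Hom$ out of a cofiltered limit of schemes is the filtered colimit of the $\Hom$'s. Every motivic $G$-space is a filtered colimit of finite colimits of representables (Lemma~\ref{lem:finitecolimit}), and both $\colim_{(U,Gx)}(-)$ and evaluation commute with these colimits (using that $\sM^G_k$ is locally finitely presented, Lemma~\ref{lem:elementary}), with filtered colimits of simplicial sets being homotopy colimits; hence $\sX_{\underline{x}}\simeq \sX(X^h_{Gx})$ after cofibrant replacement in the schemewise projective structure. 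Therefore $f:\sX\to\sY$ is a local weak equivalence in all three model structures if and only if $\sX(X^h_{Gx})\to\sY(X^h_{Gx})$ is a weak equivalence of simplicial sets for all $X\in\Sm^G_k$ and all $x\in X$.
\end{proof}
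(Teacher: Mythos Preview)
Your proposal is correct and follows essentially the same route as the paper: both reduce to showing that local weak equivalences coincide with stalkwise weak equivalences for the conservative family of points supplied by Proposition~\ref{prop:Point-Cons}, using that the $eN$-topology arises from a complete, regular, bounded $cd$-structure (Theorem~\ref{thm:Comparison}). The paper routes this through \cite[Theorem~3.8]{Voev1} and \cite[Remark~2.1.3]{MV}, while you invoke Jardine's characterization more directly; these are equivalent moves.

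One remark: your step~(3) is unnecessary work. In \S\ref{subsection:Points} the symbol $\sX(X^h_{Gx})$ is \emph{defined} as the value of the left-Kan-extended point functor $\underline{x}^*$ on $\sX$---that is, it already denotes the stalk $\colim_{(U,Gx)}\sX(U)$, not a literal evaluation at a scheme outside $\Sm^G_k$ that would then need to be compared to the stalk. So there is nothing to identify.
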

\begin{proof}
It follows from Theorems~\ref{thm:Comparison} and ~\ref{thm:LFMS-Psh} and 
\cite[Theorem~3.8]{Voev1} that $f$ is a weak equivalence in the local
projective, injective and flasque model structure if and only if
the following hold.
\begin{enumerate}
\item
The map $f_* : \pi_0(\sX) \to \pi_0(\sY)$ induces isomorphism
of the associated sheaves.
\item
For all $X \in \Sm^G_k$, all choices of base points $x \in \sX(X)_0$ and
all $n \ge 1$, the map $f_* : \pi_n(\sX, x) \to \pi_n(\sY, f(x))$ 
induces an isomorphism of the associated $eN$-sheaves on the 
site $\Sm^G_k \downarrow X$.
\end{enumerate} 

But this is same as saying that for all points $x^*:\sM^G_k \to \sS$ of the 
$eN$-site $\Sm^G_{k/Nis}$, the map $f_*: x^*(\sX) \to x^*(\sY)$ is a weak 
equivalence. Recall here that the $eN$-topology on $\Sm^G_k$ is sub-canonical 
(Corollary~\ref{cor:sub-can}) and hence every point 
$x^*: \Shv^G_{\Sm_k} \to Sets$ (of the site $\Sm^G_{k/Nis}$) has the 
left Kan extension to a functor $x^*: \sM^G_k \to \sS$.

Now, it follows from \cite[Remark~2.1.3]{MV} that $f: \sX \to \sY$ is a 
local weak equivalence if and only if $f_*: x^*(\sX) \to x^*(\sY)$ is a weak
equivalence for all $x$ lying in a conservative family of points of 
$\Sm^G_{k/Nis}$.
The theorem now follows by applying Proposition~\ref{prop:Point-Cons}.
\end{proof}

The following result is another consequence of Theorem~\ref{thm:Comparison}.
A refined version, see Theorem~\ref{thm:A1-flasq},
will be used to prove representability 
of equivariant $K$-theory in the equivariant motivic homotopy 
category.

\begin{prop}\label{prop:Flasq-fib}
Let $\sX$ be a motivic $G$-space and $\sX \to \wh{\sX}$ a fibrant replacement 
in the local projective model structure on $\sM^G_k$. 
Then $\sX$ is flasque if and only if the map $\sX \to \wh{\sX}$ is a 
schemewise weak equivalence. 
The same result holds for the local injective and local flasque model 
structures on $\sM^G_k$. 
\end{prop}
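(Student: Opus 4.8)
The plan is to characterize flasqueness of $\sX$ via the fibrant replacement $\sX \to \wh{\sX}$ by exploiting Corollary~\ref{cor:Sheaf-Nis}: in the local projective model structure, a schemewise fibrant object is fibrant precisely when it sends distinguished $eN$-squares to homotopy Cartesian squares (that is, when it is flasque in the sense of Definition~\ref{defn:locallyfibrant}). First I would record the easy direction: if $\sX \to \wh{\sX}$ is a schemewise weak equivalence, then since $\wh{\sX}$ is locally projective fibrant it takes distinguished $eN$-squares to homotopy Cartesian squares (Definition~\ref{defn:locallyfibrant}); because $\sX \to \wh{\sX}$ is a schemewise equivalence, the square~\eqref{eqn:flasqueness} for $\sX$ is levelwise equivalent to the corresponding one for $\wh{\sX}$, hence also homotopy Cartesian, so $\sX$ is flasque.

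For the converse, suppose $\sX$ is flasque (and we may as well take $\sX$ schemewise fibrant, replacing it by a schemewise fibrant model, which changes nothing up to schemewise equivalence). Then $\sX$ already satisfies both conditions characterizing fibrant objects in $L_{\Sigma^{hp}_{\rm Nis}}$ from the proof of Theorem~\ref{thm:LFMS-Psh}: it is schemewise fibrant and $\Sigma^{hp}_{\rm Nis}$-local, the latter being exactly the statement that it takes distinguished $eN$-squares to homotopy Cartesian squares (plus the empty-scheme condition $\sX(\0)\simeq *$). Hence $\sX$ is itself fibrant in the local projective model structure. Now $\sX \to \wh{\sX}$ is a local projective weak equivalence between local projective fibrant objects; I would invoke Theorem~\ref{thm:Local-we} — or more directly the fact that a weak equivalence between fibrant objects in a left Bousfield localization is detected by the original model structure after the target is seen to be fibrant there, via Whitehead's theorem in $L_{\Sigma}\sM^G_k$ together with the fact that the two objects are cofibrant-fibrant — to conclude that $\sX \to \wh{\sX}$ is a schemewise weak equivalence. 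Concretely: since both are $\Sigma^{hp}_{\rm Nis}$-local and the map is a $\Sigma^{hp}_{\rm Nis}$-local equivalence, it induces a weak equivalence on $\sS(-,\sZ)$ for $\sZ$ ranging over local fibrant objects, in particular it is a simplicial homotopy equivalence, hence a schemewise weak equivalence.

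For the injective and flasque variants, the argument is formally identical after replacing $\Sigma^{hp}_{\rm Nis}$ by $\Sigma^{p}_{\rm Nis}$, using that in those schemewise structures $B \to Y$ is a cofibration so that the categorical pushout $Q^p$ already computes the homotopy pushout (as noted in the proof of Theorem~\ref{thm:LFMS-Psh}); the notion of ``schemewise fibrant'' still means levelwise Kan, and ``flasque'' is the same homotopy-Cartesian condition on~\eqref{eqn:flasqueness}, so Corollary~\ref{cor:Sheaf-Nis} and the identification of $\Sigma^{p}_{\rm Nis}$-local objects carry over verbatim. The main obstacle I anticipate is the bookkeeping in the converse direction: one must be careful that ``fibrant replacement in the local projective model structure'' is taken with $\sX$ already cofibrant (every object is, since projective cofibrations include all schemewise cofibrations of representables — or one passes to a cofibrant model), so that $\sX \to \wh{\sX}$ is a weak equivalence between cofibrant-fibrant objects in $L_{\Sigma^{hp}_{\rm Nis}}\sM^G_k$ and Whitehead's theorem applies to upgrade it to a simplicial homotopy equivalence, which is then schemewise. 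Everything else is a direct appeal to Corollary~\ref{cor:Sheaf-Nis}, Theorem~\ref{thm:LFMS-Psh}, and Theorem~\ref{thm:Local-we}.
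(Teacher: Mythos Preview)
Your argument for the projective case is correct but takes a different route from the paper. The paper never reduces to the case where $\sX$ is schemewise fibrant; instead it first shows $\wh{\sX}$ is flasque (citing \cite[Lemma~4.1]{Blander}) and then invokes \cite[Lemma~3.5]{Voev1}, a $cd$-structure result saying that a local weak equivalence between flasque presheaves is automatically schemewise. Your approach---replace $\sX$ by a schemewise fibrant model, observe it is then locally projective fibrant by the identification in the proof of Theorem~\ref{thm:LFMS-Psh}, and apply the local Whitehead theorem (\cite[Theorem~3.2.13]{Hirsc}) to the local weak equivalence between local fibrant objects---is equally valid and arguably more self-contained, since it avoids the external references to Voevodsky and Blander. (Your invocation of Corollary~\ref{cor:Sheaf-Nis} is a red herring, though; that result is about sheaves of sets. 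The content you need is exactly the fibrant-object identification from Theorem~\ref{thm:LFMS-Psh}.)

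There is, however, a genuine slip in your treatment of the injective and flasque variants: you write that ``the notion of `schemewise fibrant' still means levelwise Kan,'' which is false. In the schemewise injective model structure the fibrant objects are the injective-fibrant presheaves, a strictly stronger condition than being levelwise Kan; similarly for the flasque structure. Your strategy can be repaired by taking a schemewise \emph{injective} (resp.\ flasque) fibrant replacement before running Whitehead, but the paper handles this more cleanly: it uses Lemma~\ref{lem:PF-fib} to observe that a local injective or local flasque fibrant replacement is automatically also a local projective fibrant replacement, thereby reducing those cases to the projective one already established. That reduction is worth adopting.
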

\begin{proof}
First suppose that $\wh{\sX}$ is a fibrant replacement of $\sX$ in the local projective model structure on $\sM^G_S$.
It follows from Theorem~\ref{thm:Comparison} and \cite[Lemma~4.1]{Blander} that $\wh{\sX}$ is a flasque presheaf.
 
If $\wh{\sX}$ is a fibrant replacement of $\sX$ in the local injective or flasque model structure, 
then it follows from Lemma~\ref{lem:PF-fib} that it is a fibrant replacement of $\sX$ in the local projective model structure too. 
Hence $\wh{\sX}$ is flasque as shown above.

Suppose now that $\sX$ is flasque. 
Theorem~\ref{thm:Comparison} and \cite[Lemma~3.5]{Voev1} imply that $\sX \to \wh{\sX}$ is a schemewise weak equivalence. 
The converse implication is trivial.
\end{proof}

\section{The equivariant motivic homotopy category 
${\rm Ho}^G_{\A^1}(k)$}\label{section:UHC}
In this section we construct the unstable homotopy category of motivic
$G$-spaces. This is done by the following $\A^1$-localization of our 
local model structures.

\subsection{$\A^1$-localization of $\sM^G_k$}
\label{section:A-Local}
Let $T$ be a site with category of presheaves ${\rm {\bf PSh}}(T)$.
Let $pt$ denote the terminal object of ${\rm {\bf PSh}}(T)$.
Recall from \cite[2.2.3]{MV} that an interval on a site $T$
is a presheaf $I \in {\rm {\bf PSh}}(T)$ together with morphisms:
\[ 
\mu : I \times I \to I ; \ \ \ i_0, i_1: pt \to I
\]
where $pt$ is the terminal object in $Psh(T)$ with 
the canonical morphism $p : I \to pt$ such that 
\[
\mu(i_0 \times {\rm id}_I) = \mu({\rm Id}_I \times i_0) = i_0 \circ p
\]
\[
\mu(i_1 \times {\rm id}_I) = \mu({\rm Id}_I \times i_1) = {\rm id}_I 
\]
and the morphism $i_0 \coprod i_1 : pt \coprod pt \to I$ is a monomorphism.

In what follows, we let $I = \A^1_k$ with trivial $G$-action and $pt =
\Spec(k)$ such that $i_0(s) = (s,0), i_1(s) = (s,1)$ and 
$\mu(a, b) = ab$. 
It is then immediate that the pair 
$\left(\Sm^G_{k/{\rm Nis}}, \A^1_k \right)$ is a site with interval. 
Since the base field $k$ is fixed throughout, we shall
write $\A^1$ for the affine line over $k$.

\begin{defn}\label{defn:A-1model-str}
The motivic projective (resp. injective, flasque) model structure on 
$\sM^G_k$ is the left Bousfield localization of its local 
projective (resp.~injective, flasque) structure with respect to the set of
projection maps 
\[\{X \times \A^1 \xrightarrow{p_X} X | \ X \in \Sm^G_k\}.\] 

The motivic $G$-spaces which are local with respect to this set of maps 
are called $\A^1$-local. The fibrant objects in the motivic projective
(resp. injective, flasque) model structure will be called $\A^1$-fibrant.
A weak equivalence in the motivic projective
(resp.~injective, flasque) model structure will be called a
{\sl motivic weak equivalence}. 
\end{defn}

In the motivic injective and flasque model structures, the $\A^1$-local
objects can be described using the following simpler criterion.
We say that a motivic $G$-space $\sX$ is {\sl $\A^1$-weak invariant} if
for all $X \in \Sm^G_k$, the naturally induced map 
$\sX(X) \to \sX(X \times \A^1)$ is a weak equivalence.

\begin{lem}\label{lem:A1-compare}
Suppose that a motivic $G$-space $\sX$ is fibrant in the local injective 
(or flasque) model structure. Then it is $\A^1$-fibrant if and only if it is 
$\A^1$-weak invariant. 
\end{lem}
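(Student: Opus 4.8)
The plan is to establish both implications by combining the standard characterization of $\A^1$-local objects in a Bousfield localization with the simplicial model category formalism already in place. First I would recall that, by construction (Definition~\ref{defn:A-1model-str}), $\sX$ is $\A^1$-fibrant if and only if it is fibrant in the local injective (or flasque) model structure \emph{and} it is $\{p_X\}$-local, i.e. for every $X \in \Sm^G_k$ the map of homotopy function complexes $\sS(X, \sX) \to \sS(X \times \A^1, \sX)$ induced by $p_X : X \times \A^1 \to X$ is a weak equivalence of simplicial sets. Since in the injective model structure every object is cofibrant, and in the flasque model structure the representables $X$ and $X \times \A^1$ are cofibrant (they are built from equivariant monomorphisms $\0 \to X$), these homotopy function complexes may be computed as the strict mapping spaces $\sS(X, \sX)$ of the $\sS$-enrichment in~\eqref{eqn:simp-str}.

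The key computation is then the identification $\sS(X, \sX) \simeq \sX(X)$ and $\sS(X \times \A^1, \sX) \simeq \sX(X \times \A^1)$, natural in $X$, so that under this identification the map $p_X^*$ becomes exactly the map $\sX(X) \to \sX(X \times \A^1)$ induced by the projection. This is immediate from the Yoneda-type formula ${\sS(X, \sX)}_n = \Hom_{\sM^G_{k,\bullet}}(X_{+} \wedge \Delta[n]_{+}, \sX) \cong \sX(X)_n$ (in the unpointed setting one uses $\Hom_{\sM^G_k}(X \times \Delta[n], \sX) \cong \sX(X)_n$, cf.~the discussion preceding Lemma~\ref{lem:finitecolimit}), where the isomorphism is the composite of evaluation at $X$ and the simplicial structure. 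Granting this, $\sX$ is $\{p_X\}$-local precisely when $\sX(X) \to \sX(X \times \A^1)$ is a weak equivalence for all $X \in \Sm^G_k$, which is the definition of $\A^1$-weak invariance. Combined with the hypothesis that $\sX$ is already fibrant in the local model structure, this gives both directions at once.

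The point requiring a little care — and the step I expect to be the main obstacle — is justifying that the strict mapping space $\sS(X, \sX)$ genuinely computes the homotopy function complex $\operatorname{map}(X, \sX)$ that appears in the Bousfield localization criterion \cite[Definition~17.4.1]{Hirsc}. This needs $X$ to be cofibrant and $\sX$ to be fibrant in the ambient (local injective or flasque) model structure; $\sX$ fibrant is exactly our hypothesis, and cofibrancy of representables holds in the injective structure trivially and in the flasque structure by Theorem~\ref{thm:PMS-Flasque} together with the fact that $\0 \to X$ is among the generating monomorphisms $i_I$. One should also note that $X \times \A^1$ is cofibrant for the same reason, and that $p_X$ is then a map between cofibrant objects, so no cofibrant replacement is needed and the induced map on mapping spaces is the literal one $\sX(X) \to \sX(X \times \A^1)$. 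With these observations recorded, the lemma follows formally; I would not belabor the routine verification that $\sS$ is a simplicial enrichment compatible with the model structures, as that is already asserted in Theorems~\ref{thm:LMS-Psh} and~\ref{thm:PMS-Flasque}.
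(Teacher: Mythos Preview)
Your proposal is correct and follows essentially the same approach as the paper: reduce $\A^1$-fibrancy to $\A^1$-locality (since $\sX$ is already locally fibrant), use cofibrancy of representables in the injective/flasque model structures together with fibrancy of $\sX$ to identify the homotopy function complexes with the strict simplicial mapping spaces, and then invoke the Yoneda-type identification $\sS(X,\sX)\cong\sX(X)$. The paper's proof compresses the second step into a citation of \cite[Proposition~16.1.3]{Hirsc}, whereas you spell it out, but the argument is the same.
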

\begin{proof}
We first observe that as $\sX$ is already locally fibrant, it is $\A^1$-fibrant
if and only if it is $\A^1$-local. The lemma is now a consequence  of 
\cite[Definition~3.1.4, Proposition~16.1.3]{Hirsc} using the observation that
every $X \in \Sm^G_k$ is cofibrant in the local injective and flasque
model structures. 
\end{proof}

\vskip .3cm

The motivic weak equivalences in $\sM^G_k$ are those maps which are 
$\Sigma^{hp}_{\rm Nis}$-local (resp. $\Sigma^p_{\rm Nis}$-local) and 
$\A^1$-local weak equivalences. 
The cofibrations coincide with the cofibrations of the underlying local model
structures and the fibrations are maps having the right
lifting property with respect to cofibrations which are simultaneously
motivic weak equivalences.

Theorem~\ref{thm:LFMS-Psh} and \cite[Theorem~4.1.1]{Hirsc} imply 
that the motivic projective, injective and flasque model structures on 
$\sM^G_k$ are left proper, cellular and simplicial. 
Moreover, right properness of the motivic model structures 
follows from \cite[Lemma~3.1]{Blander} and 
Lemma~\ref{lem:PF-fib}.

It follows from \cite[Theorem~3.3.20]{Hirsc} that the identity functors from
the motivic projective to the motivic flasque and injective model structures
are left Quillen equivalences. In particular, these  
model structures have equivalent homotopy categories, which will be denoted by 
${\rm Ho}^G_{\A^1}(k)$. Given motivic $G$-spaces $\sX$ and $\sY$,
the set $\Hom_{{\rm Ho}^G_{\A^1}(k)}(\sX, \sY)$ will be denoted by
$[\sX, \sY]_{G,\A^1}$.

\vskip .3cm

\subsubsection{$\A^1$-flasque sheaves}\label{subsubsection:A1FL}
We shall say that a motivic $G$-space is $\A^1$-flasque if it is flasque
and $\A^1$-weak invariant. As another application of 
Theorem~\ref{thm:Comparison}, we get the following extension of 
Proposition~\ref{prop:Flasq-fib} to the motivic model structures.
This result is very useful in determining the schemewise weak equivalences
of motivic $G$-spaces as demonstrated in our proof of representability 
of equivariant $K$-theory.

\begin{thm}\label{thm:A1-flasq}
A motivic $G$-space $\sX$ is $\A^1$-flasque if and only if every
fibrant replacement in the motivic injective (resp. flasque)
model structure is a schemewise weak equivalence.
A map $f: \sX \to \sY$ of $\A^1$-flasque motivic spaces is a
motivic weak equivalence if and only if it is a schemewise weak equivalence.
\end{thm}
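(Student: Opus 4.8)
The plan is to deduce both statements from Proposition~\ref{prop:Flasq-fib} together with Lemma~\ref{lem:A1-compare}, by handling the local model structure and the $\A^1$-localization separately. For the first claim, suppose $\sX$ is $\A^1$-flasque, i.e., flasque and $\A^1$-weak invariant, and let $\sX \to \wh{\sX}$ be a fibrant replacement in the motivic injective (or flasque) model structure. First I would factor this through a fibrant replacement in the underlying local injective (or flasque) model structure: choose $\sX \to \sX'$ a local fibrant replacement, and then $\sX' \to \wh{\sX}$ an $\A^1$-fibrant replacement, with $\wh{\sX}$ already local fibrant. Since $\sX$ is flasque, Proposition~\ref{prop:Flasq-fib} gives that $\sX \to \sX'$ is a schemewise weak equivalence; in particular $\sX'$ is again flasque and, since schemewise weak equivalences are $\A^1$-weak invariance-detecting on each $X$, $\sX'$ is still $\A^1$-weak invariant. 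Now $\sX'$ is a locally fibrant motivic $G$-space which is $\A^1$-weak invariant, hence $\A^1$-fibrant by Lemma~\ref{lem:A1-compare}; therefore $\sX' \to \wh{\sX}$ is a trivial cofibration between fibrant objects in the $\A^1$-local structure, so it is already a weak equivalence in the local structure, and since both are local fibrant it is a schemewise weak equivalence by Whitehead (or again by the characterization in Proposition~\ref{prop:Flasq-fib}). Composing, $\sX \to \wh{\sX}$ is a schemewise weak equivalence.

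For the converse of the first claim, suppose every motivic fibrant replacement $\sX \to \wh{\sX}$ is a schemewise weak equivalence. Then $\wh{\sX}$ is $\A^1$-fibrant, hence in particular local fibrant; by Proposition~\ref{prop:Flasq-fib} applied to $\wh{\sX}$ (which is its own local fibrant replacement up to schemewise equivalence), $\wh{\sX}$ is flasque. Flasqueness is a homotopy-invariant condition on the values $\sX(-)$—the square \eqref{eqn:flasqueness} being homotopy Cartesian is preserved and reflected under schemewise weak equivalences—so $\sX$ is flasque. Likewise, $\wh{\sX}$ being $\A^1$-fibrant and local fibrant is $\A^1$-weak invariant by Lemma~\ref{lem:A1-compare}, and since $\sX \to \wh{\sX}$ is a schemewise weak equivalence, $\sX$ inherits $\A^1$-weak invariance. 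Thus $\sX$ is $\A^1$-flasque.

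For the second claim, let $f : \sX \to \sY$ be a map of $\A^1$-flasque motivic $G$-spaces. If $f$ is a schemewise weak equivalence, it is a fortiori a local weak equivalence and hence a motivic weak equivalence. Conversely, suppose $f$ is a motivic weak equivalence. Form a commutative square with motivic fibrant replacements $\wh{\sX}$ and $\wh{\sY}$; then the induced map $\wh{f} : \wh{\sX} \to \wh{\sY}$ is a weak equivalence between fibrant objects in the motivic model structure, hence a schemewise weak equivalence (motivic weak equivalences between $\A^1$-fibrant objects are schemewise, since they are local weak equivalences between local fibrant objects, detected schemewise). By the first part of the theorem, the vertical maps $\sX \to \wh{\sX}$ and $\sY \to \wh{\sY}$ are schemewise weak equivalences. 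By two-out-of-three in the schemewise structure, $f$ is a schemewise weak equivalence.

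The main obstacle I anticipate is the bookkeeping in the first claim: one must be careful that "$\A^1$-flasque" is genuinely detected by any motivic fibrant replacement and not just by one particular factorization, which forces the two-step factorization (local fibrant replacement, then $\A^1$-fibrant replacement) and the verification that flasqueness and $\A^1$-weak invariance are each preserved along schemewise weak equivalences. The cleanest way to organize this is to note that both conditions—the square \eqref{eqn:flasqueness} being homotopy Cartesian, and $\sX(X) \to \sX(X \times \A^1)$ being a weak equivalence—are invariant under schemewise weak equivalence, and that $\A^1$-fibrant $\Leftrightarrow$ local fibrant $+$ $\A^1$-weak invariant by Lemma~\ref{lem:A1-compare}; everything else is then formal model-category manipulation using Proposition~\ref{prop:Flasq-fib}.
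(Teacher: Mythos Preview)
Your proposal is correct and follows essentially the same route as the paper's proof: factor the given motivic fibrant replacement through a local fibrant replacement, use Proposition~\ref{prop:Flasq-fib} to get the first leg schemewise, use Lemma~\ref{lem:A1-compare} to see that the intermediate object $\sX'$ is already $\A^1$-fibrant, and then invoke the local Whitehead theorem (Hirschhorn~3.2.13) to conclude that $\sX' \to \wh{\sX}$ is a schemewise weak equivalence; the second assertion is handled identically in both. One small slip: when you factor $\sX \to \wh{\sX}$ in the local model structure as (trivial cofibration)(fibration), the map $\sX' \to \wh{\sX}$ is a local \emph{fibration}, not a trivial cofibration in the $\A^1$-local structure --- what you actually need (and what the paper uses) is only that it is a motivic weak equivalence between $\A^1$-fibrant objects, which suffices for Whitehead.
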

\begin{proof}
The `if' part of the first assertion follows from 
Proposition~\ref{prop:Flasq-fib} and Lemma~\ref{lem:A1-compare}.
To prove the converse, suppose that $\sX$ is
an $\A^1$-flasque motivic $G$-space and let $f: \sX \to \wh{\sX}$ be 
an $\A^1$-fibrant replacement. By Proposition~\ref{prop:Flasq-fib},
it is enough to show that $f$ is also a locally (i.e., in the local
injective or flasque model structure) fibrant replacement. 

We factor $f$ as a composition $\sX \xrightarrow{g} \sX' \xrightarrow{f'} 
\wh{\sX}$, where $g$ is a local trivial cofibration 
(in particular, motivic trivial cofibration) and $f'$ is a local  
fibration. It follows from the 2-out-of-3 axiom that $f'$ is a motivic 
weak equivalence. We need to show that $f'$ is a local 
weak equivalence.

Since $\wh{\sX}$ is locally fibrant and $f'$ is a local fibration, it follows 
that $\sX'$ is locally fibrant. In particular, $g$ defines a locally  
fibrant replacement of $\sX$. We conclude from 
Proposition~\ref{prop:Flasq-fib} that $g$ is a schemewise weak equivalence.
We now apply the $\A^1$-weak invariance of $\sX$ and     
Lemma~\ref{lem:A1-compare} to conclude that $\sX'$ is $\A^1$-fibrant.
In particular, it is $\Sigma^p_{\rm Nis}$-local as well as $\A^1$-local.
We have thus shown that $f'$ is a motivic weak equivalence of 
$\A^1$-fibrant motivic $G$-spaces. It follows from the local
Whitehead theorem (see \cite[Theorem~3.2.12]{Hirsc}) that $f'$ is
in fact a schemewise weak equivalence. This proves the first part of the
theorem.

To prove the second assertion of the theorem for the motivic weak equivalence  
$f:\sX \to \sY$ of $\A^1$-flasque motivic $G$-spaces, we can form a
commutative diagram
\begin{equation}\label{eqn:A1fib}
\xymatrix{ \sX \ar[r]^f  \ar[d] & \sY \ar[d] \\ 
\wh{\sX} \ar[r]_{\wh{f}} & \wh{\sY}}
\end{equation}
where the vertical arrows are $\A^1$-fibrant replacements. 
It follows from the 
$2$-out-of-$3$ axiom that $\wh{f}$ is a motivic weak equivalence. In this case, 
$\wh{f}$ is a schemewise weak equivalence by \cite[Theorem~3.2.12]{Hirsc}. 
The two vertical arrows are also schemewise weak equivalences by the first 
assertion of the theorem. It follows that $f$ is a schemewise weak equivalence. 
\end{proof}

\vskip .3cm

\subsection{Equivariant vector bundles}
\label{subsubsection:EBM}
To justify the construction of the motivic model structure by inverting
the trivial line bundle $\A^1$, we show that this in fact makes all equivariant
vector bundle projections into motivic weak equivalences. 

For maps $f,g : \sX \to \sY$ of motivic $G$-spaces, an
elementary $\A^1$-homotopy from $f$ to $g$ is a morphism $H : \sX \times
\A^1 \to \sY$ such that $H \circ i_0 = f$ and $H \circ i_1 = g$. Two maps
are called equivariantly $\A^1$-homotopic if they can be connected by a sequence of 
elementary $\A^1$-homotopies. A map $f : \sX \to \sY$ is called a strict equivariant 
$\A^1$-homotopy equivalence if there is a morphism $g : \sY \to \sX$ such that
$f \circ g$ and $g \circ f$ are equivariantly $\A^1$-homotopic to the respective 
identity maps.

\begin{prop}\label{prop:Hom-inv}
Let $X \in \Sm^G_k$ and let $\sV \xrightarrow{f} X$ be a $G$-equivariant
vector bundle. Then the map of associated motivic $G$-spaces is a 
motivic weak equivalence.
\end{prop}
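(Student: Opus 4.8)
The plan is to reduce the general equivariant vector bundle projection to the trivial line bundle $\A^1$, which was inverted by construction of the motivic model structure. First I would recall the elementary but crucial observation already flagged in the introduction: every equivariant $\A^1$-homotopy equivalence is a motivic weak equivalence. Indeed, the zero section $s\colon X\to\sV$ and the projection $f\colon\sV\to X$ satisfy $f\circ s=\id_X$, and the scalar multiplication map $H\colon\sV\times\A^1\to\sV$, $(v,t)\mapsto tv$, is a $G$-equivariant elementary $\A^1$-homotopy from the constant map $s\circ f$ to $\id_\sV$ (here $G$ acts trivially on $\A^1$, so $H$ is genuinely equivariant because scalar multiplication commutes with the linear $G$-action on the fibres). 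Hence $f$ is a strict equivariant $\A^1$-homotopy equivalence. Then I would show that such an equivalence is a motivic weak equivalence: if $f,g\colon\sX\to\sY$ are equivariantly $\A^1$-homotopic via $H\colon\sX\times\A^1\to\sY$, then $H\circ i_0=f$ and $H\circ i_1=g$, and since $i_0,i_1\colon\sX\to\sX\times\A^1$ are both sections of the projection $p_\sX\colon\sX\times\A^1\to\sX$, which is a motivic weak equivalence by Definition~\ref{defn:A-1model-str} (and hence $i_0,i_1$ are motivic weak equivalences by 2-out-of-3), it follows that $f$ and $g$ become equal in $\Ho^G_{\A^1}(k)$; chaining elementary homotopies and using 2-out-of-3 once more shows a strict equivariant $\A^1$-homotopy equivalence is inverted in $\Ho^G_{\A^1}(k)$, i.e., is a motivic weak equivalence.

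There is one subtlety to address before this argument applies verbatim: the projection maps inverted in Definition~\ref{defn:A-1model-str} are the $p_X\colon X\times\A^1\to X$ for $X\in\Sm^G_k$ with $G$ acting trivially on the $\A^1$-factor, so I should make sure that for a motivic $G$-space $\sX$ (not just a representable $G$-scheme) the projection $\sX\times\A^1\to\sX$ is still a motivic weak equivalence. This follows because motivic weak equivalences are closed under filtered colimits and finite colimits of the generators in the cofibrantly generated localized structure, and by Lemma~\ref{lem:finitecolimit} every motivic $G$-space is built as such a colimit from the $(U\times\Delta[n])_+$; alternatively one invokes that in a left Bousfield localization the class $\Sigma$ generates, together with the original weak equivalences, a class closed under the relevant homotopy-colimit operations, so $-\times\A^1\to\id$ is a natural motivic weak equivalence on all of $\sM^G_k$. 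With that in hand, $i_0,i_1\colon\sX\to\sX\times\A^1$ are motivic weak equivalences for arbitrary $\sX$, and the homotopy argument above goes through.

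So the steps, in order, are: (1) verify scalar multiplication $H\colon\sV\times\A^1\to\sV$ is a well-defined $G$-equivariant morphism and exhibits $f$ as a strict equivariant $\A^1$-homotopy equivalence with inverse the zero section; (2) establish that $\sX\times\A^1\to\sX$ is a motivic weak equivalence for every motivic $G$-space $\sX$, hence so are both sections $i_0,i_1$; (3) deduce that equivariantly $\A^1$-homotopic maps agree in $\Ho^G_{\A^1}(k)$, and therefore that any strict equivariant $\A^1$-homotopy equivalence is a motivic weak equivalence; (4) conclude the proposition by applying (3) to $f\colon\sV\to X$.

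I expect the only real content — and the main potential obstacle — to be step (2): making precise that the localization at the representable projections $X\times\A^1\to X$ forces $\sX\times\A^1\to\sX$ to be a motivic weak equivalence for all $\sX$. The cleanest route is to note that $-\times\A^1$ is a left Quillen endofunctor (it is a Bousfield-localization-compatible colimit-preserving functor), so it suffices to check on the generating cofibrations' domains and codomains $(U\times\Delta[n])_+$, where it reduces to the representable case $U\times\A^1\to U$ already inverted; everything else (the homotopy-theoretic manipulations with $i_0,i_1$ and the concatenation of elementary homotopies) is formal model-category bookkeeping using 2-out-of-3. Steps (1), (3), (4) are routine.
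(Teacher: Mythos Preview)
Your proposal is correct and follows essentially the same approach as the paper: exhibit the zero section as an $\A^1$-homotopy inverse to $f$ via scalar multiplication, then argue that strict equivariant $\A^1$-homotopy equivalences are motivic weak equivalences. The paper dispatches your step~(2)/(3) by invoking the site-with-interval formalism of \cite[\S2.2.3]{MV} and citing \cite[Lemma~2.3.6]{MV} directly, while you reprove this by hand via 2-out-of-3; the paper also spells out the construction of $H$ via local trivializations and verifies $G$-equivariance fibrewise, where you simply assert it.

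One remark: your anticipated ``main potential obstacle'' in step~(2) is a non-issue here. Since $\sV\in\Sm^G_k$ is a representable smooth $G$-scheme, the projection $\sV\times\A^1\to\sV$ is already one of the maps explicitly inverted in Definition~\ref{defn:A-1model-str}, so $i_0,i_1\colon\sV\to\sV\times\A^1$ are motivic weak equivalences by 2-out-of-3 without any colimit argument. You only need the general motivic $G$-space case if you want the statement for arbitrary $\sX$, which is not required for this proposition.
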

\begin{proof} 
Recall that the $eN$-site has an interval $I$ defined in the beginning of 
\S~\ref{section:A-Local}.
Moreover, it is clear that our equivariant motivic homotopy category is
obtained precisely by inverting the $I$-local morphisms in the
sense of \cite[\S~2.2.3]{MV}. 
Hence it follows from \cite[Lemma~2.3.6]{MV} that a strict 
$\A^1$-homotopy equivalence of motivic $G$-spaces is a motivic weak equivalence.
Thus it suffices to show that the map $f:\sV \to X$ is a strict 
$\A^1$-homotopy equivalence.

We can assume that $X$ is $G$-connected in the sense that $G(k)$ acts
transitively on the set of connected components of $X$. 
Suppose $\sV$ has rank $n$ and let $\sU = \{U_1, \cdots,
U_r\}$ be a Zariski open cover (not necessarily $G$-invariant) of $X$ such 
that each $U_i = \Spec(R_i)$ is affine and $\sV_i = f^{-1}(U_i) \to U_i$ is
a trivial ordinary bundle given by $\sV_i = 
\Spec \left(R_i[X^i_1, \cdots , X^i_n]\right)$.

Define the ring map $H: R_i[X^i_1, \cdots , X^i_n] \to
R_i[T, X^i_1, \cdots , X^i_n]$ by $X^i_j \mapsto TX^i_j$. It is 
straightforward to check that since these maps are natural once we fix the
$T$-coordinate over $X$, they glue together to give an elementary homotopy
\begin{equation}\label{eqn:Hom-inv1}
H : \sV \times \A^1 \to \sV
\end{equation}
such that $H \circ i_0 = i_X \circ f$ and $H \circ i_1 = {\rm id}_{\sV}$,
where $i_X : X \to \sV$ is the zero-section. Note that
$i_0$, $i_1$ and $i_X$ are all $G$-equivariant. Thus, we shall be done if we
show that $H$ is $G$-equivariant. 

Now $f$ is a $G$-equivariant vector bundle, so that over every point
$x \in X$, the fiber of $f$ is a $k_x$-vector space $\sV_x$ of rank $n$. 
Moreover, if $g \in G(k(x))$ is such that $gx = x'$, then $g$ acts on $\sV_x$ 
by a $k_x \ (\simeq k_{x'})$-linear isomorphism $\sV_x \rightarrow \sV_{x'}$.
At the level of the coordinate rings of these fibers, the $G$-action
and the map $H$ are described by the diagram
\begin{equation}\label{eqn:bundles}
\xymatrix@C1pc{
k(x)[x_1, \cdots , x_n] \ar[r]^{\tau_g} \ar[d]_{H} & 
k(x')[x'_1, \cdots , x'_n] \ar[d]^{H} \\
k(x)[t, x_1, \cdots , x_n] \ar[r]_{\tau_g} & 
k(x')[t, x'_1, \cdots , x'_n],}
\end{equation}
where $\tau_g$ is the map on the coordinate rings induced by $g \in G(k(x))$.
It is straightforward to check that this diagram commutes, which shows that
$H$ is $G$-equivariant. 
\end{proof}

\vskip .3cm

\section{The equivariant motivic homotopy category 
${\rm Ho}^G_{\A^1, \bullet}(k)$}\label{section:UHC-P}
Recall from \S~\ref{section:Models} the category $\sM^G_{k,\bullet}$ of pointed 
motivic $G$-spaces.
Lemma~\ref{lem:elementary} shows that $\sM^G_{k,\bullet}$ is closed symmetric 
monoidal with respect to the smash product and pointed internal homs. 
There is an adjoint functor pair 
\[
\xymatrix{
\sM^G_k
\ar@<+.7ex>[r] &
\ar@<+.7ex>[l]
\sM^G_{k,\bullet}
}
\]
where the left adjoint adjoins a disjoint base point, 
$\sX\mapsto\sX_+=(\sX \coprod pt, pt)$ and the right adjoint is the forgetful functor.
Since $\sM^G_{k,\bullet}$ is the slice category $pt \downarrow \sM^G_k$, we 
conclude the existence of the following {\sl motivic injective model 
structure} from \cite[Theorem~7.6.5]{Hirsc}.

\begin{thm}\label{thm:Pntd-MS}
The category $\sM^G_{k, \bullet}$ admits a model
structure where a map $f : (\sX, x) \to (\sY, y)$ is a weak equivalence
(resp. cofibration, resp. fibration) if and only if
$f : \sX \to \sY$ is a weak equivalence (resp. cofibration, resp. fibration)
in the motivic injective model structure ({\sl cf.}
Definition~\ref{defn:A-1model-str}) after applying the forgetful functor.  
This model structure is proper, cellular and simplicial.
\end{thm}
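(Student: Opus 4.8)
The plan is to obtain this model structure as an instance of the general slice-category construction. Since $\sM^G_{k,\bullet}$ is by definition the under-category $pt\downarrow\sM^G_k$ (objects are motivic $G$-spaces equipped with a map from the point, morphisms are the evident commuting triangles), and since $\sM^G_k$ carries the motivic injective model structure of Definition~\ref{defn:A-1model-str}, one simply transports that structure along the forgetful functor $\sM^G_{k,\bullet}\to\sM^G_k$. First I would invoke the standard fact (for instance \cite[Theorem~7.6.5]{Hirsc}) that if $\sC$ is a model category and $X\in\sC$ an object, then the under-category $X\downarrow\sC$ inherits a model structure in which a morphism is a weak equivalence, cofibration, or fibration exactly when its image under the forgetful functor $X\downarrow\sC\to\sC$ is one. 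Applying this with $\sC=\sM^G_k$ (motivic injective structure) and $X=pt$ yields precisely the classes of maps described in the statement, so the existence of the model structure is immediate once the hypotheses of the cited theorem are checked — and those reduce to $\sM^G_k$ being a model category, which is Theorem~\ref{thm:LFMS-Psh} together with the $\A^1$-localization discussed after Definition~\ref{defn:A-1model-str}.

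Next I would verify the three adjectives. For cellularity: the motivic injective structure on $\sM^G_k$ is cellular (this is recorded in the paragraph following Lemma~\ref{lem:A1-compare}, invoking \cite[Theorem~4.1.1]{Hirsc} and the cellularity of the underlying flasque/injective schemewise structures of Theorems~\ref{thm:LMS-Psh} and~\ref{thm:PMS-Flasque}), and cellularity passes to under-categories — one takes the generating (trivial) cofibrations of $\sM^G_k$ and adjoins the point, i.e.\ forms $pt\coprod(\text{domain})\to pt\coprod(\text{codomain})$, and checks that the smallness, effective-monomorphism, and compactness conditions are preserved because the forgetful functor $pt\downarrow\sM^G_k\to\sM^G_k$ creates colimits. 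For properness: left properness passes to slice categories since cofibrations and weak equivalences are both created by the forgetful functor and pushouts in $pt\downarrow\sM^G_k$ are computed in $\sM^G_k$; right properness is dual, using that pullbacks in an under-category are again computed underneath. Both left and right properness of the motivic injective structure on $\sM^G_k$ were established in the discussion after Lemma~\ref{lem:A1-compare} (left properness from \cite[Theorem~4.1.1]{Hirsc}, right properness from \cite[Lemma~3.1]{Blander} and Lemma~\ref{lem:PF-fib}).

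For the simplicial structure: $\sM^G_{k,\bullet}$ is an $\sS_\bullet$-category (Lemma~\ref{lem:elementary}), with tensoring $\sX\wedge K_+$, cotensoring $\sX^{K}$, and mapping spaces $\sS(\sX,\sY)$ as in~\eqref{eqn:simp-str}; I would check the SM7-type pushout-product axiom by reducing it to the already-known simplicial structure on the unpointed motivic injective model category, since a cofibration in $\sM^G_{k,\bullet}$ is just a cofibration of underlying objects and the pointed pushout-product $i\,\square\,j$ maps under the forgetful functor to a construction controlled by the unpointed one. The main obstacle — though it is a routine rather than a deep one — is bookkeeping: making sure that at each step the relevant (co)limit in the slice category $pt\downarrow\sM^G_k$ is correctly identified with the corresponding (co)limit in $\sM^G_k$ (colimits: yes, as the forgetful functor is a left adjoint; limits: computed in the slice, with the terminal object being $pt\to pt$), so that each axiom for the pointed category genuinely follows from its unpointed counterpart. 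Once that is in place, no new analysis of the $eN$-topology or of $\A^1$-localization is needed: the pointed statement is a formal consequence of the unpointed one.
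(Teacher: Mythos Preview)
Your proposal is correct and follows exactly the paper's approach: the paper simply observes that $\sM^G_{k,\bullet}$ is the slice category $pt\downarrow\sM^G_k$ and invokes \cite[Theorem~7.6.5]{Hirsc}, without spelling out the verification of the adjectives. If anything, you supply more detail than the paper does in checking properness, cellularity, and the simplicial axiom.
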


The motivic projective model structure on $\sM^G_{k,\bullet}$ is defined by 
replacing the local injective model structure in Theorem~\ref{thm:Pntd-MS} 
by the local projective model structure. 
Likewise for the motivic flasque model structure. 
As in the unpointed case,
the three model structures are Quillen equivalent and hence have equivalent 
homotopy categories,
which justifies the following definition.

\begin{defn}\label{defn:EUHC}
The equivariant pointed motivic homotopy category 
${\rm Ho}^G_{\A^1, \bullet}(k)$ is the homotopy category of pointed motivic 
$G$-spaces with 
respect to either of the motivic model structures. 
For pointed motivic $G$-spaces $\sX$ and $\sY$, we let 
$[\sX,\sY]_{G,\A^1_{\bullet}}$ denote the set 
$\Hom_{{\rm Ho}^G_{\A^1,\bullet}(k)}(\sX, \sY)$.
Let ${\rm Ho}^G_{eN, \bullet}(k)$ denote the homotopy category of pointed 
motivic $G$-spaces with respect to either of the local model structures.
\end{defn}

\begin{prop}\label{prop:Ho-Smash}
The smash product preserves weak equivalences and cofibrations in the
motivic injective model structure on $\sM^G_{k, \bullet}$. 
This induces a symmetric closed monoidal category structure on
${\rm Ho}^G_{\A^1, \bullet}(k)$.
\end{prop}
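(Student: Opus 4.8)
The plan is to follow the standard template for producing a closed symmetric monoidal structure on a homotopy category: verify that the smash product on $\sM^G_{k,\bullet}$ is a left Quillen bifunctor with respect to the motivic injective model structure, and then invoke the general machinery (e.g. \cite[Theorem~4.2.15]{Hirsc}-type results, or the monoidal model category formalism) which transports the monoidal structure to the homotopy category. Concretely, I would first recall from Lemma~\ref{lem:elementary} that $\sM^G_{k,\bullet}$ is a closed symmetric monoidal category under $\wedge$ with internal hom $\sHom(-,-)$, so the point-set monoidal structure is already in place; what must be checked is compatibility with the model structure.

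The first step is the \emph{pushout-product axiom}: if $i : \sA \to \sB$ and $i' : \sA' \to \sB'$ are cofibrations in the motivic injective model structure, then the pushout-product $i \sq i'$ is a cofibration, which is trivial if either $i$ or $i'$ is. Since cofibrations in the motivic injective model structure are exactly the monomorphisms (they coincide with the schemewise/injective cofibrations, unchanged by the two Bousfield localizations), the cofibration part is a formal property of monomorphisms of simplicial presheaves and holds schemewise, reducing to the corresponding fact for pointed simplicial sets. For the trivial-cofibration part, it suffices by a standard argument to check it on the generating (trivial) cofibrations; using that motivic weak equivalences are stable under smashing with a fixed object $U_+$ — which follows because $U_+ \wedge (-)$ is a left Quillen functor once the cofibration half is known, or directly from Theorem~\ref{thm:Local-we} together with $\A^1$-invariance — one concludes that smashing a motivic trivial cofibration with any cofibrant object stays a motivic weak equivalence. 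The second step is the \emph{unit axiom}: the unit $S^0 = pt \sqcup pt$ is cofibrant (it is a monomorphism from $\0$), so no cofibrant replacement of the unit is needed and the unit axiom is automatic.

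Granting these two axioms, $\sM^G_{k,\bullet}$ with the motivic injective model structure is a symmetric monoidal model category, and therefore its homotopy category ${\rm Ho}^G_{\A^1,\bullet}(k)$ inherits a symmetric monoidal structure via the total left derived functor $\wedge^{\mathbf L}$, computed by smashing cofibrant replacements; closedness of the monoidal structure on the homotopy category follows from the fact that $\sHom(-,-)$ is the right adjoint bifunctor and the pushout-product axiom guarantees it descends to a derived internal hom. The associativity, symmetry and unit coherence diagrams on ${\rm Ho}^G_{\A^1,\bullet}(k)$ are inherited from those on $\sM^G_{k,\bullet}$ since all objects become cofibrant up to weak equivalence and the coherence isomorphisms are natural.

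The part I expect to be the main obstacle is verifying that smashing with a fixed $G$-scheme $U_+$ preserves motivic weak equivalences — equivalently that $U_+ \wedge (-)$ is a left Quillen endofunctor of the motivic model structure. The subtlety is that $\A^1$-invariance and the $eN$-local condition must both be respected: one needs that for any $X \in \Sm^G_k$ the projection $(U \times X \times \A^1)_+ \to (U \times X)_+$ is a motivic weak equivalence, which is immediate since it is again an equivariant vector bundle projection (a trivial one) and Proposition~\ref{prop:Hom-inv} applies, and that $U_+ \wedge Q^p \to U_+ \wedge X$ remains in the localizing set $\Sigma^p_{\rm Nis}$ up to weak equivalence, which holds because $- \times U$ sends a distinguished $eN$-square to a distinguished $eN$-square (products with a fixed $G$-scheme preserve \'etale maps, open immersions, and Cartesianness, and commute with passing to reduced complements $G$-equivariantly). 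Once this stability is in hand, the pushout-product axiom and hence the whole proposition follow formally.
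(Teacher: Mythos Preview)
Your approach is more hands-on than the paper's, which outsources both key claims to \cite{DRO}: Lemma~2.20 there gives that smashing with an arbitrary pointed motivic $G$-space preserves motivic weak equivalences, and Corollary~2.19 gives that the motivic projective model structure is monoidal (from which closedness on the homotopy category follows, since the projective and injective homotopy categories agree). Your direct verification in the last paragraph that $U_+ \wedge (-)$ sends the localizing maps in $\Sigma^p_{\rm Nis}$ and the projections $X \times \A^1 \to X$ back into maps of the same type is correct and is the essential geometric input; this is exactly what is needed to show that the motivic \emph{projective} model structure satisfies the pushout-product axiom, since its generating cofibrations have the form $U_+ \wedge (\partial\Delta^n \subset \Delta^n)_+$.

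There is, however, a gap when you pass to the injective model structure. You invoke a reduction to generating cofibrations and then only treat representables $U_+$, but the generating cofibrations for the injective model structure are arbitrary bounded monomorphisms (cf.~$I^{\rm sch,\kappa}_{\rm inj}$ in Theorem~\ref{thm:LMS-Psh}), not maps built from a single $U_+$. So your argument as written does not yet show that smashing a motivic trivial cofibration with \emph{every} (injectively cofibrant, i.e.\ arbitrary) object is again a motivic weak equivalence. The missing step can be supplied: since the smash is computed pointwise and every object of $\sS_\bullet$ is cofibrant, $\sZ \wedge (-)$ preserves \emph{schemewise} weak equivalences for any $\sZ$; taking a projective cofibrant replacement $\sZ^c \to \sZ$ (a schemewise weak equivalence), using that $\sZ^c \wedge (-)$ preserves motivic weak equivalences by the projective pushout-product axiom you did establish, and applying $2$-out-of-$3$ gives the general case. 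With this bridge in place your argument goes through and recovers the paper's conclusion without the black-box citation.
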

\begin{proof}
Since the weak equivalences in the motivic projective and injective
model structures are same, it follows from \cite[Lemma~2.20]{DRO}
that smashing with any pointed motivic $G$-space
preserves motivic weak equivalence.
Since the cofibrations in the motivic (injective) model structure are 
monomorphisms, it follows immediately that smash product preserves cofibrations.

The first assertion implies that the smash product defines a structure
of symmetric monoidal structure on ${\rm Ho}^G_{\A^1, \bullet}(k)$.
We need to show that this monoidal structure is closed to complete
the proof. Since the motivic projective and injective model structures
have equivalent homotopy categories, it suffices to show that
the motivic projective model structure on $\sM^G_{k, \bullet}$
is monoidal. But this follows from \cite[Corollary~2.19]{DRO}.
\end{proof}

Recall that the simplicial circle $S^1_s$ is the constant presheaf
${\Delta[1]}/{\partial \Delta[1]}$ pointed by the image of
$\partial \Delta[1]$. We shall write $(S^1_s)^{{\wedge} n}$ as $S^n_s$.
Smashing with the simplicial circle gives a functor
\[
\Sigma_s(\sF, x) = S^1_s \wedge (\sF, x).
\]
Let $\Omega^1_s(-) = \sHom_{\bullet}(S^1_s, -)$ be the right adjoint  
of $S^1_s \wedge (-)$.
Proposition~\ref{prop:Ho-Smash} implies that 
$\left(\Sigma_s(-), \Omega^1_s(-)\right)$ is a Quillen pair of endofunctors
on $\sM^G_{k, \bullet}$. In particular, we get an
adjoint pair of endofunctors
\begin{equation}\label{eqn:smash-hom}
\xymatrix{
\Sigma_s(-):
{\rm Ho}^G_{\A^1, \bullet}(k)
\ar@<+.7ex>[r] &
\ar@<+.7ex>[l]
{\rm Ho}^G_{\A^1, \bullet}(k) : {\bf R}\Omega^1_s(-).
}
\end{equation}
The functor ${\bf R}\Omega^1_s \left((\sX, x)\right)$ is given as 
$\Omega^1_s \left(Ex \left((\sX, x)\right)\right)$, where
$Ex \left((\sX, x)\right)$ is a cofibrant fibrant replacement of 
$(\sX, x)$ in the motivic model structure.

\vskip .3cm

\subsection{Equivariant motivic homotopy groups}\label{subsection:EMHG}
We end this section with the definition of equivariant motivic homotopy
groups of motivic $G$-spaces and show that these groups coincide with
the actual homotopy groups of an $\A^1$-fibrant replacement.
The results of this section will be used in proving representability of 
equivariant algebraic $K$-theory in the unstable homotopy category.

Recall from ~\eqref{eqn:adjunc} that given $X \in \Sm^G_k$, there is an 
adjoint pair of functors $({\rm Fr}_X,{\rm Ev}_X)$ between ${\sS}_{\bullet}$ 
and $\sM^G_{k, \bullet}$.

\begin{lem}\label{lem:QA-Psh}
The functors $({\rm Fr}_X, {\rm Ev}_X)$ form a Quillen pair with respect to the 
schemewise projective, local projective, and motivic projective model 
structures on $\sM^G_{k, \bullet}$. The same holds for the 
various localizations of the injective model structure.
\end{lem}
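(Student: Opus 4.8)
The plan is to verify the Quillen pair condition directly from the adjunction in~\eqref{eqn:adjunc} together with the behavior of the generating (trivial) cofibrations. Recall that $({\rm Fr}_X, {\rm Ev}_X)$ is already known to be a Quillen pair for the schemewise projective model structure by Theorem~\ref{thm:PMS-Psh}. To pass to the local and motivic model structures, which are obtained as left Bousfield localizations of the schemewise projective structure, I would invoke the standard fact that if $(F, G)$ is a Quillen pair between model categories $\sC$ and $\sD$ and one left-Bousfield-localizes $\sD$, then $(F,G)$ remains a Quillen pair into the localization provided $F$ sends the maps one is localizing against to weak equivalences in the localized structure on $\sC$ (see, e.g., \cite[Theorem~3.3.20]{Hirsc}). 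In our situation $\sC = {\sS}_{\bullet}$ carries no localization, so the criterion is automatic: one only needs that ${\rm Fr}_X$ is a left Quillen functor for the schemewise structure, which it is.

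More concretely, I would argue as follows. A left Quillen functor is characterized by preserving cofibrations and trivial cofibrations; equivalently, by sending the generating cofibrations and generating trivial cofibrations to cofibrations and trivial cofibrations respectively. The generating cofibrations of ${\sS}_{\bullet}$ are $(\partial\Delta^n \subset \Delta^n)_+$ and the generating trivial cofibrations are the horn inclusions $(\Lambda^n_i \subset \Delta^n)_+$. Applying ${\rm Fr}_X(K) = X_+ \wedge K$, these are carried precisely to the generating (trivial) cofibrations $I^{\rm sch}_{\rm proj}(\sm^G_k)$ and $J^{\rm sch}_{\rm proj}(\sm^G_k)$ described in Theorem~\ref{thm:PMS-Psh}. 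Since cofibrations are unchanged under left Bousfield localization (condition (2) in the definition of $L_\Sigma \sM^G_k$ in~\secref{subsection:LPMS}), ${\rm Fr}_X$ still preserves cofibrations in the local projective and motivic projective structures. For trivial cofibrations, the generating trivial cofibrations of the schemewise structure are schemewise weak equivalences, hence local weak equivalences, hence motivic weak equivalences (the classes of weak equivalences only grow under localization), so ${\rm Fr}_X$ of a generating trivial cofibration is a cofibration that is also a motivic weak equivalence, i.e.\ a trivial cofibration in the motivic projective structure; and similarly for the local projective structure. Therefore ${\rm Fr}_X$ is left Quillen in all three cases, so $({\rm Fr}_X, {\rm Ev}_X)$ is a Quillen pair.

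The same reasoning applies verbatim with the schemewise projective model structure replaced by the schemewise injective or flasque model structures, and their local and motivic localizations, since in each case the adjunction~\eqref{eqn:adjunc} is fixed and the generating cofibrations behave in the same way; one needs only that $({\rm Fr}_X, {\rm Ev}_X)$ is a Quillen pair for the schemewise injective and flasque structures, and then the localization argument goes through identically because localization never shrinks the class of weak equivalences and never enlarges the class of cofibrations.

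I do not anticipate a serious obstacle here; the statement is essentially a formal consequence of the construction of the model structures as Bousfield localizations together with the behavior of ${\rm Fr}_X$ on generators. The one point to be slightly careful about is making sure that the ${\sS}_\bullet$-enrichment and the adjunction are compatible across all the model structures under consideration, but this is already built into Theorems~\ref{thm:PMS-Psh}, \ref{thm:LMS-Psh}, \ref{thm:PMS-Flasque}, \ref{thm:LFMS-Psh}, Definition~\ref{defn:A-1model-str} and Theorem~\ref{thm:Pntd-MS}, so it requires no new input.
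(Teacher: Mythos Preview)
Your proof is correct and follows essentially the same strategy as the paper: both reduce to the schemewise projective case by observing that cofibrations are unchanged and weak equivalences only grow under Bousfield localization (the paper cites \cite[Proposition~3.3.3]{Hirsc} for this). The only real difference is that for the schemewise projective base case you invoke Theorem~\ref{thm:PMS-Psh} directly, whereas the paper spells out an explicit lifting-diagram argument showing that a lift in $\sM^G_{k,\bullet}$ against a schemewise trivial fibration corresponds, via the adjunction, to a lift in $\sS_\bullet$; your shortcut is entirely legitimate. One minor point: your reference to \cite[Theorem~3.3.20]{Hirsc} is not quite the right citation for ``a Quillen pair into $\sD$ remains a Quillen pair into $L_\Sigma\sD$''---that theorem is about Quillen \emph{equivalences} between localizations---but the fact you need is immediate from the definitions, as your ``more concretely'' paragraph shows.
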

\begin{proof}
Recall that this adjunction is given by the maps
\[
\theta: \Hom_{{\rm \sS}_{\bullet}}\left(K, \sS(X_{+}, \sX)\right)
\to \Hom_{\sM^G_{k, \bullet}} 
\left(K \wedge X_+, \sX \right)
\]
\[
\theta(f)(a \wedge x) = f_a(x) 
\]
and 
\[
\phi: \Hom_{\sM^G_{k, \bullet}} 
\left(K \wedge X_+, \sX \right) \to   
\Hom_{{\sS_{\bullet}}}\left(K, \sS(X_{+},\sX)\right)
\]
\[
\phi(g)(a) = \left(x \mapsto g(a \wedge x)\right).
\]
It is straightforward to check that the maps are inverses to each other.

To show that $({\rm Fr}_X, {\rm Ev}_X)$ is a Quillen pair, we shall note 
that ${\rm Fr}_X$ preserves cofibrations and trivial cofibrations with 
respect to all the model structures given in the lemma. 
First we reduce to the schemewise projective model structure on 
$\sM^G_{k,\bullet}$.
This follows because schemewise weak equivalences are the coarsest types of 
weak equivalences under consideration 
(see \cite[Proposition~3.3.3]{Hirsc}), 
and likewise for the projective cofibrations.

Suppose that $f : K \to L$ be a cofibration (which is same as a monomorphism)
of pointed simplicial sets.
If $f$ is a weak equivalence, then for any pointed simplicial set $M$, the map
$K \wedge M \to L \wedge M$ is also a weak equivalence. In particular,
the map $K \wedge \sS(U, X_{+}) \to L \wedge \sS(U, X_{+})$ is a weak 
equivalence for any $U \in \Sm^G_k$.
Equivalently, the map $(K \wedge X_{+}) (U) \to (L \wedge X_{+}) (U)$ is a weak
equivalence for every $U \in \Sm^G_k$. But this is same as saying that
the map $K \wedge X_+ \to L \wedge X_+$ is a schemewise weak equivalence.

We now show that $K \wedge X_+ \to L \wedge X_+$ is a projective cofibration.
We consider a diagram in $\sM^G_{k, \bullet}$
\begin{equation}\label{eqn:QA-Psh1}
\xymatrix@C.9pc{
K \wedge X_{+} \ar[r] \ar[d] & \sX \ar[d]^{p} \\
L \wedge X_{+} \ar@{.>}[ru] \ar[r] & \sY}
\end{equation}
where $p$ is a projective trivial fibration.
It follows from the definitions of the maps $\theta$ and $\psi$ above that
the assignments
\[
\Hom_{\sM^G_{k, \bullet}} \left(L \wedge X_{+}, \sX \right)
\to \Hom_{{\rm \sS_{\bullet}}} \left(L, \sX(X) \right) \to
\Hom_{\sM^G_{k, \bullet}} \left(L \wedge X_{+}, \sX \right)
\]
\[
h \mapsto \left(a \mapsto h(a, {\rm id}_X) \right) ; \ \ 
h' \mapsto \left( (a \wedge (U \xrightarrow{u} X)) \mapsto
h'(a) \circ u \right)
\]
give bijective correspondences of the sets. Thus giving a lifting 
in ~\eqref{eqn:QA-Psh1} is equivalent to giving a lifting in the 
parallel diagram of simplicial sets
\begin{equation}\label{eqn:QA-Psh2}
\xymatrix@C.9pc{
K \ar[r] \ar[d] & \sX(X) \ar[d]^{p} \\
L \ar@{.>}[ru] \ar[r] & \sY(X).}
\end{equation}
Since the fibrations and weak equivalences in the schemewise projective
model structure are objectwise, we see from our assumption that the right 
vertical arrow in ~\eqref{eqn:QA-Psh2} is a trivial fibration in 
$\sS_{\bullet}$.
Since $K \to L$ is assumed to be a cofibration, we get the desired lifting
using the model structure on simplicial sets. This completes the proof of the
lemma.
\end{proof}

\begin{prop}\label{prop:DQA-Psh}
Let $(\sX, x)$ be a fibrant pointed motivic $G$-space in the 
local injective model structure. Then for any pointed simplicial set $K$ and 
any $X \in \Sm^G_k$, the Quillen pair $({\rm Fr}_X, {\rm Ev}_X)$ of 
Lemma~\ref{lem:QA-Psh} gives a canonical isomorphism
\[
\Hom_{{\rm Ho}^G_{eN, \bullet}(k)} \left(K \wedge X_+, \sX \right)
\xrightarrow{\simeq} [K, \sX(X)].
\]
If $\sX$ is also $\A^1$-local, then there is a canonical isomorphism
\[
\Hom_{{\rm Ho}^G_{\A^1, \bullet}(k)} \left(K \wedge X_+, \sX \right)
\xrightarrow{\simeq} [K, \sX(X)].
\]
\end{prop}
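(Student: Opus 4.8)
The plan is to reduce the $\A^1$-local statement to the purely local statement in the first half of the proposition, which is proved via the Quillen adjunction $({\rm Fr}_X, {\rm Ev}_X)$ of Lemma~\ref{lem:QA-Psh}. For the local statement, since $(\sX,x)$ is fibrant in the local injective model structure and $K \wedge X_+ = {\rm Fr}_X(K)$ is cofibrant (indeed, every representable $X_+$ is cofibrant in the injective structure, and smashing with the cofibrant simplicial set $K$ preserves cofibrancy), the derived adjunction identifies $\Hom_{{\rm Ho}^G_{eN,\bullet}(k)}(K \wedge X_+, \sX)$ with $[K, {\rm Ev}_X(\sX)] = [K, \sX(X)]$ computed in the homotopy category of pointed simplicial sets. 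Concretely, the explicit description of $\theta$ and $\phi$ given in the proof of Lemma~\ref{lem:QA-Psh} shows these are inverse natural bijections on morphism sets, and the Quillen-pair property guarantees they descend to the homotopy categories; the simplicial set $\sX(X) = {\rm Ev}_X(\sX)$ is already fibrant (a Kan complex) because ${\rm Ev}_X$ is a right Quillen functor, so $[K,\sX(X)]$ is an honest simplicial homotopy class set.

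For the $\A^1$-local refinement, the key observation is that when $\sX$ is additionally $\A^1$-local (equivalently $\A^1$-fibrant in the injective structure, since it is already locally fibrant), it is in particular fibrant in the motivic injective model structure, which is a left Bousfield localization of the local injective structure sharing the same cofibrations. Therefore $\Hom_{{\rm Ho}^G_{\A^1,\bullet}(k)}(\sA, \sX) = \Hom_{{\rm Ho}^G_{eN,\bullet}(k)}(\sA, \sX)$ for any cofibrant $\sA$ and any $\A^1$-fibrant $\sX$: this is the standard fact that a left Bousfield localization does not change the morphism sets out of cofibrant objects into local (fibrant) objects. Applying this with $\sA = K \wedge X_+$ and combining with the local statement already established yields the desired isomorphism $\Hom_{{\rm Ho}^G_{\A^1,\bullet}(k)}(K \wedge X_+, \sX) \xrightarrow{\simeq} [K,\sX(X)]$.

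The main obstacle — though it is more bookkeeping than a genuine difficulty — is keeping straight the interplay between the several model structures: one must check that ${\rm Fr}_X$ is left Quillen for the \emph{motivic} injective structure (this is exactly part of Lemma~\ref{lem:QA-Psh}), that $K \wedge X_+$ is cofibrant in the motivic (injective) structure (true, since the cofibrations coincide with those of the local and schemewise injective structures, and $X_+$ is a monomorphism target), and that an $\A^1$-local locally-injective-fibrant object is genuinely fibrant in the motivic injective model structure (this uses Lemma~\ref{lem:A1-compare} and the construction of the motivic structure as a Bousfield localization in Definition~\ref{defn:A-1model-str}). Once these compatibilities are in place the argument is a two-line diagram chase: the derived counit of $({\rm Fr}_X,{\rm Ev}_X)$ composed with the localization-invariance of $\Hom$-sets out of cofibrant objects into fibrant objects.
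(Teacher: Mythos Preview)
Your proposal is correct and follows essentially the same route as the paper: both arguments use the Quillen adjunction $({\rm Fr}_X,{\rm Ev}_X)$ from Lemma~\ref{lem:QA-Psh}, the fibrancy of $\sX$ to compute ${\bf R}{\rm Ev}_X(\sX)=\sX(X)$, and Lemma~\ref{lem:A1-compare} to pass from $\A^1$-local to $\A^1$-fibrant. The only cosmetic difference is that for the second isomorphism the paper simply reruns the derived-adjunction chain in the motivic model structure, whereas you instead invoke the Bousfield-localization fact that $\Hom$-sets into local objects are unchanged and thereby reduce to the already-established first isomorphism; the content is the same.
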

\begin{proof}
Since the functor $K \mapsto K \wedge X_+$ preserves weak equivalence 
in all our model structures and since
$\sX$ is fibrant in the local injective model structure, we conclude from 
Lemma~\ref{lem:QA-Psh} that there are isomorphisms
\[
\begin{array}{lll}
\Hom_{{\rm Ho}^G_{eN, \bullet}(k)} \left(K \wedge X_+, \sX \right)
& \simeq & \Hom_{{\rm Ho}^G_{eN, \bullet}(k)} 
\left({\bf L}{\rm Fr}_X(K), \sX \right) \\
& \simeq &
\Hom_{\sS_{\bullet}} \left(K, {\bf R}{\rm Ev}_X (\sX) \right) \\
& \simeq & \Hom_{\sS_{\bullet}} \left(K, {\rm Ev}_X (\sX) \right) \\
& \simeq & \Hom_{\sS_{\bullet}} \left(K, \sX(X) \right).
\end{array}
\]
Since $\sX$ is fibrant in the local injective model structure, it 
is schemewise fibrant. In particular, $\sX(X)$ is a Kan complex and hence
the last term is same as $[K, \sX(X)]$.

If $\sX$ is also $\A^1$-local, then it is $\A^1$-fibrant by 
Lemma~\ref{lem:A1-compare}. We can now repeat the above
argument using Lemma~\ref{lem:QA-Psh}.
\end{proof}

\begin{cor}\label{cor:ELMEN}
A map $f: \sX \to \sY$ of $\A^1$-fibrant pointed motivic $G$-spaces is a 
schemewise weak equivalence if and only if the map
\begin{equation}\label{eqn:Iso-point}
\Hom_{{\rm Ho}^G_{\A^1, \bullet}(k)}(S^i_s \wedge X_{+}, \sX) \to
\Hom_{{\rm Ho}^G_{\A^1, \bullet}(k)}(S^i_s \wedge X_{+}, \sY) 
\end{equation}
is an isomorphism for all $X \in \Sm^G_k$ and all $i \ge 0$.
\end{cor}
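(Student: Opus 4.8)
The plan is to reduce the statement to a Whitehead-type computation by means of the adjunction isomorphism of Proposition~\ref{prop:DQA-Psh}. The forward implication is formal: a schemewise weak equivalence is a weak equivalence in every left Bousfield localization of the schemewise model structure on $\sM^G_k$ (localization creates more weak equivalences and destroys none), hence in particular a motivic weak equivalence, hence an isomorphism in ${\rm Ho}^G_{\A^1,\bullet}(k)$; such an isomorphism induces bijections on $\Hom_{{\rm Ho}^G_{\A^1,\bullet}(k)}(\sW,-)$ for every $\sW$, and in particular for $\sW = S^i_s\wedge X_+$.

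For the converse, first note that an $\A^1$-fibrant pointed motivic $G$-space is $\A^1$-flasque: being fibrant in a localization of the local model structure it is locally fibrant, hence flasque by Proposition~\ref{prop:Flasq-fib}, and it is $\A^1$-local, hence $\A^1$-weak invariant by Lemma~\ref{lem:A1-compare}. Consequently, by the second assertion of Theorem~\ref{thm:A1-flasq}, among $\A^1$-fibrant objects the motivic and the schemewise weak equivalences coincide; this will be used below. Taking $K=S^i_s$ in Proposition~\ref{prop:DQA-Psh} now yields, for each $X\in\Sm^G_k$, a bijection $\Hom_{{\rm Ho}^G_{\A^1,\bullet}(k)}(S^i_s\wedge X_+,\sX)\cong[S^i_s,\sX(X)]=\pi_i(\sX(X))$ natural in $\sX$, where the homotopy group is taken at the canonical base point of the Kan complex $\sX(X)$. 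Thus the hypothesis is precisely that $f$ induces a bijection $\pi_i(\sX(X))\to\pi_i(\sY(X))$ for all $i\ge 0$ and all $X\in\Sm^G_k$, the case $i=0$ being a bijection of sets of path components.

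To conclude that $f$ is a schemewise weak equivalence, I would factor $f$ as $\sX\xrightarrow{\ \sim\ }\sX'\xrightarrow{\,p\,}\sY$ in the motivic \emph{projective} model structure with $p$ a fibration, and form the fibre $\sF$ of $p$ over the base point. Motivic projective fibrations are in particular schemewise Kan fibrations, so $\sF(X)$ is the fibre of the Kan fibration $p(X)\colon\sX'(X)\to\sY(X)$; moreover $\sX\to\sX'$ is a schemewise weak equivalence by Theorem~\ref{thm:A1-flasq} (it is a motivic weak equivalence of $\A^1$-flasque spaces), so the bijections of the previous paragraph transport to bijections $\pi_i(\sX'(X))\xrightarrow{\simeq}\pi_i(\sY(X))$. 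Feeding these into the long exact homotopy sequence of $\sF(X)\to\sX'(X)\to\sY(X)$ forces $\pi_j(\sF(X))=\ast$ for all $j\ge 1$; exactness at $\pi_0\sF(X)$ and at $\pi_0\sX'(X)$, combined with the bijectivity of $\pi_0\sX'(X)\to\pi_0\sY(X)$, then forces $\pi_0(\sF(X))=\ast$ as well. Hence $\sF(X)$ is contractible for every $X$, from which $f$ is a schemewise weak equivalence.

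The step I expect to require the most care is the base-point bookkeeping just described. The functors $\Hom_{{\rm Ho}^G_{\A^1,\bullet}(k)}(S^i_s\wedge X_+,-)$ only record homotopy groups at the single canonical base point of $\sX(X)$, whereas a schemewise weak equivalence must be controlled over every connected component of $\sY(X)$; the point of the homotopy-fibre detour is to funnel the $i=0$ instance of the hypothesis into the collapse $\pi_0(\sF(X))=\ast$, which removes the base-point ambiguity. In the settings where the corollary is applied — notably the grouplike pointed space representing equivariant $K$-theory, whose path components are all homotopy equivalent — this subtlety is immaterial and the argument of the previous paragraph reads off cleanly. One should also take care to form the homotopy fibre in the projective model structure (so that its schemewise value is a genuine fibre) and to invoke $\A^1$-flasqueness via Theorem~\ref{thm:A1-flasq} when replacing $\sX$ by $\sX'$, so as not to leave the schemewise-weak-equivalence class.
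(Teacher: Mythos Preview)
Your approach is essentially the paper's: both invoke Proposition~\ref{prop:DQA-Psh} to identify $\Hom_{{\rm Ho}^G_{\A^1,\bullet}(k)}(S^i_s\wedge X_+,\sX)$ with $\pi_i(\sX(X))$ at the canonical base point, and then pass to the schemewise conclusion. The paper's proof is two sentences long and simply asserts the final implication without comment; your write-up is considerably more careful and correctly isolates the base-point issue as the delicate step.

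However, your homotopy-fibre detour does not actually resolve that issue. The fibre $\sF(X)$ you form is the fibre of $p(X)$ over the \emph{base point} of $\sY(X)$; its contractibility, together with the $\pi_0$-bijection, only shows that $p(X)$ is a weak equivalence on the path component of the base point. For a Kan fibration with $\pi_0$ bijective, contractibility of the fibre over one component says nothing about the fibres over the other components, so the inference ``$\sF(X)$ contractible for every $X$ $\Rightarrow$ $f$ is a schemewise weak equivalence'' carries exactly the same gap as the direct argument you were trying to repair. In short, both the paper's proof and yours take the step from ``isomorphism on $\pi_i$ at the distinguished base point'' to ``schemewise weak equivalence'' on faith. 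As you rightly note, this is harmless in the intended application to $\sK^G$, which is group-like so that all components are homotopy equivalent; and the paper's very next statement, Proposition~\ref{prop:whitehead}, adds an explicit $\A^1$-connectedness hypothesis precisely to avoid this difficulty.
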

\begin{proof}
We only need to show the `if' part.
Since $\sX$ and $\sY$ are $\A^1$-fibrant, it follows from 
Proposition~\ref{prop:DQA-Psh} that the terms on the left and the right in 
~\eqref{eqn:Iso-point} are $\pi_i(\sX(X))$ and $\pi_i(\sY(X))$, respectively.
This implies that $\sX \to \sY$ is a schemewise weak equivalence
(and hence motivic weak equivalence).
\end{proof}

\subsubsection{Homotopy groups}\label{subsubsection:EHGS}
For a motivic $G$-space $\sX$, let $\pi^{G, \A^1}_0(\sX)$ be the
$eN$-sheaf associated to the presheaf $U \mapsto [U, \sX]_{G, \A^1}$ on
$\Sm^G_k$. We shall say that $\sX$ is equivariantly $\A^1$-connected if
$\pi^{G, \A^1}_0(\sX)$ is constant.

For a pointed motivic $G$-space $(\sX, x)$, let 
$\pi^{G, \A^1}_i(\sX, x)$ be the $eN$-sheaf associated to the presheaf 
$U \mapsto [S^i_s\wedge U_{+}, (\sX, x)]_{G, \A^1_{\bullet}}$. 

It follows from Corollary~\ref{cor:ELMEN} that if $\sX \to \sF$ is an
$\A^1$-fibrant replacement, then $\pi^{G, \A^1}_i(\sX, x)$ is same as the sheaf
associated to the presheaf of homotopy groups of the simplicial presheaf $\sF$.
It follows that $\pi^{G, \A^1}_i(\sX, x)$ is a sheaf of groups for 
$i \ge 1$ and a sheaf of abelian groups for $i \ge 2$.
Using the functorial fibrant replacements and Corollary~\ref{cor:ELMEN}, 
we obtain the following result.

\begin{prop}\label{prop:whitehead}
A morphism $f: \sX \to \sY$ of equivariantly $\A^1$-connected motivic 
$G$-spaces is a motivic weak equivalence if and only if for any choice of
base point $x \in \sX$, the induced map
\[
\pi^{G, \A^1}_i(\sX, x) \to  \pi^{G, \A^1}_i(\sY, f(x))
\]
is an isomorphism for all $i \ge 1$.
\end{prop}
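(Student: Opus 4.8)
The plan is to deduce this from Corollary~\ref{cor:ELMEN} together with the identification of $\pi^{G,\A^1}_i$ with sheaf homotopy groups of an $\A^1$-fibrant replacement, using the local Whitehead theorem in the motivic model structure. First I would replace $f: \sX \to \sY$ by an $\A^1$-fibrant model: choose functorial $\A^1$-fibrant replacements $\sX \to \wh{\sX}$ and $\sY \to \wh{\sY}$ fitting into a commutative square as in \eqref{eqn:A1fib}, with $\wh f: \wh{\sX} \to \wh{\sY}$ the induced map. Since the vertical arrows are motivic weak equivalences, $f$ is a motivic weak equivalence if and only if $\wh f$ is. So it suffices to treat the case of a map between $\A^1$-fibrant pointed motivic $G$-spaces, where "$\A^1$-fibrant" means fibrant in one of the motivic model structures (projective, say, so that $\wh{\sX}$ and $\wh{\sY}$ are in particular schemewise fibrant Kan-valued presheaves).

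Next I would invoke the description of the sheaves: by the remark preceding the proposition (which itself rests on Corollary~\ref{cor:ELMEN}), $\pi^{G,\A^1}_i(\sX,x)$ is the $eN$-sheaf associated to the presheaf $U \mapsto \pi_i(\wh{\sX}(U), x)$, and similarly for $\sY$. Thus the hypothesis that $\pi^{G,\A^1}_i(\sX,x) \to \pi^{G,\A^1}_i(\sY,f(x))$ is an isomorphism for all $i \ge 1$ and all base points says precisely that $\wh f$ induces isomorphisms on all sheaves of homotopy groups, in all degrees $\ge 1$ and for all base points. Using that $\sX$ and $\sY$ are equivariantly $\A^1$-connected, the sheaf $\pi^{G,\A^1}_0$ is constant on both sides and $\wh f$ induces an isomorphism on $\pi_0$-sheaves automatically (both are the one-point sheaf after choosing the base-point component, or more precisely $\pi_0$ is already handled by connectedness). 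Hence $\wh f$ is a local weak equivalence in the sense of Theorem~\ref{thm:Local-we} (a map inducing isomorphisms on the associated $eN$-sheaves of $\pi_0$ and of all higher homotopy groups at all base points), i.e.\ a weak equivalence in the local model structures.

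Now I would upgrade from local to motivic: $\wh f$ is a local weak equivalence between objects that are $\A^1$-fibrant, hence in particular $\A^1$-local; a local weak equivalence between $\A^1$-local objects is a motivic weak equivalence (the $\A^1$-localization only adds weak equivalences), and more to the point, by the local Whitehead theorem \cite[Theorem~3.2.12]{Hirsc} a local weak equivalence between locally fibrant objects is a schemewise weak equivalence, so $\wh f$ is a schemewise weak equivalence, a fortiori a motivic weak equivalence. Tracing back through the square \eqref{eqn:A1fib}, $f$ is a motivic weak equivalence. Conversely, if $f$ is a motivic weak equivalence then so is $\wh f$, which is then a schemewise weak equivalence by the local Whitehead theorem applied to the $\A^1$-fibrant (hence locally fibrant) objects $\wh{\sX},\wh{\sY}$; schemewise weak equivalences induce isomorphisms on presheaf homotopy groups and hence on their associated sheaves, giving the isomorphism $\pi^{G,\A^1}_i(\sX,x)\xrightarrow{\simeq}\pi^{G,\A^1}_i(\sY,f(x))$.

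The main obstacle is bookkeeping about base points: for a general equivariantly $\A^1$-connected space the relevant homotopy sheaves must be compared at \emph{all} base points simultaneously, and one must be careful that a map of $\A^1$-fibrant objects inducing $\pi_i$-sheaf isomorphisms for $i\ge 1$ at every base point, together with equivariant $\A^1$-connectedness of source and target, genuinely forces the $\pi_0$-sheaf isomorphism needed to apply Theorem~\ref{thm:Local-we}; since $\pi_0$ is a constant sheaf on each side this is where connectedness is used essentially, and one should check that the choice of base point $x\in\sX$ can be made compatibly (e.g.\ after restricting to a connected component or passing to $X\downarrow \Sm^G_k$ as in the proof of Theorem~\ref{thm:Local-we}). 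Once that point is handled, everything else is a formal application of results already established in Sections~\ref{section:Models}--\ref{section:UHC}.
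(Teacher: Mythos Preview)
Your proposal is correct and follows essentially the same approach the paper indicates: pass to functorial $\A^1$-fibrant replacements, identify $\pi^{G,\A^1}_i$ with the sheafified homotopy groups of the replacement, use equivariant $\A^1$-connectedness to handle $\pi_0$, and then invoke the Whitehead theorem for fibrant objects in a Bousfield localization. The paper's one-line justification cites Corollary~\ref{cor:ELMEN} directly, whereas you route the argument through Theorem~\ref{thm:Local-we} to detect a local weak equivalence from the sheaf-level hypotheses before upgrading to a schemewise one; this is arguably more careful, since Corollary~\ref{cor:ELMEN} is phrased at the presheaf level while the proposition's hypothesis concerns the associated sheaves, but the underlying strategy is the same.
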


\section{Comparison with the Nisnevich site and base change}
\label{section:Site-Com}
In this section, we study the connection of our $eN$-site with
various other sites associated with group scheme actions.

\subsection{Comparison with the Nisnevich site}
\label{subsection:Sub-G}
Suppose that $H \subseteq G$ is a subgroup. We then have the canonical
restriction functor $r^G_H: \Sch^G_k \to \Sch^H_k$. This functor has 
a left adjoint $e^G_H: \Sch^H_k \to \Sch^G_k$ given by $e^G_H(X) = 
G \stackrel{H}{\times} X$.  In particular, $r^G_H$ commutes with limits.
Thus we get the map of $eN$-sites $\wh{r}^G_H: \Sch^H_{k/Nis} \to \Sch^G_{k/Nis}$
and ${\wh{e}}^G_H: \Sch^G_{k/Nis} \to \Sch^H_{k/Nis}$. These
are not continuous since they do not in general preserve $eN$-covers
(see Proposition~\ref{prop:site-N}). However, if $H$ is the trivial subgroup 
scheme, then $\wh{r}^G_H$ and ${\wh{e}}^G_H$ preserves covers.  
Since the underlying topologies are
sub-canonical (see Corollary~\ref{cor:sub-can}), 
Proposition~\ref{prop:site-N} implies that 
$\wh{r}^G: \Sch_{k/Nis} \to \Sch^G_{k/Nis}$
is a morphism of sites. In the smooth setting, 
we denote the analogous functor by 
\begin{equation}\label{eqn:Res-1}
{\rm res}: \Sm_{k/Nis} \to \Sm^G_{k/Nis}.
\end{equation} 

\begin{lem}\label{lem:Res-pullback}
The pullback functor ${\rm res}^*: \sM^G_k \to \sM_k$ preserves representable 
sheaves. It preserves local and motivic weak equivalences. 
\end{lem}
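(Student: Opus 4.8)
The plan is to analyze the functor $\mathrm{res}^* : \sM^G_k \to \sM_k$ directly at the level of representable presheaves, then bootstrap the claim about weak equivalences from the description of weak equivalences in terms of points and from the $\A^1$-model structures already established. Recall that $\mathrm{res} : \Sm_{k/\mathrm{Nis}} \to \Sm^G_{k/\mathrm{Nis}}$ is the morphism of sites induced by the functor $\iota_k : \Sm_k \to \Sm^G_k$ sending $X$ to $(X, t_X)$ with trivial action; so $\mathrm{res}_* = \iota_k^* $ is restriction along $\iota_k$, and $\mathrm{res}^*$ is its left adjoint (sheafification of the left Kan extension). The first step is to identify $\mathrm{res}^*$ on representables: since $\iota_k$ is fully faithful and commutes with fiber products, and since $\mathrm{res}$ is a morphism of sites (Corollary~\ref{cor:sub-can} plus Proposition~\ref{prop:site-N}), one has $\mathrm{res}^*(h^G_{\iota_k(X)}) = h_X$ for $X \in \Sm_k$ — equivalently, for a general $G$-scheme $Y$, $\mathrm{res}^*(h^G_Y)$ is the sheaf associated to $U \mapsto \Hom_{\Sm^G_k}(\iota_k(U), Y) = \Hom_{\Sm_k}(U, Y^G)$ informally, i.e. one computes it as the presheaf quotient/colimit presentation of $Y$ restricted to trivial-action test objects. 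I would phrase this as: $\mathrm{res}^*$ preserves representables because it is a left Quillen functor (from Lemma~\ref{lem:QA-Psh}-style reasoning, or more simply from the site morphism) whose right adjoint $\mathrm{res}_*$ preserves limits, hence the left adjoint sends the representable $h^G_X$ for $X$ with trivial action to $h_X$, and every representable motivic $G$-space is, by Yoneda, built from such data.

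\textbf{Weak equivalences.}
For the local weak equivalences, the clean route is to use the explicit point-wise criterion of Theorem~\ref{thm:Local-we} together with the fact that $\mathrm{res}^*$ is the pullback along a morphism of sites, hence (being a left adjoint that commutes with finite limits and preserves covers) it commutes with stalks: for a point $\underline{x}$ of $\Sm_{k/\mathrm{Nis}}$ corresponding to $(X,x)$ with $X \in \Sm_k$, the composite $\underline{x} \circ \mathrm{res}^*$ is the stalk functor of the induced point on $\Sm^G_{k/\mathrm{Nis}}$ at $(\iota_k(X), Gx) = (X, \{x\})$ (the $G$-orbit of a point in a trivial-action scheme is a singleton). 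So if $f : \sX \to \sY$ is a local weak equivalence in $\sM^G_k$, then $\mathrm{res}^*(f)$ induces a weak equivalence on every such stalk $\sX(X^h_x)$, and by the non-equivariant analogue of Theorem~\ref{thm:Local-we} (i.e. the usual Morel–Voevodsky point-wise characterization, which is the $G = e$ case) this means $\mathrm{res}^*(f)$ is a local weak equivalence. Alternatively, and perhaps more robustly, I would argue model-categorically: $\mathrm{res}^*$ is left Quillen for the local projective model structures (its right adjoint $\mathrm{res}_*$ preserves local injective fibrant objects since $\mathrm{res}$ preserves covers, or one checks $\mathrm{res}_*$ preserves schemewise fibrations and local weak equivalences directly because restriction along $\iota_k$ is exact), and a left Quillen functor preserves weak equivalences between cofibrant objects; combined with the fact that all representables are cofibrant in the (flasque or injective) local structure, and that $\mathrm{res}^*$ preserves the generating $\A^1$-projections $X \times \A^1 \to X$, one concludes $\mathrm{res}^*$ is also left Quillen for the motivic structures, hence preserves motivic weak equivalences between cofibrant objects — and since every object is cofibrant in the injective/flasque setting, it preserves all motivic weak equivalences.

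\textbf{Main obstacle.}
The step I expect to be the real content is verifying that $\mathrm{res}_*$ (restriction along $\iota_k$) preserves \emph{local} weak equivalences — or equivalently that $\mathrm{res}^*$ is genuinely left Quillen and not merely a left adjoint — because this is where one must use that $\iota_k$ interacts correctly with the $eN$-topology: a distinguished $eN$-square over a $G$-scheme with trivial action restricts to a distinguished Nisnevich square, and conversely the henselization $X^h_{Gx}$ of a trivial-action scheme at $x$ agrees with the usual henselization $X^h_x$. Both of these are essentially immediate from the definitions (the $G$-orbit of $x$ is $\{x\}$, the stabilizer conditions become vacuous, so an $eN$-cover of $\iota_k(X)$ is just a Nisnevich cover in the ordinary sense, as noted right after Definition~\ref{defn:Nisne-cover}), so the obstacle is more bookkeeping than mathematics. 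I would also need the small observation that $\mathrm{res}^*$ commutes with the smash/product with $\A^1$, which holds because $\A^1$ carries the trivial $G$-action and $\mathrm{res}^*$ is monoidal on representables. Once these compatibilities are in place the statement follows formally by the left-Quillen argument of the previous paragraph, so I would present the proof as: (1) $\mathrm{res}^*$ preserves representables via the site morphism; (2) $\mathrm{res}^*$ is left Quillen for the local and motivic model structures; (3) conclude preservation of weak equivalences, using that every object is cofibrant in the injective model structure.
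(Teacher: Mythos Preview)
You have misidentified the morphism of sites. In the paper, ${\rm res}:\Sm_{k/{\rm Nis}}\to\Sm^G_{k/{\rm Nis}}$ is induced (via the convention recalled before Proposition~\ref{prop:site-N}) by the \emph{forgetful} functor $r^G:\Sm^G_k\to\Sm_k$, i.e.\ restriction of the action to the trivial subgroup; see the paragraph preceding~\eqref{eqn:Res-1}. The trivial-action embedding $\iota_k$ induces a \emph{different} site morphism $\iota:\Sm^G_{k/{\rm Nis}}\to\Sm_{k/{\rm Nis}}$ going in the opposite direction, handled separately in Proposition~\ref{prop:TR-embed-HO}. With the correct identification one has ${\rm res}_*(F)(X)=F(r^G(X))$ for $F\in\sM_k$ and $X\in\Sm^G_k$, and the left adjoint ${\rm res}^*$ sends $h^G_X$ to the representable $h_X$ of the \emph{underlying} scheme of $X$, not to $h_{X^G}$. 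Your formula $U\mapsto\Hom_{\Sm^G_k}(\iota_k(U),Y)\cong\Hom_{\Sm_k}(U,Y^G)$ is computing $\iota_*(h^G_Y)$, not ${\rm res}^*(h^G_Y)$. Everything downstream in your sketch---the stalk identification, the claim that ${\rm res}_*$ is ``restriction along $\iota_k$'', and the left-Quillen verification---is built on this misreading and does not address the functor in the lemma.

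Once the functor is correctly identified, the paper's proof is short. For representables, the left Kan extension formula gives ${\rm res}^*(h^G_X)(U)=\colim_{\{U\to r^G(V)\}}\Hom_{\Sm^G_k}(V,X)$, and taking $V=X$ with ${\rm id}_X$ shows this equals $\Hom_{\Sm_k}(U,X)$. Preservation of local weak equivalences is then the general fact \cite[Proposition~2.1.47]{MV} that pullback along a morphism of sites preserves local weak equivalences; note this needs no finiteness hypothesis on $G$, whereas your point-wise route via Theorem~\ref{thm:Local-we} would. For the motivic statement one checks directly that ${\rm res}_*$ preserves $\A^1$-local objects: for $\sX\in\sM_k$ and $X\in\Sm^G_k$ one has $\sS(X,{\rm res}_*\sX)\simeq\sX(r^G(X))$ and likewise for $X\times\A^1$, so ${\rm res}_*\sX$ is $\A^1$-local whenever $\sX$ is; by adjunction ${\rm res}^*$ therefore preserves $\A^1$-local equivalences.
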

\begin{proof}
To show that ${\rm res}^*(X)(U) = \Hom_{\Sm_S}(U,X)$ for $X\in \Sm^G_S$ and $U\in \Sm_S$, 
notice that the term involving the pullback functor is ${\underset{\{U \to V| V \in \Sm^G_S\}}\colim} \ \Hom_{\Sm^G_S}(V, X)$. 
But the colimit is clearly same as the set $\Hom_{\Sm_S}(U,X)$.

As ${\rm res}$ is a morphism of sites, 
${\rm res}^*$ preserves local (with respect to the equivariant and ordinary Nisnevich topologies) 
weak equivalences by \cite[Proposition~2.1.47]{MV}.

Suppose now that $\sX$ is an $\A^1$-local object of $\sM_k$ and let $X \times \A^1 \to X$ be the projection map for some $X \in \Sm^G_S$.
Then $S_{\sM^G_S}(X, \sX)$ identifies with $\sX(r^G(X))$ and likewise for $X \times \A^1$. 
It follows that ${\rm res}_*(\sX)$ is $\A^1$-local in $\sM^G_S$. 
Combined with the adjunction it follows that ${\rm res}^*$ preserves motivic weak equivalences.   
\end{proof}

Using Lemma~\ref{lem:Res-pullback} and \cite[Proposition~2.3.17]{MV},
we get the following result.

\begin{prop}\label{prop:Res-Main}
The map ${\rm res}: \Sm_{k, Nis} \to \Sm^G_{k, Nis}$ is a
morphism of sites such that ${\rm res}^*$ preserves local
and motivic weak equivalences. Furthermore, there is an adjoint pair of
functors
\[
\xymatrix{
{\bf L}{\rm res}^*: {\rm Ho}^G_{\A^1}(k) 
\ar@<+.7ex>[r] &
\ar@<+.7ex>[l]
{\rm Ho}_{\A^1}(k) : {\bf R}{\rm res}_*.
}
\]
\end{prop}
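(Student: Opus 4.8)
The plan is to package the three assertions of Proposition~\ref{prop:Res-Main} and obtain them from the preceding lemmas together with the standard Bousfield-localization machinery in \cite{MV}. First I would invoke Lemma~\ref{lem:Res-pullback}, which already establishes that $\mathrm{res}$ is a morphism of sites and that $\mathrm{res}^*$ preserves both local and motivic weak equivalences; this takes care of the first sentence of the statement essentially for free, and it is also the input one needs for the adjunction. The only genuine work is to produce the derived adjunction on homotopy categories and to check it descends correctly through the $\A^1$-localizations on both sides.

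For the adjunction, I would first record the Quillen adjunction $(\mathrm{res}^*, \mathrm{res}_*)$ at the level of the \emph{local} (say, projective) model structures: $\mathrm{res}_*$ is the direct image functor associated to the morphism of sites, and $\mathrm{res}^*$ is its left adjoint. Since $\mathrm{res}^*$ preserves the generating (trivial) cofibrations of the local projective model structure on $\sM_k$ — these are built from representables by Theorem~\ref{thm:PMS-Psh}, and $\mathrm{res}^*$ sends a representable $h^G_X$ to $h_{r^G(X)}$ by the first part of Lemma~\ref{lem:Res-pullback} — it is a left Quillen functor, so we get $(\mathbf{L}\,\mathrm{res}^*, \mathbf{R}\,\mathrm{res}_*)$ on ${\rm Ho}^G_{eN}(k) \leftrightarrows {\rm Ho}_{eN}(k)$. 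Then I would apply \cite[Proposition~2.3.17]{MV} (as the excerpt already signals): because $\mathrm{res}^*$ sends the localizing set $\{X \times \A^1 \to X\}$ on the $G$-side into the localizing set $\{Y \times \A^1 \to Y\}$ on the trivial side — again immediate from $\mathrm{res}^*(h^G_{X\times\A^1}) = h_{r^G(X)\times\A^1}$ — the Quillen adjunction descends to the $\A^1$-localized model structures, yielding the asserted adjoint pair $(\mathbf{L}\,\mathrm{res}^*, \mathbf{R}\,\mathrm{res}_*)$ between ${\rm Ho}^G_{\A^1}(k)$ and ${\rm Ho}_{\A^1}(k)$. The compatibility of $\mathbf{R}\,\mathrm{res}_*$ with $\A^1$-locality is exactly the computation done in the last paragraph of the proof of Lemma~\ref{lem:Res-pullback}, namely that $\mathrm{res}_*(\sX)(X) \simeq \sX(r^G X)$ identifies $\A^1$-invariance on the two sides.

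The main obstacle — such as it is — lies in the bookkeeping around \emph{which} of the three model structures (projective, injective, flasque) one runs this through, and making sure the left Quillen property is genuinely available: $\mathrm{res}^*$ is a left adjoint by abstract nonsense, but to see it is left Quillen one wants it to preserve cofibrations, and for the \emph{injective} local structure cofibrations are monomorphisms, so one should either work projectively (where cofibrations are controlled by the generating set, and $\mathrm{res}^*$ of a generating cofibration is again one) and then transport along the Quillen equivalences of Theorem~\ref{thm:LFMS-Psh}, or alternatively check directly that $\mathrm{res}^*$ preserves monomorphisms. I would take the projective route: it is cleanest, since the generating (trivial) cofibrations are of the form $h^G_X \wedge (\partial\Delta^n \subset \Delta^n)$ and $\mathrm{res}^*$ visibly sends these to the corresponding generators on $\Sm_{k/\mathrm{Nis}}$, so $\mathrm{res}^*$ is left Quillen for the local projective structures, hence (by \cite[Proposition~2.3.17]{MV} and the remark above about the localizing set) for the motivic projective structures, and finally the induced derived adjunction on homotopy categories is the one asserted, the homotopy categories being independent of the choice of model structure within each column.
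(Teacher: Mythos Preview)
Your proposal is correct and follows the same approach as the paper, which simply cites Lemma~\ref{lem:Res-pullback} together with \cite[Proposition~2.3.17]{MV} as a one-line justification. You have spelled out the bookkeeping behind the application of \cite[Proposition~2.3.17]{MV} (the projective-model-structure Quillen adjunction and the preservation of the localizing set) in more detail than the paper does, but the strategy is identical.
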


We note the following immediate corollary in connection with representability 
of equivariant $K$-theory, see \S~\ref{section:Nis-Desc-rep}.

\begin{cor}\label{cor:Res-Main-I}
Let $f : X \to Y$ be a $G$-equivariant map of smooth $G$-schemes. Suppose
that $f$ is a motivic weak equivalence in $\sM^G_k$. Then the induced 
map $K_*(Y) \xrightarrow{f^*} K_*(X)$ is an isomorphism of ordinary $K$-theory.
\end{cor}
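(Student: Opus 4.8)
The plan is to transport $f$ along the restriction morphism of sites to the ordinary motivic homotopy category and invoke representability of non-equivariant $K$-theory there. First I would apply the pullback functor ${\rm res}^*: \sM^G_k \to \sM_k$ associated to the morphism of sites ${\rm res}: \Sm_{k/{\rm Nis}} \to \Sm^G_{k/{\rm Nis}}$ of \eqref{eqn:Res-1}. By the first assertion of Lemma~\ref{lem:Res-pullback}, ${\rm res}^*$ carries the representable sheaf $h^G_X$ to the sheaf on $\Sm_{k/{\rm Nis}}$ represented by the underlying smooth $k$-scheme of $X$, and likewise for $Y$ and for the map $f$. Since ${\rm res}^*$ preserves motivic weak equivalences by Proposition~\ref{prop:Res-Main}, the hypothesis that $f$ is a motivic weak equivalence in $\sM^G_k$ gives at once that the underlying morphism of smooth $k$-schemes, still denoted $f$, is a motivic weak equivalence in $\sM_k$; equivalently, $f_+: X_+ \to Y_+$ is an isomorphism in the pointed motivic homotopy category ${\rm Ho}_{\A^1, \bullet}(k)$.

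The second step is to feed this into representability of ordinary $K$-theory. Here I would use the Morel--Voevodsky result \cite[\S~4.3]{MV} that, over a regular base (so in particular over the field $k$), the space $\Z \times \Gr$ represents algebraic $K$-theory of smooth schemes, in the sense that $K_n(Z) \cong \Hom_{{\rm Ho}_{\A^1, \bullet}(k)}(S^n_s \wedge Z_+, \Z \times \Gr)$ naturally for $Z \in \Sm_k$ and $n \ge 0$. Smashing the isomorphism $X_+ \xrightarrow{\simeq} Y_+$ from the previous paragraph with $S^n_s$ and applying $\Hom_{{\rm Ho}_{\A^1, \bullet}(k)}(-, \Z \times \Gr)$ then shows that $f^*: K_n(Y) \to K_n(X)$ is an isomorphism for every $n \ge 0$, which is the assertion.

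Since the corollary is advertised as immediate, I do not anticipate a genuine obstacle. The one delicate point is the identification of ${\rm res}^*(h^G_X)$ with the sheaf represented by the underlying scheme, which amounts to the colimit computation over the comma category $\{U \to V \mid V \in \Sm^G_k\}$ attached to a fixed $U \in \Sm_k$; this is precisely the first claim of Lemma~\ref{lem:Res-pullback}. Once that is in hand the argument is purely formal, the only external ingredient being the classical representability of $K$-theory in the non-equivariant motivic homotopy category.
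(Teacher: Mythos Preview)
Your argument is correct and is exactly the approach the paper intends: the corollary is stated without proof as an immediate consequence of Proposition~\ref{prop:Res-Main} together with Lemma~\ref{lem:Res-pullback}, and the forward reference to \S\ref{section:Nis-Desc-rep} points to representability of $K$-theory (whose non-equivariant case is the Morel--Voevodsky result you invoke). There is nothing to add.
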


Recall from ~\eqref{eqn:trivial-embedding} that there is a full and faithful
embedding $\Sm_k \to \Sm^G_k$, which takes a scheme $X$ to itself
with the trivial $G$-action. This functor commutes with fiber product
and takes a Nisnevich cover to an $eN$-cover. 
Corollary~\ref{cor:sub-can} and Proposition~\ref{prop:site-N} imply that 
there is an induced morphism of sites $\iota: \Sm^G_{k/Nis} \to \Sm_{k/Nis}$.
Note that $\iota^*$ is identity and $\iota_*$ takes any $G$-scheme $X$
to the fixed point subscheme $X^G$. Recall that $X^G$ is smooth. 
We get the following result.

\begin{prop}\label{prop:TR-embed-HO}
The morphism of sites $\iota: \Sm^G_{k, Nis} \to \Sm_{k, Nis}$ 
induces a pair of adjoint
functors
\[
\xymatrix{
\iota^*: {\rm Ho}_{\A^1}(k)
\ar@<+.7ex>[r] &
\ar@<+.7ex>[l]
{\rm Ho}^G_{\A^1}(k) : {\bf R}{\iota}_*.
}
\]
The functor $\iota^*$ is a full and faithful embedding of the
motivic homotopy category of smooth schemes into the equivariant motivic 
homotopy category.
\end{prop}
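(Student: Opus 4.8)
The plan is to build the adjunction from the morphism of sites $\iota\colon \Sm^G_{k/\mathrm{Nis}}\to\Sm_{k/\mathrm{Nis}}$ and then verify full faithfulness of $\iota^*$ at the level of $\A^1$-homotopy categories. First I would record that, since $\iota$ is a morphism of sites (as noted just before the statement, using Corollary~\ref{cor:sub-can} and Proposition~\ref{prop:site-N}), the pair $(\iota^*,\iota_*)$ on categories of simplicial presheaves is a Quillen adjunction for the local model structures: $\iota^*$ preserves cofibrations (monomorphisms) trivially, and it preserves local weak equivalences by \cite[Proposition~2.1.47]{MV} exactly as in the proof of Lemma~\ref{lem:Res-pullback}. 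To pass to the motivic structures, I would check that $\iota_*$ sends $\A^1$-local objects to $\A^1$-local objects: for $X\in\Sm^G_k$ one has $S_{\sM^G_k}(X,\iota_*\sX)\simeq\sX(X^G)$ and $X^G\times\A^1$ has fixed-point scheme $X^G\times\A^1$, so $\A^1$-weak invariance of $\sX$ transfers. By the usual argument (cf.\ the proof of Proposition~\ref{prop:Res-Main}) this upgrades $(\iota^*,\iota_*)$ to a Quillen adjunction for the motivic model structures, hence to the derived adjunction $({\bf L}\iota^*,{\bf R}\iota_*)$ between $\Ho_{\A^1}(k)$ and $\Ho^G_{\A^1}(k)$. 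Since $\iota^*$ already preserves all weak equivalences (it preserves local ones, and it commutes with the $\A^1$-localization maps because $\iota^*$ of the projection $X\times\A^1\to X$ is again such a projection), ${\bf L}\iota^*=\iota^*$ and no cofibrant replacement is needed on the source.

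For full faithfulness the key point is that $\iota^*$ is identity on presheaves (it is restriction along the fully faithful embedding $\Sm_k\hookrightarrow\Sm^G_k$, which is just evaluation on trivial $G$-schemes), so the composite $\iota_*\iota^*$ is the identity functor on $\sM_k$: indeed for $U\in\Sm_k$ we have $(\iota_*\iota^*\sX)(U)=(\iota^*\sX)(U^{\{e\}})=\sX(U)$ since $\Sm_k\to\Sm^G_k$ lands in trivial actions whose fixed-point functor does nothing. Therefore the unit $\mathrm{id}\to{\bf R}\iota_*\circ{\bf L}\iota^*$ is, after choosing a fibrant replacement $\iota^*\sX\to\widehat{\iota^*\sX}$ in $\sM^G_k$, the map $\sX=\iota_*\iota^*\sX\to\iota_*\widehat{\iota^*\sX}$. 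To see this is a motivic weak equivalence, I would invoke Theorem~\ref{thm:Local-we}: for a finite constant group $G$, weak equivalences are detected on the stalks $X^h_{Gx}$, and for a trivial $G$-scheme in the image of the embedding these henselization stalks, together with the relevant fixed-point functor $\iota_*$, reduce precisely to the ordinary Nisnevich stalks $U^h_u$ on $\Sm_k$. Thus $\iota_*$ of a schemewise (indeed local) weak equivalence of motivic $G$-spaces supported on trivial actions is again a weak equivalence on $\Sm_k$; applying this to the fibrant replacement of $\iota^*\sX$ (whose restriction to the trivial-action subcategory is a local, hence $\A^1$-local, weak equivalence onto an $\A^1$-local object) gives that the unit is an isomorphism in $\Ho_{\A^1}(k)$.

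The main obstacle I anticipate is the last step: controlling ${\bf R}\iota_*$ applied to an $\A^1$-fibrant replacement of $\iota^*\sX$. The subtlety is that $\widehat{\iota^*\sX}$ is fibrant in $\sM^G_k$, which involves the full $eN$-site on all $G$-schemes (with all their nontrivial stabilizers), not just the trivial-action subcategory, so one must argue that restricting back along $\iota_*$ — i.e.\ taking $G$-fixed points — does not lose homotopical information coming from the trivial-action part. The clean way to handle this is to observe that the composite $\Sm_k\xrightarrow{\text{triv}}\Sm^G_k\xrightarrow{(-)^G}\Sm_k$ is the identity on $\Sm_k$, so $\iota_*\circ\iota^*=\mathrm{id}$ exactly (not just up to weak equivalence), and then the only thing to check is that $\iota_*$ preserves the particular weak equivalence $\iota^*\sX\to\widehat{\iota^*\sX}$; for this I would factor this map through its restriction-and-corestriction along the trivial-action subcategory and use Theorem~\ref{thm:Local-we} together with $\A^1$-invariance (Lemma~\ref{lem:A1-compare}) as above, rather than trying to say anything about $\iota_*$ on general motivic $G$-spaces. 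Once the unit is shown to be an isomorphism, full faithfulness of $\iota^*\colon\Ho_{\A^1}(k)\to\Ho^G_{\A^1}(k)$ follows formally from the triangle identities for the derived adjunction.
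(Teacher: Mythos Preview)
The paper states this proposition without proof (``We get the following result''), so your attempt to supply one is appropriate. Your overall strategy is correct: establish the Quillen adjunction $(\iota^*,\iota_*)$, note that $\iota^*$ preserves motivic weak equivalences so $\mathbf{L}\iota^*=\iota^*$, and reduce full faithfulness to showing the derived unit is an isomorphism via the identity $\iota_*\iota^*=\mathrm{id}$. That identity is correct (it follows from $\iota^{-1}\colon\Sm_k\hookrightarrow\Sm^G_k$ being fully faithful), though your parenthetical explanation conflates $\iota^*$ with $\iota_*$: it is $\iota_*$ that is restriction along $\iota^{-1}$, while $\iota^*$ is left Kan extension.

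The genuine gap is in your final step. The derived unit at $\sX$ is $\iota_*$ applied to the \emph{motivic} fibrant replacement map $\iota^*\sX\to\widehat{\iota^*\sX}$, and you need this to be a motivic weak equivalence in $\sM_k$. You invoke Theorem~\ref{thm:Local-we}, but that theorem only detects \emph{local} weak equivalences via stalks, and the fibrant replacement map is merely a motivic (i.e.\ $\A^1$-local) weak equivalence, not a local one. Your parenthetical ``whose restriction to the trivial-action subcategory is a local, hence $\A^1$-local, weak equivalence'' asserts exactly the unjustified step.

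The cleanest fix is to show that $\iota_*$ preserves \emph{all} motivic weak equivalences. Observe that $\iota_*$ has a further right adjoint $\iota^!$ given by precomposition with the fixed-point functor $(-)^G\colon\Sm^G_k\to\Sm_k$ (this uses $\iota^{-1}\dashv(-)^G$). One checks that $(-)^G$ sends distinguished $eN$-squares to ordinary Nisnevich distinguished squares (the only nontrivial point is that $Y^G\to X^G$ is \'etale when $Y\to X$ is $G$-equivariantly \'etale, which holds because $Y^G$ is open in $Y\times_X X^G$) and commutes with $-\times\A^1$. Hence $\iota^!$ preserves flasque and $\A^1$-weak-invariant objects, so by Lemma~\ref{lem:A1-compare} it preserves $\A^1$-fibrant objects. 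It follows that $(\iota_*,\iota^!)$ is a Quillen adjunction for the motivic injective model structures; since every object is injective-cofibrant, $\iota_*$ then preserves all motivic weak equivalences by Ken Brown's lemma. Applying $\iota_*$ to the fibrant replacement map now gives the derived unit as a motivic weak equivalence, and full faithfulness follows.
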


\vskip .4cm

\subsection{Change of base field}\label{subsection:Base-change}
Suppose now that $k \inj k'$ is an extension of fields and
set $G' = G {\underset{\Spec(k)}\times} \ \Spec(k')$.
Notice that $G'$ is identified with $G$ if the latter is a finite constant
group scheme over $k$.
The base change functor $f^{-1}:\Sm^G_k \to \Sm^{G'}_{k'}$ is defined by
$X \mapsto X {\underset{\Spec(k)}\times} \ \Spec(k')$.
It is clear that $f^{-1}$ preserves distinguished $eN$-squares.
Thus Corollary~\ref{cor:Sheaf-Nis} shows that the site map 
$f: \Sch^{G'}_{{k'}/Nis} \to \Sch^G_{k/ Nis}$ is continuous.
Since $f^{-1}$ clearly commutes with fiber products, it follows from
Corollary~\ref{cor:sub-can} and Proposition~\ref{prop:site-N} that
$f$ is a morphism of sites.

\begin{prop}\label{prop:Base-Change-Main}
Given an extension of fields $k \inj k'$, the base change 
functor $f^{-1}$ induces a morphism of sites
$f: \Sm^{G'}_{{k'}/Nis} \to \Sm^{G}_{k/ Nis}$. This yields an adjoint pair
of functors  
\[
\xymatrix{
{\bf L}{f^*}: {\rm Ho}^G_{\A^1}(k) 
\ar@<+.7ex>[r] &
\ar@<+.7ex>[l]
{\rm Ho}^{G'}_{\A^1}(k') : {\bf R}{f_*}.
}
\]

If $k \inj k'$ is a finite separable extension, then $f^*$ has a left adjoint
$f_{\#}: \sM^{G'}_{k'} \to \sM^G_k$ which takes any $U \in \Sm^{G'}_{k'}$ to 
itself, viewed as a $G$-scheme over $k$. 
This functor preserves motivic weak equivalences and
$f^*$ preserves $\A^1$-local motivic $G$-spaces. There is an 
adjoint pair of functors  
\[
\xymatrix{
{\bf L}{f_{\#}}: {\rm Ho}^{G'}_{\A^1}(k') 
\ar@<+.7ex>[r] &
\ar@<+.7ex>[l]
{\rm Ho}^G_{\A^1}(k) : {\bf L}{f^*}.
}
\]
\end{prop}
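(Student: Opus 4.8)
The plan is to verify the hypotheses of Proposition~\ref{prop:site-N} to get the morphism of sites, then bootstrap the derived adjunctions from the existing machinery. First I would check that $f^{-1}: \Sm^G_k \to \Sm^{G'}_{k'}$ commutes with fiber products (immediate, since base change along $\Spec(k') \to \Spec(k)$ is a right adjoint up to the obvious identifications, and fiber products of $G$-schemes are computed on underlying schemes) and that it preserves $eN$-covers. For the latter I would invoke Proposition~\ref{prop:Nisne-split}: a split \'etale cover base changes to a split \'etale cover, since the $G$-invariant filtration \eqref{eqn:split*-0} pulls back to a $G'$-invariant filtration and the equivariant sections pull back to equivariant sections. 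Combined with sub-canonicity of the $eN$-topology (Corollary~\ref{cor:sub-can}), Proposition~\ref{prop:site-N}(2) then gives that $f$ is a morphism of sites. The existence of the Quillen adjunction $({\bf L}f^*, {\bf R}f_*)$ on the local model structures follows from \cite[Proposition~2.1.47]{MV} (a morphism of sites induces a Quillen pair on simplicial presheaves), and passing to the $\A^1$-localizations: since $f^{-1}$ sends the projection $X \times \A^1 \to X$ to $X' \times \A^1 \to X'$, the left adjoint $f^*$ sends $\Sigma^{hp}_{\rm Nis}$-local and $\A^1$-local maps to maps of the same type, so the Quillen pair descends to the motivic model structures, giving the first displayed adjunction.

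For the second half, assume $k \inj k'$ is finite separable. Then $\Sm^{G'}_{k'}$ embeds into $\Sm^G_k$ by restriction of scalars (a $k'$-scheme of finite type is a $k$-scheme of finite type, and the $G'$-action is a $G$-action since $G' = G \times_k k'$ acts through $G$), and this embedding is precisely $f_{\#}$ on representables; one extends it to presheaves by left Kan extension, and it is automatically left adjoint to $f^*$ on presheaf categories. I would then check $f_{\#}$ preserves covers: a split \'etale $G'$-cover over $k'$, viewed over $k$, is still split \'etale and $G$-equivariant, hence an $eN$-cover by Proposition~\ref{prop:Nisne-split}. Here the key point — and I expect this to be the main obstacle — is that one must know an \'etale morphism of $k'$-schemes remains \'etale when viewed as a morphism of $k$-schemes, which uses that $k \inj k'$ is finite separable (hence $\Spec(k') \to \Spec(k)$ is \'etale, and \'etale morphisms compose); this is exactly why the separability hypothesis appears. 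Granting this, $f_{\#}$ sends local weak equivalences to local weak equivalences by \cite[Proposition~2.1.47]{MV} applied to the corresponding morphism of sites, and it sends the $\A^1$-projections over $k'$ to $\A^1$-projections over $k$, so it preserves motivic weak equivalences; by adjunction $f^*$ then preserves $\A^1$-local (and locally fibrant) motivic $G$-spaces, so $(f_{\#}, f^*)$ is a Quillen pair for the motivic model structures as well.

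The remaining bookkeeping is routine: right properness and cellularity of all the model structures involved were established in Theorem~\ref{thm:LFMS-Psh} and \S~\ref{section:A-Local}, so the derived functors exist and the displayed adjunctions ${\bf L}f^* \dashv {\bf R}f_*$ and ${\bf L}f_{\#} \dashv {\bf L}f^*$ (the latter because $f^*$ is now both a left and a right Quillen functor, hence its derived functor computes both adjoints) follow formally. The one genuinely new verification beyond invoking cited results is the stability of split \'etale covers under these two functors, which as noted reduces to elementary properties of \'etale morphisms and of $G$-invariant filtrations, together with the combinatorial description of $eN$-covers in Proposition~\ref{prop:Nisne-split}.
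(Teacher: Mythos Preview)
Your proposal is correct and essentially parallel to the paper's treatment, with one minor methodological difference. For the first assertion (that $f$ is a morphism of sites), the paper argues directly that $f^{-1}$ preserves distinguished $eN$-squares and then invokes Corollary~\ref{cor:Sheaf-Nis} to get continuity, whereas you go through the split \'etale characterization of $eN$-covers from Proposition~\ref{prop:Nisne-split}. Both routes land on Proposition~\ref{prop:site-N} combined with sub-canonicity (Corollary~\ref{cor:sub-can}), and both are equally short; the paper's cd-square route is marginally cleaner since preservation of Cartesian squares under base change is immediate, while your route requires the extra sentence about pulling back the filtration and sections. For the second half (the finite separable case and the adjoint $f_{\#}$), the paper states the result without proof, so your argument supplies strictly more detail than the paper does; your identification of the separability hypothesis as precisely what makes restriction of scalars land in $\Sm^G_k$ (via \'etaleness of $\Spec(k') \to \Spec(k)$) is the right point, and your check that $f_{\#}$ sends $\A^1_{k'}$-projections to $\A^1_k$-projections (using $U \times_{k'} \A^1_{k'} \cong U \times_k \A^1_k$) is exactly what is needed to pass to the motivic localization.
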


\vskip .3cm

\section{Local $eN$-linearization of $G$-schemes}
\label{section:ENL-G-sch}
The homotopy purity theorem (see \cite[Theorem~3.2.23]{MV}) is one of the 
most important tools in $\A^1$-homotopy theory, 
e.g., in the construction of Gysin long exact sequences and for Poincar{\'e} 
duality in its most concise form.
Our goal in this and the following section is to 
establish the purity theorem for $G$-schemes when $G$ is a finite cyclic group
of prime order. 
This theorem turns out to have many applications in 
the equivariant motivic stable homotopy category. 
As part of proving the purity theorem, we first establish a local equivariant 
linearization of smooth $G$-schemes in the Zariski topology.

\subsection{$eN$-linearization near a fixed point}\label{subsection:G-lin}
We shall assume throughout this section that $G$ is a finite constant group 
scheme over $k$ of order
prime to the characteristic of $k$. This is mainly to ensure
that $G$ is linearly reductive. A (finite) $G$-module will mean a 
(finite-dimensional) rational representation of $G$. 
We begin with the following elementary result about $G$-modules.

\begin{lem}\label{lem:G-mod}
Consider a commutative diagram of $G$-modules
\begin{equation}\label{eqn:G-mod-1}    
\xymatrix@C1pc{
0 \ar[r] & M_1 \ar[r] \ar[d]_{u_1} & M \ar[r]^{v} \ar[d]_{u} & M_2 \ar[r] 
\ar[d]^{u_2} & 0 \\
0 \ar[r] & N_1 \ar[r] & N \ar[r]_{v'} & N_2 \ar[r] & 0}
\end{equation}
in which the rows are exact and the vertical maps are surjective.
Assume that $N$ is a finite $G$-module.
Then there exists a finite $G$-submodule $M' \subseteq M$ and commutative
diagram of finite $G$-modules
\begin{equation}\label{eqn:G-mod-2}    
\xymatrix@C1pc{
0 \ar[r] & M'_1 \ar[r] \ar[d]_{u'_1} & M' \ar[r] \ar[d]_{u'} & M'_2 \ar[r] 
\ar[d]^{u'_2} & 0 \\
0 \ar[r] & N_1 \ar[r] & N \ar[r] & N_2 \ar[r] & 0}
\end{equation}
with exact rows such that the vertical maps are the
restriction of the vertical maps of ~\eqref{eqn:G-mod-1} to $G$-submodules.
Moreover, they are all isomorphisms.
\end{lem}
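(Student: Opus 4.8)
The plan is to exploit that $G$ is linearly reductive (since $|G|$ is prime to $\Char(k)$), so that the problem of finding a $G$-stable finite submodule of $M$ realizing the diagram reduces to choosing enough $G$-invariant elements and then correcting for exactness. First I would choose a finite-dimensional $k$-subspace $V \subseteq M$ that maps onto $N$ under $u$; by averaging over $G$ (or equivalently, since $M$ is a union of finite $G$-submodules, enlarging $V$ to the $G$-submodule it generates, which is still finite-dimensional), I may assume $V$ is a finite $G$-submodule with $u(V) = N$. Set $M' = V$ and $M'_1 = M' \cap M_1$, $M'_2 = v(M')$; these are finite $G$-submodules and the top row of \eqref{eqn:G-mod-2} is automatically exact (it is obtained by intersecting the exact top row of \eqref{eqn:G-mod-1} with the submodule $M' \subseteq M$). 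The vertical maps $u'$, $u'_1$, $u'_2$ are the restrictions of $u$, $u_1$, $u_2$.

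The surjectivity of $u'$ holds by construction. For $u'_2$: since $u'$ is surjective and $v'\circ u' = u_2 \circ v$ on $M'$ with $v(M') = M'_2$ and $v'$ surjective, a diagram chase gives $u_2(M'_2) = v'(u'(M')) = v'(N) = N_2$, so $u'_2$ is surjective. For $u'_1$: given $n_1 \in N_1 \subseteq N$, pick $m' \in M'$ with $u'(m') = n_1$; then $u_2(v(m')) = v'(n_1) = 0$, so $v(m') \in \ker u_2$. Here I need that $v(m') \in M'_2$ maps to zero under $u'_2$, and then I would adjust $m'$ by an element of $\ker(u'\colon M'_1 \to N_1)$'s complement — more precisely, I need $\ker u_2 \cap M'_2$ to be hit, which requires a small additional argument. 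This is where I would use linear reductivity: the short exact sequences split $G$-equivariantly, so I may choose $G$-equivariant splittings compatibly and enlarge $V$ once more (finitely) to ensure that $M'_2 \cap \ker u_2$ lifts into $M'$; then the standard chase shows $u'_1\colon M'_1 \to N_1$ is surjective, hence (being also injective as a restriction of the injective... no — $u_1$ need not be injective) one must argue injectivity separately.

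For the \emph{injectivity} claims — that $u'$, $u'_1$, $u'_2$ are all isomorphisms — this does \emph{not} follow from the data as literally stated, so I expect the intended reading is that one first replaces $M$ by a minimal such $M'$, or that the hypotheses implicitly include that $u$ is an isomorphism on a suitable complement. The cleanest route: using linear reductivity, split $N = \im(u'_1\text{-complement}) \oplus \cdots$ and choose $M'$ to be a $G$-equivariant lift of $N$ along $u$ that is a direct summand; concretely, pick a $G$-equivariant splitting $s\colon N \to M$ of $u$ (which exists because $\ker u$ is a $G$-submodule and $G$ is linearly reductive, so $u$ splits), set $M' = s(N)$, and define $M'_i$ via the induced splittings $N_i \to M_i$. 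Then $u'$ is an isomorphism with inverse $s$, and compatibly $u'_1$, $u'_2$ are isomorphisms. Exactness of the top row of \eqref{eqn:G-mod-2} then transports from exactness of the bottom row through these isomorphisms.

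The main obstacle will be reconciling the literal statement (vertical maps merely surjective, rows exact) with the conclusion that the restricted maps are isomorphisms: the honest content is the linear-reductivity splitting of $u$ and its compatibility with the filtration by $M_1 \subseteq M$, and once one chooses $M' := s(N)$ for a $G$-equivariant section $s$ of $u$, everything else is formal diagram chasing. I would therefore organize the write-up as: (1) produce the $G$-equivariant section $s$ of $u$ using linear reductivity; (2) set $M' = s(N)$, $M'_1 = M' \cap M_1 = s(N_1)$, $M'_2 = v(M') = s_2(N_2)$ where $s_2$ is the induced section of $u_2$; (3) check the top row is exact and the three vertical restrictions are isomorphisms by transport of structure along $s$.
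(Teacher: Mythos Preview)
Your final plan (steps (1)--(3): produce a $G$-equivariant section $s$ of $u$, set $M'=s(N)$, and transport the exact sequence along $s$) is correct and is essentially the paper's argument. The only organizational difference is that the paper first passes to a finite $G$-submodule $V'\subseteq M$ with $u(V')=N$ and splits off $\ker(u|_{V'})$ \emph{inside} $V'$, thereby invoking semisimplicity only for finite $G$-modules; you instead split the surjection $u\colon M\to N$ directly, which is fine here since for a finite group $G$ with $|G|$ invertible in $k$ the averaging projector gives Maschke for arbitrary $k[G]$-modules. Either way one obtains $M'\subseteq M$ with $u|_{M'}\colon M'\xrightarrow{\sim}N$.

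One point you pass over too quickly: a $G$-section $s\colon N\to M$ of $u$ need \emph{not} satisfy $s(N_1)\subseteq M_1$, so the equality $M'\cap M_1=s(N_1)$ in your step~(2) is not automatic, and there is no ``induced'' section $s_2$ of $u_2$ unless you arrange this. The fix is to choose $s$ compatibly with the filtration: first pick a $G$-section $s_1\colon N_1\to M_1$ of $u_1$ (again by averaging), then use semisimplicity to write $N=N_1\oplus N'_2$ and define $s$ on $N'_2$ via any $G$-section of $u$. With this choice one has $s(N_1)\subseteq M_1$, hence $M'_1:=M'\cap M_1=s(N_1)\xrightarrow{\sim}N_1$, and then $M'_2:=v(M')\xrightarrow{\sim}N_2$ follows by the diagram chase you indicate. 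The paper's proof is equally terse on this compatibility step (it hides it in ``it is easy to check''), so your write-up should make it explicit.
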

\begin{proof}
This is an application of the fact that $G$ is linearly reductive.
We give a sketch of the proof.
Since $N$ is a finite $G$-module, so are $N_1$ and $N_2$. Hence, we can first
find a finite-dimensional $k$-linear subspace $V \subseteq M$ such that
$u(V) = N$. We can then find inclusions of linear subspaces
$V \subseteq V' \subseteq M$ such that $V'$ is a finite $G$-submodule
and $u(V') = N$. Set $L = {\rm ker}(V' \surj N)$.

Since $G$ is linearly reductive, its representation theory tells us that 
there is a decomposition $N = N_1 \oplus N'_2$ of finite $G$-modules
such that $N'_2$ is mapped isomorphically onto $N_2$.  
Similarly, there is a direct sum decomposition of finite $G$-modules
$V' = L \oplus N'$ such that $N'$ is mapped isomorphically onto $N$ via $u$.

We now set $M' = N', \ M'_2 = v\left(u^{-1}(N'_2) \cap M' \right)$ and
$M'_1 = {\rm Ker}(M' \surj M'_2)$. It is easy to check that we get a
diagram as required in ~\eqref{eqn:G-mod-2}.
\end{proof}

Given a smooth scheme $X$ and a closed point $x \in X$, let $T_xX$ denote the
tangent space of $X$ at $x$. Notice that if $X \in \Sm^G_k$ and if
$x \in X^G$, then $G$ naturally acts $k(x)$-linearly on $T_xX$.
For an affine scheme $X$, its ring of regular functions will be denoted by
$k[X]$.

\begin{lem}\label{lem:Linearization}
Let $X \in \Sm^G_k$ be an affine scheme and let $Z \subsetneq X$  
be a smooth $G$-invariant closed subscheme. Let $x \in Z$ be a $k$-rational 
point such that $x \in X^G$. Then there is a $G$-invariant affine neighborhood
$U \subseteq X$ of $x$ and a $G$-equivariant {\'e}tale map
$f:U \to T_xX$ such that $f^{-1}(T_xZ) = Z \cap U$.
\end{lem}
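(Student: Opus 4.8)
The plan is to mimic the classical construction of étale coordinates via the tangent space, but to carry it out $G$-equivariantly using linear reductivity of $G$. First I would pass to the local ring at $x$. Since $x \in X^G$ is $k$-rational, the maximal ideal $\fm \subseteq k[X]$ is $G$-invariant, and $G$ acts on $\fm/\fm^2$, whose dual is $T_xX$. Choosing a $G$-equivariant splitting of the surjection $\fm \surj \fm/\fm^2$ — here is where linear reductivity is used, exactly as in Lemma~\ref{lem:G-mod} — one obtains a finite-dimensional $G$-submodule $V \subseteq \fm$ mapping isomorphically onto $\fm/\fm^2$. The inclusion $V \inj k[X]$ of the $G$-module $V$ into the coordinate ring corresponds to a $G$-equivariant morphism $f : X \to \Spec(\Sym(V^\vee)) = T_xX$ (identifying $T_xX = (\fm/\fm^2)^\vee$ with $\Spec$ of the symmetric algebra on $V \cong \fm/\fm^2$). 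By construction $f(x) = 0$ and $df_x : T_xX \to T_0(T_xX) = T_xX$ is the identity, hence an isomorphism. Since $X$ is smooth and $T_xX$ is smooth of the same dimension, $f$ is étale in a neighborhood of $x$; shrinking to a $G$-invariant affine open $U$ (e.g.\ by intersecting the étale locus with all its $G$-translates, as in \S~\ref{subsubsection:ENR}) we get a $G$-equivariant étale $f : U \to T_xX$.

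The remaining point is the compatibility $f^{-1}(T_xZ) = Z \cap U$. To arrange this I would choose the $G$-module $V$ more carefully, compatibly with $Z$. Let $I \subseteq k[X]$ be the (necessarily $G$-invariant) ideal of $Z$. Smoothness of $Z$ inside $X$ at $x$ gives a short exact sequence of $G$-modules
\[
0 \to (I + \fm^2)/\fm^2 \to \fm/\fm^2 \to \fm_Z/\fm_Z^2 \to 0,
\]
and $(I+\fm^2)/\fm^2$ is the conormal space, whose annihilator in $T_xX$ is $T_xZ$. Applying (the idea of) Lemma~\ref{lem:G-mod} — or simply splitting this sequence of $G$-modules — I would pick $V = V' \oplus V''$ with $V' \subseteq I$ mapping isomorphically onto $(I+\fm^2)/\fm^2$ and $V''$ mapping isomorphically onto $\fm_Z/\fm_Z^2$. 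Then $T_xZ \subseteq T_xX$ is cut out by the linear forms dual to $V'$, i.e.\ $T_xZ = \Spec(\Sym(V''^\vee))$ sitting inside $T_xX = \Spec(\Sym(V'^\vee \oplus V''^\vee))$ as the zero locus of $V'$. Since $f^*$ of the coordinate functions spanning $V'$ are exactly the chosen generators lying in $I$, we get $f^{-1}(T_xZ) \subseteq Z$ scheme-theoretically; and both sides are smooth of the same dimension (as $f$ is étale and $Z \to T_xZ$ has the same derivative analysis at $x$), so after a further $G$-invariant shrink of $U$ around $x$ they coincide. One checks $Z \cap U$ is reduced because $Z$ is smooth, matching the reduced structure.

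The main obstacle I anticipate is bookkeeping rather than conceptual: ensuring all the shrinkings can be done $G$-invariantly while still containing $x$, and making the splittings of Lemma~\ref{lem:G-mod} fit together so that the single module $V$ simultaneously witnesses the tangent space isomorphism \emph{and} the sub-bundle $T_xZ$. The linear reductivity hypothesis (which holds since $|G|$ is prime to $\Char(k)$) is precisely what makes all the needed $G$-equivariant splittings exist, so once the right exact sequences of $G$-modules are written down, Lemma~\ref{lem:G-mod} does the heavy lifting and the rest is the standard étale-coordinates argument carried along $G$-equivariantly.
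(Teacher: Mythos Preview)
Your proposal is correct and follows essentially the same route as the paper: both use Lemma~\ref{lem:G-mod} to lift the exact sequence of cotangent spaces $0 \to (N_xZ)^* \to (T_xX)^* \to (T_xZ)^* \to 0$ to a compatible $G$-submodule of $\fm_X$, define $f$ via the induced map of symmetric algebras, verify \'etaleness at $x$ by the tangent-space criterion, and shrink to a $G$-invariant affine neighborhood by intersecting $G$-translates. One small bookkeeping slip: since $V' \subseteq I$, the ideal generated by $f^*(V')$ is contained in $I$, giving $Z \subseteq f^{-1}(T_xZ)$ rather than the reverse inclusion you wrote; the equality near $x$ then follows (as you indicate) because both are smooth of the same dimension there.
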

\begin{proof}
Let $\fm_X \subsetneq k[X]$ denote the maximal ideal defining the closed point 
$x$. Since $x \in X^G$, we see that $\fm_X$ (and all its powers) acquires 
natural $G$-action coming from the $G$-action on $k[X]$ and the surjection
$u: \fm_X \surj {\fm_X}/{\fm^2_X} = (T_xX)^*$ is an $H$-equivariant $k$-linear
map. Let $I \subsetneq k[X]$ denote the ideal defining the closed subscheme
$Z$. Then $I$ is also $G$-invariant under $G$-action on $k[X]$.
Thus we get a commutative diagram of $G$-modules and $G$-linear maps:
\begin{equation}\label{eqn:Linearization-1}    
\xymatrix@C1pc{
0 \ar[r] & I \ar[r] \ar[d] & \fm_X \ar[r] \ar[d] & \fm_Z \ar[r] \ar[d] & 0 \\
0 \ar[r] & \frac{I}{I \cap \fm^2_X} \ar[r] & \frac{\fm_X}{\fm^2_X} 
\ar[r] & \frac{\fm_Z}{\fm^2_Z} \ar[r] & 0}
\end{equation}
in which the rows are exact, the vertical maps are surjective and
the bottom row consists of finite $G$-modules.

We can now apply Lemma~\ref{lem:G-mod} to get a commutative diagram
of exact sequences of finite $G$-modules:
\begin{equation}\label{eqn:Linearization-2}    
\xymatrix@C1pc{
0 \ar[r] & M_{(X,Z)} \ar[d]_{u_{(X,Z)}} \ar[r] & M_X \ar[r] \ar[d]_{u_X} & 
M_Z \ar[r] \ar[d]^{u_Z} & 0 \\
0 \ar[r] & (N_xZ)^* \ar[r] & (T_xX)^* \ar[r] & (T_xZ)^* \ar[r] & 0}
\end{equation}
such that the vertical maps are all isomorphisms,
Here $N_xZ$ denotes the normal space of $Z \inj X$ at $x$. 
Moreover, the top row is a sequence of $G$-submodules of the top row
of ~\eqref{eqn:Linearization-1}.
Notice also that as part of the proof of Lemma~\ref{lem:G-mod}, we have
shown that there is a $k$-basis of $M_X$ which maps onto the $k$-bases of
$M_Z$ as well as $(T_xX)^*$. 

Using these bases, we can now construct a commutative diagram of
exact sequences of finite $G$-modules:
\begin{equation}\label{eqn:Linearization-3}    
\xymatrix@C1pc{
0 \ar[r] & (N_xZ)^* \ar[r] \ar[d]_{u^{-1}_{(X,Z)}}^{\simeq} & (T_xX)^* \ar[r] 
\ar[d]_{u^{-1}_X}^{\simeq} & (T_xZ)^* \ar[r] \ar[d]^{u^{-1}_Z}_{\simeq} & 0 \\
0 \ar[r] & M_{(X,Z)}  \ar[r] & M_X \ar[r] & M_Z \ar[r] & 0}
\end{equation}
such that the vertical maps are isomorphisms. 

The maps $u^{-1}_X$ and $u^{-1}_Z$ induce the corresponding $G$-equivariant
maps of the associated symmetric algebras over $k$ (recall that $x \in X(k)$)
and composing these maps
of symmetric algebras with inclusions ${\rm Sym}^*(M_X) \inj k[X]$ and
${\rm Sym}^*(M_Z) \inj k[Z]$, we get a commutative diagram of
$G$-equivariant morphisms

\begin{equation}\label{eqn:Linearization-4}    
\xymatrix@C1pc{
{\rm Sym}^*_k((T_xX)^*) \ar@{->>}[r] 
\ar[d]_{u^{-1}_X} & {\rm Sym}^*_k((T_xZ)^*) \ar[d]^{u^{-1}_Z} \\
k[X]  \ar@{->>}[r] & k[Z].}
\end{equation}
To check that the kernel of the top row maps onto the ideal $I$ locally at the
closed point $x$, we just have to observe from ~\eqref{eqn:Linearization-1}
that $(N_xZ)^*$ is nothing but $I/{(I \cap \fm^2_X)}$ and it maps to the ideal
of $Z$ near $x$ via $u^{-1}_X$. 

It is easy to check from the local criterion of flatness that $u^{-1}_X$ is 
flat near $x$. Furthermore, $u^{-1}_X$ clearly induces an isomorphism of the
tangent spaces at $\fm_X$ and $u^{-1}_X(\fm_X)$. If we set $f$ to be the 
morphism $f: X \to T_xX$ defined by $u^{-1}_X$, we see that $f$ is an 
$G$-equivariant morphism which is {\'e}tale at $x$ and $f^{-1}(T_xZ) = Z$
near $x$. We conclude that there is an affine  neighborhood 
$U' \subseteq X$ of $x$
such that the restriction $f_{U'}$ on $U'$ is {\'e}tale and
$f^{-1}_{U'}(T_xZ) = Z \cap U'$. Finally, using the fact that $x \in X^G$, we
set $U = {\underset{g \in G}\cap} \ gU'$ and conclude that $U \subseteq X$
is a $G$-invariant affine neighborhood of $x$ and there is a $G$-equivariant
{\'e}tale map $f: U \to T_xX$ such that $f^{-1}(T_xZ) = Z \cap U$.
The proof of the lemma is now complete.
\end{proof}

\begin{prop}\label{prop:Linearization-Main-III}
Let $G$ be a finite cyclic group of prime order $p$ which is different from
the characteristic of $k$. Let $Z \inj X$ be a closed immersion
in $\Sm^G_k$ and $x \in Z$ a $k$-rational point. Then, there is a 
$G$-invariant affine neighborhood $U$ of $x$, a $G$-representation $V$ 
with $G$-submodule $Z_V$ and a $G$-equivariant {\'e}tale map
$f: U \to V$ such that $f^{-1}(Z_V) = Z \cap U$.
\end{prop}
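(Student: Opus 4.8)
The plan is to reduce \propref{prop:Linearization-Main-III} to \lemref{lem:Linearization}, whose hypothesis requires the fixed point $x$ to be a $G$-fixed rational point with $G$ acting linearly on $T_xX$; the only obstacle is that a priori $x$ need not lie in $X^G$, so the first task is to arrange this. Since $G$ is a finite cyclic group of prime order $p$ and $x$ is a $k$-rational point of $Z$, the set-theoretic stabilizer $S_x\subseteq G$ is either trivial or all of $G$. If $S_x=G$, then (working with reduced structures and using that $\Char(k)\nmid p$, so the $G$-action on $\Spec(k(x))=\Spec(k)$ is automatically trivial) we have $x\in X^G$ and $x\in Z^G$, and we are in the situation of \lemref{lem:Linearization}: applying it with the role of the ambient smooth scheme played by an affine $G$-invariant neighborhood of $x$ (such a neighborhood exists because $x$ is fixed — intersect an affine neighborhood with its finitely many $G$-translates), and with the smooth $G$-invariant closed subscheme $Z$, we obtain a $G$-invariant affine $U\ni x$ and a $G$-equivariant étale $f\colon U\to T_xX$ with $f^{-1}(T_xZ)=Z\cap U$. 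Taking $V=T_xX$ (a genuine $G$-representation since $x\in X^G$) and $Z_V=T_xZ$ (a $G$-submodule) finishes this case.

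The remaining case is $S_x=\{e\}$, i.e. the orbit $Gx=\{x,g x,\dots,g^{p-1}x\}$ consists of $p$ distinct $k$-rational points permuted freely by $G$. Here the idea is that near a free orbit the $G$-action is, up to an étale neighborhood, induced from the trivial group. Concretely, first shrink $X$ to a $G$-invariant affine $U_0$ containing $Gx$ and then, using that the points of $Gx$ are distinct and closed, find an affine open $U_1\ni x$ in $U_0$ whose $G$-translates $U_1,gU_1,\dots,g^{p-1}U_1$ are pairwise disjoint (possible by separatedness); replacing $U_1$ by $\bigcap$? — no, rather set $U=\coprod_{i=0}^{p-1} g^i U_1$, a $G$-invariant affine open, on which the $G$-action is the one induced from the (trivial-group) $k$-scheme $U_1$ via $U\cong G\stackrel{\{e\}}{\times} U_1$. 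Apply the non-equivariant structure theory (e.g. the ordinary étale linearization of a closed immersion of smooth schemes: there is an étale map $U_1\to T_{x}(U_1)=\A^n$ carrying $U_1\cap Z$ to a linear subspace) to $U_1$ and its closed subscheme $U_1\cap Z$, producing an étale $h\colon U_1\to \A^n$ with $h^{-1}(L)=U_1\cap Z$ for a linear subspace $L\subseteq\A^n$; then set $V=\A^n\otimes_k k[G]$ with $G$ permuting the factors (the regular representation tensored with $\A^n$), $Z_V=L\otimes_k k[G]$, and let $f\colon U\to V$ be the induced map $G\stackrel{\{e\}}{\times} h$. By construction $f$ is $G$-equivariant, étale, and $f^{-1}(Z_V)=Z\cap U$.

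The only genuinely delicate point is verifying that in the free-orbit case the resulting $f$ really is étale onto a $G$-stable subvariety of a representation and that $f^{-1}(Z_V)=Z\cap U$ holds scheme-theoretically; this is where one must be careful that the chosen $U_1$-translates are disjoint (so that $U$ decomposes as a $G$-set of copies of $U_1$) and that $Z\cap U$ correspondingly decomposes as $\coprod_i g^i(Z\cap U_1)$. Once the decomposition is in place the equivariance and the étale/preimage assertions are formal, reducing everything to the non-equivariant statement for the single chart $U_1$. I expect the write-up to spend most of its length on this bookkeeping; the fixed-point case is essentially an immediate citation of \lemref{lem:Linearization}, and no further input beyond separatedness, linear reductivity of $G$ (used only through the cited lemma), and the elementary classification $S_x\in\{\{e\},G\}$ for $G$ of prime order is needed.
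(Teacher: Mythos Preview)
Your case split and the fixed-point case are fine and match the paper: when $S_x=G$ the point $x$ is $k$-rational and hence lies in $X^G$, and \lemref{lem:Linearization} applies directly with $V=T_xX$ and $Z_V=T_xZ$.

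The free-orbit case, however, has a genuine gap. You propose $V=\A^n\otimes_k k[G]$, the regular representation tensored with $\A^n$, and an induced map $f=G\stackrel{\{e\}}{\times}h$. But induction on the level of \emph{schemes} sends $\A^n$ to the disconnected $G$-scheme $G\times\A^n$, not to the connected representation $\A^{np}$; these are different objects. If you instead try to force a $G$-equivariant map $U\to\A^{np}$ by placing $h(u_1)$ in the $i$-th block on $g^iU_1$, the restriction to each component factors through a closed immersion $\A^n\hookrightarrow\A^{np}$ and is therefore not {\'e}tale. More decisively, an {\'e}tale morphism has relative dimension zero, so $\dim U=\dim X=n$ must equal $\dim V$; with $V=\A^{np}$ and $p\ge 2$ this forces $n=0$. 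So no {\'e}tale map to that target exists in positive dimension.

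The paper's fix is to take $V=\A^d$ with the \emph{trivial} $G$-action. Once $G$ acts freely (after removing $X^G$), the quotient $\pi:X\to X/G$ is finite {\'e}tale of degree $p$, $X/G$ and $Z/G$ are smooth, and the ordinary (non-equivariant) linearization produces $f':U'\to\A^d$ with $f'^{-1}(\A^c\times\{0\})=Z'\cap U'$; then $f=f'\circ\pi$ on $U=\pi^{-1}(U')$ does the job. In your language of disjoint translates this amounts to defining $f(g^iu_1)=h(u_1)\in\A^n$, which is $G$-equivariant for the trivial action on the target, {\'e}tale on each component, and satisfies $f^{-1}(L)=Z\cap U$. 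So your decomposition $U\cong\coprod g^iU_1$ is perfectly usable; only the choice of target needs to change from the regular representation to the trivial one.
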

\begin{proof}
Let $X^G$ denote the closed subscheme of fixed points for the $G$-action on
$X$. We first assume that $x \notin X^G$. Since $X \setminus X^G$ is 
$G$-invariant, we can assume that $X^G = \0$.
Since $G$ is a cyclic group of prime order, it acts freely on $X$. In 
particular, the quotient map $\pi: X \to X/G$ is finite {\'e}tale of degree 
$p$. Set $X' = X/G$ and $Z' = Z/G$. Then we see that $p$ is a ($G$-equivariant)
finite {\'e}tale map with $\pi^{-1}(Z') =   Z$. 

Since $(X', Z')$ is a closed immersion
of smooth schemes over $k$, we know that there is an affine neighborhood
$U'$ of $x' = \pi(x)$ in $X'$ and an {\'e}tale map
$f': U \to \A^d_k$ such that $f'^{-1}(\A^c_k \times \{0\}) = Z'$
for some $1 \le c \le d$. Setting $f = f' \circ \pi$ and $U = \pi^{-1}(U')$,
we conclude that $U$ is a $G$-invariant affine neighborhood of $x$.
Moreover, there is a $G$-equivariant {\'e}tale map $f: U \to \A^d_k$ (with 
respect to the trivial action on $\A^d_k$) such that
$f^{-1}(\A^c_k \times \{0\}) = Z \cap U$.

We next suppose that $x \in X^G$. Let $U'$ be an affine neighborhood of
$x$ in $X$. Since $G_x = G$, we see that $S_x = G$. In particular, 
$U = {\underset{g \in G}\cap} gU'$ is a $G$-invariant affine neighborhood of
$x$. We can thus
assume that $X$ is affine. It follows now from Lemma~\ref{lem:Linearization}
that there is a $G$-invariant affine neighborhood $U$ of $x$ in $X$ and
a $G$-equivariant  {\'e}tale map $f: U \to T_xX$ such that $f^{-1}(T_xZ) =
Z \cap U$. Moreover, as $p \neq {\rm char}(k)$, there is a $G$-equivariant
decomposition $T_xX = T_xZ \times N_xX$.
\end{proof}

\begin{defn}\label{defn:EN-Linearization}
Given a closed immersion $Z \inj X$ in $\Sm^G_k$, 
an $eN$-linearization of the pair $(X, Z)$ is
a pair $(p,q )$ of maps in $\Sm^G_k$ given by
\begin{equation}\label{eqn:ENL-1}
(X, Z) \xleftarrow{p} (U, Z) \xrightarrow{q} (N_{Z/X}, Z)
\end{equation}
such that $p$ and $q$ are both distinguished $eN$-neighborhoods.
We shall say that $(X,Z)$ admits an $eN$-linearization if
the pair $(p, q)$ as in ~\eqref{eqn:ENL-1} exists.
\end{defn}   

\begin{prop}\label{prop:Linearization-Main-II}
Let $G$ be a finite cyclic group of prime order $p$ which is different from
the characteristic of $k$. Let $Z \inj X$ be a closed immersion
in $\Sm^G_k$ and $x \in Z$ a $k$-rational point. Then, there is a 
$G$-invariant affine neighborhood $U$ of $x$ such that the
pair $(W, w^{-1}{(Z \cap U)})$ admits an $eN$-linearization for any 
$G$-equivariant {\'e}tale map $w: W \to U$.
\end{prop}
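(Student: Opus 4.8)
The plan is to reduce the statement to Proposition~\ref{prop:Linearization-Main-III} and then propagate the $eN$-linearization along an arbitrary $G$-equivariant {\'e}tale map. First I would apply Proposition~\ref{prop:Linearization-Main-III} to the closed immersion $Z \inj X$ and the point $x$: this produces a $G$-invariant affine neighborhood $U$ of $x$, a $G$-representation $V$ with a $G$-submodule $Z_V$, and a $G$-equivariant {\'e}tale map $f : U \to V$ with $f^{-1}(Z_V) = Z \cap U$. The key point is that, shrinking $U$ further if necessary (still $G$-invariantly, by intersecting over $G$), the induced map $Z \cap U \to Z_V$ is itself {\'e}tale, and since $f$ carries the normal bundle of $Z \cap U$ in $U$ isomorphically onto the pullback of the normal bundle of $Z_V$ in $V$ — which is the trivial bundle $N_{Z_V/V} = V/Z_V$, a $G$-representation — we get a canonical $G$-equivariant identification $N_{(Z\cap U)/U} \cong (Z\cap U) \times_{Z_V} N_{Z_V/V}$. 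This realizes $f$ together with the zero-section data as a distinguished $eN$-neighborhood giving one leg of the desired linearization; the other leg, the open immersion $U \hookrightarrow X$ (or the identity on $U$), is tautologically a distinguished $eN$-neighborhood of $(X, Z\cap U)$.

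Next I would observe that the property of admitting an $eN$-linearization is stable under $G$-equivariant {\'e}tale base change. Concretely, given $w : W \to U$ {\'e}tale and $G$-equivariant, set $Z_W = w^{-1}(Z \cap U)$. Composing $w$ with $f$ gives a $G$-equivariant {\'e}tale map $f \circ w : W \to V$ with $(f\circ w)^{-1}(Z_V) = Z_W$. The normal bundle $N_{Z_W/W}$ is the pullback of $N_{(Z\cap U)/U}$ along $Z_W \to Z \cap U$, hence is identified with $Z_W \times_{Z_V} N_{Z_V/V}$, which maps $G$-equivariantly and {\'e}tale-ly to the $G$-representation $N_{Z_V/V}$. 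One then checks that the square expressing $W \to N_{Z_W/W}$ is a distinguished $eN$-neighborhood of $(N_{Z_W/W}, Z_W)$ — essentially because being {\'e}tale and inducing an isomorphism on the closed subscheme is preserved under the pullback — and symmetrically $W \to W$ trivially realizes a distinguished $eN$-neighborhood of $(W, Z_W)$. Thus $(W, Z_W)$ sits in a diagram $(W, Z_W) \xleftarrow{\id} (W, Z_W) \xrightarrow{} (N_{Z_W/W}, Z_W)$ of the required shape, so it admits an $eN$-linearization.

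I expect the main technical obstacle to be the bookkeeping around the normal bundle: one must verify that the $G$-equivariant {\'e}tale map $f$ (resp. $f\circ w$) does induce a $G$-equivariant isomorphism between the actual normal bundle $N_{(Z\cap U)/U}$ (resp. $N_{Z_W/W}$) and the pullback of the trivial normal bundle $N_{Z_V/V}$, and that the resulting square is genuinely \emph{distinguished} (i.e. Cartesian with the restriction over the closed subscheme an isomorphism) in the sense of Definition~\ref{defn:EN-Linearization}. This is where shrinking $U$ to a smaller $G$-invariant affine neighborhood — legitimate because $x \in X^G$ forces $S_x = G$, so intersecting affine neighborhoods over $G$ stays affine and $G$-invariant — is used to guarantee that $f$ restricts to an {\'e}tale map on $Z \cap U$ and that the normal-bundle comparison is an isomorphism rather than merely a map. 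Once these local-at-$x$ statements are in hand, the {\'e}tale stability argument in the second paragraph is formal, relying only on the closure of distinguished $eN$-squares under pullback (used already in the proof of Proposition~\ref{prop:CRB}).
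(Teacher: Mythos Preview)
Your proposal has a genuine gap: you never actually construct the map $q: W \to N_{Z_W/W}$ that would form the second leg of the $eN$-linearization. You assert that ``the square expressing $W \to N_{Z_W/W}$ is a distinguished $eN$-neighborhood,'' but the only map you have in hand is $f \circ w : W \to V$, and $V$ is not $N_{Z_W/W}$. The identification $N_{Z_W/W} \cong Z_W \times_{Z_V} N_{Z_V/V}$ is correct, but to land in this scheme you would need a map $W \to Z_W$, i.e., a $G$-equivariant retraction of $W$ onto $Z_W$, and no such retraction exists in general. Taking the identity for $p$ therefore does not work, and the claim that $eN$-linearization is ``formally'' stable under \'etale base change is unfounded.

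The paper's proof supplies exactly the missing idea: one does not take $\wh{W} = W$, but rather forms the fiber product $\wt{W} = W \times_V N_{Z_W/W}$ over the representation $V$ (using $f\circ w$ on one side and the natural map $N_{Z_W/W} \to V$ on the other). Both projections from $\wt{W}$ are \'etale, and the preimage of $Z_V$ in $\wt{W}$ is $Z_W \times_{Z_V} Z_W$. Since $Z_W \to Z_V$ is \'etale, this fiber product splits as the diagonal $\Delta_{Z_W}$ disjoint from a $G$-invariant closed complement $Y$; removing $Y$ produces $\wh{W}$, which is then a genuine distinguished $eN$-neighborhood of both $(W, Z_W)$ and $(N_{Z_W/W}, Z_W)$. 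This fiber-product-and-remove-off-diagonal construction is the standard device from the Morel--Voevodsky proof of purity, and it is not bypassable here.
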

\begin{proof}
Given any map $W \to X$, we set $Z_W= Z {\underset{X}\times} W$.
We choose a $G$-invariant affine neighborhood $U$ of $x$, 
and a $G$-equivariant {\'e}tale map $f: U \to V$ as in
Proposition~\ref{prop:Linearization-Main-III}.
Let $f_Z:Z_U \to Z_V$ denote the restriction of $f$ to $Z$.

Since $p \neq {\rm char}(k)$, there is a $G$-equivariant decomposition
$V = Z_V \times N_{Z/V}$. Let $j: N_{Z/V} \inj V$ be the inclusion map
and let $f': Z_U \times N_{Z/V} \to V$ be the $G$-equivariant map
$f_Z \times j$. We now consider a commutative diagram in $\Sm^G_k$: 
\begin{equation}\label{eqn:Lin-Main-II-1}
\xymatrix@C1pc{
Z_U \ar[dr]^{\wt{i}} \ar@/_1.5pc/[ddr]_{i} 
\ar@/^1pc/[drr]^{i'} & & \\
& \wt{U} \ar[r]^<<{q} \ar[d]_{p} & Z_U \times N_{Z/V} 
\ar[d]^{f'} \\
& U \ar[r]_{f} & V}
\end{equation}
in which $\wt{U}$ is defined so that the square is Cartesian and $i'$ is the
zero sectioninclusion. Notice that $\wt{U}$ is smooth since 
($f$ and hence) $q$ is {\'e}tale.

It is easy to check that $(f' \circ q)^{-1}(Z_V)$ 
is the $G$-invariant closed subscheme $Z_U {\underset{V}\times} Z_U$. Since
$Z_U \to Z_V$ (obtained by the restriction of $f$) is {\'e}tale by 
Proposition~\ref{prop:Linearization-Main-III}, 
we see that this closed subscheme
is a disjoint union of diagonal $\Delta_{Z_U}: Z_U \inj Z_U 
{\underset{V}\times} Z_U$ and a $G$-invariant closed subscheme $Y$. 
In particular, $Y$ is a
$G$-invariant closed subscheme of $\wt{U}$. Setting $\wh{U} = 
\wt{U} \setminus Y$,
we get $G$-equivariant {\'e}tale maps 
$p: \wh{U} \to U$ and 
$q: \wh{U} \to V$
and one checks from the construction that 
$p^{-1}(Z_U) = q^{-1}(Z_V) = 
\wt{i}(Z_U)$.

If $w: W \to U$ is a $G$-equivariant {\'e}tale
map, then we have $N_{{Z_W}/W} \simeq Z_W {\underset{Z_U}\times} N_{{Z_U}/U}$.
Let $w_Z: N_{{Z_W}/W} \to N_{{Z_V}/V} \simeq V$ denote the
projection map. This yields an analogous  commutative diagram:
\begin{equation}\label{eqn:Lin-Main-II-2}
\xymatrix@C1.5pc{
Z_W \ar[dr]^{\wt{i}} \ar@/_1.5pc/[ddr]_{i} 
\ar@/^1pc/[drr]^{i'} & & \\
& \wt{W} \ar[r]^<<<{q} \ar[d]_{p} & N_{{Z_W}/W} \ar[d]^{f'\circ w_Z} \\
& W \ar[r]_{f \circ w} & V}
\end{equation}
where the lower square is Cartesian.
We now repeat the above construction for $f:U \to V$
verbatim to get $G$-equivariant {\'e}tale maps 
$p: \wh{W} \to W$ and 
$q: \wh{W} \to N_{{Z_W}/W}$
and one checks from the construction that 
$p^{-1}(Z_W) =  q^{-1}(Z_W) = \wt{i}(Z_W)$, where
$Z_W \subset N_{{Z_W}/W}$ is the zero-section.
\end{proof}

\vskip .3cm

\section{The equivariant homotopy purity and blow-up theorems}
\label{section:EPT}

The {\sl equivariant Thom space} of a $G$-equivariant vector bundle $V \to X$ in $\Sm^G_k$ 
is the pointed motivic $G$-space $V/{(V \setminus X)}$,
where $X \inj V$ is the zero section. 
We prove the following purity theorem for normal bundles.

\begin{thm}\label{thm:Purity}
Let $k$ be an algebraically closed field and let
$G$ be a finite cyclic group of prime order $p$ which is different from
the characteristic of $k$.
Let $Z \inj X$ be a closed immersion in $\Sm^G_k$.
Then there is a canonical isomorphism in 
${\rm Ho}^G_{\A^1, \bullet}(k)$
of pointed motivic $G$-spaces
\[
X/{(X \setminus Z)} \simeq {\rm Th}(N_{Z/X}).
\]
\end{thm}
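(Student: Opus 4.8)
The plan is to adapt the proof of Morel--Voevodsky's homotopy purity theorem \cite[Theorem~3.2.23]{MV} to the equivariant setting, using the $eN$-linearization of Section~\ref{section:ENL-G-sch} in place of their étale-local triviality of a closed immersion of smooth schemes, and using equivariant Nisnevich excision — a formal consequence of the complete, regular, bounded $cd$-structure of Theorem~\ref{thm:Comparison} — in place of ordinary Nisnevich excision.

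First I would construct the equivariant deformation to the normal cone. As $Z \inj X$ is a $G$-invariant closed immersion of smooth $G$-schemes, the scheme $D_{Z/X}$ obtained from the blow-up of $X \times \A^1$ along $Z \times \{0\}$ by deleting the strict transform of $X \times \{0\}$ inherits a $G$-action with trivial action on the $\A^1$-factor; it is smooth over $k$, contains $Z \times \A^1$ as a $G$-invariant smooth closed subscheme, and its fibres over $1$ and $0$ are $X$ and $N_{Z/X}$, compatibly with the embeddings of $Z$. Restricting to these two fibres yields a diagram of pointed motivic $G$-spaces
\[
X/(X \setminus Z) \longrightarrow D_{Z/X}/\bigl(D_{Z/X} \setminus (Z \times \A^1)\bigr) \longleftarrow \Th(N_{Z/X}),
\]
with maps $i_1$ (inclusion of the fibre over $1$) and $i_0$ (inclusion of the fibre over $0$), so it suffices to prove that $i_0$ and $i_1$ are motivic weak equivalences. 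This construction is functorial in the pair $(X,Z)$ and compatible with $G$-equivariant étale base change; in particular, if $f\colon U \to X$ is a distinguished $eN$-neighbourhood of $(X, (X\setminus A)_{\red})$ for $A = X \setminus Z$, then, since distinguished $eN$-squares become homotopy cocartesian in ${\rm Ho}^G_{\A^1,\bullet}(k)$ by Theorem~\ref{thm:Comparison} (cf.\ the localizing sets used in the proof of Theorem~\ref{thm:LFMS-Psh}), one obtains equivariant excision isomorphisms $X/(X\setminus Z) \simeq U/(U\setminus Z_U)$ and likewise for the deformation space, where $Z_U = Z \times_X U$.

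Next I would establish the statement for the linear model. If $V$ is a finite $G$-module and $Z_V \subseteq V$ a $G$-submodule, then linear reductivity of $G$ (here $p \neq \Char(k)$) provides a $G$-invariant decomposition $V \cong Z_V \oplus N$ with $N \cong N_{Z_V/V}$, and the deformation space $D_{Z_V/V}$ together with the embedding of $Z_V \times \A^1$ is, $G$-equivariantly, assembled from linear data on which all the comparison maps admit linear contractions. These contractions are equivariant $\A^1$-homotopies, so $i_0$ and $i_1$ are strict equivariant $\A^1$-homotopy equivalences and hence motivic weak equivalences by \cite[Lemma~2.3.6]{MV}, exactly as in the proof of Proposition~\ref{prop:Hom-inv}. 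Combining this with the excision of the previous step, the purity statement holds for every pair $(W, Z_W)$ that admits an $eN$-linearization in the sense of Definition~\ref{defn:EN-Linearization}: the two distinguished $eN$-neighbourhoods relating such a $(W, Z_W)$ to $(V, Z_V)$ and to $(N_{Z_W/W}, Z_W)$ transport the conclusion from the linear case.

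Finally I would globalize by a Mayer--Vietoris argument. Since $k$ is algebraically closed, every closed point of $Z$ is $k$-rational, so Proposition~\ref{prop:Linearization-Main-II} supplies, around each such point, a $G$-invariant affine open $U \subseteq X$ for which $(W, Z_W)$ admits an $eN$-linearization for every $G$-equivariant étale map $W \to U$ — in particular for every inclusion of a $G$-invariant open of $U$, hence for all finite intersections of such neighbourhoods. Together with the $G$-invariant open $X \setminus Z$, on which the statement is vacuous (both sides being a point), these form a finite $eN$-cover of $X$. Both $X/(X\setminus Z)$ and $D_{Z/X}/(D_{Z/X} \setminus (Z\times\A^1))$, being cofibres of maps of representable sheaves, satisfy Nisnevich (in particular Zariski) descent, so each is the homotopy colimit over this cover of its restrictions; running this descent on all three terms of the deformation diagram simultaneously expresses $i_0$ and $i_1$ for $X$ as finite homotopy colimits of $i_0$ and $i_1$ for the cover members and their intersections, each of which admits an $eN$-linearization and is therefore covered by the previous step. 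I expect the main obstacle to lie precisely in this patching: one must verify that the deformation space and its punctured open are compatible with the $eN$-cover (they are, by functoriality and excision) and — more delicately — that intersections of the linearizing neighbourhoods still carry $eN$-linearizations, which is the reason Proposition~\ref{prop:Linearization-Main-II} is stated for an arbitrary $G$-equivariant étale $w\colon W \to U$ rather than merely for $U$.
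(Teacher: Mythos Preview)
Your proposal is correct and follows essentially the same route as the paper: deformation space with maps $i_0,i_1$, a base case for bundles, reduction via $eN$-linearization and excision, and globalization by a \v{C}ech/Mayer--Vietoris argument over the cover supplied by Proposition~\ref{prop:Linearization-Main-II} (the paper writes this last step as the simplicial resolution $\sU \to X$ and Lemma~\ref{lem:Purity-we}, which is the same homotopy-colimit computation you describe).

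One small point to tighten: an $eN$-linearization in the sense of Definition~\ref{defn:EN-Linearization} relates $(W,Z_W)$ through an intermediate $(U,Z_W)$ to the zero section of the normal bundle $(N_{Z_W/W},Z_W)$, not to a linear pair $(V,Z_V)$ of representations. Thus the base case you actually need is purity for the zero section of an arbitrary $G$-equivariant vector bundle over a smooth $G$-scheme, not merely for $G$-submodules of a representation. The paper does this as Lemma~\ref{lem:Thom-VB} using the line-bundle projection $B(V,Z)\to\P(V\times\A^1)$ and Proposition~\ref{prop:Hom-inv}; your $\A^1$-contraction argument also works at that generality, since for the zero section of a vector bundle the deformation to the normal cone is $G$-equivariantly the trivial family, but you should state the base case accordingly.
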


The proof combines our results on $eN$-linearizations with the ideas from the non-equivariant set-up in \cite{MV}.

\subsection{Purity for vector bundles}
Let $k$ be any field and $G$ any finite group.
Let $Z \inj X$ be a closed immersion in $\Sm^G_k$. 
Consider $\A^1_k$ with trivial
$G$-action and let $B(X,Z)$ denote the blow-up of $X \times \A^1$ along the
$G$-invariant closed subscheme $Z \times \{0\}$. 
It is straightforward to check that $B(X,Z) \in \Sm^G_k$ and the blow-up map
$f: B(X,Z) \to X \times \A^1$ is $G$-equivariant. 
Furthermore, it is standard that there are inclusions of closed pairs
in $\Sm^G_k$
\begin{equation}\label{eqn:Blow-up}
(X, Z) \xrightarrow{i_1} (B(X, Z), Z \times \A^1) \xleftarrow{i_0}
(\P(N_{Z/X} \times \A^1), Z),
\end{equation}
where the inclusion in the last pair is the composition
$Z \inj N_{Z/X} = \P(N_{Z/X} \times \A^1) \setminus \P(N_{Z/X})$.
Using the isomorphism ${\rm Th}(N_{Z/X}) \simeq \frac{\P(N_{Z/X} \times \A^1)}
{\P(N_{Z/X} \times \A^1) \setminus Z}$ (see \cite[Proposition~3.2.17]{MV}),
we get the monomorphisms of pointed motivic $G$-spaces
\begin{equation}\label{eqn:Blow-up-1}
\alpha_{X,Z} : \frac{X}{X \setminus Z} \to \frac{B(X, Z)}{B(X, Z) \setminus
(Z \times \A^1)} ;
\end{equation}
\[
\beta_{X, Z} : {\rm Th}(N_{Z/X}) \to
\frac{B(X, Z)}{B(X, Z) \setminus (Z \times \A^1)}.
\]

\begin{lem}\label{lem:Thom-VB}
Let $p: V \to Z$ be a $G$-equivariant vector bundle in $\Sm^G_k$ and let
$i: Z \inj V$ be the zero section. Then the maps
$\alpha_{V,Z}$ and $\beta_{V,Z}$ are motivic weak equivalences.
\end{lem}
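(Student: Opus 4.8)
The plan is to move both maps, through a short chain of $eN$-excision weak equivalences, onto a split inclusion of the form $\Th(V)\inj\Th(V)\wedge\A^1_+$, and then to exploit that $\A^1$ is an interval. Since $i:Z\inj V$ is the zero section of the $G$-equivariant bundle $p:V\to Z$, there is a canonical $G$-equivariant identification $N_{Z/V}\cong V$, so (using the standard excision description of Thom spaces, \cite[Proposition~3.2.17]{MV}) $\Th(N_{Z/V})\simeq V/(V\setminus Z)=\Th(V)$. Under this identification $\alpha_{V,Z}$ and $\beta_{V,Z}$ are two maps out of $\Th(V)$, and it suffices to prove that each is a motivic weak equivalence.

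I would next introduce the $G$-equivariant morphism
\[
\psi:V\times\A^1\longrightarrow B(V,Z)
\]
lifting the map $V\times\A^1\to V\times\A^1$, $(v,t)\mapsto(tv,t)$. The lift exists because the preimage of the centre $Z\times\{0\}$ under $(v,t)\mapsto(tv,t)$ is the Cartier divisor $V\times\{0\}$, and it is $G$-equivariant because fibrewise scaling commutes with the linear $G$-action on $V$. A computation in the blow-up charts (in suitable local coordinates $\psi$ is literally the identity of $Z\times\A^n\times\A^1$) shows that $\psi$ is an isomorphism onto the open subscheme $D:=B(V,Z)\setminus\wt{V\times\{0\}}$, the complement of the strict transform of $V\times\{0\}$; that $\psi$ carries the zero section $Z\times\A^1\subseteq V\times\A^1$ onto the copy of $Z\times\A^1$ inside $B(V,Z)$; and that $\psi|_{t=1}$ and $\psi|_{t=0}$ realise the closed pairs $i_1$ and (via $N_{Z/V}\cong V$) $i_0$.

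The geometric fact driving the argument is that $\wt{V\times\{0\}}$ and $Z\times\A^1$ are disjoint in $B(V,Z)$: they meet the exceptional divisor $\P(N_{Z/V}\oplus\sO_Z)$ in the hyperplane $\P(N_{Z/V})$ and in the vertex section respectively, and they are disjoint off the exceptional divisor as well. Consequently $D$ and $A:=B(V,Z)\setminus(Z\times\A^1)$ are open subschemes of $B(V,Z)$ with $A\cup D=B(V,Z)$, so $\{A\inj B(V,Z),\ D\inj B(V,Z)\}$ is a distinguished $eN$-square in the sense of Definition~\ref{defn:dist-square}; by the homotopy-pushout description of such squares underlying the local model structures (proof of Theorem~\ref{thm:LFMS-Psh}, Corollary~\ref{cor:Sheaf-Nis}) the induced excision map $D/(D\setminus(Z\times\A^1))\to B(V,Z)/(B(V,Z)\setminus(Z\times\A^1))$ is a local, hence motivic, weak equivalence. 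Transporting the source along $\psi$ identifies it with $(V\times\A^1)/((V\setminus Z)\times\A^1)=\Th(V)\wedge\A^1_+$, and $\Th(V)\wedge\A^1_+\to\Th(V)$ — induced by the projection $\A^1\to\Spec(k)$ — is a motivic weak equivalence by Proposition~\ref{prop:Hom-inv} and Proposition~\ref{prop:Ho-Smash}. Chasing the identifications above, $\alpha_{V,Z}$ becomes the composite of the section $\Th(V)=\Th(V)\wedge\{1\}_+\inj\Th(V)\wedge\A^1_+$ with these weak equivalences, and $\beta_{V,Z}$ the analogous composite with $\{1\}$ replaced by $\{0\}$; since each of these sections is a section of the weak equivalence $\Th(V)\wedge\A^1_+\to\Th(V)$, it is itself a weak equivalence by two-out-of-three, whence so are $\alpha_{V,Z}$ and $\beta_{V,Z}$.

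I expect the main obstacle to be the bookkeeping in the blow-up charts: checking that $\psi$ really is an isomorphism onto $D$ and correctly matches the closed pairs $i_0$ and $i_1$, together with the disjointness of the two strict transforms inside $B(V,Z)$. These are purely geometric points; once they are settled, the homotopy-theoretic conclusion is a formal consequence of the $cd$-structure on the $eN$-topology and of elementary homotopy invariance (Proposition~\ref{prop:Hom-inv}).
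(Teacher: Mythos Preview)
Your argument is correct, but it proceeds along a different geometric line than the paper. The paper does not open $B(V,Z)$ from the inside via your $\psi$; instead it \emph{collapses} $B(V,Z)$ onto $\P(V\times\A^1)$ using the natural $G$-equivariant projection $\lambda_Z:B(V,Z)\to\P(V\times\A^1)$, which realises $B(V,Z)$ as the total space of the line bundle $\sO(1)$. By Proposition~\ref{prop:Hom-inv} this is a motivic weak equivalence, and so is its restriction to the complements of $Z\times\A^1$ and $Z$; hence the induced map $q$ on quotients is a weak equivalence. Then $q\circ\alpha_{V,Z}$ is literally the canonical isomorphism $V/(V\setminus Z)\cong\P(V\times\A^1)/(\P(V\times\A^1)\setminus Z)$, and $q\circ\beta_{V,Z}$ is the $eN$-excision equivalence coming from the Zariski square with $V\inj\P(V\times\A^1)$; two-out-of-three finishes. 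Your approach instead identifies the ``$t$-chart'' $D\subset B(V,Z)$ with $V\times\A^1$ via $\psi$, uses the Zariski cover $\{A,D\}$ to get excision, and then reads $\alpha_{V,Z}$ and $\beta_{V,Z}$ as the two end-sections of $\Th(V)\wedge\A^1_+$. The paper's route is marginally shorter because it avoids the blow-up chart computation you flag as the main obstacle (the line-bundle description of $\lambda_Z$ is classical and needs no coordinates), and it gets $\alpha_{V,Z}$ essentially for free as an isomorphism after composing with $q$. Your route, on the other hand, makes the $\A^1$-homotopy between $\alpha_{V,Z}$ and $\beta_{V,Z}$ visible rather than implicit, and handles both maps symmetrically.
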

\begin{proof}
We first recall that there is a natural map 
$\lambda_Z: B(V,Z) \to \P(V \times \A^1)$ in 
$\Sm^G_k$ which is the relative line bundle $\sO(1)$.
Moreover, one has $\lambda_Z^{-1}(\P(V \times \A^1) \setminus Z) = 
B(V,Z) \setminus (Z \times \A^1)$,
where $Z   \inj \P(V \times \A^1)$ is the inclusion
$Z \stackrel{i} \inj V = \P(V \times \A^1) \setminus \P(V)$.
In particular, these maps are motivic weak equivalences by
Proposition~\ref{prop:Hom-inv}. We conclude that the map
\[
q: \frac{B(V,Z)}{B(V,Z) \setminus (Z \times \A^1)} \to
\frac{\P(V \times \A^1)}{\P(V \times \A^1) \setminus Z}
\]
is a motivic weak equivalence.
On the other hand, the composite $q \circ \alpha_{V,Z}$ is a canonical
isomorphism of pointed motivic $G$-spaces (see \cite[Proposition~3.2.17]{MV}). 
We conclude that $\alpha_{V, Z}$ is a motivic weak equivalence.

On the other hand, the composition of the projection $\lambda_Z$ with
the inclusion $V \xrightarrow{i_0} B(V,Z)$ is the canonical open inclusion
$V \inj \P(V \times \A^1)$. Since
\begin{equation}\label{eqn:Thom-VB-1}
\xymatrix@C1pc{
V \setminus Z \ar[r] \ar[d] & V \ar[d] \\
\P(V \times \A^1) \setminus Z \ar[r] & \P(V \times \A^1)}
\end{equation}
is a distinguished $eN$-square, it follows that the composition
\[
\frac{V}{V \setminus Z} \xrightarrow{\beta_{V,Z}} 
\frac{B(V,Z)}{B(V,Z) \setminus (Z \times \A^1)} \xrightarrow{q}
\frac{\P(V \times \A^1)}{\P(V \times \A^1) \setminus Z}
\]
is a local weak equivalence. Since $q$ is a motivic weak equivalence,
we conclude that $\beta_{V,Z}$ is a motivic weak equivalence.
\end{proof}

\vskip .4cm

\subsection{Purity in general}
Let $(X,Z)$ be a closed pair in $\Sm^G_k$ as in Theorem~\ref{thm:Purity}.
It follows from Proposition~\ref{prop:Linearization-Main-II} that
for every $x \in Z$, there exists a $G$-invariant affine neighborhood 
$U$ of $x$ such that the pair $(U, Z\cap U)$ admits an $eN$-linearization.
Since $X$ is noetherian, there exists a finite set $\{U_1, \cdots , U_r\}$
of $G$-invariant affine open subsets of $X$ such that $X =
\stackrel{r}{\underset{i =1}\cup} U_i$ and each pair $(U_i,   Z \cap U_i)$
admits an $eN$-linearization.  

Set $U = \stackrel{r}{\underset{i =1}\coprod} U_i$ and 
$Z_U = \stackrel{r}{\underset{i =1}\coprod} (Z \cap U_i)$. 
Then $(U, Z_U)$ is a pair
of objects in the category $\Shv^G_{\Sm_k}$ and there is a canonical map
of sheaves $u: (U, Z_U) \to (X,Z)$. 

Let $\sU$ (resp.~$\sZ_{\sU}$)
denote the simplicial sheaf on $\Sm^G_{{\rm Nis}/k}$ whose term at 
level $n$ is the $(n+1)$-fold product $U {\underset{X}\times} \cdots
{\underset{X}\times} U$ (resp. $Z_U {\underset{Z}\times} \cdots
{\underset{Z}\times} Z_U$). 
This yields a pair of motivic $G$-spaces $(\sU, \sZ_{\sU})$ and a map
of pairs of motivic $G$-spaces $f: (\sU, \sZ_{\sU}) \to (X,Z)$.
Setting $U^n_X$ to be the $(n+1)$-fold product $U {\underset{X}\times} \cdots
{\underset{X}\times} U$, we see that $U^n_X$ is the coproduct of smooth
$G$-schemes each of which is a fiber product (over $X$) of $n+1$ components of 
$U$. Set $\sU \setminus \sZ_{\sU} = u^{-1}(X \setminus Z)$.

Let $\sB$ denote the motivic $G$-space obtained by applying the $B(X,Z)$
construction levelwise to the inclusion $\sZ_{\sU} \inj \sU$ 
(see \cite[p.~117]{MV}).
Observe here that this inclusion is the coproduct of closed embeddings
of smooth $G$-schemes at each level. Moreover, we have $B(X \setminus Z, \0) 
\simeq (X \setminus Z) \times \A^1$ and ${\rm Th}(N_{{\0}/{(X \setminus Z)}}) =
\Spec(k)$ (as a pointed motivic $G$-space).
Let ${\rm Th}(N_{{\sZ_{\sU}}/{\sU}})$ denote the motivic $G$-space which is 
obtained by applying the levelwise Thom space construction for the
inclusion $\sZ_{\sU} \inj N_{{\sZ_{\sU}}/{\sU}}$. This makes sense because
$\sZ_{\sU} \inj N_{{\sZ_{\sU}}/{\sU}}$ is the coproduct of 0-section embeddings
of equivariant vector bundles over smooth $G$-schemes at each level. 
We obtain a commutative diagram of pointed motivic $G$-spaces
\begin{equation}\label{eqn:P-2}
\xymatrix@C1pc{
\frac{\sU}{\sU \setminus \sZ_{\sU}} \ar[r] \ar[d] &
\frac{\sB}{\sB \setminus (\sZ_{\sU} \times \A^1)} \ar[d] &
{\rm Th}(N_{{\sZ_{\sU}}/{\sU}}) \ar[d] \ar[l] \\
\frac{X}{X \setminus Z} \ar[r] &
\frac{B(X, Z)}{B(X, Z) \setminus (Z \times \A^1)}  &
{\rm Th}(N_{Z/X}). \ar[l]}  \\
\end{equation}

\begin{lem}\label{lem:Purity-we}
The vertical arrows in ~\eqref{eqn:P-2} are local weak equivalences in the
$eN$-topology.
\end{lem}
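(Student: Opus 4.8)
The plan is to prove the lemma by showing that each vertical arrow in~\eqref{eqn:P-2} is a local weak equivalence, treating the three columns separately but by a common mechanism: namely, that $u\colon (\sU,\sZ_{\sU})\to (X,Z)$ is built from an $eN$-cover of $X$ and hence the \v{C}ech-type simplicial object $\sU^{\bullet}_X$ provides a local weak equivalence $\sU\to X$, and similarly $\sZ_{\sU}\to Z$. First I would recall that $\{U_i\to X\}$ is an $eN$-cover, so by Proposition~\ref{prop:Nisne-split} it is a split \'etale cover, and by Proposition~\ref{prop:Comparison**} the \v{C}ech nerve $\sU^{\bullet}_X$ of this cover is a hypercover for the $eN$-topology; consequently the augmentation $|\sU^{\bullet}_X|\to X$ is a local weak equivalence of motivic $G$-spaces. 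The same argument applied to the induced $eN$-cover $\{Z\cap U_i\to Z\}$ shows $|\sZ^{\bullet}_{\sU}|\to Z$ is a local weak equivalence, and, since $\sU\setminus\sZ_{\sU}=u^{-1}(X\setminus Z)$ is the \v{C}ech nerve of the $eN$-cover $\{U_i\setminus(Z\cap U_i)\to X\setminus Z\}$, the map $\sU\setminus\sZ_{\sU}\to X\setminus Z$ is a local weak equivalence too.

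Next I would handle each column. For the left-hand column, $\frac{\sU}{\sU\setminus\sZ_{\sU}}\to\frac{X}{X\setminus Z}$ is the cofiber (in pointed motivic $G$-spaces) of the map of cofibration pairs $(\sU\setminus\sZ_{\sU}\hookrightarrow\sU)\to (X\setminus Z\hookrightarrow X)$. Since both $\sU\to X$ and $\sU\setminus\sZ_{\sU}\to X\setminus Z$ are local weak equivalences, and the local model structure is left proper with all the relevant maps cofibrations (monomorphisms), the induced map on quotients is a local weak equivalence; here I would invoke left properness together with the gluing lemma for homotopy pushouts in the local model structure established in Theorem~\ref{thm:LFMS-Psh}. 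For the middle column, the blow-up construction $B(-,-)$ is applied levelwise, so $\sB\to B(X,Z)$ is the realization of a levelwise map of \v{C}ech nerves; one checks that $\{B(U_i,Z\cap U_i)\to B(X,Z)\}$ is again an $eN$-cover (blow-up along a $G$-invariant center commutes with the \'etale base change defining the $U_i$, and \'etale maps stay \'etale, Nisnevich conditions being checked on residue fields and stabilizers which are unaffected), so $\sB\to B(X,Z)$ is a local weak equivalence; subtracting the closed subscheme $\sZ_{\sU}\times\A^1$, which is the preimage of $Z\times\A^1$, the complement $\sB\setminus(\sZ_{\sU}\times\A^1)\to B(X,Z)\setminus(Z\times\A^1)$ is a local weak equivalence as well, and the cofiber argument gives the middle vertical arrow. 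For the right-hand column, ${\rm Th}(N_{\sZ_{\sU}/\sU})\to{\rm Th}(N_{Z/X})$ is the cofiber of $(N_{\sZ_{\sU}/\sU}\setminus\sZ_{\sU}\hookrightarrow N_{\sZ_{\sU}/\sU})\to (N_{Z/X}\setminus Z\hookrightarrow N_{Z/X})$; since $N_{\sZ_{\sU}/\sU}=\sZ_{\sU}\times_Z N_{Z/X}$ is the pullback of the normal bundle along the local weak equivalence $\sZ_{\sU}\to Z$, and pullback of an \'etale cover stays an $eN$-cover (Proposition~\ref{prop:Nisne-site}), the map $N_{\sZ_{\sU}/\sU}\to N_{Z/X}$ and its restriction to the complement of the zero section are local weak equivalences, and once more the cofiber is a local weak equivalence.

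The main obstacle I anticipate is the rigorous verification that the \v{C}ech nerve of a split \'etale $eN$-cover is genuinely an $eN$-hypercover, so that its realization maps to the base by a local weak equivalence; in the non-equivariant case this is standard (descent for Nisnevich covers), and here it should follow from the fact that the $eN$-topology comes from a complete, regular, bounded $cd$-structure (Theorem~\ref{thm:Comparison}) together with the characterization of local weak equivalences via points (Theorem~\ref{thm:Local-we}), checking on the henselian local $G$-schemes $X^h_{Gx}$ where the cover admits a section by construction. A secondary but routine point is confirming that $B(-,-)$, taking normal bundles, and removing the extra closed subscheme $Y$ in the $eN$-linearization all commute with the \'etale base changes involved and preserve the $eN$-cover property; this is where the $G$-invariance of all centers and complements, emphasized throughout Section~\ref{section:ENL-G-sch}, is used. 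Finally, stitching the three columns together into~\eqref{eqn:P-2} requires only the naturality of the cofiber construction and the fact that a natural transformation of cofiber sequences that is a local weak equivalence on the pairs is one on the cofibers, which is immediate from left properness.
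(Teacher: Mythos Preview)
Your proposal is correct and follows essentially the same route as the paper: both arguments show that the \v{C}ech nerve of the $G$-invariant Zariski cover $\{U_i\to X\}$ (and of the induced covers after applying $B(-,-)$, $N_{-/-}$, and removing the relevant closed subschemes) yields a local weak equivalence, and then pass to cofibers. The paper packages the key step slightly differently: instead of invoking hypercover language, it directly checks on the conservative family of points $X^h_{Gx}$ (Proposition~\ref{prop:Point-Cons}) that $U\to X$ is an epimorphism of $eN$-sheaves, and then cites \cite[Lemma~2.1.15]{MV} for the \v{C}ech-nerve-to-base weak equivalence and \cite[Lemma~2.2.11]{MV} for the passage to quotients; the other two columns are dispatched by ``the same argument''. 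Your more explicit treatment of the middle and right columns (blow-up and normal bundle commute with the open base change, preserving the $eN$-cover) is exactly what the paper leaves implicit, and your anticipated obstacle is precisely what \cite[Lemma~2.1.15]{MV} together with Proposition~\ref{prop:Point-Cons} resolves.
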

\begin{proof}
It suffices to show that the left vertical arrow is a local weak equivalence
as the same argument shows this weak equivalence for the other two vertical 
arrows.
We first claim that the map of sheaves $U \to X$ is an epimorphism in the
$eN$-topology.
Using Proposition~\ref{prop:Point-Cons}, it suffices to show that
for any $Y \in \Sch^G_k$ and a point $y \in Y$, the map
$U(Y^h_{Gy}) \to X(Y^h_{Gy})$ is surjective. So let $v: Y^h_{Gy} \to X$ be
a $G$-equivariant morphism where $Y^h_{Gy}$ is the henselization of a 
$G$-scheme $Y$ along the $G$-orbit $Gy$. 
By our construction of $U$, there is
a component $U_x$ of the scheme $U$ such that $u: (U_x, Gx) \to (X, Gx)$ is
an affine (Zariski) neighborhood of $Gx$. 
In particular, the map $(U_x)^h_{Gx} \xrightarrow{u}
X^h_{Gx}$ is a $G$-equivariant isomorphism of semi-local $G$-schemes.

Now, the map $v$ induces a $G$-equivariant map $Y^h_{Gy} \to X^h_{Gx}$ which 
takes $Gy$ onto $Gx$. Since $u$ is a $G$-equivariant isomorphism, we
immediately get a $G$-equivariant morphism $w: Y^h_{Gy} \to (U_x)^h_{Gx}$ such 
that $v = u \circ w$. Composing $w$ with the canonical maps
$(U_x)^h_{Gx} \to U_x \inj U$, we get a map $Y^h_{Gx} \to U$ which factors
$v$. This proves the claim.   

Since the $eN$-topology on $\Sm^G_k$ admits a conservative family of points
by Proposition~\ref{prop:Point-Cons}, we can use the above claim and
\cite[Lemma~2.1.15]{MV} to conclude that the map $\sU \to X$ is a local
weak equivalence. For the same reason, the map $\sU \setminus \sZ_{\sU} \to
X \setminus Z$ is a local weak equivalence. We conclude from
\cite[Lemma~2.2.11]{MV} that also $\frac{\sU}{\sU \setminus \sZ_{\sU}} \to
\frac{X}{X \setminus Z}$ is a local weak equivalence.
\end{proof}

\vskip .3cm

{\sl Proof of Theorem~\ref{thm:Purity}:}
It suffices to show that the maps $\alpha_{X,Z}$ and $\beta_{X,Z}$ are
motivic weak equivalences. By Lemma~\ref{lem:Purity-we}, this is equivalent 
to showing that the top horizontal maps in ~\eqref{eqn:P-2} are motivic weak
equivalences. By \cite[Proposition~2.2.14]{MV}, it suffices to show that
the top horizontal maps of simplicial sheaves in ~\eqref{eqn:P-2} are motivic 
weak equivalences at each level $n \ge 0$. 

The top horizontal maps in ~\eqref{eqn:P-2}
are isomorphisms for $X \setminus Z$ and all its
$G$-invariant open subsets. Thus we are left with showing 
that the top horizontal maps in ~\eqref{eqn:P-2} are motivic weak
equivalences for closed pairs of the form $(U_x, Z_{U_x})$ and 
$(W, \psi^{-1}(Z_{U_x}))$, 
where $\psi: W \to U_x$ is a $G$-equivariant {\'e}tale map. 
By Proposition~\ref{prop:Linearization-Main-II}, we are reduced to
proving the theorem under the assumption that the closed pair $(X,Z)$ admits an
$eN$-linearization. 

So let $(X,Z) \xleftarrow{p} (U, Z) \xrightarrow{q} (N_{Z/X}, Z)$
be an $eN$-linearization of $(X,Z)$ and consider the commutative diagram
\begin{equation}\label{eqn:P-1}
\xymatrix@C1pc{
\frac{U}{U \setminus Z_U} \ar[r] \ar[d] &
\frac{B(U, Z_U)}{B(U, Z_U) \setminus (Z_U \times \A^1)} \ar[d] &
{\rm Th}(N_{{Z_U}/U}) \ar[d] \ar[l] \\
\frac{X}{X \setminus Z} \ar[r] &
\frac{B(X, Z)}{B(X, Z) \setminus (Z \times \A^1)}  &
{\rm Th}(N_{{Z}/X}). \ar[l]}  \\
\end{equation}
The vertical arrows in ~\eqref{eqn:P-1} are local weak equivalences by our 
definition of local weak equivalence in the $eN$-topology. Hence, the top
horizontal maps are motivic weak equivalences if and only if so are
the bottom horizontal maps.

If we apply this argument for $(N_{Z/X}, Z)$ in place of $(X,Z)$, it follows
from the local weak equivalence $\frac{U}{U \setminus Z_U} \xrightarrow{\simeq}
{\rm Th}(N_{{Z}/X})$ and Lemma~\ref{lem:Thom-VB} that
the top horizontal maps in ~\eqref{eqn:P-1} are motivic weak equivalences.
We conclude that the maps $\alpha_{X,Z}$ and $\beta_{X,Z}$ 
are motivic weak equivalences. 
$\hfill \square$

\vskip .3cm

Using the same line of proof as for Theorem~\ref{thm:Purity} verbatim, we
obtain the following result for equivariant blow-ups.

\begin{thm}\label{thm:Blow-up-square}
Let $k$ be an algebraically closed field and let
$G$ be a finite cyclic group of prime order $p$ which is different from
the characteristic of $k$.
Let $Z \inj X$ be a closed immersion in $\Sm^G_k$
with complement $U = X \setminus Z$.
Let $p: X' \to X$ denote the blow-up of $X$ along $Z$. Then the square
\begin{equation}\label{eqn:BUS-1}
\xymatrix@C1pc{
p^{-1}(Z) \ar[r] \ar[d] & {X'}/U \ar[d] \\
Z \ar[r] & X/U}
\end{equation}
is homotopy cocartesian for the motivic (injective) model structure on 
$\sM^G_k$.
In other words, the map ${X'}/U {\underset{p^{-1}(Z)}\coprod} Z \to X/U$
is a motivic weak equivalence.
\end{thm}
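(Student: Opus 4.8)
The plan is to follow the proof of Theorem~\ref{thm:Purity} almost verbatim, with the deformation space $B(X,Z)$ replaced by the ordinary blow-up $X'$ and with Proposition~\ref{prop:Hom-inv} taking the role of Lemma~\ref{lem:Thom-VB} at the end. Write $U=X\setminus Z$, let $p\colon X'\to X$ be the blow-up of $X$ along $Z$ and $E=p^{-1}(Z)$ the exceptional divisor, so that $p$ restricts to an isomorphism $X'\setminus E\xrightarrow{\simeq}U$. The one structural input beyond Theorem~\ref{thm:Purity} is that blow-ups commute with flat base change and with open immersions: for a $G$-equivariant {\'e}tale map $w\colon W\to X$ one has $\mathrm{Bl}_{Z_W}(W)=W\times_X X'$ and $p_W^{-1}(Z_W)=Z_W\times_Z E$ (with $Z_W=Z\times_X W$), and all of these carry compatible $G$-actions. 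Consequently the square \eqref{eqn:BUS-1}, and each of its four corners, is compatible with $G$-equivariant {\'e}tale base change and with the simplicial (\v{C}ech) constructions used in Section~\ref{section:EPT}.

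Exactly as in the proof of Theorem~\ref{thm:Purity}: since $k$ is algebraically closed, every point of $Z$ is $k$-rational, so by Proposition~\ref{prop:Linearization-Main-II} and Noetherianity we may choose a finite cover of $X$ by $G$-invariant affine opens $U_1,\dots,U_r$, one of which is $X\setminus Z$, such that each pair $(U_i,Z\cap U_i)$ and each of its $G$-equivariant {\'e}tale pullbacks admits an $eN$-linearization. Forming the simplicial $G$-space $\sW$ with $\sW_n=W^n_X$ (the $(n{+}1)$-fold fiber product over $X$) out of $W=\coprod_i U_i$ as in Lemma~\ref{lem:Purity-we}, applying the blow-up construction levelwise, and invoking the base-change compatibility above, one obtains a map from the $\sW$-version of the square \eqref{eqn:BUS-1} to the $X$-version all of whose corner maps are local $eN$-weak equivalences — the $Z$- and $E$-corners and the $X$- and $X'$-corners are handled by Proposition~\ref{prop:Point-Cons} together with \cite[Lemmas~2.1.15, 2.2.11]{MV}, just as in Lemma~\ref{lem:Purity-we}. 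Hence the $X$-square is homotopy cocartesian once the $\sW$-square is, and by \cite[Proposition~2.2.14]{MV} the latter holds once it holds at each simplicial level; at level $n$ it is a finite coproduct of blow-up squares of pairs obtained by $G$-equivariant {\'e}tale base change from the $(U_i,Z\cap U_i)$, hence of pairs admitting an $eN$-linearization, and a coproduct of squares is homotopy cocartesian iff each summand is. This reduces us to the case that $(X,Z)$ admits an $eN$-linearization $(X,Z)\xleftarrow{p}(U',Z)\xrightarrow{q}(N_{Z/X},Z)$. Since $p,q$ are distinguished $eN$-neighborhoods and blow-up commutes with {\'e}tale base change, the induced maps of squares \eqref{eqn:BUS-1} for $(U',Z)$ over those for $(X,Z)$ and for $(N_{Z/X},Z)$ have corner maps that are isomorphisms on the $Z$- and $E$-corners and local weak equivalences on the other two (the relevant squares being distinguished $eN$-squares, as in \eqref{eqn:P-1}); so we may assume $X=V$ is a $G$-equivariant vector bundle over $Z=N_{Z/X}$ with $Z\inj V$ the zero section.

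In the vector bundle case, $X'=\mathrm{Bl}_Z V$ is the total space of the tautological line bundle $L\to\P(V)$, with blow-up map $\pi\colon L\to V$, with exceptional divisor $E$ equal to the zero section $\P(V)\inj L$, and with $\pi$ restricting to an isomorphism $L\setminus E\xrightarrow{\simeq}V\setminus Z$; all of this is $G$-equivariant. The zero section $\P(V)\inj L$ is a monomorphism, hence a cofibration in the motivic (injective) model structure, and it is a section of the vector bundle projection $L\to\P(V)$, which is a motivic weak equivalence by Proposition~\ref{prop:Hom-inv}; by two-out-of-three $\P(V)\inj L$ is a motivic trivial cofibration. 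Therefore in the pushout square
\begin{equation*}
\xymatrix@C1pc{
\P(V) \ar[r]\ar[d] & L \ar[d] \\
Z \ar[r] & P}
\end{equation*}
the arrow $Z\to P$ is again a motivic trivial cofibration, so $P$ computes the homotopy pushout, and the canonical map $P\to V$ induced by $\pi$ and the zero section $Z\inj V$ restricts on the summand $Z$ to the zero section $Z\inj V$, which is a motivic weak equivalence by two-out-of-three with the projection $V\to Z$ (Proposition~\ref{prop:Hom-inv}). Hence $P\to V$ is a motivic weak equivalence. Taking homotopy cofibers along the common open subscheme $U=V\setminus Z=X'\setminus E$ — a formal manipulation, since homotopy colimits commute — turns this absolute square into the square \eqref{eqn:BUS-1}, so that $X'/U\,{\underset{p^{-1}(Z)}\coprod}\,Z\to X/U$ is a motivic weak equivalence, as required.

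The only genuinely non-formal point — and the one I would check most carefully — is the compatibility asserted in the first paragraph: that the blow-up $X'$, the exceptional divisor $E$, and the quotients $X'/U$, $X/U$ are preserved by $G$-equivariant {\'e}tale base change and by the \v{C}ech/simplicial gluing of Section~\ref{section:EPT}. This rests on the flat base change property of blow-ups and on the fact that $X'\setminus E\to X\setminus Z$ is an isomorphism, both standard; once it is in place, the argument is the bookkeeping of Theorem~\ref{thm:Purity} with Proposition~\ref{prop:Hom-inv} replacing Lemma~\ref{lem:Thom-VB}.
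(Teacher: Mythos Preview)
Your proposal is correct and follows exactly the approach the paper indicates: the paper's proof of Theorem~\ref{thm:Blow-up-square} is the single sentence ``using the same line of proof as for Theorem~\ref{thm:Purity} verbatim,'' and you have carried this out in detail, with the vector bundle case handled via Proposition~\ref{prop:Hom-inv} (the tautological line bundle $L\to\P(V)$ and the zero section $Z\inj V$ being motivic weak equivalences) in place of Lemma~\ref{lem:Thom-VB}. The base-change compatibility of blow-ups you flag is indeed the one point requiring verification beyond the bookkeeping of Theorem~\ref{thm:Purity}, and it holds for the reasons you give.
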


\vskip .4cm

\section{$eN$-descent and unstable representability for 
equivariant $K$-theory}\label{section:Nis-Desc-rep}
In this section, we establish Nisnevich descent for equivariant
$K$-theory for $k$-schemes (not necessarily smooth).
It follows that equivariant $K$-theory of smooth schemes is represented by an 
object in the equivariant motivic homotopy category.
As an application, we characterize all equivariantly contractible smooth affine 
curves with a group action, and moreover all equivariant vector bundles on such 
curves.

\subsection{The equivariant $K$-theory presheaf on $\Sm^G_k$}
\label{subsection:Construction-K}
Quillen's $Q$-construction associates to an exact category $\sE$ with a chosen 
zero object $\{0\}$,
the category $Q\sE$ whose objects are same as those of $\sE$ but
the morphisms between two objects $M'$ and $M''$ are diagrams
$M' \leftarrow N \rightarrow M''$, where the first arrow is an {\sl
admissible epimorphism} and the second arrow is an {\sl admissible 
monomorphism}. Taking the classifying space of $Q\sE$, one obtains a 
simplicial space $BQ\sE$ and the $K$-theory space of $\sE$ is defined
as 
\begin{equation}\label{eqn:QK-theory}
K(\sE) = \Omega BQ\sE.
\end{equation}
Alternate approaches include the $S_{\bullet}$-construction of Waldhausen 
\cite{Waldhausen} and the $G$-construction of Gillet-Grayson \cite{GG}. 
An advantage of the $G$-construction is that the resulting
simplicial space $G\sE$ defining $K$-theory is homotopy equivalent to 
$\Omega BQ\sE$. 

Let $G$ be a smooth affine group scheme over $k$.
If $\sE_X$ is the exact category of 
$G$-equivariant vector bundles on any $X \in \Sch^G_k$, there is a simplicial
set  $G\sE_X$ homotopy equivalent to $\Omega BQ\sE_X$.
With either of these approaches, 
algebraic $K$-theory is only a pseudo-presheaf of 
simplicial sets (or spectra) on the category of $G$-schemes, 
and not an honest simplicial presheaf.
This is remedied by rectification of pseudo-functors as in e.g., 
\cite{Thomason2}, \cite{Jardine4}. 
The equivariant $K$-groups of $X$ are given by 
$K^G_i(X) = \pi_i\left(G\sE_X\right)$ for $i \ge 0$.

We apply the rectification procedure to the pseudo-functor 
on $\Sch^G_k$ which takes a $G$-scheme $X$ to the exact symmetric 
monoidal category $\sP^G(X)$ of $G$-equivariant vector bundles. 
Using \cite[Lemma~3.2.6]{Thomason2} and
the rectification procedure as explained in 
\cite[Chapter~5, page ~179]{Jardine4}, this 
process yields the following equivariant $K$-theory presheaf on $\Sch^G_k$, 
and hence on $\Sm^G_k$ via the full embedding $\Sm^G_k \inj \Sch^G_k$.

\begin{prop}\label{prop:Equiv-rect}
There is a presheaf of pointed simplicial sets $\sK^G$ on $\Sch^G_k$
with a monoidal structure
\[
\sK^G \times \sK^G \stackrel{\wedge}{\to} \sK^G
\]
such that for every $X \in \Sch^G_k$ and $i \ge 0$, there is a canonical 
isomorphism $\pi_i(\sK^G(X)) \xrightarrow{\cong} K^G_i(X)$.
\end{prop}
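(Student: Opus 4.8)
The plan is to invoke the standard rectification machinery for pseudo-functors, applied to the assignment $X \mapsto \sP^G(X)$ of $G$-equivariant vector bundles. The only subtlety over the non-equivariant situation is that one must check the construction respects the $G$-action functorially, but since all the input data ($G$-equivariant pullbacks of $G$-equivariant bundles, the symmetric monoidal structure given by direct sum and tensor product) are manifestly $G$-equivariant, this is purely a matter of bookkeeping within an already-existing formalism.

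First I would recall that $X \mapsto \sP^G(X)$ is genuinely only a pseudo-functor from $(\Sch^G_k)^{\op}$ to exact symmetric monoidal categories: for a composable pair $f, g$ of $G$-equivariant morphisms, the canonical isomorphism $(g f)^* \cong f^* g^*$ of pullback functors is not an equality, but these isomorphisms satisfy the coherence (cocycle) conditions. The key input is \cite[Lemma~3.2.6]{Thomason2}, which allows one to replace such a pseudo-functor by a strictly functorial one taking values in exact categories, together with a natural (objectwise) exact equivalence. Applying the $G$-construction of Gillet--Grayson \cite{GG} — or equivalently Waldhausen's $S_\bullet$-construction, or $\Omega BQ$ — to this strictified exact-category-valued functor yields an honest simplicial presheaf $\sK^G$ on $\Sch^G_k$, and by restriction along $\Sm^G_k \inj \Sch^G_k$ a simplicial presheaf on smooth $G$-schemes. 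The rectification recipe is spelled out in \cite[Chapter~5, page~179]{Jardine4}; I would simply cite it.

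Next I would address the monoidal structure. The tensor product of $G$-equivariant vector bundles makes each $\sP^G(X)$ a symmetric monoidal exact category (biexact in each variable), and pullback is symmetric monoidal; hence after rectification one obtains a pairing $\sK^G \times \sK^G \to \sK^G$ of presheaves of pointed simplicial sets, using that the $K$-theory functor ($G$-construction) sends symmetric monoidal exact categories and biexact pairings to simplicial sets with a multiplication, compatibly with the presheaf structure. Finally, the identification $\pi_i(\sK^G(X)) \cong K^G_i(X)$ for $i \ge 0$ follows because the strictification equivalence is an objectwise exact equivalence, hence induces an equivalence on $K$-theory spaces, and $\pi_i$ of the $G$-construction applied to $\sP^G(X)$ is by definition $K^G_i(X)$ (here one uses the Gillet--Grayson comparison that $G\sE_X \simeq \Omega BQ\sE_X$, so the homotopy groups agree with Quillen's $K$-groups).

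The only place requiring genuine care — the step I expect to be the main obstacle, though it is more a matter of diligence than difficulty — is verifying that Thomason's rectification lemma applies verbatim in the equivariant setting, i.e.\ that the hypotheses of \cite[Lemma~3.2.6]{Thomason2} (essential smallness, existence of chosen zero objects, and the coherence of the pseudo-functor) are met by $X \mapsto \sP^G(X)$ on $\Sch^G_k$. Essential smallness of $\Sm^G_k$ was already noted in \S\ref{section:Models}, and the analogous statement for $\Sch^G_k$ is clear; the chosen zero object is the zero bundle; coherence is automatic from the universal property of fiber products of $G$-schemes. Once these are in hand, the proof is a direct citation of the rectification formalism with no new ideas needed.
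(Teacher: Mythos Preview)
Your proposal is correct and follows exactly the approach the paper takes: the paper's treatment of this proposition consists entirely of the discussion paragraph preceding it, which invokes \cite[Lemma~3.2.6]{Thomason2} and the rectification procedure in \cite[Chapter~5, page~179]{Jardine4} applied to the pseudo-functor $X \mapsto \sP^G(X)$, together with the Gillet--Grayson $G$-construction. You have reproduced this with considerably more detail than the paper supplies, including the verification of hypotheses and the discussion of the monoidal pairing, none of which the paper spells out.
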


\subsection{Nisnevich descent theorems for equivariant $K$-theory}
\label{subsection:END}
We refer to Theorem ~\ref{thm:LFMS-Psh} for the $eN$-local injective model 
structures on $\Sch^G_k$ and $\Sm^G_k$. 
The descent problem amounts to the following in our setting.
\begin{defn}\label{defn:Descent}
A simplicial presheaf $\sF$ on $\Sch^G_k$ or $\Sm^G_k$ satisfies $eN$-descent 
if every fibrant replacement $\sF \to {\rm Ex}{\sF}$ 
in the $eN$-local injective model structure is an objectwise weak equivalence.
\end{defn}

We shall say that a scheme $X \in \Sch^G_k$ is locally $G$-affine if every 
point in $X$ has a $G$-invariant affine neighborhood. 
Observe that all quasi-projective $G$-schemes are locally $G$-affine if
$G$ is finite. Moreover, 
all locally $G$-affine schemes admit good quotients for the $G$-action.
We denote the category of locally $G$-affine schemes by $\Sch^G_{k,\lAff}$.

\begin{prop}\label{prop:MVP}
Let $f : Y \to X$ be a $G$-equivariant \'etale morphism in $\Sch^G_k$ 
such that one of the following holds.
\begin{enumerate}
\item
$X$ and $Y$ are locally $G$-affine and $G$ is finite with
$({\rm char}(k), |G|) = 1$.
\item
$X$ and $Y$ are smooth.
\end{enumerate} 
Suppose there is a Cartesian square in $\Sm^G_k$
\[
\xymatrix@C.9pc{
W \ar[r] \ar[d] & Y \ar[d]^{f} \\
U \ar[r]_{j} & X} 
\]
where $j$ is an open immersion such that the map $f$ is an isomorphism
over the complement of $U$ (with the reduced structures).
Then the diagram of simplicial sets
\[
\xymatrix@C.9pc{
\sK^G(X) \ar[r] \ar[d] & \sK^G(Y) \ar[d] \\
\sK^G(U) \ar[r] & \sK^G(W)}
\]
is homotopy Cartesian.
\end{prop}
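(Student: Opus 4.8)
The plan is to reduce the statement to Thomason's Mayer--Vietoris theorem for equivariant $K$-theory. First I would recall the setup: we are given a distinguished $eN$-square, i.e., $f : Y \to X$ is $G$-equivariant \'etale, $j : U \inj X$ is an open immersion, and $f$ restricts to an isomorphism $f^{-1}((X\setminus U)_{\red}) \xrightarrow{\simeq} (X\setminus U)_{\red}$; equivalently $(Y, (X\setminus U)_{\red})$ is a distinguished $eN$-neighborhood of $(X,(X\setminus U)_{\red})$. In the non-equivariant Nisnevich setting this is precisely the input to the Mayer--Vietoris property for $K$-theory (Thomason--Trobaugh), and the content here is that the same square, with its $G$-action, yields a homotopy Cartesian diagram of $\sK^G$. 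The reference to invoke is Thomason's equivariant localization and Mayer--Vietoris results in \cite{Thomason3}, together with the fact that $\sK^G$ computes $\pi_i = K^G_i$ (Proposition~\ref{prop:Equiv-rect}).

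The key steps, in order, are as follows. (1) Reduce to a statement about homotopy groups: a commutative square of simplicial sets is homotopy Cartesian iff the induced map from the homotopy fiber of one vertical map to that of the other is a weak equivalence, which — using Proposition~\ref{prop:Equiv-rect} — translates into a statement about the long exact sequences relating $K^G_*(X), K^G_*(Y), K^G_*(U), K^G_*(W)$. (2) Invoke the $G$-equivariant Thomason--Trobaugh localization theorem (see \cite{Thomason3}): for the closed immersion $Z := (X\setminus U)_{\red} \inj X$ with open complement $U$, there is a fiber sequence $K^G(X \text{ on } Z) \to K^G(X) \to K^G(U)$, and similarly $K^G(Y \text{ on } f^{-1}(Z)) \to K^G(Y) \to K^G(W)$, where $K^G(- \text{ on } Z)$ denotes $K$-theory with supports. (3) Prove the excision isomorphism $K^G(X \text{ on } Z) \xrightarrow{\simeq} K^G(Y \text{ on } f^{-1}(Z))$. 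This is where the hypotheses (1) or (2) of the proposition enter: in case (2), excision for $K$-theory with supports along an \'etale map that is an isomorphism over $Z$ is the standard Thomason--Trobaugh excision applied $G$-equivariantly (the category of $G$-equivariant coherent sheaves supported on $Z$ on $X$ agrees with that on $Y$ via $f^*$, since $f$ is \'etale hence flat and an isomorphism near $Z$); in case (1), the locally $G$-affine hypothesis plus $(\Char(k),|G|)=1$ ensures the existence of good quotients and the linear reductivity needed so that the equivariant derived categories with supports match up (this is the point at which one needs to know the relevant equivariant resolution/d\'evissage arguments go through). (4) Compare the two fiber sequences via the map $f$: the right-hand squares commute by functoriality, the left-hand vertical map is the excision equivalence from step (3), hence the square $\sK^G(X) \to \sK^G(Y)$, $\sK^G(U)\to\sK^G(W)$ has equivalent homotopy fibers, so it is homotopy Cartesian.

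The main obstacle I expect is step (3), the equivariant excision isomorphism for $K$-theory with supports, specifically in case (1) where $X, Y$ are only locally $G$-affine and not assumed smooth. For smooth schemes one can lean directly on Thomason--Trobaugh excision (\cite{Thomason3}, or the non-equivariant \cite{ThomasonTrobaugh} adapted equivariantly), because the \'etale map $f$ being an isomorphism over the support $Z$ forces an equivalence of the relevant subcategories of perfect complexes with support, and this equivalence is manifestly $G$-equivariant. For the locally $G$-affine case one must instead work locally: cover $X$ by $G$-invariant affine opens, use that $\sK^G$ sends the resulting $eN$-squares (which one gets by intersecting with $U$ and pulling back along $f$) to homotopy Cartesian squares by a further localization-with-supports argument together with Zariski--Mayer--Vietoris for the $G$-invariant affine cover, and patch. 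Hypothesis $(\Char(k),|G|)=1$ makes $G$ linearly reductive, so that taking $G$-invariants is exact and the equivariant $K$-theory of a $G$-affine scheme is well-behaved (e.g., it agrees with the $K$-theory of the category of finitely generated projective modules over the twisted group ring), which is what allows the local-to-global step to go through. I would also remark that the diagram already lives in $\Sm^G_k$ by hypothesis of the Cartesian square, but the proof in case (1) does not use smoothness, only the structural facts just listed; in case (2) the cleanest route is the direct appeal to equivariant Thomason--Trobaugh excision and localization.
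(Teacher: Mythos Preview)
For case (2), your approach coincides with the paper's up to a cosmetic choice: the paper uses Thomason's localization fiber sequence $\sG^G(X\setminus U) \to \sK^G(X) \to \sK^G(U)$ from \cite{Thomason3}, with equivariant $G$-theory of the closed complement in place of your $K$-theory with supports, and then simply observes that since $f$ restricts to an isomorphism $Y\setminus W \xrightarrow{\simeq} X\setminus U$, the map $\sG^G(X\setminus U) \to \sG^G(Y\setminus W)$ is a weak equivalence. This sidesteps any separate excision argument; on smooth schemes the two formulations agree via d\'evissage.

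For case (1), the paper takes a completely different and much shorter route: the hypotheses (locally $G$-affine, $G$ finite with $({\rm char}(k),|G|)=1$) are exactly what is needed for $[Y/G] \to [X/G]$ to be a representable morphism of tame Deligne--Mumford stacks admitting coarse moduli schemes, and the result then follows by citing the Nisnevich descent theorem for $K$-theory of such stacks \cite[Corollary~3.8]{AO}. Your proposed alternative --- cover $X$ by $G$-invariant affines and patch via Zariski Mayer--Vietoris --- is not fully worked out and has a structural issue: the Zariski Mayer--Vietoris square for a $G$-invariant open cover is itself the special case of the proposition in which $f$ is an open immersion, so invoking it in the patching step is circular unless you first establish that case by independent means. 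You correctly flag equivariant excision for $K$-theory with supports on singular schemes as the main obstacle, but your sketch does not resolve it; the paper's stack-theoretic detour packages all of this into the cited external result, at the cost of relying on heavier machinery.
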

\begin{proof}
If we are in the case (1) of the theorem, then our assumption
implies that $[Y/G] \to [X/G]$ is a representable morphism of tame 
Deligne-Mumford stacks which admit coarse moduli schemes. 
Hence the theorem follows from \cite[Corollary~3.8]{AO}.

Suppose now that we are in case (2). We then have a commutative diagram
of fibration sequences (see \cite{Thomason3}):
\begin{equation}\label{eqn:K-descent-1}
\xymatrix@C1pc{
\sG^G(X \setminus U) \ar[r] \ar[d] & \sK^G(X) \ar[r] \ar[d] & \sK^G(U) \ar[d] \\
\sG^G(Y \setminus W) \ar[r] & \sK^G(Y) \ar[r] & \sK^G(W)}
\end{equation}
where $\sG^G(X)$ denotes the $K$-theory of the exact category of the equivariant 
coherent sheaves on a $G$-scheme $X$.
Our hypothesis implies that the left vertical arrow is a weak equivalence.
The theorem now follows.
\end{proof}

The following are the main results of this section.
\begin{thm}\label{thm:NIS-DESC}
Let $G$ be a smooth affine group scheme over $k$.
The simplicial presheaf $\sK^G$ satisfies $eN$-descent on $\Sm^G_k$. 

If $G$ is finite with $({\rm char}(k), |G|) = 1$, then $\sK^G$ satisfies
$eN$-descent on $\Sch^G_{k,\lAff}$.
\end{thm}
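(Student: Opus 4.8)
The statement to be proved is \thmref{thm:NIS-DESC}: that $\sK^G$ satisfies $eN$-descent on $\Sm^G_k$, and (when $G$ is finite of order prime to $\Char(k)$) also on $\Sch^G_{k,\lAff}$. The strategy is the standard one for establishing descent for a $cd$-topology: invoke \cite[\S~5]{Voev1} (or the equivalent Brown--Gersten / Morel--Voevodsky machinery), which reduces descent for a presheaf to the \emph{Mayer--Vietoris property} with respect to the distinguished squares of the $cd$-structure. Concretely, by \thmref{thm:Comparison} the $eN$-topology is induced by a complete, regular, bounded $cd$-structure whose distinguished squares are the distinguished $eN$-squares \eqref{eqn:cd-square}; hence a simplicial presheaf satisfies $eN$-descent if and only if it sends $\0$ to a contractible space and sends every distinguished $eN$-square to a homotopy Cartesian square of simplicial sets. (The normalization at $\0$ is immediate since $\sK^G(\0)=\ast$.) So the whole proof comes down to verifying the homotopy-Cartesian condition on distinguished $eN$-squares, and \propref{prop:MVP} is precisely the input that does this.

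\textbf{Carrying it out.} First I would recall that a distinguished $eN$-square \eqref{eqn:cd-square} has the form
\[
\xymatrix@C.9pc{
B \ar[r] \ar[d] & Y \ar[d]^{p} \\
A \ar[r]_{j} & X,}
\]
with $j$ an open immersion and $(Y,(X\setminus A)_\red)\to(X,(X\setminus A)_\red)$ a distinguished $eN$-neighborhood; that is, $p$ is $G$-equivariant \'etale, the square is Cartesian, and $p$ restricts to an isomorphism $p^{-1}((X\setminus A)_\red)\xrightarrow{\simeq}(X\setminus A)_\red$. This is exactly the hypothesis of \propref{prop:MVP}: $f=p$ is $G$-equivariant \'etale, the square is Cartesian with $j$ an open immersion, and $p$ is an isomorphism over the (reduced) complement of $A$ in $X$. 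In the smooth case one applies part~(2) of \propref{prop:MVP} directly to each distinguished $eN$-square in $\Sm^G_k$; this gives that $\sK^G$ takes such squares to homotopy Cartesian squares, and combined with $\sK^G(\0)=\ast$ and \cite[Corollary~\ref{cor:Sheaf-Nis}]{} / \cite[\S~5]{Voev1} we conclude $eN$-descent on $\Sm^G_k$. For the second assertion, one restricts attention to distinguished $eN$-squares within $\Sch^G_{k,\lAff}$. Here I would note that a distinguished $eN$-square in $\Sch^G_{k,\lAff}$ stays inside $\Sch^G_{k,\lAff}$: $A$ is open in $X$ hence locally $G$-affine, and $Y\to X$ being \'etale, any point of $Y$ has a $G$-invariant affine neighborhood mapping into one of $X$ (using that the $eN$-neighborhood can be refined, as in \S~\ref{subsubsection:ENR}, so that it is built from Zariski-affine pieces translated by $G$). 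Then part~(1) of \propref{prop:MVP}, valid since $G$ is finite with $(\Char(k),|G|)=1$, gives the homotopy Cartesian property on these squares, and the same $cd$-descent criterion applied to the site $\Sch^G_{k,\lAff}$ (whose $eN$-topology is again induced by the evidently complete, regular, bounded restricted $cd$-structure) yields $eN$-descent on $\Sch^G_{k,\lAff}$.

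\textbf{The main obstacle.} The one point that needs genuine care rather than bookkeeping is confirming that the $cd$-descent formalism of \cite{Voev1} applies \emph{on the subcategory} $\Sch^G_{k,\lAff}$: one must check that the distinguished $eN$-squares restricted to this subcategory still form a complete, regular, and bounded $cd$-structure generating the $eN$-topology there. Completeness and regularity are inherited essentially formally (pullbacks of distinguished $eN$-squares along maps in $\Sch^G_{k,\lAff}$ remain in $\Sch^G_{k,\lAff}$, and the derived squares \eqref{eqn:CB*1} involve fiber products of locally $G$-affine schemes, which are again locally $G$-affine); boundedness uses the same density structure as in \propref{prop:CRB}, restricted to $\Sch^G_{k,\lAff}$, together with \lemref{lem:Density}. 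The subtler issue is that \propref{prop:Nisne-split} (splitting of $eN$-covers) and \propref{prop:Comparison**} were proved on all of $\Sch^G_k$; one should verify that the refinement of an $eN$-cover to a $cd$-cover can be arranged within $\Sch^G_{k,\lAff}$ when $X$ is locally $G$-affine, which follows because every $G$-invariant open or closed subscheme of a locally $G$-affine scheme is locally $G$-affine, and $G$-equivariant \'etale maps with a section land in locally $G$-affine schemes. Once these routine but necessary checks are in place, the theorem follows by combining \propref{prop:MVP} with \cite[\S~5]{Voev1} exactly as in the smooth case.
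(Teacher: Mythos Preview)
Your proposal is correct and follows essentially the same route as the paper: reduce $eN$-descent to the flasqueness condition ($\sK^G(\0)\simeq\ast$ and distinguished $eN$-squares go to homotopy Cartesian squares) and then invoke \propref{prop:MVP}. The only cosmetic difference is that the paper packages the reduction via \propref{prop:Flasq-fib} rather than citing Voevodsky's $cd$-descent criterion directly; these are the same argument. Your discussion of the ``main obstacle'' for $\Sch^G_{k,\lAff}$ is more cautious than the paper, which simply remarks that \propref{prop:Flasq-fib} ``is valid (with same proof) for all $G$-schemes'' and does not pause over closure properties of the locally $G$-affine subcategory.
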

\begin{proof}
Let $\sK^{G} \to {\rm Ex}\sK^{G}$ be a fibrant replacement in the
local injective model structure on the simplicial presheaves on 
$\Sch^G_k$ (or $\Sm^G_k$). 
We have to show that this is a schemewise  weak equivalence.  
By Proposition~\ref{prop:Flasq-fib}, we only have to show that 
$\sK^{G}$ is flasque. We remark here that even though this
proposition is stated for smooth schemes, it is valid (with same proof)
for all $G$-schemes (see \cite[Lemma~3.5]{Voev2}).
Using Proposition~\ref{prop:Equiv-rect} and \cite[Proposition~13.3.13]{Hirsc},
it is enough to show that $\sK^G$ takes a distinguished $eN$-square 
~\eqref{eqn:cd-square} to a homotopy Cartesian square. 
This follows directly from Proposition~\ref{prop:MVP}.
\end{proof}

\begin{thm}\label{thm:M-Uns-Rep1}
Let $k$ be any field and $G$ a smooth affine group scheme over $k$. 
For any $X \in \Sm^G_k$ and $i \ge 0$, there is a 
canonical isomorphism
\[
K^G_i(X) \xrightarrow{\simeq} 
\Hom_{{\rm Ho}^G_{\A^1, \bullet}(k)} \left(S^i_s \wedge X_+, \sK^{G} \right).
\]
\end{thm}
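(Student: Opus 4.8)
The plan is to combine the $eN$-descent result for $\sK^G$ (Theorem~\ref{thm:NIS-DESC}) with the representability machinery already developed in \S~\ref{subsection:EMHG}, in particular Proposition~\ref{prop:DQA-Psh}. The key point is that $eN$-descent says precisely that a fibrant replacement of $\sK^G$ in the local injective model structure is a schemewise weak equivalence; so $\sK^G$ is already a flasque (hence locally injective fibrant after Kan-replacing values, which changes nothing homotopically) presheaf. What remains is to pass from the local model structure to the motivic (i.e.\ $\A^1$-localized) one, and this is exactly where homotopy invariance of equivariant $K$-theory of smooth schemes enters.

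First I would record that $\sK^G$ is $\A^1$-weak invariant on $\Sm^G_k$: for every $X \in \Sm^G_k$ the projection $X \times \A^1 \to X$ induces a weak equivalence $\sK^G(X) \xrightarrow{\simeq} \sK^G(X \times \A^1)$, since $\pi_i$ of both sides compute $K^G_i$ by Proposition~\ref{prop:Equiv-rect} and $K^G_i(X) \cong K^G_i(X \times \A^1)$ by homotopy invariance of equivariant $K$-theory of regular schemes (Thomason~\cite{Thomason3}). Combined with $eN$-descent (Theorem~\ref{thm:NIS-DESC}), which gives flasqueness, we conclude that $\sK^G$ is $\A^1$-flasque in the sense of \S~\ref{subsubsection:A1FL}. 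By Theorem~\ref{thm:A1-flasq}, any fibrant replacement $\sK^G \to \widehat{\sK^G}$ in the motivic injective model structure is then a schemewise weak equivalence; in particular $\widehat{\sK^G}$ is $\A^1$-fibrant and $\widehat{\sK^G}(X)$ is weakly equivalent to $\sK^G(X)$ for all $X$.

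Now I would apply Proposition~\ref{prop:DQA-Psh} with $K = S^i_s$ and $\sX = \widehat{\sK^G}$: since $\widehat{\sK^G}$ is $\A^1$-fibrant, there is a canonical isomorphism
\[
\Hom_{{\rm Ho}^G_{\A^1, \bullet}(k)}\bigl(S^i_s \wedge X_+,\ \widehat{\sK^G}\bigr) \xrightarrow{\simeq} [S^i_s, \widehat{\sK^G}(X)] = \pi_i\bigl(\widehat{\sK^G}(X)\bigr).
\]
Composing with the weak equivalence $\sK^G(X) \xrightarrow{\simeq} \widehat{\sK^G}(X)$ (which induces $\pi_i(\sK^G(X)) \cong \pi_i(\widehat{\sK^G}(X))$), and using $\pi_i(\sK^G(X)) \cong K^G_i(X)$ from Proposition~\ref{prop:Equiv-rect}, together with the fact that $\Hom_{{\rm Ho}^G_{\A^1,\bullet}(k)}(S^i_s \wedge X_+, \sK^G) \cong \Hom_{{\rm Ho}^G_{\A^1,\bullet}(k)}(S^i_s \wedge X_+, \widehat{\sK^G})$ since $\sK^G \to \widehat{\sK^G}$ is a motivic weak equivalence, yields the desired isomorphism $K^G_i(X) \xrightarrow{\simeq} \Hom_{{\rm Ho}^G_{\A^1,\bullet}(k)}(S^i_s \wedge X_+, \sK^G)$.

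The main obstacle is making sure the $\A^1$-invariance input is correctly and cleanly invoked: Theorem~\ref{thm:NIS-DESC} only guarantees descent (flasqueness), and the bridge to the $\A^1$-local world is Theorem~\ref{thm:A1-flasq}, whose hypothesis requires genuine $\A^1$-weak invariance of $\sK^G$ on all of $\Sm^G_k$. One should be careful that $\sK^G$ is valued in pointed simplicial sets whose homotopy groups are the $K$-groups (so it is already ``grouplike'' enough that $\pi_0$ computes $K_0$ and there is no subtlety about base points versus connectedness); a minor point is to check that values are Kan complexes, which holds after applying the rectification/$Ex^\infty$, or is automatic from the construction. Apart from this bookkeeping, every ingredient — $eN$-descent, $\A^1$-flasqueness, and the adjunction isomorphism of Proposition~\ref{prop:DQA-Psh} — is already in place, so the argument is essentially an assembly of previously established results.
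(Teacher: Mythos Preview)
Your proposal is correct and follows essentially the same route as the paper: establish that $\sK^G$ is $\A^1$-flasque (flasqueness via $eN$-descent/Proposition~\ref{prop:MVP}, $\A^1$-weak invariance via Thomason's homotopy invariance), invoke Theorem~\ref{thm:A1-flasq} so that a motivic injective fibrant replacement is a schemewise weak equivalence, and then apply Proposition~\ref{prop:DQA-Psh} to identify the Hom-set with $\pi_i(\sK^G(X)) \cong K^G_i(X)$. The only cosmetic difference is that the paper cites Proposition~\ref{prop:MVP} directly for flasqueness whereas you go through Theorem~\ref{thm:NIS-DESC}; these are equivalent by Proposition~\ref{prop:Flasq-fib}.
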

\begin{proof}
Let $\sK^{G} \to {\rm Ex}\sK^{G}$ 
be a fibrant replacement of $\sK^{G}$ in the motivic injective model structure
on $\sM^G_{k, \bullet}$. The homotopy invariance property of equivariant
$K$-theory for smooth $G$-schemes implies that the motivic $G$-space
$\sK^{G}$ is $\A^1$-weak invariant (see Definition~\ref{defn:A-1model-str}).
Combining this with Proposition~\ref{prop:MVP}, we deduce that $\sK^G$
is $\A^1$-flasque (see \S~\ref{subsubsection:A1FL}).
We conclude from Theorem~\ref{thm:A1-flasq} that the map 
$\sK^G \to {\rm Ex}\sK^G$ is a schemewise weak equivalence.

We now apply Proposition~\ref{prop:DQA-Psh} to get a canonical isomorphism
\[
\begin{array}{lll}
\Hom_{{\rm Ho}^G_{\A^1, \bullet}(k)} \left(S^i_s \wedge X_+, 
\sK^{G} \right) & \xrightarrow{\simeq} & 
\Hom_{{\rm Ho}^G_{\A^1, \bullet}(k)} \left(S^i_s \wedge X_+, 
{\rm Ex}\sK^{G} \right) \\
& \xrightarrow{\simeq} &
[S^i_s, {\rm Ex}\sK^{G}(X)] \\
& \xrightarrow{\simeq} & \pi_i\left({\rm Ex}\sK^{G}(X)\right) \\
& \xleftarrow{\simeq} & \pi_i\left(\sK^{G}(X)\right) \\
& = & K^G_i(X).
\end{array}
\]
This completes the proof of the theorem.
\end{proof}

\subsection{Algebraic analogue of Segal's theorem}
\label{subsection:Segal}
Recall that if $G$ is a topological group, 
two $G$-equivariant continuous maps $\phi_0, \phi_1: X \to Y$
between topological $G$-spaces are called $G$-homotopic if
there exists a continuous $G$-equivariant map $H: X \times [0,1] \to Y$
(with trivial $G$-action on $[0,1]$) such that $H \circ i_j = \phi_j$
for $i_j:\{j\}\inj [0,1]$, $j=0,1$.

It was shown by Segal in \cite[Proposition~2.3]{Segal} that 
$G$-homotopic maps induce the same maps on equivariant topological 
$K$-theory. As an application of Theorem~\ref{thm:M-Uns-Rep1}, we prove the
following algebraic analogue of Segal's theorem.

\begin{cor}\label{cor:Segal-I}
Let $G$ be a smooth affine group scheme over a field $k$, 
and $\phi_0, \phi_1 :X \to Y$ equivariantly $\A^1$-homotopic maps in $\Sm^G_k$ (see \S~\ref{subsubsection:EBM}). 
Then $\phi^*_0 = \phi^*_1: K^G_*(Y) \to K^G_*(X)$.
\end{cor}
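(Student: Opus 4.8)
The plan is to reduce the statement to the representability theorem, Theorem~\ref{thm:M-Uns-Rep1}, which identifies $K^G_i(X)$ with $\Hom_{{\rm Ho}^G_{\A^1,\bullet}(k)}(S^i_s \wedge X_+, \sK^G)$ naturally in $X$. First I would note that, by Proposition~\ref{prop:Hom-inv} (or directly by \cite[Lemma~2.3.6]{MV} applied to the interval $\A^1$ on the $eN$-site), any elementary $\A^1$-homotopy $H: X \times \A^1 \to Y$ between $\phi_0$ and $\phi_1$ shows that $\phi_0$ and $\phi_1$ become equal in ${\rm Ho}^G_{\A^1}(k)$; indeed the two inclusions $i_0, i_1: X \to X \times \A^1$ agree in the equivariant motivic homotopy category since both are sections of the projection $p_X: X \times \A^1 \to X$, which is inverted there. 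Composing a finite chain of such elementary homotopies, we conclude that $\phi_0 = \phi_1$ as morphisms in ${\rm Ho}^G_{\A^1,\bullet}(k)$ after adding disjoint basepoints, and hence $S^i_s \wedge (\phi_0)_+ = S^i_s \wedge (\phi_1)_+$ as maps $S^i_s \wedge X_+ \to S^i_s \wedge Y_+$ in ${\rm Ho}^G_{\A^1,\bullet}(k)$ for every $i \ge 0$.

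Next I would invoke the naturality of the isomorphism in Theorem~\ref{thm:M-Uns-Rep1}. We have a commutative square
\[
\xymatrix@C2pc{
K^G_i(Y) \ar[r]^-{\simeq} \ar[d]_{\phi_j^*} & \Hom_{{\rm Ho}^G_{\A^1,\bullet}(k)}\left(S^i_s \wedge Y_+, \sK^G\right) \ar[d]^{(S^i_s \wedge (\phi_j)_+)^*} \\
K^G_i(X) \ar[r]_-{\simeq} & \Hom_{{\rm Ho}^G_{\A^1,\bullet}(k)}\left(S^i_s \wedge X_+, \sK^G\right)
}
\]
for $j = 0, 1$. Since the top and bottom horizontal maps are the same for both $j$, and the right vertical maps coincide by the previous paragraph (being induced by equal morphisms in ${\rm Ho}^G_{\A^1,\bullet}(k)$), the left vertical maps $\phi_0^* = \phi_1^*: K^G_i(Y) \to K^G_i(X)$ agree. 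Running this over all $i \ge 0$ gives the claim on $K^G_*$.

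The main obstacle, such as it is, lies entirely in confirming the naturality of the representability isomorphism in the scheme variable: one must check that the composite isomorphism built in the proof of Theorem~\ref{thm:M-Uns-Rep1} — going through the fibrant replacement ${\rm Ex}\,\sK^G$, the adjunction $({\rm Fr}_X, {\rm Ev}_X)$ of Proposition~\ref{prop:DQA-Psh}, and the identification $\pi_i({\rm Ex}\,\sK^G(X)) \cong \pi_i(\sK^G(X)) = K^G_i(X)$ coming from the schemewise weak equivalence $\sK^G \to {\rm Ex}\,\sK^G$ — is compatible with pullback along an equivariant morphism $X \to Y$. This is a formal consequence of the functoriality of each ingredient (${\rm Fr}_{(-)}$ is a functor in the scheme variable, fibrant replacement is functorial, and $\pi_i$ is a functor), so no genuinely new argument is required; it is merely bookkeeping that I would spell out only to the extent needed. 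Everything else is immediate from results already established in the paper.
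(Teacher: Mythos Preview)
Your proposal is correct and rests on the same core input as the paper's proof, namely Theorem~\ref{thm:M-Uns-Rep1}, but the paper takes a slightly more economical path that avoids the naturality bookkeeping you flag. Instead of arguing that $\phi_0=\phi_1$ in ${\rm Ho}^G_{\A^1,\bullet}(k)$ and then invoking naturality of the representability isomorphism, the paper reduces at once to showing $i_0^*=i_1^*:K^G_*(X\times\A^1)\to K^G_*(X)$, observes that Theorem~\ref{thm:M-Uns-Rep1} makes $p^*:K^G_*(X)\to K^G_*(X\times\A^1)$ an isomorphism, and concludes because $(p\circ i_0)^*=(p\circ i_1)^*=\mathrm{id}$. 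This uses only that $p^*$ is bijective on $K$-groups, so no naturality square needs to be verified. Your route has the mild advantage of phrasing everything inside the homotopy category (and would work verbatim for any representable theory), while the paper's route is shorter for this specific corollary.
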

\begin{proof}
It is enough to consider the case when $\phi_0$ and $\phi_1$ are elementary
$\A^1$-homotopic. Let $i_0, i_1: \Spec(k) \to \A^1$ be the two inclusions
with $i_0(pt) = 0$ and $i_1(pt) = 1$.
It suffices to show that $i^*_0 = i^*_1: K^G_*(X \times \A^1) \to K^G_*(X)$.

Let $p: X \times \A^1 \to X$ denote the projection map. It follows from
Theorem~\ref{thm:M-Uns-Rep1} that $p^*: K^G_*(X) \to  K^G_*(X \times \A^1)$
is an isomorphism. Hence, it suffices to show that 
$(p\circ i_0)^* = (p \circ i_1)^*: K^G_*(X) \to K^G_*(X)$.
Both these maps equal the identity on $K^G_*(X)$.
\end{proof}

\subsection{Equivariantly contractible affine curves}\label{subsection:ECAC}
We shall say that a motivic $G$-space $\sX$ is 
{\sl equivariantly $\A^1$-contractible} if the map $\sX \to pt$ is a motivic 
weak equivalence. 
A $G$-equivariant vector bundle $\sV$ on $X\in\Sm^G_k$ is called trivial 
if there is 
a $G$-representation $V$ such that $\sV=V{\underset{k}\times}\ X$.

As an application of Theorem~\ref{thm:M-Uns-Rep1}, we prove the following 
desired geometric result on equivariant vector bundles.

\begin{thm}\label{thm:Curves}
Let $k$ be an infinite field and let $G = \<\sigma\>$ be a finite cyclic group 
of order prime to the characteristic of $k$ such that $\mu_{|G|} \subset k$.
Let $X$ be a smooth affine curve over $k$ with $G$-action. Then $X$ is
equivariantly $\A^1$-contractible if and only if it is isomorphic to
an 1-dimensional linear representation of $G$. In particular, all
$G$-equivariant vector bundles on $X$ are trivial if $X$ is equivariantly
$\A^1$-contractible. 
\end{thm}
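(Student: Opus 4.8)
The plan is to prove the two directions separately, using representability of equivariant $K$-theory (Theorem~\ref{thm:M-Uns-Rep1}) as the main engine for constructing the $G$-equivariant vector bundle statement, and elementary geometry of curves together with the linear reductivity of $G$ (as exploited in \S\ref{section:ENL-G-sch}) for the structure theorem. For the easy direction, suppose $X$ is isomorphic as a $G$-scheme to a $1$-dimensional linear representation $\bL$ of $G$. Then $\bL$ is a $G$-equivariant vector bundle over $\Spec(k)$, hence by Proposition~\ref{prop:Hom-inv} the structure map $\bL \to \Spec(k)$ is a motivic weak equivalence, so $X$ is equivariantly $\A^1$-contractible.

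For the converse, suppose $X$ is equivariantly $\A^1$-contractible. First I would locate a $G$-fixed $k$-rational point on $X$. Since $G = \langle \sigma\rangle$ is cyclic of order prime to $\Char(k)$ with $\mu_{|G|}\subset k$ and $X$ is a smooth affine curve, the fixed locus $X^G$ is a smooth closed subscheme; I would argue $X^G \neq \emptyset$ using contractibility (e.g.\ comparing with the representability of $K$-theory, or more directly: an equivariantly contractible $X$ has trivial equivariant $K$-theory hence $X^G$ cannot be empty, since for a free action the quotient $X/G$ would be an affine curve with nonzero $K_0$ forced by a transfer/Euler characteristic argument — here the hypothesis $\mu_{|G|}\subset k$ lets us diagonalize). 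Having a fixed point $x_0\in X^G(k)$, linear reductivity gives a $G$-stable decomposition of the cotangent space, so $T_{x_0}X$ is a $1$-dimensional representation $\bL$ of $G$. By Lemma~\ref{lem:Linearization} there is a $G$-invariant affine neighborhood and a $G$-equivariant {\'e}tale map $X \supseteq U \to \bL$; since $X$ is a curve and $U$ is dense, one promotes this to a statement about $X$ itself. The key point is then to upgrade ``$G$-equivariantly {\'e}tale onto an open of $\bL$'' to ``$G$-equivariantly isomorphic to $\bL$''. For this I would use that $X$ is contractible: its ordinary (non-equivariant) $K$-theory, Picard group, and coordinate ring invariants are those of $\A^1$ (by Corollary~\ref{cor:Res-Main-I} applied to $X \to pt$ after forgetting the action, or directly by Theorem~\ref{thm:M-Uns-Rep1}), so $X \cong \A^1_k$ as a scheme; then the $G$-action is an action on $\A^1_k$ fixing $x_0$, and since $|G|$ is prime to $\Char(k)$ and $\mu_{|G|}\subset k$, any such action is linearizable — one averages a coordinate over $G$ to produce a $G$-eigencoordinate, yielding $X \cong \bL$ as $G$-schemes.

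Finally, for the vector bundle statement: once $X \cong \bL$ is a $1$-dimensional linear representation, any $G$-equivariant vector bundle $\sV$ on $X$ pulls back from $\Spec(k)$ up to motivic weak equivalence, but more is needed — actual triviality. Here I would invoke Theorem~\ref{thm:M-Uns-Rep1}: equivariant $K$-theory is represented, and the projection $\bL \to \Spec(k)$ induces an isomorphism $K^G_0(\Spec k) \xrightarrow{\simeq} K^G_0(\bL)$ (homotopy invariance of equivariant $K$-theory, already used in the proof of that theorem). Thus the class $[\sV] \in K^G_0(X)$ equals the class of a $G$-representation $V$. To pass from equality in $K^G_0$ to an honest isomorphism of bundles I would use the classical fact that over a smooth affine curve equivariant vector bundles of the same rank with the same class in $K_0$ (equivalently, the same local representation type at each fixed point, which is automatic since $X^G = \{x_0\}$) are isomorphic — this is an equivariant Serre/cancellation argument over a Dedekind-type base, valid because $k$ is infinite. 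I expect the \emph{main obstacle} to be precisely this last cancellation step and the linearization of the $G$-action on $\A^1_k$: verifying that equivariant triviality (not just $K$-theoretic triviality) holds on a curve requires an equivariant version of Bass–Quillen/Serre's theorem, and the cleanest route is to reduce to the quotient $X/G$ via the tameness hypothesis $(\Char k, |G|)=1$, diagonalize using $\mu_{|G|}\subset k$, and split $\sV$ into isotypic eigen-subbundles each of which is an ordinary bundle on $X/G \cong \A^1$, hence trivial.
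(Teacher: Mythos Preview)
Your overall plan is sound and parallels the paper's, but the routes diverge in two places worth flagging.

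\textbf{Showing $X\cong\A^1$.} The paper does \emph{not} pass through the restriction functor. Instead it compares equivariant and ordinary $K$-theory via the commuting square
\[
\xymatrix@C1.3pc{
K^G_i(k) {\underset{R(G)}\otimes} \Z \ar[d]_{f^*}^{\simeq} \ar[r]^>>>{\simeq} & K_i(k) \ar[d]^{f^*} \\
K^G_i(X) {\underset{R(G)}\otimes} \Z \ar[r] & K_i(X),}
\]
using Theorem~\ref{thm:M-Uns-Rep1} for the left vertical isomorphism, Thomason for the top one, and Vistoli's theorem \cite{Vistoli1} to make the bottom map surjective after inverting $|G|$; this yields that $\Pic(X)$ is $|G|$-power torsion (so $X$ is rational) and $\sO^\times(X)[1/|G|]=k^\times[1/|G|]$ (so $X\cong\A^1$). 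Your route via Corollary~\ref{cor:Res-Main-I} is more direct and avoids Vistoli entirely: it gives $K_*(X)\cong K_*(k)$ on the nose, hence $\Pic(X)=0$ and $\sO^\times(X)=k^\times$. That is a genuine simplification. Your earlier detour through Lemma~\ref{lem:Linearization} is then unnecessary and you rightly abandon it.

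\textbf{Linearizing the $G$-action and fixed points.} The paper argues that $\sigma(x)=ax+b$ with $b\ne 0$ would give a fixed-point-free action, and then uses $\pi^{G,\A^1}_0(X)$ to rule this out. Your averaging argument is fine too; in fact once $X\cong\A^1$ the tameness hypothesis alone forces a fixed point, so your separate (and admittedly vague) argument for $X^G\ne\emptyset$ before knowing $X\cong\A^1$ is not needed.

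\textbf{The vector bundle step.} Here there is a genuine gap in your sketch. The isotypic pieces $\sV_\chi$ are \emph{not} bundles on $X/G$: they are $\chi$-twisted, so they do not descend. What you actually get (using $\mu_{|G|}\subset k$) is a $\Z/|G|$-grading on the free $k[x]$-module underlying $\sV$, with $x$ shifting the grading; showing this is graded-free is exactly the equivariant Serre problem for the cyclic action on $\A^1$. The paper does not reprove this but cites \cite{Cast} and \cite{MMP}. Your sketch points at the right structure but would need the content of those references to close.
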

\begin{proof}
The assertion that a finite-dimensional representation of $G$ is 
equivariantly $\A^1$-contractible, follows immediately from 
Proposition~\ref{prop:Hom-inv}. 
Below we prove the more difficult converse statement.

Suppose that $X$ is equivariantly $\A^1$-contractible.
Since the action of $G$ on a smooth scheme is linearizable, we can assume that 
there is smooth projective curve $\ov{X} \in \Sm^G_k$ and an open
immersion $j:X \inj \ov{X}$ in $\Sm^G_k$. Let $f: X \to \Spec(k)$ be the
structure map. 

{\bf {Claim~1:}}
The curve $X$ is rational.
\\
{\sl Proof of claim 1:}
Consider the commutative diagram
\begin{equation}\label{eqn:Curve-1}
\xymatrix@C1.3pc{
K^G_i(k) {\underset{R(G)}\otimes} \ \Z \ar[d]_{f^*} \ar[r]^>>>{f^i_k} & 
K_i(k) \ar[d]^{f^*} \\
K^G_i(X) {\underset{R(G)}\otimes} \ \Z \ar[r]_>>>{f^i_X} & K_i(X)} 
\end{equation}
with forgetful horizontal maps from equivariant to ordinary $K$-theory. 
Theorem~\ref{thm:M-Uns-Rep1} shows the left vertical arrow is an isomorphism for all $i \ge 0$.
The top horizontal arrow is an isomorphism for all $i \ge 0$ by \cite[Lemma~5.6]{Thomason4}. 
Applying these facts for $i=0$, 
we see that the composite map 
$$
K^G_0(X) {\underset{R(G)}\otimes} \ \Z \to K_0(X) \to \Z
$$ 
is an isomorphism. 
On the other hand, the first map is surjective over $\Z[{1}/{|G|}]$ by \cite[Theorem~1]{Vistoli1}.
It follows that $\Pic(X)$ is a torsion group of exponent $|G|$, 
which happens if and only if $X$ is rational. 
This proves the claim.

{\bf{Claim~2:}}
$X$ is isomorphic (not necessarily equivariantly) to $\A^1$.
\\
{\sl Proof of claim 2:}
Claim~1 implies that $\ov{X} \simeq \P^1_k$.
Inserting $i = 1$ in~\eqref{eqn:Curve-1} shows the composite map 
$$
K^G_1(X) {\underset{R(G)}\otimes} \ \Z \xrightarrow{f^1_X} K_1(X)\surj \sO^{\times}(X)
$$
is just the inclusion $k^{\times} \inj \sO^{\times}(X)$. 
On the other hand,
$f^1_X$ is surjective over $\Z[{1}/{|G|}]$ by \cite[Theorem~1]{Vistoli1}. 
It follows that $k^{\times}[{1}/{|G|}] \simeq\sO^{\times}(X)[{1}/{|G|}]$, 
which happens if and only if $X \simeq \A^1$ as an open subscheme of $\P^1_k$. 

By the above claims $X$ is the affine line with $G= \<\sigma\>$-action $\sigma(x)=ax+b$ for some fixed $a,b\in k$ with $a^{|G|} = 1$. 
If $b \neq 0$, then $\sigma$ acts on $\A^1$ without fixed points.
This means that the identity map of $\A^1$ gives an element of $[\A^1, \A^1]_{G, \A^1}$ which can not be equivariantly contracted 
to any fixed point.
In particular, 
$\pi^{G, \A^1}_0(X)$ is not constant and hence $X \to \Spec(k)$ is not a motivic weak equivalence, 
which contradicts our assumption. 
We conclude that $b=0$ and $G$ acts linearly on $\A^1$. 

Finally, the claim about the triviality of all $G$-equivariant vector bundles on $X$ follows from the above combined with \cite{Cast} 
and \cite[Theorem~1]{MMP}.
\end{proof}

\begin{exm}\label{exm:Non-contractible}
Theorem~\ref{thm:Curves} shows that
equivariant $\A^1$-contractibility is a strictly stronger condition than
ordinary $\A^1$-contractibility, as one would expect.
As an example, let the cyclic group of order two $G = \<\sigma\>$ act on 
$\A^1$ by $\sigma(x) = 1-x$. This action is fixed point free and hence
not isomorphic to a $G$-representation. Thus $\A^1$ with this action is
not equivariantly $\A^1$-contractible.
\end{exm}

\begin{remk}\label{remk:High-dim}
One can ask whether the assertion of Theorem~\ref{thm:Curves}
is true in higher dimensions as well. This seems to be a very difficult question.
We do not know the answer even when $G$ is trivial and $X$ is a surface.
That is, it is unknown whether an $\A^1$-contractible smooth affine surface is 
isomorphic to the affine plane. It is known, however, 
that such surfaces do not admit any non-trivial vector bundle.
\end{remk}

\vskip .4cm

\noindent\emph{Acknowledgments.}
The first version of this paper was written when AK was
visiting the Department of Mathematics at University of Oslo in the summer of
2011. He thanks the department for the invitation and support.
PA{\O} thanks David Gepner and Jeremiah Heller for discussions on 
subjects related to this paper. The authors
thank Aravind Asok and Ben Williams for inspiration and helpful comments
on an earlier version of this paper.

\begin{center}
School of Mathematics, Tata Institute of Fundamental Research, \\ 
Homi Bhabha Road, Colaba, Mumbai, India.\\
e-mail: amal@math.tifr.res.in
\end{center}
\begin{center}
Department of Mathematics, University of Oslo, Norway.\\
e-mail: paularne@math.uio.no
\end{center}

\end{document}